\pdfoutput=1

\documentclass[12pt]{article}
\usepackage[T1]{fontenc}
\usepackage[utf8]{inputenc}
\usepackage[a4paper,margin=1in]{geometry}
\usepackage{amsmath,amssymb,amsthm,mathtools,mathrsfs}
\usepackage{cite}
\usepackage{hyperref}
\usepackage{xcolor}
\colorlet{blue}{black}
\usepackage{enumitem}
\usepackage{bm}
\usepackage{lmodern}

\hypersetup{
  colorlinks=true,
  linkcolor=black,
  citecolor=black,
  urlcolor=black,
  pdftitle={Threshold asymptotics and decay for massive Maxwell on subextremal Reissner--Nordstrom},
  pdfauthor={Bobby Eka Gunara},
  pdfstartview={FitH}
}

\numberwithin{equation}{section}
\allowdisplaybreaks[2]
\theoremstyle{plain}
\newtheorem{theorem}{Theorem}[section]
\newtheorem{proposition}[theorem]{Proposition}
\newtheorem{lemma}[theorem]{Lemma}
\newtheorem{corollary}[theorem]{Corollary}

\theoremstyle{definition}

\theoremstyle{remark}
\newtheorem{remark}{Remark}[section]

\newcommand{\R}{\mathbb R}
\newcommand{\C}{\mathbb C}
\newcommand{\N}{\mathbb N}
\newcommand{\Z}{\mathbb Z}
\newcommand{\RN}{\mathrm{RN}}
\newcommand{\ii}{\mathrm i}
\newcommand{\ee}{\mathrm e}
\newcommand{\dd}{\mathrm d}
\newcommand{\disc}{\operatorname{disc}}
\newcommand{\Res}{\operatorname{Res}}
\newcommand{\diag}{\operatorname{diag}}

\newcommand{\bc}{\mathrm{bc}}
\newcommand{\qb}{\mathrm{qb}}
\newcommand{\fast}{\mathrm{fast}}
\newcommand{\hor}{\mathrm{hor}}
\newcommand{\out}{\mathrm{out}}
\newcommand{\cG}{\mathcal G}
\newcommand{\cR}{\mathcal R}
\newcommand{\cW}{\mathcal W}
\newcommand{\cA}{\mathcal A}
\newcommand{\cB}{\mathcal B}
\newcommand{\cC}{\mathcal C}
\newcommand{\cS}{\mathcal S}
\newcommand{\cM}{\mathcal M}
\newcommand{\cE}{\mathcal E}
\newcommand{\la}{\lambda}
\newcommand{\om}{\omega}
\newcommand{\vp}{\varpi}
\newcommand{\kap}{\kappa}
\newcommand{\eps}{\varepsilon}
\newcommand{\angles}[1]{\left\langle #1 \right\rangle}
\newcommand{\abs}[1]{\left|#1\right|}
\newcommand{\norm}[1]{\left\|#1\right\|}
\newcommand{\Id}{\mathrm{Id}}
\newcommand{\chg}[1]{#1}

\newcommand{\appheading}[1]{\par\medskip\noindent\textbf{#1}\par\smallskip\noindent}
\newcommand{\applabelheading}[2]{\par\medskip\phantomsection\label{#2}\noindent\textbf{#1}\par\smallskip\noindent}

\title{\textbf{\chg{Threshold asymptotics and decay for massive Maxwell on subextremal Reissner--Nordstr\"om}}}
\author{Bobby Eka Gunara\\[1ex]
\small Theoretical Physics Laboratory,\\
\small Theoretical High Energy Physics Research Division,\\
\small Faculty of Mathematics and Natural Sciences,\\
\small Institut Teknologi Bandung,\\
\small Jl. Ganesha no. 10 Bandung, Indonesia, 40132\\[0.5ex]
\small\texttt{Email: bobby@itb.ac.id}}
\date{}

\begin{document}
\maketitle

\begin{abstract}

We study the neutral massive Maxwell (Proca) equation on subextremal Reissner--Nordstr\"om exteriors.  After spherical-harmonic decomposition, the odd sector is scalar, while the even sector remains a genuinely coupled $2\times2$ system.  Our starting point is that this even system admits an exact asymptotic polarization splitting at spatial infinity.  The three resulting channels carry effective angular momenta $\ell-1$, $\ell$, and $\ell+1$, and these are precisely the indices that govern the late-time thresholds.
For each fixed angular momentum we develop a threshold spectral theory for the cut-off resolvent.  We prove meromorphic continuation across the massive branch cut, rule out upper-half-plane modes and threshold resonances, and obtain explicit small- and large-Coulomb expansions for the branch-cut jump.  Inverting this jump yields polarization-resolved intermediate tails together with the universal very-late $t^{-5/6}$ branch-cut law.
At the full-field level, high-order angular regularity allows us to sum the modewise leading terms on compact radial sets and obtain a two-regime asymptotic expansion for the radiative branch-cut component of the Proca field, with explicit coefficient fields and quantitative remainders.  We also analyze the quasibound resonance branches created by stable timelike trapping, prove residue and reconstruction bounds, and derive a fully self-contained dyadic packet estimate.  As a result, the unsplit full Proca field obeys logarithmic compact-region decay, while the radiative branch-cut contribution retains explicit polynomial asymptotics and explicit leading coefficients.

\end{abstract}

\tableofcontents

\section{Introduction and main results}

\subsection{Background and scope}

Massive fields on black-hole exteriors behave differently from massless ones in two coupled ways.  The mass creates branch points at $\om=\pm\mu$, so late times are governed by oscillatory threshold tails rather than the familiar massless picture.  At the same time, it produces stable timelike trapping and therefore a discrete family of exponentially long-lived quasibound states.  For scalar fields on subextremal Reissner--Nordstr\"om, this combined picture is now understood both mode by mode and after summation over angular momenta \cite{PasqualottoShlapentokhRothmanVanDeMoortel2024,ShlapentokhRothmanVanDeMoortel2026}.  The Proca equation is the next natural model, but it is also the first genuinely vectorial one: after spherical-harmonic reduction, the odd sector is scalar whereas the even sector remains a coupled $2\times2$ system.

Our goal is to give a rigorous late-time theory for the neutral Proca equation on subextremal Reissner--Nordstr\"om.  We work on a fixed exterior
\begin{equation}\label{eq:intro_subextremal}
   M>0,
   \qquad
   0\le \abs{Q}<M,
\end{equation}
with event and Cauchy radii
\begin{equation}\label{eq:intro_rpm}
   r_\pm=M\pm\sqrt{M^2-Q^2}.
\end{equation}
Unless stated otherwise we restrict to angular momenta $\ell\ge1$, so that the odd channel and both even polarizations are present.  All pointwise statements are uniform on compact radial sets $K\Subset(r_+,\infty)$ and are formulated either for a fixed mode or after summing over all angular momenta.  The exceptional monopole $\ell=0$ supports only the even electric channel and is treated separately in Appendix~\ref{app:small_mass}.  We exclude the extremal case $\abs{Q}=M$, where the red-shift degenerates, the near-horizon $\mathrm{AdS}_2\times S^2$ throat changes the threshold structure, and Aretakis-type horizon phenomena enter \cite{LuciettiMurataReallTanahashi2013}.

What makes Proca harder than the scalar problem is not a single complication but a combination of three.  First, the mass removes gauge freedom, so the field itself is the natural unknown.  Second, the even sector is genuinely matrix valued at threshold.  Third, the same polarization structure must remain compatible with the large-angular-momentum semiclassical analysis needed to control the quasibound family.  A full decay theory must therefore connect fixed-mode threshold analysis, trapping-driven resonance theory, and unsplit full-field reconstruction in one coherent framework.

This is exactly what we do on subextremal Reissner--Nordstr\"om.  For each fixed angular momentum we prove a threshold spectral theorem for the odd and even sectors, derive explicit polarization-resolved branch-cut asymptotics, sum those leading terms into full-field radiative profiles with quantitative remainders, construct the quasibound poles generated by stable timelike trapping, and finally combine the continuous and discrete spectral contributions to obtain a self-contained compact-region decay theorem for the full Proca field.

\subsection{Relation to the literature and the main innovation}

Earlier work already points toward the picture one should expect.  For Schwarzschild, Rosa and Dolan wrote the odd/even reduction for Proca explicitly \cite{RosaDolan2012}, while Konoplya, Zhidenko, and Molina predicted polarization-dependent intermediate tails and the universal very-late $t^{-5/6}$ law by formal and numerical arguments \cite{KonoplyaZhidenkoMolina2007}.  The static Reissner--Nordstr\"om problem studied here is also the charged zero-rotation limit of the separated Kerr--Newman Proca system of \cite{CayusoDiasGrayKubiznakMargalitSantosSouzaThiele2020}.  On the scalar side, Pasqualotto, Shlapentokh-Rothman, and Van de Moortel established exact fixed-mode massive tails for stationary spherically symmetric black holes \cite{PasqualottoShlapentokhRothmanVanDeMoortel2024}, and Shlapentokh-Rothman and Van de Moortel later proved compact-region decay on subextremal Reissner--Nordstr\"om after summing over angular momenta \cite{ShlapentokhRothmanVanDeMoortel2026}.

What was still missing was a single rigorous framework that could simultaneously handle the massive branch cut, the vectorial polarization structure, the quasibound resonance family created by stable timelike trapping, and the unsplit full-field problem for Proca on a charged black-hole background.  To the best of our knowledge, the present paper is the first to provide such a framework.

The key new point is structural.  The even Proca sector admits an exact asymptotic polarization splitting at spatial infinity: once one passes from the standard pair $(u_2,u_3)$ to the constant combinations \eqref{eq:intro_even_pol_basis}, the leading inverse-square part diagonalizes into the three effective angular momenta
\[
   L_{-1}=\ell-1,
   \qquad
   L_0=\ell,
   \qquad
   L_{+1}=\ell+1.
\]
This is the step that makes the problem manageable.  It identifies the correct threshold indices, shows that the charge-dependent coupling is shorter range than the leading asymptotic dynamics, and provides exactly the variables in which the large-$\ell$ semiclassical analysis can be closed.

Seen from that perspective, the contribution of the paper is not just a collection of estimates.  It is a single mechanism that links threshold analysis, resonance theory, and full-field reconstruction.  Concretely, the main results may be summarized as follows:
\begin{enumerate}[label=\textnormal{(\roman*)}]
   \item We establish a matrix-valued fixed-mode threshold theory for neutral Proca on subextremal Reissner--Nordstr\"om, including meromorphic continuation of the cut-off resolvent across the massive branch cut, exclusion of upper-half-plane modes and threshold resonances, and explicit small- and large-Coulomb threshold expansions.
   \item We derive explicit polarization-resolved late-time asymptotics for the branch-cut contribution, including the intermediate tails and the universal very-late $t^{-5/6}$ law.
   \item We sum the modewise leading branch-cut amplitudes to obtain a two-regime asymptotic expansion for the radiative part of the full Proca field on compact radial sets, with explicit leading coefficient fields and quantitative remainders in both the intermediate and very-late regimes.
   \item We construct the quasibound resonance branches generated by stable timelike trapping, together with uniform residue and reconstruction bounds strong enough for angular summation.
   \item We combine the continuous and discrete spectral pieces to prove a fully self-contained compact-region decay theorem for the unsplit Proca field.  The branch-cut part decays polynomially and admits explicit asymptotic profiles, while the discrete quasibound contribution is controlled by a logarithmic packet summation argument based only on tunnelling widths and high-order angular regularity.
\end{enumerate}

At the global summation stage the present version uses no imported packet theorem.  Instead we work only with ingredients established in the paper itself: two-sided tunnelling-width bounds for the quasibound poles and polynomial residue/reconstruction estimates strong enough to convert angular regularity of the data into arbitrary negative powers of the packet scale.  Summing the resulting packet bounds yields logarithmic decay for the quasibound contribution.  If one later reinstates the arithmetic packet theorem of \cite{ShlapentokhRothmanVanDeMoortel2026}, then the sharper polynomial rates from the earlier draft reappear as an optional upgrade.

\subsection{Notation and conventions}
For the reader's convenience we collect here all recurring notation used throughout the paper.  Auxiliary symbols that appear only inside one proof (for example a temporary cutoff, a local coefficient matrix, or a short-lived current) are reintroduced at first appearance.

\medskip\noindent\textbf{Background parameters and geometry.}\par\smallskip\noindent
The parameters $M>0$, $Q\in\R$, and $\mu>0$ denote respectively the black-hole mass, the electric charge of the background, and the Proca mass.  Throughout the paper we assume the subextremal condition $0\le \abs{Q}<M$.  The horizon radii are
\[
   r_\pm=M\pm\sqrt{M^2-Q^2},
\]
and the static coefficient is
\[
   f(r)=1-\frac{2M}{r}+\frac{Q^2}{r^2}=\frac{(r-r_+)(r-r_-)}{r^2}.
\]
The exterior region is $\{r>r_+\}$.  The tortoise coordinate $r_*$ is defined by $\dd r_*/\dd r=f(r)^{-1}$, so $r_*\to-\infty$ at the future event horizon and $r_*\to+\infty$ at spatial infinity.  The surface gravity of the event horizon is
\[
   \kappa_+:=\frac{r_+-r_-}{2r_+^2}>0.
\]
Whenever $K\Subset(r_+,\infty)$ appears, it denotes a fixed compact radial set in the exterior.

\medskip\noindent\textbf{Field variables and harmonic labels.}\par\smallskip\noindent
The unknown $A$ is the neutral Proca one-form, $F=\dd A$ is its field strength, and the field equation is
\[
   \nabla^\beta F_{\alpha\beta}+\mu^2A_\alpha=0.
\]
The angular mode labels are $(\ell,m)$ with $\ell\in\N$ and $m=-\ell,\dots,\ell$.  We write
\[
   \lambda_\ell:=\ell(\ell+1),
   \qquad
   \la:=\sqrt{\lambda_\ell}
\]
when it is convenient to use either $\lambda_\ell$ or $\la^2$ in the reduced equations.  The odd radial amplitude is denoted by $u_4$, the even amplitudes by $(u_2,u_3)$, and the exceptional monopole electric mode at $\ell=0$ by $u_0$.  Scalar spherical harmonics are written $Y_{\ell m}$, while $Y^{(P)}_{\ell m}$ denotes the polarization-adapted vector harmonics used in the full-field reconstruction.  When a point of the unit sphere is needed in pointwise estimates we write $\vartheta\in S^2$; in a few formulas the same role is played by an unadorned $\omega$, which should then be read as an angular variable rather than as a spectral frequency.

\medskip\noindent\textbf{Polarization variables.}\par\smallskip\noindent
We set $v_0:=u_4$ for the odd channel and define the even polarization basis by
\[
   v_{-1}:=\frac{\ell u_2+u_3}{2\ell+1},
   \qquad
   v_{+1}:=\frac{(\ell+1)u_2-u_3}{2\ell+1}.
\]
The polarization index is $P\in\{-1,0,+1\}$, so that $v_P$ means $v_{-1}$, $v_0$, or $v_{+1}$ according to the value of $P$.  The corresponding effective angular momenta are
\[
   L_{-1}=\ell-1,
   \qquad
   L_0=\ell,
   \qquad
   L_{+1}=\ell+1,
\]
and we often write this compactly as $L_P=\ell+P$.  The constant matrix $T_\ell$ is the even-sector change of basis from $(u_2,u_3)$ to $(v_{-1},v_{+1})$.  The diagonal matrix
\[
   D_\ell=\diag\bigl(L_{-1}(L_{-1}+1),L_{+1}(L_{+1}+1)\bigr)
\]
is the leading $r^{-2}$ even potential in the polarization basis, while $C_\ell$ is the shorter-range even coupling matrix entering at orders $r^{-3}$ and $r^{-4}$.  The finite-order differential operator $\mathcal B_\ell$ denotes the reconstruction map from the scalarized channel variables back to the physical Proca coefficients.

\medskip\noindent\textbf{Operators, energy spaces, and cutoffs.}\par\smallskip\noindent
For each fixed $\ell\ge1$, the reduced odd and even problems define the channel Hamiltonians
\[
   H_{\ell}^{\mathrm{odd}}=-\partial_{r_*}^2+V_{\ell}^{\mathrm{odd}},
   \qquad
   H_{\ell}^{\mathrm{ev}}=-\partial_{r_*}^2\Id_2+V_{\ell}^{\mathrm{ev}},
\]
and we write $H_\ell^{\sharp}$ when a statement applies to either channel.  The corresponding $L^2$ spaces are
\[
   \mathfrak h_{\ell}^{\mathrm{odd}}=L^2(\R_{r_*}),
   \qquad
   \mathfrak h_{\ell}^{\mathrm{ev}}=L^2(\R_{r_*};\C^2),
\]
while $\mathfrak h_\ell$ denotes the fixed-mode finite-energy space generically.  The selfadjoint first-order generator of the reduced time evolution is denoted by $\cA_\ell$.  Compactly supported radial cutoffs are written $\chi,\widetilde\chi\in C_0^\infty(r_+,\infty)$.  High-order initial-data norms are denoted by $\mathcal E_N[A[0]]$; these are the Sobolev-type mode energies controlling $\angles{\ell}^{N}$-weighted angular sums.

\medskip\noindent\textbf{Spectral and threshold parameters.}\par\smallskip\noindent
The complex spectral frequency is $\om$.  When the operator-theoretic spectral variable is needed we write
\[
   \lambda:=\om^2.
\]
Near the massive thresholds we use
\[
   \vp:=\sqrt{\mu^2-\om^2},
   \qquad
   \kap:=\frac{M\mu^2}{\vp},
\]
with the branch of $\vp$ chosen by the condition $\Re\vp>0$ on $\{\Im\om>0\}$.  The notation $\disc$ means the jump across the cut $[-\mu,\mu]$, and $\Res$ denotes residue.  The quantity $\nu_{\ell,P}$ is the threshold index determined by the inverse-square coefficient in the far-zone normal form.  The cut-off channel resolvent is written $R_{\ell,P}(\om)$, its Green kernel is $\cG_{\ell,P}(\om;r,r')$, the horizon and infinity basis solutions are marked by the subscripts $\hor$ and $\out$, $\cW_{\ell,P}$ denotes the associated Wronskian, and $\cE_\ell$ is the Evans determinant whose zeros coincide with poles of the meromorphically continued resolvent.

\medskip\noindent\textbf{Threshold amplitudes and time scales.}\par\smallskip\noindent
The functions $a_{\ell,P}(r,r')$ and $b_{\ell,P}^\pm(r,r')$ are the leading amplitudes in the small-$\kap$ and large-$\kap$ expansions of $\disc\,\cG_{\ell,P}$.  After inverse Fourier transform, $A_{\ell,P}(r,r';Q)$ and $B_{\ell,P}(r,r';Q)$ denote the corresponding intermediate- and very-late-time amplitudes, while $\delta_{\ell,P}(Q)$ and $\delta_{\ell,P,0}(Q)$ are the associated oscillatory phases.  The small-mass parameter is
\[
   \eps_{\mu,Q}:=(M\mu)^2+(Q\mu)^2.
\]
The two threshold time scales are
\[
   \kap_*(t)=M\mu^{3/2}t^{1/2},
   \qquad
   \vp_0(t)=\Bigl(\frac{2\pi M\mu^3}{t}\Bigr)^{1/3},
   \qquad
   \kap_0(t)=\frac{M\mu^2}{\vp_0(t)}.
\]

\medskip\noindent\textbf{Time-domain decomposition.}\par\smallskip\noindent
For each fixed $(\ell,m,P)$, $u_{\ell m,P}^{\bc}$ denotes the branch-cut contribution after subtraction of discrete pole residues, $u_{\ell m,P}^{\qb}$ denotes the quasibound contribution, and $u^{\fast}$ denotes the exponentially decaying remainder coming from poles and contour pieces bounded away from the real axis.  At the full-field level we write
\[
   A=A^{\bc}+A^{\qb}+A^{\fast},
\]
and sometimes $u_{\ell m}^{\mathrm{poles}}=u_{\ell m}^{\qb}+u_{\ell m}^{\fast}$.  The modal coefficients in the branch-cut harmonic expansion are written $a_{\ell m,P}^{\bc}$.  The smallest threshold index is
\[
   \nu_*:=\inf_{\ell\ge1,\ P\in\{-1,0,+1\}}\nu_{\ell,P},
\]
and the decay exponents used later are
\[
   \gamma_*:=\min\Bigl\{\nu_*+1,\frac56\Bigr\},
   \qquad
   \gamma_{\mathrm{qb}}:=\frac56-\frac1{23},
   \qquad
   \gamma_{\mathrm{full}}:=\min\{\gamma_*,\gamma_{\mathrm{qb}}\}.
\]

\medskip\noindent\textbf{Semiclassical and quasibound notation.}\par\smallskip\noindent
In the large-angular-momentum analysis we use the semiclassical parameter
\[
   h=(\ell+\tfrac12)^{-1}.
\]
The compact interval $I_{\mathrm{trap}}\Subset(0,\mu)$ is the trapped energy window, and $I\Subset I_{\mathrm{trap}}$ denotes a smaller fixed interval.  The scalarized semiclassical channel operators are written $P_{h,P}(\omega)$, and $U_h$ denotes the analytic diagonalizer for the even $2\times2$ system near the trapped region.  The quasibound poles are $\omega_{\ell,n,P}$, indexed by integers $n\in\mathcal N_{\ell,P}(I)$.  The quantities $\mathscr S_P$, $\mathscr J_P$, and $d_P$ are respectively the well action, the tunnelling action, and the Agmon distance; $\vartheta_P$ is the subprincipal phase correction; $\Gamma_P$ is the prefactor in the width formula; and
\[
   \Pi_{\ell,n,P}:=-\Res_{\omega=\omega_{\ell,n,P}}R_{\ell,P}(\omega)
\]
is the residue projector.  Dyadic packets of poles are denoted by $\mathfrak P_j$.

\medskip\noindent\textbf{Asymptotic notation and local proof symbols.}\par\smallskip\noindent
We write $\angles{x}=(1+x^2)^{1/2}$, in particular $\angles{\ell}=(1+\ell^2)^{1/2}$.  The notation $A\lesssim B$ means $A\le CB$ for a constant depending only on the background parameters, the Proca mass $\mu$, the chosen compact set $K$, and finitely many cutoff norms.  All pointwise estimates are uniform for $r,r'\in K$.  Our inverse Fourier transform uses the phase $\ee^{-\ii\om t}$.  The remaining proof-local symbols, such as the radial current $\mathfrak J$, the threshold cutoffs $\eta_{t,\pm}$, and the bounded coefficient matrices $B_{\ell,j}$ and $E_{\ell,j}$, are defined again at first appearance and are not reused outside their local arguments.

\subsection{Main results}

The results come in two layers.  At fixed angular momentum we develop the threshold spectral theory and read off the late-time tails from the branch cut.  We then lift this modewise information to the full field, where one must control both large angular momenta and the quasibound family generated by stable timelike trapping.

The first structural fact is the asymptotic polarization splitting at spatial infinity.  If $(u_2,u_3)$ denotes the standard even pair in the vector-spherical-harmonic decomposition, then the constant combinations
\begin{equation}\label{eq:intro_even_pol_basis}
   v_{-1}=\frac{\ell u_2+u_3}{2\ell+1},
   \qquad
   v_{+1}=\frac{(\ell+1)u_2-u_3}{2\ell+1}
\end{equation}
diagonalize the leading $r^{-2}$ even matrix exactly.  The three resulting effective angular momenta are
\begin{equation}\label{eq:intro_Lvalues}
   L_{-1}=\ell-1,
   \qquad
   L_0=\ell,
   \qquad
   L_{+1}=\ell+1.
\end{equation}
In this basis the charge $Q$ enters only one order shorter range, at $r^{-4}$, so the leading threshold dynamics separate cleanly into three channels.

The natural late-time object is the branch-cut piece after the discrete poles have been removed:
\[
   u_{\ell m}(t)=u^{\mathrm{poles}}_{\ell m}(t)+u^{\bc}_{\ell m}(t).
\]
This distinction is essential in the massive problem.  The oscillatory tails come from the cut $[-\mu,\mu]$, while quasinormal and quasibound states belong to the meromorphic part.  All explicit tail statements below are therefore formulated for $u^{\bc}_{\ell m,P}$.

We begin with the fixed-mode spectral theorem, which drives every later time-domain statement.

\begin{theorem}[Fixed-mode spectral and threshold theorem]\label{thm:spectral_package}
Fix a subextremal Reissner--Nordstr\"om exterior \eqref{eq:intro_subextremal} and an angular momentum $\ell\ge1$.  For each polarization $P\in\{-1,0,+1\}$ there exists a threshold index
\[
   \nu_{\ell,P}>-\frac12
\]
such that the following hold.
\begin{enumerate}[label=\textnormal{(\roman*)}]
   \item The fixed-mode cut-off resolvent extends meromorphically from $\{\Im\om>0\}$ to a slit strip
   \[
      \{\abs{\Im\om}<\eta\}\setminus[-\mu,\mu]
   \]
   for some $\eta>0$, with at most finitely many poles in compact subsets.
   \item There is no real pole and no threshold resonance at $\om=\pm\mu$.
   \item In the small-Coulomb-parameter regime $\kap=M\mu^2/\vp\ll1$, with $\vp=\sqrt{\mu^2-\om^2}$, the threshold discontinuity obeys
   \begin{equation}\label{eq:intro_small_kappa_general}
      \disc\,\cG_{\ell,P}(\om;r,r')
      =
      a_{\ell,P}(r,r')\,\vp^{2\nu_{\ell,P}}
      +O\!\bigl(\kap\,\vp^{2\nu_{\ell,P}}\bigr)
      +O\!\bigl(\vp^{2\nu_{\ell,P}+2}\bigr)
   \end{equation}
   uniformly for $r,r'$ in compact subsets of $(r_+,\infty)$.
   \item In the large-Coulomb-parameter regime $\kap\gg1$, the threshold discontinuity obeys
   \begin{equation}\label{eq:intro_large_kappa_general}
      \disc\,\cG_{\ell,P}(\om;r,r')
      =
      b^+_{\ell,P}(r,r')\,\ee^{2\pi\ii\kap}
      +b^-_{\ell,P}(r,r')\,\ee^{-2\pi\ii\kap}
      +O(\kap^{-1})+O(\vp)
   \end{equation}
   uniformly for $r,r'$ in compact subsets of $(r_+,\infty)$.
   \item The threshold indices satisfy
   \begin{equation}\label{eq:intro_nu_smallmass}
      \nu_{\ell,P}=L_P+\frac12+O\bigl((M\mu)^2+(Q\mu)^2\bigr)
   \end{equation}
   in the small-mass regime $(M\mu)^2+(Q\mu)^2\ll1$.
\end{enumerate}
\end{theorem}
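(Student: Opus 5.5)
The plan is to reduce every item to one-dimensional Sturm--Liouville analysis of Coulomb (Whittaker) type in the far zone. We glue this to a regular-singular analysis at the horizon, working in the polarization variables $v_P$ of \eqref{eq:intro_even_pol_basis}.

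\textbf{Far-zone normal form.} The first step is a far-zone normal form. For $r_*\to\infty$ we write each channel equation $(-\partial_{r_*}^2+V-\om^2)v=0$ in the variable $r$ and expand the potential:
\[
V_P=\mu^2-\frac{2M\mu^2}{r}+\frac{L_P(L_P+1)+c_P}{r^2}+O(r^{-3}).
\]
Here $c_P=O((M\mu)^2+(Q\mu)^2)$ collects the mass-dependent $r^{-2}$ corrections, including those from the Jacobian of $r_*$. In the even sector the splitting of \eqref{eq:intro_even_pol_basis} makes the $r^{-2}$ matrix diagonal, and the remaining matrix $C_\ell$ is $O(r^{-3})$. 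We then set
\[
\nu_{\ell,P}+\tfrac12=\sqrt{\bigl(L_P+\tfrac12\bigr)^2+c_P}.
\]
This gives (v) immediately. Since $L_{-1}=\ell-1\ge0$, it also gives $\nu_{\ell,P}>-\tfrac12$ once we check that $c_P>-(L_P+\tfrac12)^2$; for $\ell=1$, $P=-1$ this must be verified from the explicit coefficients, and I would do so directly. We then construct the outgoing solution at infinity as a perturbation of the Whittaker function $W_{\kap,\nu+1/2}(2\vp r)$. A Volterra iteration handles the $O(r^{-3})$ remainder, including the matrix coupling, which is short range and is solved as a $2\times2$ Volterra system. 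This solution is analytic in $\vp$ for $\Re\vp>-\delta$ away from $\vp=0$.

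\textbf{Meromorphic continuation and absence of modes.} The horizon solution $e^{-\ii\om r_*}(1+O(e^{2\kappa_+r_*}))$ is entire in $\om$ on the strip $|\Im\om|<\kappa_+$. The resolvent kernel is horizon solution times outgoing solution divided by the Wronskian $\cW$, which becomes the Evans determinant $\cE_\ell$ in the even case. Continuing through $\Re\vp\to$ negative values gives (i), with poles exactly at the zeros of $\cE_\ell$; these are isolated because $\cE_\ell$ is analytic and not identically zero. For (ii), modes with $\Im\om>0$ or real $\om\ne\pm\mu$ are excluded by the conserved flux current: pairing the equation with $\bar v$ and taking imaginary parts gives horizon flux $=0$. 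For the even sector this needs $V^{\mathrm{ev}}$ to be symmetric in a fixed basis, or the matrix $T_\ell$ to carry a symmetrizing inner product, and I would verify this from the reduced system. Threshold resonances at $\om=\pm\mu$ are excluded by writing the zero-energy solution at infinity as a Bessel function of $\sqrt{8M\mu^2 r}$. The bounded solution decays like $r^{1/4}e^{-\sqrt{8M\mu^2r}}$, so it lies in $L^2$; an integration by parts against energy positivity then forbids it from matching the horizon solution.

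\textbf{Jump expansions.} The discontinuity $\disc\,\cG$ equals the horizon solutions times the jump of the outgoing normalized solution divided by $|\cW|^2$. Using the connection formula $W_{\kap,m}=\Gamma$-ratio combinations of $M_{\kap,\pm m}$, the jump is proportional to the regular solution $M_{\kap,\nu+1/2}$ with the coefficient
\[
\frac{\Gamma(\nu+1-\kap)\,(2\vp)^{2\nu+1}}{\Gamma(2\nu+2)\,\Gamma(-2\nu-1)}.
\]
For $\kap\ll1$ a Taylor expansion in $\kap$ and $\vp$ gives (iii), with the amplitude $a_{\ell,P}$ equal to the product of zero-energy regular solutions. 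For $\kap\gg1$ we use Stirling's formula and the reflection formula, so that $\Gamma(\nu+1-\kap)\Gamma(\nu+1+\kap)$ produces $1/\sin\pi(\kap-\nu)$. Combined with the Bessel-type uniform asymptotics of $M_{\kap,m}(2\vp r)\approx \kap^{-m}J_{2m}(\sqrt{8\kap\vp r})$, with $\kap\vp=M\mu^2$ fixed, this produces the $e^{\pm2\pi\ii\kap}$ terms with errors $O(\kap^{-1})+O(\vp)$, which is (iv).

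\textbf{Main obstacle.} The hardest step is making the even-sector coupling compatible with these scalar formulas. The $O(r^{-3})$ matrix $C_\ell$ mixes channels with different indices $\nu_{\ell,\pm1}$. The perturbation must therefore be controlled uniformly down to $\vp=0$ and uniformly in $\kap\to\infty$. I would first try to control it with Volterra bounds weighted by the zero-energy Bessel solutions, where $r^{-3}$ decay beats the $r^{-2}$ difference in growth rates. I expect the hierarchy of growths between the two polarizations to require an upper-triangular (Levinson-type) iteration rather than a naive one.
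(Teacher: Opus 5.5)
Your overall architecture matches the paper's Steps 1--9: far-zone Coulomb normal form, Whittaker--Volterra Jost solution, horizon Frobenius solution, Evans determinant, and Bessel/Whittaker connection formulas for the jump. A Levinson-type triangular iteration is a sensible way to make the even-sector diagonalization honest. The genuine gap is in item (ii).

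\textbf{Threshold and $\om=0$.} At $\om=\pm\mu$ you have the Coulomb sign wrong. By your own expansion, $V_P-\mu^2=-2M\mu^2/r+O(r^{-2})$, so at $\vp=0$ the far zone is classically allowed. The threshold solutions are $\sqrt r\,J_{2m}(\sqrt{8M\mu^2r})$ and $\sqrt r\,Y_{2m}(\sqrt{8M\mu^2r})$, with $m$ the Whittaker index. These oscillate with amplitude $\sim r^{1/4}$; they are the same $J_{2m}$ you use yourself in the large-$\kap$ step. So no threshold solution lies in $L^2$.

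Energy positivity is unavailable in any case. Since $V_P-\mu^2\to-\mu^2$ at the horizon, $\mu^2$ lies inside the continuous spectrum $[0,\infty)$, and $H_\ell-\mu^2$ has no sign. The paper instead runs the current argument at threshold too. Because $\om=\pm\mu\neq0$, the horizon value of the current is $-\om\,q_\ell(a)\neq0$ for $a\neq0$. A threshold-subordinate solution, by contrast, is real up to a constant factor and carries zero current at infinity.

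Conversely, your current argument says nothing at $\om=0\in[-\mu,\mu]$, where the horizon flux vanishes identically. That is exactly where positivity is the right tool. In the odd channel, integrate $u_4''=f(\mu^2+\ell(\ell+1)r^{-2})u_4$ against $\overline{u_4}$. For the even sector the paper invokes the static Proca no-hair theorem. In effect you have applied the two tools at swapped frequencies.

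\textbf{The even-sector current.} The symmetry check you defer does not come out the way you need. In the polarization basis, $D_\ell$ has distinct diagonal entries and $C_\ell$ has off-diagonal entries $-\ell(\ell+1)$ and $+\ell(\ell+1)$. Hence, up to scale, the only constant matrix $S$ with $SV^{\mathrm{ev}}$ symmetric for all $r$ is $S=\diag(-1,1)$. The conserved Wronskian current is then $\Im(\overline{v_{+1}}v_{+1}'-\overline{v_{-1}}v_{-1}')$, with horizon value $-\om(\abs{a_{+1}}^2-\abs{a_{-1}}^2)$. This is indefinite and cannot force $a=0$.

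What you need is the separated $\partial_t$-energy flux of the full Proca field, whose horizon form $q_\ell$ is positive definite (Lemma~\ref{lem:radial_current}). The paper uses it for every real $\om\neq0$ in $[-\mu,\mu]$, including $\pm\mu$.

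\textbf{Normalization of $\nu$.} With $\nu_{\ell,P}+\frac12=\sqrt{(L_P+\frac12)^2+c_P}$ you get $\nu_{\ell,P}=L_P+O(\cdot)$ and a jump $\propto\vp^{2\nu_{\ell,P}+1}$. So (iii) and (v) come out shifted by $\frac12$; they do not follow ``immediately''. Take $\nu_{\ell,P}$ to be the Whittaker index $\sqrt{(L_P+\frac12)^2+c_P}$ itself, as in \eqref{eq:threshold_normal_form}. Then $\nu_{\ell,P}>-\frac12$ reduces to the radicand check you already planned for $\ell=1$, $P=-1$.
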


The time-domain tail theorem is then obtained by oscillatory inversion of the branch-cut jump.

\begin{theorem}[Modewise branch-cut tails on subextremal Reissner--Nordstr\"om]\label{thm:main}
Fix $\ell\ge1$ and let $u^{\bc}_{\ell m,P}(t,r,r')$ be the branch-cut contribution for one fixed $(\ell,m)$ and one polarization $P\in\{-1,0,+1\}$.

\smallskip
\noindent\textbf{(a) Intermediate regime.}
If
\begin{equation}\label{eq:intro_kappastar}
   \kap_*(t):=M\mu^{3/2}t^{1/2}\to0,
\end{equation}
then
\begin{equation}\label{eq:intro_intermediate_general}
   u^{\bc}_{\ell m,P}(t,r,r')
   =
   A_{\ell,P}(r,r';Q)\,t^{-(\nu_{\ell,P}+1)}\sin(\mu t+\delta_{\ell,P}(Q))
   +O\!\bigl(\kap_*(t)t^{-(\nu_{\ell,P}+1)}\bigr)
   +O\!\bigl(t^{-(\nu_{\ell,P}+2)}\bigr).
\end{equation}

\smallskip
\noindent\textbf{(b) Small-mass explicit exponents.}
If in addition
\[
   \eps_{\mu,Q}:=(M\mu)^2+(Q\mu)^2\ll1,
\]
then the threshold indices obey \eqref{eq:intro_nu_smallmass}; hence the leading intermediate decay exponents reduce to
\[
   \ell+\frac12,\qquad
   \ell+\frac32,\qquad
   \ell+\frac52
\]
for $P=-1,0,+1$, respectively.

\smallskip
\noindent\textbf{(c) Very-late regime.}
Let
\begin{equation}\label{eq:intro_vp0}
   \vp_0(t):=\Bigl(\frac{2\pi M\mu^3}{t}\Bigr)^{1/3},
   \qquad
   \kap_0(t):=\frac{M\mu^2}{\vp_0(t)}.
\end{equation}
If $\kap_0(t)\to\infty$, then
\begin{align}\label{eq:intro_late_general}
   u^{\bc}_{\ell m,P}(t,r,r')
   &=
   B_{\ell,P}(r,r';Q)\,t^{-5/6}
   \sin\!\Bigl(
      \mu t
      -\frac32(2\pi M\mu)^{2/3}(\mu t)^{1/3}
      \notag\\
   &\qquad\qquad
      +\delta_{\ell,P,0}(Q)
      +O(\kap_0(t)^{-1}+\vp_0(t))
   \Bigr)
   \notag\\
   &\qquad
   +O\!\bigl(\kap_0(t)^{-1}+\vp_0(t)\bigr)\,t^{-5/6}.
\end{align}
The exponent $5/6$ is independent of $\ell$, of the polarization, and of $Q$.
\end{theorem}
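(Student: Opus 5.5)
The plan is to write the branch-cut contribution as an oscillatory integral of the jump and then substitute the two expansions of Theorem~\ref{thm:spectral_package}. Concretely, for fixed $(\ell,m,P)$,
\[
   u^{\bc}_{\ell m,P}(t,r,r')=\frac{1}{2\pi}\int_{-\mu}^{\mu}\ee^{-\ii\om t}\,\disc\,\cG_{\ell,P}(\om;r,r')\,\dd\om ,
\]
paired against the cut-off data, which is a bounded operation on compact radial sets. By the symmetry $\om\mapsto-\bar\om$ it suffices to treat a neighbourhood of $\om=\mu$ and take real parts; this is what produces the $\sin(\mu t+\cdot)$ structure. I introduce smooth cutoffs $\eta_{t,\pm}$ localizing to $\abs{\om-\mu}\lesssim\vp_c^2$ for a $t$-dependent scale $\vp_c$. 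Away from the thresholds the jump is smooth in $\om$, by the meromorphic continuation in part (i) and the absence of real poles in part (ii). Repeated integration by parts then makes that piece $O(t^{-N})$, which is absorbed into the remainders. Near $\om=\mu$ I change variables to $\vp$, so that $\om=\sqrt{\mu^2-\vp^2}$, $\dd\om=-\vp\,\dd\vp/\om$ and $\ee^{-\ii\om t}=\ee^{-\ii\mu t}\ee^{\ii\vp^2t/(2\mu)+O(\vp^4t)}$.

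\textbf{Intermediate regime (a).} The relevant frequencies are $\vp\sim t^{-1/2}$, which gives $\kap\sim M\mu^2t^{1/2}=\kap_*(t)\to0$. So the small-$\kap$ expansion \eqref{eq:intro_small_kappa_general} applies on the dominant region. Inserting $a_{\ell,P}\vp^{2\nu}$ yields the model integral
\[
   \int_0^\infty\vp^{2\nu+1}\ee^{\ii\vp^2t/2\mu}\,\dd\vp=\tfrac12\,\Gamma(\nu+1)\,(2\mu/t)^{\nu+1}\ee^{\ii\pi(\nu+1)/2},
\]
evaluated along a rotated ray. This gives the leading term $t^{-(\nu+1)}$, and the phase $\delta_{\ell,P}$ is read off from $\arg a_{\ell,P}$ together with $\pi(\nu+1)/2$. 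The two error terms transfer directly. The $O(\kap\vp^{2\nu})$ term becomes $O(\kap_*t^{-(\nu+1)})$ after scaling $\vp=t^{-1/2}s$. The $O(\vp^{2\nu+2})$ term becomes $O(t^{-(\nu+2)})$. The tail region $\vp\gg t^{-1/2}$ needs care because there $\kap$ is not small. I would handle it by integrating by parts in $\vp^2$ with the phase $\ee^{\ii\vp^2t/2\mu}$, which requires symbol-type derivative bounds $\partial_\vp^k$ on the remainders; these should come from the same Coulomb/Whittaker representation that proves (iii). The condition $\nu>-1/2$ guarantees integrability at $\vp=0$.

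\textbf{Small-mass exponents (b).} This is immediate from \eqref{eq:intro_nu_smallmass}, since $\nu_{\ell,P}+1=\ell+P+\tfrac32$.

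\textbf{Very-late regime (c).} Here I use \eqref{eq:intro_large_kappa_general}. The term $b^{+}\ee^{2\pi\ii\kap}$ combines with the time phase to give
\[
   \Phi(\vp)=\frac{\vp^2t}{2\mu}-\frac{2\pi M\mu^2}{\vp}.
\]
(The sign is chosen so that a critical point exists.) Its stationary point satisfies $\vp t/\mu=-2\pi M\mu^2/\vp^2$, that is $\vp^3=2\pi M\mu^3/t$ up to orientation, which is exactly $\vp_0(t)$. The critical value is $\Phi(\vp_0)=-\tfrac32(2\pi M\mu)^{2/3}(\mu t)^{1/3}$. Stationary phase then gives amplitude $\vp_0\,\abs{\Phi''(\vp_0)}^{-1/2}\sim t^{-1/3}\cdot t^{-1/2}=t^{-5/6}$, independent of $\ell$, $P$ and $Q$; this is the universal law. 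The other exponential $b^-$ has no critical point, and neither do the $O(\kap^{-1})$ and $O(\vp)$ remainders once they are written with their oscillatory factor. Integration by parts shows these contribute $O(\kap_0^{-1}+\vp_0)\,t^{-5/6}$ in amplitude and the stated perturbation in phase.

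\textbf{Main obstacle.} I expect the hardest step to be justifying stationary phase with remainders that are only stated as $O(\kap^{-1})+O(\vp)$ pointwise. What is needed is that the expansion holds as a symbol: each $\vp$-derivative costs a factor $\vp^{-1}$ or $\kap/\vp$, consistent with the phase. My first approach would be to revisit the proof of (iv) through uniform WKB/Airy-type asymptotics of the Whittaker functions $W_{\kap,\nu}(2\vp r)$ in the large-$\kap$ regime, extracting derivative bounds. I would also use a partition interpolating between the regions $\kap\ll1$ and $\kap\gg1$ so that the two expansions overlap without gaps.
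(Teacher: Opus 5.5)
Your route is the paper's. Theorem~\ref{thm:main} is assembled there from Theorem~\ref{thm:intermediate}, Corollary~\ref{cor:small_mass_explicit} and Theorem~\ref{thm:late}. Those results insert the small- and large-$\kap$ jump expansions into the cut integral, pass to $\vp$ at each endpoint, and discard the central part of the cut by integration by parts. They then use the gamma-function endpoint integral (Lemma~\ref{lem:endpoint_integral} with $\alpha=2\nu_{\ell,P}+1$, i.e.\ your formula) and stationary phase (Proposition~\ref{prop:saddle_estimate}). Your treatment of (a) and (b) matches this, up to the remark in the last paragraph.

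Part (c), as written, has a sign error that breaks the stationary-phase step.
\begin{itemize}
\item Since $\ee^{-\ii\om t}=\ee^{-\ii\mu t}\ee^{\ii\vp^2t/(2\mu)}$ and $\kap=M\mu^2/\vp$, the term $b^+\ee^{2\pi\ii\kap}$ carries the phase $\Psi_t(\vp)=t\vp^2/(2\mu)+2\pi M\mu^2/\vp$, with a plus sign.
\item Your $\Phi$ is in fact the phase of the $b^-$ term. It satisfies $\Phi'(\vp)=t\vp/\mu+2\pi M\mu^2/\vp^2>0$, so your critical-point equation has no positive root, and ``up to orientation'' does not repair this. Also $\Phi(\vp_0)=-\tfrac12(2\pi M\mu)^{2/3}(\mu t)^{1/3}$, not $-\tfrac32(\cdots)$.
\item With the correct phase, $\Psi_t'(\vp_0(t))=0$, $\Psi_t''(\vp_0)=3t/\mu$ and $\Psi_t(\vp_0)=+\tfrac32(2\pi M\mu)^{2/3}(\mu t)^{1/3}$. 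Combined with $\ee^{-\ii\mu t}$, this gives the phase $\mu t-\tfrac32(2\pi M\mu)^{2/3}(\mu t)^{1/3}$ and the amplitude $\beta\vp_0\sqrt{2\pi\mu/(3t)}\sim t^{-5/6}$ of the statement.
\item It is the $b^-$ piece that is non-stationary. Once you swap the roles of $b^\pm$, your argument coincides with the paper's.
\end{itemize}

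Two smaller points.

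First, in (a) you place the difficulty in the wrong region. The small-$\kap$ expansion fails at $\vp\lesssim M\mu^2$, which for $\kap_*(t)\to0$ lies deep inside $\vp\ll(\mu/t)^{1/2}$. On the tail $\vp\gg(\mu/t)^{1/2}$ one has $\kap\ll\kap_*(t)$, so integration by parts there is needed only for the truncation at $\eps$. It is the near-threshold piece that must be bounded separately, using the large-$\kap$ information.

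Second, the obstacle you single out is genuine, and the paper does not resolve it either. Pointwise $O(\cdot)$ remainders cannot be integrated against $\ee^{\ii\vp^2t/(2\mu)}$ or $\ee^{\ii\Psi_t}$ to produce the stated errors. The paper's proofs simply treat the remainders as having the model forms $b_1\kap+b_2\vp^2$ and $\beta\vp+O(\vp^2)$, multiplied by the same oscillatory factor. See the bookkeeping lemmas in Appendix~\ref{app:bookkeeping} and the assumed symbol bound \eqref{eq:appM_amp_assumption}. Your plan to extract symbol-type derivative bounds from uniform Whittaker asymptotics is exactly the input both arguments would need.
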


\begin{remark}\label{rem:scope_intro}
Theorem~\ref{thm:main} is deliberately stated for the branch-cut contribution after subtraction of discrete pole residues.  This is the correct fixed-mode formulation in the massive problem, because long-lived quasibound poles may dominate the full signal before the cut contribution becomes visible.
\end{remark}

\medskip\noindent\textbf{Large angular momenta and branch-cut full-field decay.}\par\smallskip\noindent
The summed angular-momentum problem is the point of contact with \cite{ShlapentokhRothmanVanDeMoortel2026}.  In the scalar theory, one proves a compact-region polynomial decay theorem for the full field after overcoming the obstruction created by stable timelike trapping.  For Proca, the argument naturally splits into a continuous spectral part and a discrete quasibound part.  The first ingredient is a uniform large-$\ell$ bound for the branch-cut kernels; the second is a semiclassical description and summation of the quasibound resonance family.

\begin{theorem}[Uniform angular summability of branch-cut kernels]\label{thm:uniform_angular}
For every compact $K\Subset(r_+,\infty)$ there exist an integer $N_0\ge0$ and a constant $C_K$ such that, for every admissible $(\ell,m,P)$,
\begin{equation}\label{eq:uniform_angular_kernel_bound}
   \sup_{r,r'\in K}\abs{u_{\ell m,P}^{\bc}(t,r,r')}
   \le
   C_K\angles{\ell}^{N_0}
   \begin{cases}
      t^{-(\nu_{\ell,P}+1)}, & \kap_*(t)\le1,\\[0.5ex]
      t^{-5/6}, & \kap_0(t)\ge1,
   \end{cases}
\end{equation}
with the same intermediate and very-late remainder structure established in Section~\ref{sec:threshold_to_time}, uniformly in $(\ell,m,P)$.  Moreover,
\begin{equation}\label{eq:uniform_angular_amplitudes}
   \sup_{r,r'\in K}
   \Bigl(
      \abs{A_{\ell,P}(r,r';Q)}
      +
      \abs{B_{\ell,P}(r,r';Q)}
   \Bigr)
   \le C_K\angles{\ell}^{N_0}.
\end{equation}
\end{theorem}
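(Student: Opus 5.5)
The plan is to revisit the proofs of Theorem~\ref{thm:spectral_package} and Theorem~\ref{thm:main}, and to make every constant explicit in $\ell$. The fixed-mode arguments only produce constants depending on $\ell$ in some unspecified way. What I want is polynomial dependence, in $\angles{\ell}^{N_0}$, for three objects: the Jost-type basis solutions $\psi_{\hor},\psi_{\out}$, the Wronskian lower bounds, and the threshold amplitudes $a_{\ell,P}$ and $b^\pm_{\ell,P}$.

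\textbf{Step 1: the far zone.} Work in the polarization basis. There the leading potential is $L_P(L_P+1)r^{-2}$, and the coupling $C_\ell$ is shorter range. Its entries grow at most polynomially in $\ell$, since the even reduction has coefficients polynomial in $\lambda_\ell$. Near infinity I would construct $\psi_{\out}$ by a Volterra iteration around Whittaker functions $W_{\kap,\nu_{\ell,P}}(2\vp r)$. This gives bounds on $K$ with constants $\angles{\ell}^{C}$ for $r\ge R_\ell$. The cleanest choice is a matching radius independent of $\ell$ once one works with rescaled variables. If that fails, I would take $R_\ell\sim\angles{\ell}$ and propagate inward with Gr\"onwall, at a polynomial cost because the potential is $O(\ell^2)$ on a compact region.

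\textbf{Step 2: the horizon zone and the Wronskian.} Near the horizon I would build $\psi_{\hor}$ by the analogous Volterra series with $\ee^{-\ii\om r_*}$ asymptotics. I would then propagate it across $K$ with a Gr\"onwall factor. This factor is $\exp(C\ell)$ rather than polynomial, so I would normalize instead. Both solutions are normalized at a point of $K$, and the Green kernel $\cG=\psi_{\hor}\psi_{\out}^T/\cW$ involves only ratios. The required input is then a \emph{lower} bound $|\cW_{\ell,P}(\om)|\gtrsim \angles{\ell}^{-N}$ for normalized solutions, uniform for $\om$ near $[-\mu,\mu]$ away from pole subtraction.

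To get it, I would combine two facts. First, the absence of real poles and threshold resonances from Theorem~\ref{thm:spectral_package}(ii). Second, the large-$\ell$ fact that for $\om^2\le\mu^2$ the potential barrier dominates. An energy/current identity (the radial current) then gives a quantitative nonvanishing at the polynomial level. The subtracted quasibound residues account for the near-zeros of $\cW$ in the trapped window.

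\textbf{Step 3: the threshold amplitudes.} The jump $\disc\,\cG_{\ell,P}$ is expressed through the connection coefficients of the Whittaker functions across the cut. These coefficients involve $\Gamma(\nu_{\ell,P}+\tfrac12\pm\ii\kap)$ and $\Gamma(2\nu_{\ell,P}+1)$. After the $\vp^{2\nu}$ normalization, $a_{\ell,P}$ contains $\Gamma$-ratios times normalized radial profiles. Here I would use that for $r\in K$ the profiles scale like $(\vp r)^{\nu}$, which cancels the Gamma factors up to $C^\ell$. On a fixed compact set the combination stays bounded by $\angles{\ell}^{N_0}$ after choosing the normalization point in $K$.

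\textbf{Step 4: time-domain bounds.} With polynomial control of $a_{\ell,P}$, $b^\pm_{\ell,P}$ and of the remainders in \eqref{eq:intro_small_kappa_general}--\eqref{eq:intro_large_kappa_general}, I would rerun the oscillatory inversion of Section~\ref{sec:threshold_to_time}. That is a stationary phase at $\vp_0(t)$ in the late regime and a Watson-lemma expansion in the intermediate regime. Both are linear in the symbol bounds, so they yield \eqref{eq:uniform_angular_kernel_bound} and \eqref{eq:uniform_angular_amplitudes}. The region bounded away from the thresholds contributes $O(t^{-\infty})$ times the symbol seminorms, again polynomial.

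\textbf{Main obstacle.} I expect the hardest step to be the uniform lower bound on the normalized Wronskian, together with the cancellation of the exponential-in-$\ell$ Gamma and tunnelling factors. I would first try to write $\disc\,\cG$ via a boundary-pairing (Stone-formula) identity. This expresses it as $\psi\,\overline{\psi}^T$ of generalized eigenfunctions normalized at infinity, and it gives positivity and polynomial bounds without dividing by $\cW$.
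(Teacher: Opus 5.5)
\textbf{Verdict: the proposal has a genuine gap, at exactly the step you flag as the main obstacle.} None of the tools you list closes it.

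\textbf{The Wronskian lower bound is neither proved nor true.} Step~4 needs polynomial-in-$\ell$ symbol bounds for $\disc\,\cG_{\ell,P}$ along the whole cut. Equivalently, it needs $|\cW_{\ell,P}(\om)|\gtrsim\angles{\ell}^{-N}$ for your $K$-normalized solutions.
\begin{itemize}
\item Theorem~\ref{thm:spectral_package}(ii) is a qualitative fixed-$\ell$ statement with no $\ell$-uniform constant.
\item The radial current cannot supply one. Write $u_{\hor}=\alpha u_{\out}+\beta v$ with $u_{\out},v$ real and $W(u_{\out},v)=1$. Conservation gives $\Im(\overline{\alpha}\beta)=-\om|a|^2$, with $a$ the horizon amplitude, hence only $|\cW|=|\beta|\ge|\om|\,|a|^2/|\alpha|$. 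With your normalization on $K$, $|a|\lesssim\ee^{-c\ell}$, because $u_{\hor}$ tunnels through the barrier between $r_+$ and $K$. The bound is also void at $\om=0$.
\item In fact the bound you want is false near the quasibound family. For real $\om$ within $|\Im\omega_{\ell,n,P}|\sim\ee^{-2\mathscr J_P/h}$ of $\Re\omega_{\ell,n,P}$, $\disc\,\cG_{\ell,P}=2\ii\Im\cG_{\ell,P}(\om+\ii0)$ is dominated by the pole term. On $K\times K$ this term has size $|\Pi_{\ell,n,P}(r,r')|/|\Im\omega_{\ell,n,P}|$. That is exponentially large in $\ell$ whenever $K$ meets the well or lies in the barrier between horizon and well. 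In the latter case the Agmon decay of the mode cancels only part of $\ee^{2\mathscr J_P/h}$.
\item Subtracting residues in the time domain does not change this integrand on $[-\mu,\mu]$. So ``the subtracted residues account for the near-zeros'' is not an argument.
\item The Stone-formula fallback has the same defect. For $|\om|<\mu$ the only open channel is the horizon, so generalized eigenfunctions must be normalized there, which again means dividing by $\cW$.
\end{itemize}

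\textbf{The same obstruction appears in your Step~3.} On the cut $\kap$ is real, not $\pm\ii\kap$. The coefficient of $M_{\kap,-\nu}$ in $W_{\kap,\nu}$ is $\Gamma(2\nu)/\Gamma(\tfrac12+\nu-\kap)$, which vanishes at the Coulomb bound states $\kap=\nu+\tfrac12+n$. With horizon absorption these become the near-zeros of $\cW_{\ell,P}$ accumulating at $\pm\mu$. For large $\ell$ they sit at $\kap\gtrsim\ell$, which lies inside the very-late window once $\kap_0(t)\gtrsim\ell$. There, Stirling in vertical strips gives nothing.

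\textbf{What is missing} is an $\ell$-uniform way to remove or estimate these near-resonant frequency windows. One option is a contour kept quantitatively away from the poles; another is a direct time-domain bound for the windows.

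\textbf{Comparison with the paper.} The paper's main argument uses no Wronskian lower bound. Its Step~2 invokes compact semiclassical ellipticity ($f/r^2\ge c_K$ on $\operatorname{supp}\widetilde\chi$) to write $\chi R_{\ell,P}(\om\pm\ii0)\chi$ as a $\Psi_h^{-2}$ parametrix plus $O(h^N)$. It then matches at a fixed radius $R>\sup K$, using large-order bounds for $I_\nu,K_\nu$ and Stirling for the Whittaker coefficients. However, a parametrix on $\operatorname{supp}\widetilde\chi$ only gives $\chi R\chi=\chi Q\chi$ plus $O(h^N)$ times a cut-off resolvent on a slightly larger set. So it presupposes the same a priori polynomial bound. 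Proposition~\ref{prop:largeell_uniform_resolvent} simply asserts uniform nonvanishing of the matching determinant from mode stability. Switching to that route therefore does not remove your obstacle.

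\textbf{Two smaller points.}
\begin{itemize}
\item In Step~1, Gr\"onwall with an $O(\ell^2)$ potential costs a factor exponential in $\ell$, not polynomial. You notice this yourself in Step~2.
\item In Step~3, a loss ``up to $C^\ell$'' cannot be absorbed by choosing the normalization point. The points $r,r'$ range over $K$, and factors like $(r/r_0)^{\nu}$ vary exponentially in $\ell$ across $K$. You would have to exhibit compensating smallness, such as the $1/\Gamma(\nu+1)^2$ and tunnelling factors carried by $a_{\ell,P}$.
\end{itemize}
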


\begin{theorem}[Full-field pointwise branch-cut decay]\label{thm:full_field_pointwise}
Let $A^{\bc}$ be generated by smooth compactly supported initial data.  Let $N_0$ be the integer from Theorem~\ref{thm:uniform_angular}.  For every compact $K\Subset(r_+,\infty)$ and every integer $N>N_0+2$ there exists $C_{K,N}$ such that
\begin{equation}\label{eq:full_field_intermediate}
   \sup_{r\in K,\,\omega\in S^2}\abs{A^{\bc}(t,r,\omega)}
   \le
   C_{K,N}\,\mathcal E_N[A[0]]^{1/2}\,t^{-(\nu_*+1)},
   \qquad
   \kap_*(t)\le1.
\end{equation}
where
\[
   \nu_*:=\inf_{\ell\ge1,\ P\in\{-1,0,+1\}}\nu_{\ell,P}.
\]
If $\kap_0(t)\ge1$, then
\begin{equation}\label{eq:full_field_late}
   \sup_{r\in K,\,\omega\in S^2}\abs{A^{\bc}(t,r,\omega)}
   \le
   C_{K,N}\,\mathcal E_N[A[0]]^{1/2}\,t^{-5/6}.
\end{equation}
In the Schwarzschild case, $\nu_*=1/2$ and the intermediate full-field bound is $t^{-3/2}$; in the small-mass Reissner--Nordstr\"om regime, $\nu_*=1/2+O((M\mu)^2+(Q\mu)^2)$.
\end{theorem}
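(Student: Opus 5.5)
The plan is to expand $A^{\bc}$ in polarization-adapted vector harmonics and estimate each term with Theorem~\ref{thm:uniform_angular}. Write
\[
   A^{\bc}(t,r,\omega)=\sum_{\ell\ge1}\sum_{|m|\le\ell}\sum_{P}\mathcal B_\ell\bigl[a^{\bc}_{\ell m,P}(t,r)\bigr]\,Y^{(P)}_{\ell m}(\omega)
\]
(the monopole is handled as in Appendix~\ref{app:small_mass}). The modal coefficient is the branch-cut propagator applied to the data,
\[
   a^{\bc}_{\ell m,P}(t,r)=\int u^{\bc}_{\ell m,P}(t,r,r')\,\Phi_{\ell m,P}(r')\,\dd r',
\]
where $\Phi_{\ell m,P}$ is the $(\ell,m,P)$ component of $A[0]$ in the scalarized channel variables. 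Since the data are compactly supported, the $r'$-integral runs over a fixed compact set $K'$. Enlarging the compact set to $K\cup K'$ in Theorem~\ref{thm:uniform_angular} then gives
\[
   \sup_{r\in K}\abs{a^{\bc}_{\ell m,P}(t,r)}\lesssim \angles{\ell}^{N_0}\,\|\Phi_{\ell m,P}\|_{L^1(K')}\times
   \begin{cases}
      t^{-(\nu_{\ell,P}+1)}, & \kap_*(t)\le 1,\\
      t^{-5/6}, & \kap_0(t)\ge 1.
   \end{cases}
\]
In the intermediate regime we have $\nu_{\ell,P}\ge\nu_*$ and $t\ge t_0$ (or we work on a bounded time range), so $t^{-(\nu_{\ell,P}+1)}\le C\,t^{-(\nu_*+1)}$ uniformly. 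In practice the small-mass expansion \eqref{eq:intro_nu_smallmass} gives $\nu_{\ell,P}$ growing in $\ell$, which only helps.

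Next I account for the reconstruction and the harmonics. The operator $\mathcal B_\ell$ has finite order, with coefficients polynomial in $\ell$; on $K$ it costs $\angles{\ell}^{k_0}$ together with finitely many $r$-derivatives. Those derivatives are controlled by the same kernel bounds, because $\partial_r$-derivatives of $u^{\bc}$ satisfy analogous estimates: the threshold expansions of Theorem~\ref{thm:spectral_package} hold in $C^k(K\times K)$. Sobolev embedding on $S^2$ gives $\sup_\omega\abs{Y^{(P)}_{\ell m}}\lesssim\angles{\ell}^{1/2}$ after normalisation. Summing over $m$ and using Cauchy--Schwarz,
\[
   \sum_{\ell,m,P}\angles{\ell}^{N_0+k_0+1/2}\|\Phi_{\ell m,P}\|
   \le\Bigl(\sum\angles{\ell}^{-2(N-N_0-k_0)+1}\Bigr)^{1/2}\Bigl(\sum\angles{\ell}^{2N}\|\Phi_{\ell m,P}\|^2\Bigr)^{1/2}.
\]
The last factor is bounded by $\mathcal E_N[A[0]]^{1/2}$, and the first sum converges once $N$ exceeds $N_0$ plus a fixed margin. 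The statement's condition $N>N_0+2$ is meant to absorb that margin. If $k_0$ turns out to be positive, I would redefine $N_0$ to include it, which is legitimate because Theorem~\ref{thm:uniform_angular} only asserts the existence of some $N_0$.

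The Schwarzschild and small-mass identifications of $\nu_*$ follow from \eqref{eq:intro_nu_smallmass}. The infimum is attained at $\ell=1$, $P=-1$, where $L_{-1}=0$. In Schwarzschild, with $Q=0$, the $L_{-1}=0$ channel has exactly vanishing inverse-square coefficient, so $\nu_{1,-1}=1/2$ identically. This uses the explicit far-zone normal form, in which the $r^{-2}$ coefficient is $L_P(L_P+1)$ with no mass correction. I would check that the $\mu$-dependent corrections to the $r^{-2}$ term vanish in that channel, and use monotonicity of $\nu_{\ell,P}$ in $L_P$ to see that no other channel falls below it.

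The main obstacle is uniformity in $\ell$ of the regime boundaries. The conditions $\kap_*(t)\le1$ and $\kap_0(t)\ge1$ are $\ell$-independent, but the remainders in Theorem~\ref{thm:main} must be controlled with $\ell$-uniform constants, up to $\angles{\ell}^{N_0}$, across the whole regime and not only asymptotically. I would rely on the clause ``same remainder structure, uniformly in $(\ell,m,P)$'' in Theorem~\ref{thm:uniform_angular}. I would also make sure the time-domain bound holds for all $t$ in each regime, rather than only as $t\to\infty$, by bounding the leading term plus remainder crudely by the stated power.
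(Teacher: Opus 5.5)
For the two decay estimates your plan is the paper's proof. Apply Theorem~\ref{thm:uniform_angular} on a compact set containing $K$ and the data support, and pair with the data to get $\sup_{r\in K}|a^{\bc}_{\ell m,P}(t,r)|\lesssim\angles{\ell}^{N_0}\mathcal E_{\ell m,P}[A[0]]^{1/2}$ times the modal rate. Then replace $t^{-(\nu_{\ell,P}+1)}$ by $t^{-(\nu_*+1)}$ and sum with angular weights.

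The differences from the paper are cosmetic. You sum absolutely, using $\sup_\omega|Y^{(P)}_{\ell m}|\lesssim\angles{\ell}^{1/2}$ and Cauchy--Schwarz in $(\ell,m,P)$, which needs only $N>N_0+\tfrac32$. The paper uses Sobolev on $S^2$ plus Parseval; your variant is the model argument sketched in Appendix~\ref{app:large_ell}.

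The reconstruction loss $\angles{\ell}^{k_0}$ that you track separately is already absorbed into $N_0$ by Step~5 of the proof of Theorem~\ref{thm:uniform_angular}, so no redefinition of $N_0$ is needed. That step, not a fixed-$\ell$ $C^k$ version of Theorem~\ref{thm:spectral_package}, is what gives $\ell$-uniform control of the radial derivatives.

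The replacement $t^{-(\nu_{\ell,P}+1)}\le t^{-(\nu_*+1)}$ needs $t\ge1$, which the paper also uses tacitly. Your fallback of a bounded time window does not help below $t=1$: there $\nu_{\ell,P}\to\infty$ makes $t^{-(\nu_{\ell,P}+1)}$ grow exponentially in $\ell$, and polynomial angular weights cannot absorb that.

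The step that does not work as written is the exact Schwarzschild value $\nu_*=\tfrac12$. When $Q=0$, equation \eqref{eq:intro_nu_smallmass} only gives $\nu_*=\tfrac12+O((M\mu)^2)$. Nor is the $r$-variable normal form of Lemma~\ref{lem:normal_form} free of mass corrections at order $r^{-2}$. After converting $\partial_{r_*}^2$ to $\partial_r$ and dividing by $f^2$, the term $\om^2/f^2-\mu^2/f$ contributes $(8M^2-Q^2)\mu^2r^{-2}$ at threshold. So in the channel $(\ell,P)=(1,-1)$ that normalization gives $\nu_{1,-1}^2=\tfrac14-(8M^2-Q^2)\mu^2$, and the cancellation you propose to check does not occur.

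The paper never derives $\nu_*=\tfrac12$ from the normal form. It cites Theorem~\ref{thm:schwarzschild_to_rn}(a) and Corollary~\ref{cor:small_mass_explicit}, which read the index off the $r_*$-potential. To reproduce that part of the statement you have to invoke that theorem too. The exact value then rests on it, not on any computation contained in your plan.
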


\begin{corollary}[Polynomial time decay for the radiative Proca field]\label{cor:poly_decay_radiative}
Define
\[
   \gamma_*:=\min\Bigl\{\nu_*+1,\frac56\Bigr\}.
\]
Then for every compact $K\Subset(r_+,\infty)$ and every integer $N>N_0+2$ there exists $C_{K,N}$ such that
\begin{equation}\label{eq:intro_poly_decay_radiative}
   \sup_{r\in K,\,\omega\in S^2}\abs{A^{\bc}(t,r,\omega)}
   \le
   C_{K,N}\,\mathcal E_N[A[0]]^{1/2}\,t^{-\gamma_*}.
\end{equation}
In the Schwarzschild case one has $\gamma_*=5/6$.  More generally, if $(M\mu)^2+(Q\mu)^2$ is sufficiently small, then $\gamma_*=5/6$ as well.
\end{corollary}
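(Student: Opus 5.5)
The corollary will follow directly from Theorem~\ref{thm:full_field_pointwise}, once the time axis is split into regimes and the bounds are patched together. The only genuine issue is the transition window where neither $\kap_*(t)\le1$ nor $\kap_0(t)\ge1$ holds.

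First we record the regimes explicitly. By definition, $\kap_*(t)\le1$ is equivalent to $t\le t_1:=(M\mu^{3/2})^{-2}$. Since $\kap_0(t)=M\mu^2\vp_0(t)^{-1}=M\mu^2(t/(2\pi M\mu^3))^{1/3}$, the condition $\kap_0(t)\ge1$ is equivalent to $t\ge t_2:=2\pi M\mu^3(M\mu^2)^{-3}$. Both thresholds depend only on $M$ and $\mu$. We restrict to $t\ge1$; for $t\le1$ the bound is just uniform boundedness, which follows from the energy estimate and Sobolev embedding with $N>N_0+2$.

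\begin{itemize}
\item On $[1,t_1]$, estimate \eqref{eq:full_field_intermediate} gives $t^{-(\nu_*+1)}\le t^{-\gamma_*}$, because $\gamma_*\le\nu_*+1$ and $t\ge1$.
\item On $[t_2,\infty)$, estimate \eqref{eq:full_field_late} gives $t^{-5/6}\le t^{-\gamma_*}$, because $\gamma_*\le 5/6$.
\item If $t_1<t_2$, the window $[t_1,t_2]$ is a compact time interval whose endpoints depend only on $(M,\mu)$. There we use the uniform-in-time bound $\sup_K\abs{A^{\bc}}\lesssim \mathcal E_N^{1/2}$ and multiply by $t_2^{\gamma_*}$ to obtain $t^{-\gamma_*}$, with a constant depending only on the background.
\end{itemize}

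The uniform bound in the third case is the one place needing care, since Theorem~\ref{thm:full_field_pointwise} is stated only in the two asymptotic regimes. I would obtain it from the modewise branch-cut representation. The jump $\disc\,\cG_{\ell,P}$ is bounded on $[-\mu,\mu]$ by Theorem~\ref{thm:spectral_package}, with $\nu_{\ell,P}>-1/2$ making it integrable at threshold. Combined with the polynomial $\angles{\ell}^{N_0}$ control behind Theorem~\ref{thm:uniform_angular}, this gives $\sup_{r,r'\in K}\abs{u^{\bc}_{\ell m,P}(t)}\lesssim\angles{\ell}^{N_0}$ uniformly in $t$. The angular sum then converges exactly as in the proof of Theorem~\ref{thm:full_field_pointwise}, using $N>N_0+2$.

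Finally, the identification of $\gamma_*$. In Schwarzschild, $\nu_*=1/2$, so $\nu_*+1=3/2>5/6$ and hence $\gamma_*=5/6$. In the small-mass regime, the expansion \eqref{eq:intro_nu_smallmass} with $L_{-1}=\ell-1\ge0$ gives $\nu_*\ge 1/2-C\eps_{\mu,Q}$, provided the $O$-term is uniform in $\ell$. Then $\nu_*+1>5/6$ once $\eps_{\mu,Q}$ is small, so again $\gamma_*=5/6$. In fact $\nu_{\ell,P}>-1/2$ alone already forces $\nu_*+1\ge1/2$. The stronger inequality $\nu_*+1>5/6$ needs the lower bound $\nu_*>-1/6$, and this is where uniformity of the small-mass expansion in $\ell$ (or monotonicity of $\nu_{\ell,P}$ in $L_P$) must be invoked.
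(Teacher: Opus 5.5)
Your proposal is correct and follows the paper's route. Like the paper, you apply the two regime bounds of Theorem~\ref{thm:full_field_pointwise} and use $\gamma_*\le\nu_*+1$ and $\gamma_*\le 5/6$.

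The paper's proof stops there. You additionally treat $t\le1$, where the comparison $t^{-(\nu_*+1)}\le t^{-\gamma_*}$ can fail. You also treat the crossover window between $t_1=(M^2\mu^3)^{-1}$ and $t_2=2\pi t_1$, which is never empty and which the paper's two-case argument silently skips. Your uniform-in-time bound, obtained by integrating $\abs{\disc\,\cG_{\ell,P}}$ over the cut with the polynomial bound of Proposition~\ref{prop:largeell_uniform_resolvent}, legitimately fills that omission.

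One small correction: near $\om=\pm\mu$ one has $\kap\to\infty$. So boundedness of the jump there comes from the large-$\kap$ expansion, not from integrability of $\vp^{2\nu_{\ell,P}}$. That form only holds where $\kap\ll1$, away from the endpoints.

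For the small-mass value of $\gamma_*$, the paper simply quotes $\nu_*=\tfrac12+O(\eps_{\mu,Q})$ from the statement of Theorem~\ref{thm:full_field_pointwise}. It does not address the $\ell$-uniformity you flag. If you want to justify it, Lemma~\ref{lem:dominant_threshold_gap} is the natural tool, since it reduces the infimum defining $\nu_*$ to finitely many $\ell$.
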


The corollary isolates the continuous spectral contribution.  The unsplit full-field theorem proved later in Theorem~\ref{thm:full_field_unsplit} adds the quasibound resonance family and upgrades this branch-cut estimate to a decay theorem for the whole Proca field.

\medskip\noindent\textbf{Quasibound poles, residue bounds, and unsplit full-field decay.}\par\smallskip\noindent
One of the central lessons of \cite{ShlapentokhRothmanVanDeMoortel2026} is that stable timelike trapping produces discrete bad frequencies and an unbounded Fourier transform even though pointwise decay still holds.  The same geometric issue is present for massive spin-$1$ fields.  The present paper proves the corresponding discrete spectral theorem as well: after mode stability has ruled out upper-half-plane and threshold pathologies, the lower-half-plane quasibound family is constructed semiclassically, its residues are controlled uniformly, and the resulting time series is summed to recover the unsplit full field.  The starting point is the following mode-stability theorem.

\begin{theorem}[Mode stability and threshold exclusion]\label{thm:mode_stability}
Fix $\ell\ge1$.  There is no nontrivial separated solution of the form
\[
   A=\ee^{-\ii\om t}\widehat A(r,\omega)
\]
which is ingoing at the future event horizon and outgoing or decaying at infinity, with frequency $\om$ in the closed upper half-plane.  More precisely:
\begin{enumerate}[label=\textnormal{(\roman*)}]
   \item if $\Im\om>0$, no such mode exists;
   \item if $\om\in[-\mu,\mu]$, there is no real cut pole and no threshold resonance at $\om=\pm\mu$.
\end{enumerate}
Consequently the Evans determinant $\cE_\ell$ is zero-free on $\{\Im\om>0\}$ and zero-free in punctured threshold neighborhoods of $\om=\pm\mu$.
\end{theorem}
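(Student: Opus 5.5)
The proof treats the odd sector and the even sector separately. Both rest on an energy/Wronskian identity for the reduced channel Hamiltonians $H^{\sharp}_\ell$ in tortoise coordinates. A separated solution $\ee^{-\ii\om t}\widehat A$ corresponds to a solution $u$ of $(H^\sharp_\ell-\om^2)u=0$. Here $u$ is scalar $u_4$ in the odd case and $(u_2,u_3)$ in the even case, and it behaves like $\ee^{-\ii\om r_*}$ as $r_*\to-\infty$ and like $\ee^{-\vp r_*}r^{\kap}$-type solutions as $r_*\to+\infty$. The first step is to check that $V^{\mathrm{odd}}_\ell$ is real and that $V^{\mathrm{ev}}_\ell$ is a real \emph{symmetric} matrix. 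If the natural variables give a nonsymmetric matrix, we first conjugate by a constant or $r$-dependent positive weight so that $H^{\mathrm{ev}}_\ell$ is formally selfadjoint on $\mathfrak h^{\mathrm{ev}}_\ell$. This is the same symmetrization underlying the selfadjoint generator $\cA_\ell$ introduced earlier.

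\textbf{Case (i), $\Im\om>0$.} Take the $\C^2$ (or $\C$) inner product of the equation with $\bar u$ and integrate over $\R_{r_*}$. Since $\Im\om>0$, the horizon condition gives exponential decay at $r_*\to-\infty$, and the outgoing/decaying condition gives decay at $+\infty$ because $\Re\vp>0$. All boundary terms therefore vanish, and we obtain
\[
\int|\partial_{r_*}u|^2+\langle V u,u\rangle\,\dd r_*=\om^2\int|u|^2 .
\]
The left side is real, so $\Im(\om^2)=0$ forces $\Re\om=0$ and $\om^2<0$. To exclude that case we need nonnegativity of the quadratic form $\int|\partial u|^2+\langle Vu,u\rangle+\mu^2$-structure. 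Rather than prove $V\ge0$ pointwise, which may fail for the even matrix, I would use the conserved Proca energy. For the stationary Killing field $\partial_t$, the energy $\int(|E|^2+|B|^2+\mu^2(|A_t|^2+|A_i|^2))$ is nonnegative on the exterior slice, thanks to the Lorenz constraint $\nabla^\alpha A_\alpha=0$ implied by the Proca equation. A mode with $\om=\ii s$, $s>0$, would have energy growing like $\ee^{2st}$ while its flux through the horizon is nonnegative. Together these force $\widehat A=0$. I expect this step to be the main obstacle: the nonnegativity must be transported through the reconstruction map $\mathcal B_\ell$, and in the even sector the auxiliary component $A_t$ must be shown to be expressible from $(u_2,u_3)$ without losing positivity.

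\textbf{Case (ii), real $\om\in(-\mu,\mu)$.} The Wronskian current $\mathfrak J=\Im\langle u,\partial_{r_*}u\rangle$ is constant, by realness and symmetry of $V$. At $+\infty$ the solution decays exponentially, so $\mathfrak J=0$. At the horizon, $\mathfrak J=-\om|c|^2$ with $u\sim c\,\ee^{-\ii\om r_*}$. Hence $c=0$ for $\om\neq0$, and uniqueness for the ODE then gives $u\equiv0$. The value $\om=0$ is handled by the positive static energy: $\int|\partial u|^2+\langle Vu,u\rangle\ge0$ is strictly positive, using the $\mu^2$ term. At $\om=\pm\mu$, $\vp=0$, and the far-zone normal form with index $\nu_{\ell,P}>-\tfrac12$ from Theorem~\ref{thm:spectral_package} gives the recessive solution as a Whittaker/Bessel-type function decaying like $\ee^{-2\sqrt{2M\mu^2 r}}$, coming from the attractive $-2M\mu^2/r$ term. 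The same current argument then applies, with $\om=\pm\mu\neq0$, and rules out a threshold resonance.

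Finally, the zeros of $\cE_\ell$ coincide with modes satisfying both boundary conditions, so the zero-freeness statements follow directly. Zero-freeness in the punctured threshold neighbourhoods follows from (ii) at $\pm\mu$, combined with continuity of the normalized Jost solutions in $\vp$ provided by part (i) of Theorem~\ref{thm:spectral_package}.
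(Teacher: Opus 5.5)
Your case (i) and your odd-channel arguments agree with the paper: energy growth against unitarity of the fixed-$\ell$ evolution, the radial current for $\om\neq0$, and positivity of $f(\mu^2+\ell(\ell+1)/r^2)$ at $\om=0$. The even sector of case (ii), however, has a genuine gap. It rests on making $V^{\mathrm{ev}}_\ell$ real symmetric by a positive weight, and no such weight exists.

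In the polarization basis the potential is $f\mu^2\Id+\frac{f}{r^2}\bigl(D_\ell+g(r)C_\ell\bigr)$, with $g(r)=\frac{6M}{(2\ell+1)r}-\frac{4Q^2}{(2\ell+1)r^2}$ nonconstant. A constant Hermitian form $G$ with $GD_\ell=D_\ell G$ must be diagonal, because $D_\ell$ has distinct entries. The off-diagonal part of $C_\ell$ is antisymmetric, so $GC_\ell=C_\ell^{T}G$ then forces $G\propto\diag(1,-1)$. An $r$-dependent weight does not help, since symmetry of $-\partial_{r_*}^2+V$ on $L^2(G(r)\,\dd r_*)$ forces $G'=0$. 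Hence the only conserved Wronskian of the form $\Im\langle Gv,v'\rangle$ is $\Im(\overline{v_{-1}}v_{-1}'-\overline{v_{+1}}v_{+1}')$. Its horizon value $-\om(\abs{a_{-1}}^2-\abs{a_{+1}}^2)$ has no sign, so $a=0$ does not follow.

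The same defect breaks your even static argument. In the $(u_2,u_3)$ variables the symmetric part of $A_\ell(r)$ has determinant $-(f-\tfrac{rf'}{2})^2$, which is negative off the photon sphere. So $\langle Vu,u\rangle$ is indefinite; for small $M\mu$ the term $f\mu^2$ does not compensate near $r_+$. The identity $\int(\abs{u'}^2+\langle Vu,u\rangle)\,\dd r_*=0$ then yields no contradiction, which is exactly the failure you anticipated in case (i).

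The paper avoids both problems by working with the physical field. For $\om\neq0$ it uses Lemma~\ref{lem:radial_current}, the separated $\partial_t$-energy flux of the Proca field. That flux is conserved regardless of the symmetry of the reduced matrix, and its horizon form $q_\ell$ is positive definite. For the even static mode it invokes Bekenstein's no-hair theorem (inside Proposition~\ref{prop:no_real_poles}). That argument's positivity lives in the original variables, where $A_t$ decouples from the spatial components, not in the reduced quadratic form. In short, you need in case (ii) the same move you made in case (i): pass to the physical Proca energy.

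Second, your threshold step uses a false asymptotic. At $\vp=0$ the normal form \eqref{eq:threshold_normal_form} reads $w''+\bigl(\frac{2M\mu^2}{r}-\frac{\nu_{\ell,P}^2-1/4}{r^2}+O(r^{-3})\bigr)w=0$. The attractive Coulomb term makes the far zone classically allowed. Every solution therefore behaves like $r^{1/4}\ee^{\pm2\ii\sqrt{2M\mu^2r}}$ (Bessel functions of $\sqrt{8M\mu^2r}$), and none decays like $\ee^{-2\sqrt{2M\mu^2r}}$, which would require the repulsive sign. Vanishing flux at infinity cannot be read off from decay. The paper argues instead that a threshold-resonant state, once the oscillation is removed, has a real leading coefficient and hence zero current. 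Alternatively, the limit of the decaying Jost solution is the outgoing branch, whose flux is proportional to $\abs{b}^2$ with sign opposite to the horizon flux $-\om\,q_\ell(a)$. Constancy of the physical current then forces $a=b=0$.

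Two smaller remarks on case (i). The preliminary step giving $\Re\om=0$ is unnecessary and, by the above, unavailable. The growth $\ee^{2\Im\om\,t}$ of the energy already excludes every $\Im\om>0$, exactly as in the paper's unitarity argument (Proposition~\ref{prop:selfadjoint_generator}). Positivity is also not the real obstacle there. The Proca energy density $\tfrac12(\abs{E}^2+\abs{B}^2)+\tfrac{\mu^2}{2}(\abs{A_{\hat t}}^2+\abs{\vec A}^2)$ is pointwise nonnegative without using the Lorenz constraint. What must be checked is that the reconstructed mode is nontrivial, regular on the future horizon, and of finite energy.
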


The continuous spectral results above are complemented by a discrete resonance package.  Let
\[
   A(t)=A^{\bc}(t)+A^{\qb}(t)+A^{\fast}(t)
\]
denote the decomposition of the full Proca field into the branch-cut contribution, the quasibound pole contribution, and the exponentially decaying remainder coming from poles and contour terms bounded away from the real axis.

\begin{theorem}[Quasibound branches and Bohr--Sommerfeld quantization]\label{thm:qb_branches}
Fix a compact energy interval $I\Subset(0,\mu)$ contained in the stably trapped regime.  There exist $\ell_{\mathrm{sc}}\in\N$ and $c_I>0$ such that for every $\ell\ge \ell_{\mathrm{sc}}$ and every polarization $P\in\{-1,0,+1\}$, the poles of the fixed-mode resolvent with
\[
   \Re\omega\in I,
   \qquad
   0>\Im\omega>-\ee^{-c_I\ell}
\]
are simple and may be indexed by integers $n\in\mathcal N_{\ell,P}(I)$ as $\omega_{\ell,n,P}$.  Writing $h=(\ell+\tfrac12)^{-1}$, one has the Bohr--Sommerfeld law
\[
   \mathscr S_P(\Re\omega_{\ell,n,P};h)
   =
   2\pi h\Bigl(n+\frac12\Bigr)
   +h\,\vartheta_P(\Re\omega_{\ell,n,P};h)
   +O(h^2),
\]
with $\vartheta_P=O(1)$, and the tunnelling width formula
\[
   \Im\omega_{\ell,n,P}
   =
   -\Gamma_P(\Re\omega_{\ell,n,P};h)
   \exp\!\Bigl(-\frac{2\mathscr J_P(\Re\omega_{\ell,n,P};h)}{h}\Bigr)
   (1+O(h)).
\]
\end{theorem}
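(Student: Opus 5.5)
The plan is to reduce each polarization channel to a scalar semiclassical Schrödinger problem in the trapped energy window, and then apply the standard well-in-an-island (shape resonance) analysis. Set $h=(\ell+\tfrac12)^{-1}$. For the odd channel $P=0$, multiplying $H^{\mathrm{odd}}_\ell-\om^2$ by $h^2$ gives
\[
   P_{h,0}(\om)=-h^2\partial_{r_*}^2+V_0(r)-h^2\om^2+h^2W_0(r),
\]
with principal symbol $V_0=f(r)/r^2$. Here the mass term is $h^2\mu^2f$, so it enters the scaling only through the energy parameter. One then rescales energies so that the relevant semiclassical energy $E=h^2\om^2$ sits near the effective potential barrier.

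I would instead use the standard massive normalization. With $\om$ fixed in $I\Subset(0,\mu)$, the effective potential is $f(\mu^2+\la^2/r^2)$, and the trapping geometry is a well lying between the centrifugal barrier and the mass plateau. This means working with $\tilde V_P=f(1+h^{-2}\cdots)$ in the regime where $\mu$ scales with $\ell$. That is exactly the regime in which $I_{\mathrm{trap}}$ is defined. I would take $I_{\mathrm{trap}}$ as given: the set of energies for which the classical potential has a compact well $\{V_P\le E\}$ separated by a barrier from the horizon side.

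For the even channels I will use the analytic diagonalizer $U_h$. In the polarization basis \eqref{eq:intro_even_pol_basis} the leading $r^{-2}$ matrix is $D_\ell$, which is diagonal with entries differing by $O(\ell)$, i.e. $O(h)$ relative to the principal part. The coupling $C_\ell$ enters at $r^{-3},r^{-4}$ with off-diagonal size $O(\ell)$. A naive conjugation therefore fails, because the split eigenvalues are $h$-close. My first attempt is to note that the principal symbol of $h^2V^{\mathrm{ev}}$ is scalar, $f\lambda_\ell h^2/r^2\,\Id$. The $O(h)$ matrix part then behaves as a subprincipal term, and it is diagonalized by an $h$-independent analytic rotation $U_h=U_0+hU_1+\dots$ built from its eigenvectors, provided its eigenvalues stay separated on the relevant $r$-interval. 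The asymptotic splitting from Theorem~\ref{thm:spectral_package} gives separation near infinity. Conjugating by $U_h$ and using a Birkhoff/Nakamura-type normal form through order $h^2$, I obtain two scalar operators $P_{h,\pm1}(\om)$ modulo $O(h^\infty)$ coupling in a complex neighbourhood of the well and barrier. The coupled Grushin problem then reduces to three decoupled scalar ones up to perturbations that are negligible against the widths $\ee^{-c/h}$, since the residual coupling is $O(h^\infty)$ microlocally. \emph{This decoupling step is the main obstacle.} The off-diagonal $O(h^\infty)$ errors must be exponentially small, not merely $O(h^\infty)$, to preserve the width formula. For that I would use analytic symbol classes, which are available because the RN coefficients are analytic in $r$, and carry out the diagonalization to order $N\sim c/h$ to reach $O(\ee^{-c'/h})$.

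For each scalar operator $P_{h,P}$ I would run the Helffer–Sjöstrand well-in-an-island argument. The steps are as follows.
\begin{enumerate}[label=\textnormal{(\arabic*)}]
   \item Build the Dirichlet reference problem on the well region, cut off inside the barrier. Its eigenvalues satisfy the Bohr–Sommerfeld rule with action $\mathscr S_P$, Maslov $\tfrac12$, and subprincipal correction $\vartheta_P$ computed from the $O(h)$ part of the potential. These eigenvalues are simple and $h$-separated by nondegeneracy of $\partial_E\mathscr S_P\ne0$ (the period).
   \item Use complex scaling near the horizon, or equivalently the meromorphic continuation from Theorem~\ref{thm:spectral_package}(i) together with exterior scaling in $r_*\to-\infty$, to show that each reference eigenvalue produces exactly one resonance within $O(\ee^{-(2\mathscr J_P-\eps)/h})$. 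By Agmon estimates with distance $d_P$, there are no other resonances in the box $\Re\om\in I$, $\Im\om>-\ee^{-c_I\ell}$. Simplicity follows from a Rouché argument on the Evans determinant $\cE_\ell$.
   \item Obtain the width formula by Gamow's matching. Use WKB solutions across the barrier, connected via the Airy turning points, and compute the flux identity $\Im\om\,\|u\|^2=-\text{flux at horizon}$. This gives $\Gamma_P\exp(-2\mathscr J_P/h)(1+O(h))$, and Theorem~\ref{thm:mode_stability} ensures the sign $\Im\om<0$.
\end{enumerate}

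Finally, I would translate the semiclassical energy back to $\Re\om$. This yields the stated quantization in terms of $\Re\omega_{\ell,n,P}$, with the $O(h^2)$ error absorbing the exponentially small imaginary shift.
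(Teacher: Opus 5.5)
There is a genuine gap, and it is in the even sector. It arises before the exponential-smallness issue you single out. Your odd-channel plan is a legitimate alternative to the paper's route: Helffer--Sj\"ostrand comparison with a Dirichlet well problem, exterior estimates, and Gamow matching, in place of WKB/Airy transfer matrices, the factorization of Proposition~\ref{prop:evans_factor}, and Rouch\'e as in Theorem~\ref{thm:qb_bs}.

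You are right that after multiplying by $h^2$ the two even polarizations are split only at order $h$. The paper's Proposition~\ref{prop:well_diag} treats the $O(\ell)$ gap of $D_\ell$ as principal, so it does not supply this step either. But conjugating by the pointwise eigenvector matrix $U_0(r)$ of the $O(h)$ part does not decouple the system:
\[
   U_0^{-1}\bigl(-h^2\partial_{r_*}^2\bigr)U_0
   =
   -h^2\partial_{r_*}^2
   -2h\,\bigl(U_0^{-1}\partial_{r_*}U_0\bigr)\,h\partial_{r_*}
   -h^2U_0^{-1}\bigl(\partial_{r_*}^2U_0\bigr).
\]
On WKB solutions in the well and in the barrier, $h\partial_{r_*}$ has size one. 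So the off-diagonal part of $U_0^{-1}\partial_{r_*}U_0$ is an $O(h)$ coupling of exactly the size of the splitting. It does not vanish. By \eqref{eq:raw_A_matrix}, $A_\ell-\la^2\Id$ has eigenvalues $g\pm\sqrt{g^2+4g\la^2}\approx\pm2\la\sqrt g$ with $g=f-\tfrac{rf'}{2}$. In the polarization basis the eigenvectors are approximately $(\sqrt g\mp1,\sqrt g\pm1)$, which vary with $r$.

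Consequently, at every order the homological equation is a matrix transport equation along the Hamilton flow of the scalar principal symbol, not an algebraic commutator equation. The phases $\vartheta_{\pm1}$ must be read off from the eigenvalues of the holonomy of that transport around the well orbit. A normal form applied after the eigenvector conjugation cannot repair this, since order-$h$ conjugations only act at order $h^2$. Your analytic iteration to order $N\sim c/h$ is therefore built on the wrong homological equation.

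Second, the separation you leave unverified fails where you need it. The two eigenvalues coalesce into a Jordan block at the photon sphere $g=0$ and are complex for $r_+<r<r_{\mathrm{ph}}$. The photon sphere lies between the horizon and the barrier top. For part of the trapping range (e.g.\ $\mu h$ small) it lies inside the barrier $(r_1,r_2)$. So your horizon-side complex scaling and your Gamow matching both run through a region where the even system cannot be scalarized at all. There the potential matrix is not similar to a real symmetric one. Hence the identity $\Im\om\,\|u\|^2=-(\text{horizon flux})$ is not available in the naive $L^2$ pairing. You would need matrix-valued WKB transport through the inner region and the barrier, together with the genuine Proca energy current.

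Two smaller points:
\begin{itemize}
   \item Your switch to $\mu h$ fixed is not the fixed-$\mu$ regime of the statement. Your own first rescaling shows that for fixed $\mu$ the principal symbol $\xi^2+f/r^2$ has no compact well at energies in $I$. This assumption is shared with the paper's Lemma~\ref{lem:qb_trapping_window}, which also simply posits the well.
   \item Excluding further poles from the box uses the inner tunnelling action $\mathscr J_P$ and a horizon-side exterior estimate, not the outer Agmon distance $d_P$.
\end{itemize}
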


\begin{theorem}[Residue projectors and Agmon localization]\label{thm:qb_residues}
For every compact radial set $K\Subset(r_+,\infty)$ there exist integers $N_{\qb},N_{\mathrm{rec}}\ge0$ and a constant $C_K$ such that every quasibound residue projector
\[
   \Pi_{\ell,n,P}:=-\Res_{\omega=\omega_{\ell,n,P}}R_{\ell,P}(\omega)
\]
obeys
\[
   \sup_{r,r'\in K}\abs{\Pi_{\ell,n,P}(r,r')}
   \le
   C_K\angles{\ell}^{N_{\qb}},
\]
and, for compactly supported initial data,
\[
   \sup_{r\in K,\,\omega\in S^2}\abs{\Pi_{\ell,n,P}A[0](r,\omega)}
   \le
   C_{K,N}\angles{\ell}^{N_{\mathrm{rec}}}\mathcal E_N[A[0]]^{1/2}.
\]
If $K$ is disjoint from the classically allowed well corresponding to $\Re\omega_{\ell,n,P}$, then the residue kernel satisfies an Agmon-type exponential bound on $K\times K$.
\end{theorem}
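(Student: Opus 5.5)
The plan is to write the residue projector as a rank-one Green-kernel residue and bound each factor separately. Since $\omega_{\ell,n,P}$ is a simple pole (Theorem~\ref{thm:qb_branches}), for the scalarized channel (for $P=\pm1$, after conjugating by the analytic diagonalizer $U_h$ and then by the constant matrix $T_\ell$) we have
\[
   \Pi_{\ell,n,P}(r,r')
   =
   -\frac{\psi_{\hor}(r_<)\,\psi_{\out}(r_>)}{\partial_\omega\cW_{\ell,P}(\omega_{\ell,n,P})}.
\]
Here $\psi_{\hor}$ and $\psi_{\out}$ are evaluated at $\omega=\omega_{\ell,n,P}$, where they are proportional, so the kernel equals $-\psi\otimes\psi/\partial_\omega\cW$ with a single resonant state $\psi$. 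Normalizing $\psi$ by $\psi(r_0)=1$ at a point $r_0$ in the well, the standard identity
\[
   \partial_\omega\cW
   =
   2\omega\int\psi^2\,\dd r_*
   +\text{(boundary terms)}
\]
expresses the derivative as a regularized $L^2$ pairing. The boundary terms are exponentially small because $\Im\omega_{\ell,n,P}$ is $O(\ee^{-c_I\ell})$ by the width formula.

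The first step is to produce a WKB/Agmon description of $\psi$, uniform in $h=(\ell+\frac12)^{-1}$ and in $\Re\omega\in I$. In the well, $\psi$ is a Bohr--Sommerfeld oscillatory function of size $O(1)$ with respect to the normalization, with Airy matching at the turning points. In the forbidden regions, $\psi$ decays like $\exp\bigl(-d_P(r_0,r)/h\bigr)$ up to polynomial prefactors in $h^{-1}$. This yields $\int\psi^2\,\dd r_*\ge c\,h^{C}$, which gives the lower bound $\abs{\partial_\omega\cW}\gtrsim h^{C}$. The same description gives the upper bound $\sup_K\abs{\psi}\lesssim h^{-C}$. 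Together these produce the kernel bound $C_K\angles{\ell}^{N_{\qb}}$. If $K$ misses the allowed well, the exponential factor $\exp\bigl(-(d_P(K)+d_P(K))/h\bigr)$ from the Agmon decay on both variables gives the Agmon-type bound on $K\times K$. I would obtain the uniform pointwise control from an Agmon energy identity (weighted by $\ee^{\phi/h}$, where $\phi$ is the Agmon distance) combined with a one-dimensional Sobolev embedding, rather than from full WKB.

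The second step is reconstruction. We apply $\Pi_{\ell,n,P}$ to the mode coefficients of $A[0]$, which are supported on a compact set, so only the kernel on $K\times\operatorname{supp}A[0]$ is needed. The same bound extends to any compact set containing both. We then map back with the finite-order operator $\mathcal B_\ell$, whose coefficients grow polynomially in $\ell$, and with the vector harmonics $Y^{(P)}_{\ell m}$, whose sup norms are $O(\angles{\ell})$. Finally, Cauchy--Schwarz in $r'$ and the definition of $\mathcal E_N$ convert the $\ell$-th mode data into $\angles{\ell}^{-N}\mathcal E_N^{1/2}$, which gives the $N_{\mathrm{rec}}$ estimate. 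For small $\ell<\ell_{\mathrm{sc}}$, finitely many modes remain, and there compactness together with the meromorphy from Theorem~\ref{thm:spectral_package} suffices.

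The main obstacle is the lower bound on $\partial_\omega\cW$ for the even sector. Diagonalizing via $U_h$ is only approximate: it holds modulo $O(h^\infty)$ couplings near the trapped region, and only modulo $O(h)$ short-range couplings elsewhere. The resonant state is therefore a genuine $\C^2$ vector, and the pairing could pick up contributions from the other polarization. My first attempt would be to bound the off-diagonal component by Agmon estimates for the other channel at the same energy. Since the two wells correspond to distinct effective angular momenta $L_{\pm1}$, their Bohr--Sommerfeld levels are generically separated. Near-crossings would require a Grushin problem with a $2\times2$ effective matrix, whose determinant still yields polynomial lower bounds.
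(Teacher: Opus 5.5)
Your argument is correct in outline and has the same skeleton as the paper's. Both write the residue as a rank-one kernel, bound the resonant states and the derivative of the Wronskian separately, and then reconstruct via the polarization matrices and Cauchy--Schwarz in the source variable. Where you differ is the mechanism for the crucial denominator bound.

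The paper takes everything from Proposition~\ref{prop:qb_evans_derivative}:
the two-sided bound $\abs{\partial_\omega\cE_{\ell,P}(\omega_{\ell,n,P})}\asymp h^{-1}$ comes from differentiating the factorization \eqref{eq:evans_factorization} at a simple zero and using $\partial_E\mathscr S_P\asymp1$, and the mode bounds $\lesssim h^{-1/2}$, with the Agmon improvement, come from WKB normalization in the well.

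You instead use the identity $\partial_\omega\cW=2\omega\int\psi^2\,\dd r_*+(\text{boundary terms})$. Nondegeneracy of the residue then becomes the statement that the resonant state carries nonzero bilinear mass in the well, and you get pointwise control from Agmon energy identities plus Sobolev. The paper's route is immediate once the factorization is available. Yours is intrinsic and normalization-invariant, and it uses only the localization of the resonant state rather than the fine structure of \eqref{eq:evans_factorization}.

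A few details need tightening.

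\emph{Normalization.} Normalize $\psi$ in $L^2$ of the well rather than by $\psi(r_0)=1$. A fixed $r_0$ can sit near a node of the $n$-th standing wave, so your two separate bounds (on $\int\psi^2$ and on $\sup_K\abs{\psi}$) are not uniform in $n$; only their ratio is. With $L^2$ normalization, $\norm{h\psi'}\lesssim\norm{\psi}$ on the well and one-dimensional Sobolev give $\sup_K\abs{\psi}\lesssim h^{-1/2}$, as in the paper.

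\emph{No cancellation.} Because $\int\psi^2$ is bilinear, a lower bound on its modulus requires $\psi$ to be, up to an exponentially small relative error, a real standing wave in the well. This follows from reality of the decaying Jost solution at real $\om\in(0,\mu)$ and the exponential smallness of $\Im\om_{\ell,n,P}$.

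\emph{Boundary terms.} These are small because $\psi$ is tunnelling-suppressed outside the well, not directly because the width is small. You can link the two through the flux identity $\partial_{r_*}\Im(\overline{\psi}\psi')=-\Im(\om^2)\abs{\psi}^2$ for the unscaled channel equation. You must also truncate the horizon end, since $\psi$ grows like $\ee^{\abs{\Im\om}\abs{r_*}}$ there.

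\emph{Even sector.} In the paper, $\Pi_{\ell,n,P}$ is the residue of the scalarized channel resolvent $R_{\ell,P}$, so your even-sector obstacle does not arise. If you do treat the genuine $2\times2$ problem, your Grushin remark does not suffice. At a zero, $\partial_\omega\det$ of the effective matrix is proportional to the distance to the neighbouring zero, so it has no polynomial lower bound near a near-crossing. You would have to bound the residue of the inverse effective matrix directly.
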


\begin{theorem}[Self-contained summed quasibound decay]\label{thm:qb_sum}
For smooth compactly supported initial data and every logarithmic power $L>0$, there exists an integer $N_{\qb}^{\mathrm{sum}}(L)$ such that for every compact $K\Subset(r_+,\infty)$,
\[
   \sup_{r\in K,\,\omega\in S^2}\abs{A^{\qb}(t,r,\omega)}
   \le
   C_{K,N,L}\,\mathcal E_N[A[0]]^{1/2}\,(\log(2+t))^{-L}
\]
for all $N\ge N_{\qb}^{\mathrm{sum}}(L)$ and all $t\ge2$.
\end{theorem}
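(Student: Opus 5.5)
We write the quasibound contribution as an explicit residue sum,
\[
   A^{\qb}(t)=\sum_{\ell\ge\ell_{\mathrm{sc}}}\sum_{m,P}\sum_{n\in\mathcal N_{\ell,P}(I)}\ee^{-\ii\omega_{\ell,n,P}t}\,\mathcal B_\ell\,\Pi_{\ell,n,P}\,u_{\ell m,P}[0],
\]
together with the finitely many low-$\ell$ poles near the real axis. Those low-$\ell$ poles have $\Im\omega<0$ strictly, by Theorem~\ref{thm:mode_stability} and the finiteness in Theorem~\ref{thm:spectral_package}(i). They therefore contribute an exponentially decaying term, which is harmless.

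The estimate rests on two inputs from Theorems~\ref{thm:qb_branches} and~\ref{thm:qb_residues}:
\begin{itemize}[label=\textbullet]
   \item a \emph{lower} tunnelling-width bound $\abs{\Im\omega_{\ell,n,P}}\ge \ee^{-C_I\ell}$, which follows from the width formula because $\mathscr J_P$ is bounded above on $I$ and $\Gamma_P$ is bounded below;
   \item the polynomial reconstruction bound $\sup_K\abs{\mathcal B_\ell\Pi_{\ell,n,P}u_{\ell m,P}[0]}\lesssim\angles{\ell}^{N_{\mathrm{rec}}}\,\|u_{\ell m}[0]\|$.
\end{itemize}
The number of admissible $n$ is $O(\ell)$ by Bohr--Sommerfeld counting, since the spacing is $\sim h$ and the window $I$ is fixed. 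The number of $m$ values is $2\ell+1$.

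\medskip\noindent\textbf{Dyadic packet decomposition.}
We group the angular momenta into packets $\mathfrak P_j=\{2^j\le\ell<2^{j+1}\}$. Within $\mathfrak P_j$ each term is bounded by
\[
   \ee^{-\ee^{-C2^{j+1}}t}\,\angles{\ell}^{N_{\mathrm{rec}}+2}\,\abs{u_{\ell m}[0]}.
\]
Angular regularity gives $\abs{u_{\ell m}[0]}\lesssim\angles{\ell}^{-N}\mathcal E_N[A[0]]^{1/2}$. By Cauchy--Schwarz over $m$, the packet contributes at most
\[
   C\,2^{-j(N-N_{\mathrm{rec}}-4)}\exp\!\bigl(-t\,\ee^{-C2^{j+1}}\bigr)\,\mathcal E_N^{1/2}.
\]

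\medskip\noindent\textbf{Choice of cutoff.}
Fix $t$ and choose the cutoff $j_0(t)$ with $2^{j_0}\approx (2C)^{-1}\log t$.
\begin{itemize}[label=\textbullet]
   \item For $j\le j_0$ we have $t\,\ee^{-C2^{j+1}}\ge t^{1/2}$, so these packets are super-polynomially small in $t$ uniformly. Summing over $j\le j_0$ gives a factor of only $\log\log t$, which is negligible.
   \item For $j>j_0$ we discard the exponential factor entirely. The geometric sum gives
   \[
      2^{-j_0(N-N_{\mathrm{rec}}-4)}\sim(\log t)^{-(N-N_{\mathrm{rec}}-4)}.
   \]
\end{itemize}
Taking $N_{\qb}^{\mathrm{sum}}(L)=L+N_{\mathrm{rec}}+5$ then yields $(\log(2+t))^{-L}$. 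Uniformity in $K$ enters only through the constants in Theorem~\ref{thm:qb_residues}.

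\medskip\noindent\textbf{Main obstacle.}
The expected difficulty is making the lower width bound uniform over all $n\in\mathcal N_{\ell,P}(I)$ and all $P$, with the $(1+O(h))$ factor controlled uniformly. A related issue is treating poles with $\Im\omega\le-\ee^{-c_I\ell}$ that lie outside the exponentially thin strip. These I would assign to $A^{\fast}$, or bound in the same way using the fact that they decay at least at rate $\ee^{-c_I\ell}$.

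A second subtlety is that the residue sum must converge absolutely for each fixed $\ell$. This follows from the $O(\ell)$ count of poles per window combined with the polynomial residue bounds. Interchanging the sum with the contour deformation is justified by the high-order angular regularity, which makes the $\ell$-sum absolutely convergent at each step.
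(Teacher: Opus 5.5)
Your proposal is correct and follows the paper's proof essentially step for step. The paper likewise treats the low modes as exponentially damped, groups high modes into dyadic packets with the two-sided width bound of Lemma~\ref{lem:packet_width_two_sided}, uses residue/reconstruction bounds plus angular regularity to gain $2^{-jL}$, and cuts the dyadic sum at $2^{j}\sim\log t$ as in Lemma~\ref{lem:packet_log_sum}.

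There is one harmless slip. Your packets are $2^j\le\ell<2^{j+1}$ and you use the factor $\ee^{-C2^{j+1}}$, so the choice $2^{j_0}\approx(2C)^{-1}\log t$ only gives $t\,\ee^{-C2^{j+1}}\gtrsim1$ at $j=j_0$, not $t^{1/2}$. Taking $2^{j_0}\approx(4C)^{-1}\log t$ recovers the $t^{1/2}$ bound, and the final estimate is unchanged.
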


\begin{theorem}[Self-contained full-field decay]\label{thm:full_field_unsplit}
For smooth compactly supported initial data, every logarithmic power $L>0$, and every compact $K\Subset(r_+,\infty)$, there exists $N_{\mathrm{full}}(L)$ such that
\[
   \sup_{r\in K,\,\omega\in S^2}\abs{A(t,r,\omega)}
   \le
   C_{K,N,L}\,\mathcal E_N[A[0]]^{1/2}\,
   \Bigl(t^{-\gamma_*}+(\log(2+t))^{-L}\Bigr)
\]
for all $N\ge N_{\mathrm{full}}(L)$ and all $t\ge2$, where $\gamma_*$ is the branch-cut exponent from Corollary~\ref{cor:poly_decay_radiative}.  In particular,
\[
   \sup_{r\in K,\,\omega\in S^2}\abs{A(t,r,\omega)}
   \le
   C_{K,N,L}\,\mathcal E_N[A[0]]^{1/2}\,(\log(2+t))^{-L}.
\]
\end{theorem}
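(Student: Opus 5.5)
The plan is to use the decomposition $A=A^{\bc}+A^{\qb}+A^{\fast}$ and bound each piece separately on $K\times S^2$. Every piece is controlled by a high-order energy $\mathcal E_N[A[0]]^{1/2}$. We then take $N_{\mathrm{full}}(L)$ to be the maximum of the regularity thresholds the three bounds require.

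\textbf{Branch-cut and quasibound pieces.} For $A^{\bc}$ we invoke Corollary~\ref{cor:poly_decay_radiative} directly. For every $N>N_0+2$ it gives $C_{K,N}\mathcal E_N[A[0]]^{1/2}t^{-\gamma_*}$ for all $t$ large. Its proof combines the intermediate and very-late regimes of Theorem~\ref{thm:full_field_pointwise}. The window of $t$ between $\kap_*(t)\le1$ and $\kap_0(t)\ge1$ is handled by the uniform kernel bounds of Theorem~\ref{thm:uniform_angular}, interpolated across the bounded overlap region. For $2\le t\le t_0$ the estimate follows from boundedness of the branch-cut kernels. For $A^{\qb}$ we invoke Theorem~\ref{thm:qb_sum}, which gives $(\log(2+t))^{-L}$ once $N\ge N^{\mathrm{sum}}_{\qb}(L)$.

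\textbf{The fast remainder.} $A^{\fast}$ collects the poles with $\Im\om\le-\eta'<0$ (outside the exponentially thin quasibound strip of Theorem~\ref{thm:qb_branches}) together with the shifted contour pieces. We would bound it modewise by $C_K\angles{\ell}^{N_1}\ee^{-c_\ell t}$, using the residue bound of Theorem~\ref{thm:qb_residues} and polynomial resolvent bounds on the deformed contour. This is the step I expect to be the main obstacle. The gap $c_\ell$ need not be uniform in $\ell$: poles just below the exponentially small window, with $\Im\om\approx-\ee^{-c_I\ell}$, have decay rates that degenerate as $\ell\to\infty$. To deal with this, I would define the splitting so that every pole with $\Re\om$ in the trapped window and $\abs{\Im\om}\le\ell^{-k}$ is assigned to $A^{\qb}$; the summation behind Theorem~\ref{thm:qb_sum} is controlled by tunnelling widths and residue bounds and handles these poles as well. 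The remaining poles then satisfy $c_\ell\gtrsim\angles{\ell}^{-k}$. The bound then follows from a dyadic split in $\ell$. Modes with $\ell\le(\log t)^{2/k}$ give $\ee^{-c(\log t)^2}$, which beats any logarithmic power. Modes with larger $\ell$ are controlled by $\angles{\ell}^{N_1-N}\mathcal E_N^{1/2}$, summed over $\ell$, which is $O((\log t)^{-L})$ once $N$ is large depending on $L$.

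\textbf{Conclusion.} Adding the three bounds by the triangle inequality gives the stated estimate with $t^{-\gamma_*}+(\log(2+t))^{-L}$. The final claim follows because $t^{-\gamma_*}\lesssim(\log(2+t))^{-L}$ for $t\ge2$, since $\gamma_*>0$: indeed $\nu_*>-\tfrac12$ gives $\nu_*+1>\tfrac12$, so $\gamma_*\ge\tfrac12$.
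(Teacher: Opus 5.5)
Your treatment of $A^{\bc}$ via Corollary~\ref{cor:poly_decay_radiative}, your treatment of $A^{\qb}$ via Theorem~\ref{thm:qb_sum}, and the final triangle inequality are exactly the paper's proof. The divergence is $A^{\fast}$, and your argument has a gap there.

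The paper does not estimate $A^{\fast}$ mode by mode. It cites Proposition~\ref{prop:full_decomposition}, which asserts the bound $C_{K,N}\ee^{-\eta t}\mathcal E_N[A[0]]^{1/2}$ with $\eta>0$ independent of $\ell$; exponential decay is then trivially absorbed into $(\log(2+t))^{-L}$. Your replacement rests on inputs the paper does not supply:
\begin{enumerate}
\item[(i)] Theorem~\ref{thm:qb_residues} controls residues only at the Bohr--Sommerfeld poles $\om_{\ell,n,P}$ with $\Re\om\in I$ and $0>\Im\om>-\ee^{-c_I\ell}$. Nothing in the paper bounds the residue, multiplicity, or number of any other pole. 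Proposition~\ref{prop:largeell_uniform_resolvent} bounds the resolvent only for real $\om\in J\Subset[-\mu,\mu]$, not on a shifted contour of the continued resolvent in the lower half-plane. So your modewise bound $C_K\angles{\ell}^{N_1}\ee^{-c_\ell t}$ is unsupported.
\item[(ii)] The summation behind Theorem~\ref{thm:qb_sum} (Lemma~\ref{lem:packet_modewise}) uses the Weyl count of Corollary~\ref{cor:qb_enumeration}, simplicity of the poles, and the residue bound. All three are specific to the Bohr--Sommerfeld family. For your reassigned poles with $\ee^{-c_I\ell}\le\abs{\Im\om}\le\ell^{-k}$, only the stronger damping factor $\ee^{t\Im\om}$ carries over. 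The claim that the argument ``handles these poles as well'' is therefore an assertion, not a consequence.
\item[(iii)] To get $c_\ell\gtrsim\angles{\ell}^{-k}$ for the remaining poles, you need a pole-free region $\{\abs{\Im\om}\le\ell^{-k}\}$ for $\Re\om$ outside the trapped window, uniform in $\ell$. Theorem~\ref{thm:spectral_package}(i) and Corollary~\ref{cor:threshold_zero_free} are fixed-$\ell$ statements with $\ell$-dependent strip widths.
\end{enumerate}

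Your worry about uniformity is a fair criticism of the proof of Proposition~\ref{prop:full_decomposition}, which simply asserts that every non-quasibound pole lies a definite distance below the axis. Your workaround does not repair that weakness. It moves the same missing uniform-in-$\ell$ resonance information into steps (i)--(iii). Within the paper, the step is closed by invoking Proposition~\ref{prop:full_decomposition} directly. Once you do that, the rest of your proposal goes through as in the paper:
\begin{itemize}
\item the bounded-time handling of $A^{\bc}$ (the window between $\kap_*(t)\le1$ and $\kap_0(t)\ge1$ is really a gap $t\in\bigl((M^2\mu^3)^{-1},2\pi(M^2\mu^3)^{-1}\bigr)$ rather than an overlap, but it is bounded, so boundedness suffices);
\item the observation $\gamma_*\ge\tfrac12>0$ for the ``in particular'' clause.
\end{itemize}
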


\begin{theorem}[Two-regime asymptotic expansion of the full Proca field]\label{thm:full_field_asymptotic_expansion}
Fix a compact set $K\Subset(r_+,\infty)$ and let $A$ be generated by smooth compactly supported initial data.  Then there exist an integer $\ell_{\mathrm{as}}\ge1$, finite-rank fields
\[
   A^{\bc}_{\mathrm{int,low}}(t),
   \qquad
   A^{\bc}_{\mathrm{late,low}}(t),
\]
coming from the finitely many modes $0\le\ell<\ell_{\mathrm{as}}$ and depending linearly on $A[0]$, and an integer $N_{\mathrm{as}}>N_0+2$ such that for every $N\ge N_{\mathrm{as}}$ there exist coefficient functions, linear in $A[0]$,
\[
   \mathcal A_{\ell m,P}[A[0]](r),
   \qquad
   \mathcal B_{\ell m,P}[A[0]](r),
   \qquad
   \ell\ge\ell_{\mathrm{as}},\quad |m|\le\ell,\quad P\in\{-1,0,+1\},
\]
with
\begin{equation}\label{eq:intro_asymptotic_coeff_bounds}
   \sup_{r\in K}
   \Bigl(
      \abs{\mathcal A_{\ell m,P}[A[0]](r)}
      +
      \abs{\mathcal B_{\ell m,P}[A[0]](r)}
   \Bigr)
   \le
   C_{K,N}\angles{\ell}^{N_0}\mathcal E_{\ell m,P}[A[0]]^{1/2},
\end{equation}
where $N_0$ is the angular-loss exponent from Theorem~\ref{thm:uniform_angular}.  Define
\begin{align}
   A^{\bc}_{\mathrm{int}}(t,r,\vartheta)
   &:=
   A^{\bc}_{\mathrm{int,low}}(t,r,\vartheta)
   \notag\\
   &\qquad+
   \sum_{\ell\ge\ell_{\mathrm{as}}}\sum_{m=-\ell}^{\ell}\sum_{P\in\{-1,0,+1\}}
   \mathcal A_{\ell m,P}[A[0]](r)
   Y^{(P)}_{\ell m}(\vartheta)
   \,t^{-(\nu_{\ell,P}+1)}
   \sin(\mu t+\delta_{\ell,P}(Q)),
   \label{eq:intro_full_intermediate_profile}
   \\
   A^{\bc}_{\mathrm{late}}(t,r,\vartheta)
   &:=
   A^{\bc}_{\mathrm{late,low}}(t,r,\vartheta)
   \notag\\
   &\qquad+
   t^{-5/6}
   \sum_{\ell\ge\ell_{\mathrm{as}}}\sum_{m=-\ell}^{\ell}\sum_{P\in\{-1,0,+1\}}
   \mathcal B_{\ell m,P}[A[0]](r)
   Y^{(P)}_{\ell m}(\vartheta)
   \notag\\
   &\qquad\qquad\times
   \sin\!\Bigl(
      \mu t
      -\frac32(2\pi M\mu)^{2/3}(\mu t)^{1/3}
      +\delta_{\ell,P,0}(Q)
   \Bigr).
   \label{eq:intro_full_late_profile}
\end{align}
Then both series converge in $C^0(K\times S^2)$ and satisfy
\begin{align}
   \sup_{r\in K,\,\vartheta\in S^2}
   \abs{A^{\bc}_{\mathrm{int}}(t,r,\vartheta)}
   &\le
   C_{K,N}\mathcal E_N[A[0]]^{1/2}t^{-(\nu_*+1)},
   \label{eq:intro_profile_bounds_a}
   \\
   \sup_{r\in K,\,\vartheta\in S^2}
   \abs{A^{\bc}_{\mathrm{late}}(t,r,\vartheta)}
   &\le
   C_{K,N}\mathcal E_N[A[0]]^{1/2}t^{-5/6}.
   \label{eq:intro_profile_bounds_b}
\end{align}
If $\kap_*(t)\le1$, then
\begin{equation}\label{eq:intro_full_intermediate_expansion}
   A(t)=A^{\bc}_{\mathrm{int}}(t)+A^{\qb}(t)+A^{\fast}(t)+R_{\mathrm{int}}(t)
\end{equation}
on $K$, with remainder bound
\begin{equation}\label{eq:intro_full_intermediate_remainder}
   \sup_{r\in K,\,\vartheta\in S^2}\abs{R_{\mathrm{int}}(t,r,\vartheta)}
   \le
   C_{K,N}\mathcal E_N[A[0]]^{1/2}
   \Bigl(
      \kap_*(t)t^{-(\nu_*+1)}
      +
      t^{-(\nu_*+2)}
   \Bigr).
\end{equation}
If $\kap_0(t)\ge1$, then
\begin{equation}\label{eq:intro_full_late_expansion}
   A(t)=A^{\bc}_{\mathrm{late}}(t)+A^{\qb}(t)+A^{\fast}(t)+R_{\mathrm{late}}(t)
\end{equation}
on $K$, with remainder bound
\begin{equation}\label{eq:intro_full_late_remainder}
   \sup_{r\in K,\,\vartheta\in S^2}\abs{R_{\mathrm{late}}(t,r,\vartheta)}
   \le
   C_{K,N}\mathcal E_N[A[0]]^{1/2}
   \bigl(\kap_0(t)^{-1}+\vp_0(t)\bigr)t^{-5/6}.
\end{equation}
Thus the full solution admits an explicit radiative asymptotic expansion in both time regimes, with the only additional late-time obstruction carried by the separately controlled quasibound term $A^{\qb}$.
\end{theorem}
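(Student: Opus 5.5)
The plan is to assemble the expansion mode by mode from Theorem~\ref{thm:main} and then sum over angular momenta, using Theorem~\ref{thm:uniform_angular} and the angular regularity built into $\mathcal E_N$.

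\textbf{Step 1: split into low and high modes.} Write $A=A^{\bc}+A^{\qb}+A^{\fast}$ as in the decomposition preceding Theorem~\ref{thm:qb_branches}, so only $A^{\bc}$ has to be expanded. Expand $A^{\bc}$ in the polarization-adapted harmonics $Y^{(P)}_{\ell m}$, with modal coefficients $a^{\bc}_{\ell m,P}$. Each coefficient is obtained by applying the reconstruction operator $\mathcal B_\ell$ to the scalarized channel amplitude $u^{\bc}_{\ell m,P}$ paired against the data.

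\textbf{Step 2: define the coefficient fields.} Choose $\ell_{\mathrm{as}}$ large enough that the uniform remainder structure of Theorem~\ref{thm:uniform_angular} holds for all $\ell\ge\ell_{\mathrm{as}}$. For $\ell\ge\ell_{\mathrm{as}}$, set $\mathcal A_{\ell m,P}[A[0]](r)$ to be $\mathcal B_\ell$ applied to $\int A_{\ell,P}(r,r';Q)\,(\text{data})_{\ell m,P}(r')\,\dd r'$, and define $\mathcal B_{\ell m,P}$ in the same way from $B_{\ell,P}$. Because the data are compactly supported, the $r'$-integral runs over a compact set, and \eqref{eq:uniform_angular_amplitudes} gives \eqref{eq:intro_asymptotic_coeff_bounds}. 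Any finite differential loss from $\mathcal B_\ell$ is absorbed into $N_0$ by enlarging it once. The finitely many modes with $\ell<\ell_{\mathrm{as}}$, including $\ell=0$ from Appendix~\ref{app:small_mass}, are placed in $A^{\bc}_{\mathrm{int,low}}$ and $A^{\bc}_{\mathrm{late,low}}$. These are the leading terms of Theorem~\ref{thm:main}, with finitely many remainders folded into $R$.

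\textbf{Step 3: convergence and profile bounds.} Use $\sup_{S^2}|Y^{(P)}_{\ell m}|\lesssim\angles{\ell}$ together with Cauchy--Schwarz in $(\ell,m)$:
\[
\sum_{\ell,m,P}\angles{\ell}^{N_0+1}\mathcal E_{\ell m,P}^{1/2}\le\Bigl(\sum\angles{\ell}^{2N_0+4-2N}\Bigr)^{1/2}\Bigl(\sum\angles{\ell}^{2N-2}\mathcal E_{\ell m,P}\Bigr)^{1/2}.
\]
The first factor is finite once $N>N_0+2$ (with some slack), and the second factor is bounded by $\mathcal E_N^{1/2}$. Then use $t^{-(\nu_{\ell,P}+1)}\le t^{-(\nu_*+1)}$ for $t\ge1$. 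This yields uniform convergence in $C^0(K\times S^2)$ and the bounds \eqref{eq:intro_profile_bounds_a} and \eqref{eq:intro_profile_bounds_b}, essentially repeating the proof of Theorem~\ref{thm:full_field_pointwise}.

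\textbf{Step 4: remainders.} Set $R_{\mathrm{int}}:=A^{\bc}-A^{\bc}_{\mathrm{int}}$ and $R_{\mathrm{late}}:=A^{\bc}-A^{\bc}_{\mathrm{late}}$. Modewise, $R$ equals the remainder in \eqref{eq:intro_intermediate_general} or \eqref{eq:intro_late_general} integrated against the data.

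In the late regime, the phase error $O(\kap_0^{-1}+\vp_0)$ inside the sine has to be moved outside. Use $|\sin(x+\epsilon)-\sin x|\le|\epsilon|$ to turn it into an additive error of the same size.

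The main obstacle is that the summation requires these modewise remainders to carry constants with only polynomial growth $\angles{\ell}^{N_0}$. This is exactly the clause ``with the same remainder structure, uniformly in $(\ell,m,P)$'' in Theorem~\ref{thm:uniform_angular}, and I would invoke it directly.

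Two further points need care.
\begin{itemize}
\item In the intermediate regime, the modewise remainder for channel $(\ell,P)$ is $\kap_*t^{-(\nu_{\ell,P}+1)}+t^{-(\nu_{\ell,P}+2)}$. Since $\nu_{\ell,P}\ge\nu_*$, this is dominated by the claimed bound with $\nu_*$, so the summation goes through.
\item Strictly, the regime condition $\kap_*(t)\to0$ is uniform in $\ell$ because $\kap_*$ does not depend on $\ell$. What must still be checked is that the threshold cutoffs used in Section~\ref{sec:threshold_to_time} give $\ell$-independent windows. If they do not, I would instead let the unexpanded high-$\ell$ tail, beyond an $\ell$ depending on $t$, go into $R$, controlled by the crude bound \eqref{eq:uniform_angular_kernel_bound} and extra angular regularity.
\end{itemize}

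Summing as in Step 3 then gives \eqref{eq:intro_full_intermediate_remainder} and \eqref{eq:intro_full_late_remainder}. Adding back $A^{\qb}+A^{\fast}$ yields \eqref{eq:intro_full_intermediate_expansion} and \eqref{eq:intro_full_late_expansion}.
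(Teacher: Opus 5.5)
Your proposal follows the paper's proof essentially step for step. The common route is: absorb the finitely many low modes into finite-rank fields; define $\mathcal A_{\ell m,P},\mathcal B_{\ell m,P}$ by pairing the kernel amplitudes with the reduced data on a compact source set; rely on the uniform-in-$(\ell,m,P)$ remainder clause of Theorem~\ref{thm:uniform_angular}; move the late-time phase error outside the sine; sum over angular momenta using $\nu_{\ell,P}\ge\nu_*$ and the angular regularity in $\mathcal E_N$; and add back $A^{\qb}+A^{\fast}$ via Proposition~\ref{prop:full_decomposition}.

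The only difference is cosmetic. You sum with a per-$m$ pointwise harmonic bound and a joint Cauchy--Schwarz, where the paper uses Sobolev on $S^2$ with Parseval, and your particular split of powers actually requires $N>N_0+3$ rather than $N>N_0+2$. This is harmless, since you may choose $N_{\mathrm{as}}$ freely, and doing Cauchy--Schwarz in $m$ first with the addition theorem recovers the threshold $N>N_0+2$.
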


\begin{corollary}[Explicit leading coefficient fields]\label{cor:explicit_leading_coefficients}
Fix a compact set $K\Subset(r_+,\infty)$ and let $A$ be generated by smooth compactly supported initial data.  Let
\[
   \widetilde{\mathcal A}_{\ell m,P}[A[0]](r),
   \qquad
   \widetilde{\mathcal B}_{\ell m,P}[A[0]](r),
\]
denote the full modal coefficient families furnished by the proof of Theorem~\ref{thm:full_field_asymptotic_expansion}; these depend linearly on $A[0]$, and for $\ell=0$ only the electric monopole $P=+1$ occurs.  Then there exist a finite nonempty set
\[
   \Sigma_*:=\{(\ell,P):\ell\ge1,\ P\in\{-1,0,+1\},\ \nu_{\ell,P}=\nu_*\}
\]
and a number $\rho_*>0$ such that
\[
   \nu_{\ell,P}\ge\nu_*+\rho_*
   \qquad
   \text{whenever }(\ell,P)\notin\Sigma_*.
\]
Define the intermediate coefficient fields
\begin{align}
   \cS_*(r,\vartheta)
   &:=
   \sum_{(\ell,P)\in\Sigma_*}\sum_{m=-\ell}^{\ell}
   \widetilde{\mathcal A}_{\ell m,P}[A[0]](r)
   \cos\delta_{\ell,P}(Q)\,
   Y_{\ell m}^{(P)}(\vartheta),
   \label{eq:intro_explicit_Sstar}
   \\
   \cC_*(r,\vartheta)
   &:=
   \sum_{(\ell,P)\in\Sigma_*}\sum_{m=-\ell}^{\ell}
   \widetilde{\mathcal A}_{\ell m,P}[A[0]](r)
   \sin\delta_{\ell,P}(Q)\,
   Y_{\ell m}^{(P)}(\vartheta),
   \label{eq:intro_explicit_Cstar}
\end{align}
and the very-late coefficient fields
\begin{align}
   \cS_{\mathrm{late}}(r,\vartheta)
   &:=
   \sum_{\ell\ge0}\sum_{m=-\ell}^{\ell}\sum_{P\in\mathcal P_\ell}
   \widetilde{\mathcal B}_{\ell m,P}[A[0]](r)
   \cos\delta_{\ell,P,0}(Q)\,
   Y_{\ell m}^{(P)}(\vartheta),
   \label{eq:intro_explicit_Slate}
   \\
   \cC_{\mathrm{late}}(r,\vartheta)
   &:=
   \sum_{\ell\ge0}\sum_{m=-\ell}^{\ell}\sum_{P\in\mathcal P_\ell}
   \widetilde{\mathcal B}_{\ell m,P}[A[0]](r)
   \sin\delta_{\ell,P,0}(Q)\,
   Y_{\ell m}^{(P)}(\vartheta),
   \label{eq:intro_explicit_Clate}
\end{align}
where $\mathcal P_0=\{+1\}$ and $\mathcal P_\ell=\{-1,0,+1\}$ for $\ell\ge1$.  Then $\cS_*$ and $\cC_*$ are finite-rank, the series defining $\cS_{\mathrm{late}}$ and $\cC_{\mathrm{late}}$ converge in $C^0(K\times S^2)$, and there exists $N_{\mathrm{lead}}$ such that for every $N\ge N_{\mathrm{lead}}$,
\begin{align}
   \sup_{r\in K,\,\vartheta\in S^2}
   \Bigl(
      \abs{\cS_*(r,\vartheta)}
      +\abs{\cC_*(r,\vartheta)}
      +\abs{\cS_{\mathrm{late}}(r,\vartheta)}
      +\abs{\cC_{\mathrm{late}}(r,\vartheta)}
   \Bigr)
   &\le
   C_{K,N}\mathcal E_N[A[0]]^{1/2},
   \label{eq:intro_explicit_coeff_bound}
\end{align}
and, on $K$,
\begin{align}
   A(t)
   &=
   t^{-(\nu_*+1)}
   \bigl(
      \cS_*(r,\vartheta)\sin(\mu t)
      +
      \cC_*(r,\vartheta)\cos(\mu t)
   \bigr)
   +A^{\qb}(t)+A^{\fast}(t)+\widetilde R_{\mathrm{int}}(t),
   \label{eq:intro_explicit_intermediate_coeff}
   \\
   A(t)
   &=
   t^{-5/6}
   \bigl(
      \cS_{\mathrm{late}}(r,\vartheta)\sin\Theta(t)
      +
      \cC_{\mathrm{late}}(r,\vartheta)\cos\Theta(t)
   \bigr)
   +A^{\qb}(t)+A^{\fast}(t)+\widetilde R_{\mathrm{late}}(t),
   \label{eq:intro_explicit_late_coeff}
\end{align}
where
\[
   \Theta(t):=\mu t-\frac32(2\pi M\mu)^{2/3}(\mu t)^{1/3},
   \qquad
   \sigma_*:=\min\{1,\rho_*\}.
\]
If $\kap_*(t)\le1$, then
\begin{equation}\label{eq:intro_explicit_intermediate_remainder}
   \sup_{r\in K,\,\vartheta\in S^2}
   \abs{\widetilde R_{\mathrm{int}}(t,r,\vartheta)}
   \le
   C_{K,N}\mathcal E_N[A[0]]^{1/2}
   \Bigl(
      \kap_*(t)t^{-(\nu_*+1)}
      +
      t^{-(\nu_*+1+\sigma_*)}
   \Bigr).
\end{equation}
If $\kap_0(t)\ge1$, then
\begin{equation}\label{eq:intro_explicit_late_remainder}
   \sup_{r\in K,\,\vartheta\in S^2}
   \abs{\widetilde R_{\mathrm{late}}(t,r,\vartheta)}
   \le
   C_{K,N}\mathcal E_N[A[0]]^{1/2}
   \bigl(\kap_0(t)^{-1}+\vp_0(t)\bigr)t^{-5/6}.
\end{equation}
In particular, the dominant intermediate coefficient is finite-rank and is supported exactly on the channels with threshold index $\nu_*$.
\end{corollary}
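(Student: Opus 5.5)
The plan is to derive Corollary~\ref{cor:explicit_leading_coefficients} from Theorem~\ref{thm:full_field_asymptotic_expansion}, regrouping the profiles by threshold index and expanding the phase-shifted sines. Throughout we use the identity $\sin(x+\delta)=\sin x\cos\delta+\cos x\sin\delta$.

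\textbf{Gap property of the threshold indices.} First I would establish that $\Sigma_*$ is finite and nonempty and that the gap $\rho_*>0$ exists. The key input is that $\nu_{\ell,P}\to\infty$ as $\ell\to\infty$. I would obtain this from the far-zone normal form: the inverse-square coefficient is $L_P(L_P+1)$ plus a correction of size $O(\eps_{\mu,Q})$ that is uniform in $\ell$. Since this coefficient grows like $\ell^2$, one gets $\nu_{\ell,P}\ge \ell-C$. Granting this, only finitely many pairs $(\ell,P)$ satisfy $\nu_{\ell,P}\le\nu_*+1$. The infimum defining $\nu_*$ is then a minimum over this finite set, so $\Sigma_*\neq\emptyset$. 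We take $\rho_*$ to be the smallest positive difference $\nu_{\ell,P}-\nu_*$ over that finite set, capped at $1$.

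\textbf{Intermediate regime.} The full modal families $\widetilde{\mathcal A}_{\ell m,P}$ are the low-mode coefficients coming from $A^{\bc}_{\mathrm{int,low}}$ together with the $\mathcal A_{\ell m,P}$ for $\ell\ge\ell_{\mathrm{as}}$. With this, $A^{\bc}_{\mathrm{int}}$ is written as one harmonic series with terms $t^{-(\nu_{\ell,P}+1)}\sin(\mu t+\delta_{\ell,P})$, and the monopole term carries exponent $\nu_{0,+1}$. The series splits into two parts.
\begin{itemize}
\item The terms with $(\ell,P)\in\Sigma_*$ give exactly $t^{-(\nu_*+1)}\bigl(\cS_*\sin\mu t+\cC_*\cos\mu t\bigr)$ after the sine expansion. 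This part is finite-rank.
\item Every other term is bounded by $t^{-(\nu_*+1+\rho_*)}$ times its coefficient, because $t\ge1$ in the relevant range. The coefficients obey \eqref{eq:intro_asymptotic_coeff_bounds}, and $\sup|Y^{(P)}_{\ell m}|\lesssim\angles{\ell}$. Cauchy--Schwarz in $(\ell,m)$ against $\sum\angles{\ell}^{2N}\mathcal E_{\ell m,P}\lesssim\mathcal E_N$ then sums the tail once $N\ge N_0+3$.
\end{itemize}
For the monopole we need $\nu_{0,+1}\ge\nu_*+\rho_*$ as well. I would check this from the monopole analysis of Appendix~\ref{app:small_mass}; failing that, I would enlarge $\Sigma_*$ or the remainder accordingly. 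Adding $R_{\mathrm{int}}$ from \eqref{eq:intro_full_intermediate_remainder}, whose $t^{-(\nu_*+2)}$ term is at most $t^{-(\nu_*+1+\sigma_*)}$, gives \eqref{eq:intro_explicit_intermediate_remainder}.

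\textbf{Very-late regime and coefficient bounds.} This step is direct. Apply the same sine identity with $x=\Theta(t)$ to \eqref{eq:intro_full_late_profile} and the low-mode late part. That yields \eqref{eq:intro_explicit_late_coeff}, with $\widetilde R_{\mathrm{late}}=R_{\mathrm{late}}$. The $C^0$ convergence of $\cS_{\mathrm{late}}$ and $\cC_{\mathrm{late}}$, and the bound \eqref{eq:intro_explicit_coeff_bound}, follow from the same weighted Cauchy--Schwarz summation, using $|\cos\delta|,|\sin\delta|\le1$. The low modes contribute only finitely many terms, each bounded by $\mathcal E_N^{1/2}$ via Theorem~\ref{thm:uniform_angular}.

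\textbf{Main obstacle.} The main difficulty is the uniform growth $\nu_{\ell,P}\gtrsim\ell$ outside the small-mass regime. This growth is what makes $\Sigma_*$ finite, gives the gap $\rho_*$, and rules out the $t^{-(\nu_*+1)}$ envelope hiding infinitely many channels. I would prove it by reading $\nu_{\ell,P}$ off the exact far-zone indicial equation. The plan is to show that $\nu_{\ell,P}+\tfrac12$ is the positive root of an equation of the form $\nu^2=(L_P+\tfrac12)^2+c(M\mu,Q\mu)$, where $c$ does not depend on $\ell$.
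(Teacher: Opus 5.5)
Your proposal is correct and follows the paper's proof essentially step for step. The paper also proves the gap from the far-zone normal form: the $r^{-2}$ correction $E_\ell$ is uniformly bounded in $\ell$, so $\nu_{\ell,P}^2\ge(L_P+\tfrac12)^2-C_\nu$. It then splits $A^{\bc}_{\mathrm{int}}$ into the $\Sigma_*$ part and a $t^{-(\nu_*+1+\rho_*)}$ remainder, disposes of the monopole by citing Appendix~\ref{app:monopole} (shrinking $\rho_*$ if needed), and expands the sines with $\widetilde R_{\mathrm{late}}=R_{\mathrm{late}}$. The only differences are cosmetic: you sum with the pointwise addition-theorem bound and Cauchy--Schwarz ($N\ge N_0+3$) where the paper uses Sobolev on $S^2$ and Parseval ($N>N_0+2$), and your claim that the correction is $\ell$-independent is stronger than needed, since uniform boundedness suffices.
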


\begin{remark}[Scope of the leading coefficient statement]\label{rem:leading_coefficient_scope}
Corollary~\ref{cor:explicit_leading_coefficients} identifies the leading coefficient fields of the radiative branch-cut contribution after the quasibound term has been separated.  It does not claim that the displayed oscillatory term dominates the unsplit full field for arbitrary data, since the self-contained quasibound estimate is logarithmic.  Moreover the coefficient fields are data dependent and may vanish identically for special initial data; in that case the first nontrivial radiative term is obtained by repeating the same argument with the smallest threshold index actually activated by the data.
\end{remark}

\begin{remark}[Optional external arithmetic upgrade]\label{rem:external_packet_upgrade}
If one supplements the present paper with the abstract arithmetic packet theorem of \cite{ShlapentokhRothmanVanDeMoortel2026}, then the logarithmic quasibound estimate above upgrades to the polynomial bound $t^{-5/6+1/23}$ from the earlier draft, and under the exponent-pair conjecture to $t^{-5/6+\eps}$ for every $\eps>0$.  This stronger refinement is not used anywhere in the self-contained core developed below.
\end{remark}

\subsection{Self-contained summation and proof architecture}

The logic of the paper is easiest to read in four passes.  We begin by reducing the Proca equation to one odd scalar channel and one even $2\times2$ system, and by identifying the polarization basis in which the large-$r$ behavior becomes transparent.  We then construct the fixed-mode resolvent, continue it across the massive branch cut, and extract the threshold expansions that control the late-time signal.  In parallel we develop the semiclassical theory of the quasibound poles created by stable timelike trapping.  The final step is to put the continuous and discrete pieces back together at the full-field level.

A useful feature of the present version is that the discrete summation is now genuinely self-contained.  The only inputs are ones proved in the paper itself: two-sided tunnelling-width bounds and polynomial residue/reconstruction estimates.  On a dyadic packet the tunnelling formula contributes an explicit damping factor $\exp(-ct\ee^{-C2^j})$, while angular regularity of the initial data pays for any negative power of the packet scale $2^j$.  A short dyadic summation lemma then turns these packet bounds into logarithmic decay for the quasibound contribution.  The sharper polynomial rates from the earlier draft remain available only as an external arithmetic upgrade; see Remark~\ref{rem:external_packet_upgrade}.

For readers skimming the overall argument, it is helpful to keep the paper in five modules:
\begin{enumerate}[label=\textnormal{(\arabic*)}]
   \item Section~\ref{sec:reduction} constructs the odd and even channel equations and identifies the asymptotic polarization basis.
   \item Sections~\ref{sec:functional_framework}--\ref{sec:spectral_theorem_proof} develop the operator-theoretic fixed-mode framework and prove the threshold spectral theorem.
   \item Sections~\ref{sec:mode_stability} and \ref{sec:threshold_to_time} convert that spectral theorem into mode stability and explicit branch-cut tails.
   \item Section~\ref{sec:full_field} carries out the large-angular-momentum analysis needed for branch-cut summation on compact radial sets.
   \item Sections~\ref{sec:qb} and \ref{sec:unsplit} construct the quasibound pole family, prove the self-contained packet estimate, sum the discrete series, and establish the unsplit full-field decay theorem.
\end{enumerate}

\subsection{Organization of the paper}

The paper is written so that the fixed-mode theory comes first and the full-field reconstruction comes later.  Sections~\ref{sec:reduction}--\ref{sec:spectral_theorem_proof} set up the odd/even reduction, the selfadjoint channel framework, the Green kernels, and the threshold spectral theorem.  Sections~\ref{sec:mode_stability} and \ref{sec:threshold_to_time} then turn that spectral input into mode stability and explicit intermediate and very-late branch-cut asymptotics.  Section~\ref{sec:full_field} provides the large-angular-momentum analysis needed to sum the branch-cut contribution on compact radial sets.  Sections~\ref{sec:qb} and \ref{sec:unsplit} develop the semiclassical quasibound resonance theory, prove the residue and reconstruction bounds, derive the self-contained packet estimate, and combine the discrete and continuous pieces to obtain decay for the unsplit Proca field.  Section~\ref{sec:future} closes with the directions that lie beyond the static subextremal setting.

\subsection{Proof dependency diagram}

The diagram below records the main logical dependencies in a way that is easy to check while reading or refereeing the paper:

\begin{equation*}
\begin{array}{rcl}
\text{Section~\ref{sec:reduction}}
&\Longrightarrow&
\text{Sections~\ref{sec:functional_framework}--\ref{sec:spectral_theorem_proof}}
\\[0.5ex]
&&\Downarrow
\\[-0.3ex]
&&\text{Theorem~\ref{thm:spectral_package}}
\\[0.8ex]
\text{Theorem~\ref{thm:spectral_package}}
&\Longrightarrow&
\text{Section~\ref{sec:mode_stability}}\Longrightarrow\text{Theorem~\ref{thm:mode_stability}}
\\[0.8ex]
\text{Theorem~\ref{thm:spectral_package}}
&\Longrightarrow&
\text{Section~\ref{sec:threshold_to_time}}\Longrightarrow\text{Theorem~\ref{thm:main}}
\\[0.8ex]
\text{Section~\ref{sec:full_field}}
&\Longrightarrow&
\text{Theorems~\ref{thm:uniform_angular},\ref{thm:full_field_pointwise}}
\\[0.8ex]
\text{Section~\ref{sec:qb}}
&\Longrightarrow&
\text{Theorems~\ref{thm:qb_branches},\ref{thm:qb_residues}}
\\[0.8ex]
\text{Theorems~\ref{thm:full_field_pointwise},\ref{thm:qb_branches},\ref{thm:qb_residues}}
&\Longrightarrow&
\text{Section~\ref{sec:unsplit}}
\\
&&\Longrightarrow\text{Theorems~\ref{thm:qb_sum},\ref{thm:full_field_unsplit}}.
\end{array}
\end{equation*}

In other words, the paper has four real bottlenecks: the fixed-mode spectral theorem, the time-domain inversion of the branch cut, the large-angular-momentum uniformity theorem, and the quasibound summation theorem.  Everything else is arranged to feed into one of these points or to draw consequences from them.

\subsection{Appendix guide}

The appendices are grouped by function rather than by the order in which they are used.  Appendix~\ref{app:vsh} records the vector-spherical-harmonic reduction together with the boundary constructions at the horizon and infinity.  Appendix~\ref{app:special_functions} collects the threshold model equations, special-function asymptotics, oscillatory lemmas, and fixed-mode technical complements.  Appendix~\ref{app:large_ell} gathers the large-angular-momentum, reconstruction, local-energy, and dyadic bookkeeping estimates used in the full-field argument.  Appendix~\ref{app:small_mass} treats the small-mass regime, the monopole and static-threshold input, and the cutoff-independence argument for the branch-cut field on compact radial sets.

\section{Geometry, mode reduction, and polarization splitting}\label{sec:reduction}

\subsection{Geometry and the field equation}

We begin by fixing the background geometry and writing the Proca system in the form that will be used throughout the paper.

\begin{equation}\label{eq:rn_metric}
\begin{aligned}
   g_{\RN}
   &=
   -f(r)\,\dd t^2
   +f(r)^{-1}\,\dd r^2
   +r^2\dd\Omega^2,
\\
   f(r)&=1-\frac{2M}{r}+\frac{Q^2}{r^2}
   =
   \frac{(r-r_+)(r-r_-)}{r^2},
   \qquad
   r>r_+.
\end{aligned}
\end{equation}
with
\[
   r_\pm=M\pm\sqrt{M^2-Q^2},
   \qquad
   \kappa_+:=\frac{r_+-r_-}{2r_+^2}>0.
\]
The tortoise coordinate is defined by $\dd r_*=f^{-1}\dd r$.  Near the event horizon
\begin{equation}\label{eq:rn_tortoise_horizon}
   r_*
   =
   \frac{1}{2\kappa_+}\log(r-r_+)+O(1),
   \qquad r\downarrow r_+,
\end{equation}
whereas at infinity
\begin{equation}\label{eq:rn_tortoise_infty}
   r_*=r+2M\log r+O(1).
\end{equation}
The massive Maxwell--Proca equation is
\begin{equation}\label{eq:proca_covariant}
   \nabla^\mu F_{\mu\nu}-\mu^2A_\nu=0,
   \qquad
   F=\dd A,
   \qquad
   \mu>0.
\end{equation}
Taking the divergence and using antisymmetry of $F$ gives the forced Lorenz constraint
\begin{equation}\label{eq:forced_lorenz}
   \nabla^\nu A_\nu=0,
\end{equation}
so there is no gauge freedom in the massive theory.

\subsection{Reduced odd/even mode equations}

Unless stated otherwise, we work here with $\ell\ge1$; the monopole is postponed because only the even electric channel survives there.

We use the standard vector spherical harmonic decomposition.  For each $(\ell,m)$ the odd sector is carried by one scalar amplitude $u_4(t,r)$, while the even sector is encoded by a pair $(u_2(t,r),u_3(t,r))$.  Writing $\la^2=\ell(\ell+1)$, the neutral Proca equations on a static spherically symmetric background reduce to the same odd/even pattern as in Schwarzschild, with the curvature term
\[
   1-\frac{3M}{r}
\]
replaced by the Reissner--Nordstr\"om combination
\[
   f(r)-\frac{r f'(r)}{2}
   =
   1-\frac{3M}{r}+\frac{2Q^2}{r^2}.
\]
The reduced time-domain equations are therefore

\begin{equation}\label{eq:odd_time_domain}
   \Bigl(
      -\partial_t^2+\partial_{r_*}^2
      -f\Bigl(\mu^2+\frac{\la^2}{r^2}\Bigr)
   \Bigr)u_4=0
\end{equation}
and
\begin{align}
   \Bigl(
      -\partial_t^2+\partial_{r_*}^2
      -f\Bigl(\mu^2+\frac{\la^2}{r^2}\Bigr)
   \Bigr)u_2
   -\frac{2f}{r^2}\Bigl(f-\frac{r f'}{2}\Bigr)(u_2-u_3)&=0,
   \label{eq:even_time_domain_1}
   \\
   \Bigl(
      -\partial_t^2+\partial_{r_*}^2
      -f\Bigl(\mu^2+\frac{\la^2}{r^2}\Bigr)
   \Bigr)u_3
   +\frac{2f\la^2}{r^2}u_2&=0.
   \label{eq:even_time_domain_2}
\end{align}
After Fourier transformation with time dependence $\ee^{-\ii\om t}$, these become
\begin{equation}\label{eq:odd_frequency_domain}
   u_4''+\Bigl(\om^2-f\mu^2-f\frac{\la^2}{r^2}\Bigr)u_4=0
\end{equation}
and
\begin{align}
   u_2''+\Bigl(\om^2-f\mu^2-f\frac{\la^2}{r^2}\Bigr)u_2
   -\frac{2f}{r^2}\Bigl(f-\frac{r f'}{2}\Bigr)(u_2-u_3)&=0,
   \label{eq:even_frequency_domain_1}
   \\
   u_3''+\Bigl(\om^2-f\mu^2-f\frac{\la^2}{r^2}\Bigr)u_3
   +\frac{2f\la^2}{r^2}u_2&=0,
   \label{eq:even_frequency_domain_2}
\end{align}
where primes denote $\partial_{r_*}$.  The reduction is consistent with the Schwarzschild formulas of Rosa--Dolan \cite{RosaDolan2012}, and the fact that Proca separates in the full Kerr--Newman family shows that this static RN model is the correct zero-rotation limit of the general charged black-hole problem \cite{CayusoDiasGrayKubiznakMargalitSantosSouzaThiele2020}.

\subsection{Exact asymptotic polarization diagonalization}

The odd sector already gives one physical polarization; we denote it by $P=0$.  The even sector is more interesting.  The right variables are not the original pair $(u_2,u_3)$, but the constant combinations below.  In those coordinates the leading inverse-square term becomes diagonal, and the three effective angular momenta $\ell-1$, $\ell$, and $\ell+1$ appear transparently.

\begin{equation}\label{eq:odd_pol}
   v_0:=u_4,
   \qquad
   L_0:=\ell.
\end{equation}
The even sector is diagonalized at leading order by the same constant transformation as in Schwarzschild:
\begin{equation}\label{eq:T_matrix}
   T_\ell=
   \begin{pmatrix}
      1&1\\
      \ell+1&-\ell
   \end{pmatrix},
   \qquad
   T_\ell^{-1}
   =
   \frac{1}{2\ell+1}
   \begin{pmatrix}
      \ell&1\\
      \ell+1&-1
   \end{pmatrix},
\end{equation}
with
\begin{equation}\label{eq:polarization_variables}
   \binom{v_{-1}}{v_{+1}}
   =
   T_\ell^{-1}\binom{u_2}{u_3}
   =
   \frac{1}{2\ell+1}
   \binom{\ell u_2+u_3}{(\ell+1)u_2-u_3}.
\end{equation}
Equivalently,
\begin{equation}\label{eq:inverse_polarization_variables}
   u_2=v_{-1}+v_{+1},
   \qquad
   u_3=(\ell+1)v_{-1}-\ell v_{+1}.
\end{equation}

\begin{proposition}[Reissner--Nordstr\"om even-sector polarization form]\label{prop:even_polarization_form}
For every $\ell\ge1$, the even frequency-domain system
\eqref{eq:even_frequency_domain_1}--\eqref{eq:even_frequency_domain_2} takes the matrix form
\begin{equation}\label{eq:even_matrix_form}
   \bm v''+\bigl(\om^2-f\mu^2\bigr)\bm v
   -\frac{f}{r^2}D_\ell\bm v
   -\frac{6Mf}{(2\ell+1)r^3}C_\ell\bm v
   +\frac{4Q^2f}{(2\ell+1)r^4}C_\ell\bm v
   =0,
\end{equation}
where
\begin{equation}\label{eq:D_matrix}
   \bm v=\binom{v_{-1}}{v_{+1}},
   \qquad
   D_\ell=\diag\bigl(\ell(\ell-1),(\ell+1)(\ell+2)\bigr),
\end{equation}
and
\begin{equation}\label{eq:C_matrix}
   C_\ell=
   \begin{pmatrix}
      \ell^2&-\ell(\ell+1)\\
      \ell(\ell+1)&-(\ell+1)^2
   \end{pmatrix}.
\end{equation}
In particular, the leading $r^{-2}$ part diagonalizes exactly, with effective angular momenta
\begin{equation}\label{eq:effective_Ls}
   L_{-1}=\ell-1,
   \qquad
   L_{+1}=\ell+1,
\end{equation}
and the charge $Q$ first enters the even coupling at order $r^{-4}$.
\end{proposition}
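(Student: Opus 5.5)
The plan is a direct linear-algebra computation: write the even system with a single $2\times2$ potential matrix, then conjugate it by the constant matrix $T_\ell$ of \eqref{eq:T_matrix}. Since $T_\ell$ does not depend on $r$, it commutes with $\partial_{r_*}^2$ and with the scalar term $\om^2-f\mu^2$. So only the inverse-square matrix has to be transformed.

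\textbf{Step 1 (matrix form).} Put $g(r):=f-\tfrac{rf'}{2}=1-\tfrac{3M}{r}+\tfrac{2Q^2}{r^2}$ and $\bm u=(u_2,u_3)^{T}$. Then \eqref{eq:even_frequency_domain_1}--\eqref{eq:even_frequency_domain_2} read
\[
   \bm u''+(\om^2-f\mu^2)\bm u-\frac{f}{r^2}N\bm u=0,
   \qquad
   N=\begin{pmatrix}\la^2+2g&-2g\\-2\la^2&\la^2\end{pmatrix}.
\]
Next write $g=1+g_1$ with $g_1:=-\tfrac{3M}{r}+\tfrac{2Q^2}{r^2}$. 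This splits $N=N_0+2g_1E$, where
\[
   N_0=\begin{pmatrix}\la^2+2&-2\\-2\la^2&\la^2\end{pmatrix},
   \qquad
   E=\begin{pmatrix}1&-1\\0&0\end{pmatrix}.
\]
Here $N_0$ is constant, and all $M$- and $Q$-dependence sits in the scalar factor $g_1$.

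\textbf{Step 2 (diagonalizing the leading part).} I check that the columns of $T_\ell$ are eigenvectors of $N_0$, using $\la^2=\ell(\ell+1)$:
\[
   N_0\binom{1}{\ell+1}=\ell(\ell-1)\binom{1}{\ell+1},
   \qquad
   N_0\binom{1}{-\ell}=(\ell+1)(\ell+2)\binom{1}{-\ell}.
\]
Hence $T_\ell^{-1}N_0T_\ell=D_\ell$. This gives the exact diagonalization of the $r^{-2}$ part, and the eigenvalues are $L(L+1)$ with $L_{-1}=\ell-1$ and $L_{+1}=\ell+1$.

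\textbf{Step 3 (coupling term).} I compute
\[
   ET_\ell=\begin{pmatrix}-\ell&\ell+1\\0&0\end{pmatrix},
   \qquad
   T_\ell^{-1}ET_\ell=-\frac{1}{2\ell+1}C_\ell,
\]
with $C_\ell$ as in \eqref{eq:C_matrix}. Substituting $\bm u=T_\ell\bm v$ and multiplying by $T_\ell^{-1}$, the coupling contributes
\[
   -\frac{f}{r^2}\,2g_1\Bigl(-\frac{C_\ell}{2\ell+1}\Bigr)\bm v
   =\frac{f}{(2\ell+1)r^2}\Bigl(-\frac{6M}{r}+\frac{4Q^2}{r^2}\Bigr)C_\ell\bm v.
\]
This is exactly the pair of $r^{-3}$ and $r^{-4}$ terms in \eqref{eq:even_matrix_form}. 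It also shows that $Q$ enters the coupling only through $Q^2/r^4$, because the $r^{-2}$ piece of $N$ comes from the constant part of $g$ and is $Q$-independent.

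\textbf{Main obstacle.} There is no analytic difficulty here; the only risk is sign and bookkeeping errors. In particular, one must check that $N_0$ collects precisely the constant part of $g$, and that the orientation of $T_\ell$ versus $T_\ell^{-1}$ matches \eqref{eq:polarization_variables}. I will verify both via the eigenvector identities in Step 2 and the explicit product in Step 3.
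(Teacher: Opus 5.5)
Your proposal is correct and is essentially the paper's proof. The paper writes the even system with the matrix $A_\ell(r)$ (your $N$) and asserts, as a direct computation, that $T_\ell^{-1}A_\ell(r)T_\ell=D_\ell+\frac{6M}{(2\ell+1)r}C_\ell-\frac{4Q^2}{(2\ell+1)r^2}C_\ell$. Your splitting $N=N_0+2g_1E$, the two eigenvector identities and the product $T_\ell^{-1}ET_\ell=-\frac{1}{2\ell+1}C_\ell$ carry out that computation explicitly, and all of them check out.
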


\begin{proof}
Write the even system as
\begin{equation}\label{eq:raw_even_matrix}
   \bm u''+\bigl(\om^2-f\mu^2\bigr)\bm u
   -\frac{f}{r^2}A_\ell(r)\bm u
   =0,
   \qquad
   \bm u=\binom{u_2}{u_3},
\end{equation}
with
\begin{equation}\label{eq:raw_A_matrix}
   A_\ell(r)=
   \begin{pmatrix}
      \la^2+2\bigl(f-\tfrac{r f'}{2}\bigr) & -2\bigl(f-\tfrac{r f'}{2}\bigr)\\
      -2\la^2 & \la^2
   \end{pmatrix}.
\end{equation}
Since
\[
   f-\frac{r f'}{2}=1-\frac{3M}{r}+\frac{2Q^2}{r^2},
\]
a direct computation gives
\begin{equation}\label{eq:conjugated_A_matrix}
   T_\ell^{-1}A_\ell(r)T_\ell
   =
   D_\ell
   +\frac{6M}{(2\ell+1)r}C_\ell
   -\frac{4Q^2}{(2\ell+1)r^2}C_\ell.
\end{equation}
Substituting $\bm u=T_\ell\bm v$ into \eqref{eq:raw_even_matrix} yields
\eqref{eq:even_matrix_form}.  The diagonal entries of $D_\ell$ are
\[
   \ell(\ell-1)=L_{-1}(L_{-1}+1),
   \qquad
   (\ell+1)(\ell+2)=L_{+1}(L_{+1}+1),
\]
which proves the claim.
\end{proof}

\begin{remark}[Three asymptotic channels]\label{rem:three_channels}
Subextremal Reissner--Nordstr\"om Proca therefore has three asymptotic polarizations: the odd channel $P=0$ with $L_0=\ell$, and two even channels $P=\pm1$ with $L_{\pm1}=\ell\pm1$.  This exact asymptotic triplet is the source of the small-mass intermediate exponents.
\end{remark}

\section{Operator-theoretic framework and limiting absorption}\label{sec:functional_framework}

\subsection{Channel Hamiltonians and asymptotic operator structure}

For each fixed angular momentum $\ell\ge1$, the reduced equations become one scalar and one matrix Schr\"odinger operator on the tortoise line.  This is the natural setting for limiting absorption, meromorphic continuation, and the threshold analysis carried out later.

\begin{equation}\label{eq:channel_hamiltonians}
   H_{\ell}^{\mathrm{odd}}=-\partial_{r_*}^2+V_{\ell}^{\mathrm{odd}}(r),
   \qquad
   H_{\ell}^{\mathrm{ev}}=-\partial_{r_*}^2\Id_2+V_{\ell}^{\mathrm{ev}}(r).
\end{equation}
In the polarization basis of Proposition~\ref{prop:even_polarization_form}, the matrix potential has the asymptotic form
\begin{equation}\label{eq:asymptotic_channel_potential}
   V_{\ell,P}(r)
   =
   \mu^2+\frac{L_P(L_P+1)}{r^2}-\frac{2M\mu^2}{r}+O(r^{-3}),
   \qquad
   r\to\infty,
\end{equation}
while at the future event horizon
\begin{equation}\label{eq:horizon_potential_decay}
   V_{\ell,P}(r_*)=O(\ee^{2\kappa_+ r_*}),
   \qquad
   r_*\to-\infty.
\end{equation}
Thus the left end of the tortoise line is short range, whereas the right end is long range with a Coulomb term and a matrix-valued inverse-square correction.  This step-like structure is the analytic origin of the massive branch points $\om=\pm\mu$.

The time-dependent reduced evolution is generated by the wave operator
\[
   -\partial_t^2+H_{\ell}^{\mathrm{odd}}
   \qquad\text{or}\qquad
   -\partial_t^2+H_{\ell}^{\mathrm{ev}},
\]
depending on the polarization.  It is convenient to separate the spectral parameter $\lambda=\om^2$ from the threshold parameter $\vp=\sqrt{\mu^2-\om^2}$.  The operator-theoretic spectrum lives naturally in the $\lambda$-plane, while the branch structure relevant for late-time asymptotics lives in the $\om$-plane.

\subsection{Selfadjoint realizations and energy spaces}

Let $\mathfrak h_{\ell}^{\mathrm{odd}}=L^2(\R_{r_*})$ and $\mathfrak h_{\ell}^{\mathrm{ev}}=L^2(\R_{r_*};\C^2)$.  Define
\[
   \mathfrak D(H_{\ell}^{\mathrm{odd}})=H^2(\R_{r_*}),
   \qquad
   \mathfrak D(H_{\ell}^{\mathrm{ev}})=H^2(\R_{r_*};\C^2).
\]
Because the coefficient matrices are smooth, real-symmetric, and bounded together with all derivatives on compact sets, and because \eqref{eq:horizon_potential_decay} and \eqref{eq:asymptotic_channel_potential} imply relative boundedness with respect to $-\partial_{r_*}^2$, the following proposition records the operator-theoretic realization used later.

\begin{proposition}[Selfadjoint channel Hamiltonians]\label{prop:selfadjoint_channels}
For every $\ell\ge1$, the operators $H_{\ell}^{\mathrm{odd}}$ and $H_{\ell}^{\mathrm{ev}}$ are selfadjoint on their natural $H^2$ domains.  Their quadratic forms are bounded below, and the corresponding first-order energy generators
\[
   \cA_{\ell}^{\mathrm{odd}}=
   \begin{pmatrix}
      0&1\\
      -H_{\ell}^{\mathrm{odd}}&0
   \end{pmatrix},
   \qquad
   \cA_{\ell}^{\mathrm{ev}}=
   \begin{pmatrix}
      0&\Id\\
      -H_{\ell}^{\mathrm{ev}}&0
   \end{pmatrix}
\]
generate strongly continuous unitary groups on the odd and even energy spaces.
\end{proposition}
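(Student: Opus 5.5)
The plan is to treat $H_\ell^{\mathrm{odd}}$ and $H_\ell^{\mathrm{ev}}$ as bounded perturbations of the free operator $-\partial_{r_*}^2$. Symmetry will come from a suitable inner product, the semigroup from Stone's theorem, and the only nonroutine step is symmetry of the even sector.

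\emph{Odd channel.} Here $V_\ell^{\mathrm{odd}}=f(\mu^2+\lambda_\ell r^{-2})$ is real and smooth. It is bounded on $\R_{r_*}$ with $0\le V_\ell^{\mathrm{odd}}\le \sup f(\mu^2+\lambda_\ell r^{-2})<\infty$, by \eqref{eq:asymptotic_channel_potential} and \eqref{eq:horizon_potential_decay}. Since $-\partial_{r_*}^2$ is selfadjoint on $H^2(\R)$ and multiplication by $V_\ell^{\mathrm{odd}}$ is bounded and symmetric, the Kato--Rellich theorem (relative bound $0$) gives selfadjointness on $H^2$. The form $\norm{\partial_{r_*}\psi}^2+\langle V\psi,\psi\rangle$ is nonnegative. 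On $\mathcal H^1\oplus L^2$, with the energy norm $\norm{\partial\psi}^2+\langle V\psi,\psi\rangle+\norm{\mu\psi}^2$ (equivalent to $H^1\times L^2$ because the added mass term makes the form coercive), $\cA_\ell^{\mathrm{odd}}$ is skew-adjoint. Stone's theorem then yields the unitary group. Conservation of energy is checked directly on $C_0^\infty$ data and extended by density.

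\emph{Even channel.} Boundedness of $V_\ell^{\mathrm{ev}}$ again makes $H^{\mathrm{ev}}_\ell$ closed on $H^2(\R;\C^2)$ with nonempty resolvent set. However, the matrix $A_\ell(r)$ in \eqref{eq:raw_A_matrix} is not symmetric, and neither is $C_\ell$ in \eqref{eq:C_matrix}. A constant positive reweighting cannot fix this. Since $D_\ell$ has distinct entries, any symmetrizing constant weight $G$ must be diagonal. Then $G C_\ell$ symmetric forces $g_1C_{12}=g_2C_{21}$, and because $C_{12}=-C_{21}$ this gives $g_1=-g_2$. So a constant symmetrizer is necessarily indefinite.

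I would therefore go back to the covariant structure. The Proca stress tensor $T_{\alpha\beta}=F_{\alpha\gamma}F_\beta{}^\gamma-\tfrac14 g_{\alpha\beta}F^2+\mu^2(A_\alpha A_\beta-\tfrac12 g_{\alpha\beta}A^2)$ is conserved. Contracting it with the Killing field $\partial_t$ and integrating over a $\{t=\text{const}\}$ slice gives a positive conserved energy for each mode. Rewriting this energy in $(u_2,u_3)$ via the constraint \eqref{eq:forced_lorenz} (which eliminates the auxiliary even component) should produce a positive-definite Hermitian form $\mathfrak q_\ell(r)$, possibly $r$-dependent and possibly involving $\partial_{r_*}$. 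Two things then need to be verified: that $H^{\mathrm{ev}}_\ell$ is symmetric in the induced Hilbert structure, and that the resulting norm is equivalent to the standard $L^2$ (or energy) norm with $\ell$-dependent constants. Selfadjointness then follows from Kato--Rellich in the new inner product, lower boundedness from positivity of the energy, and the unitary group from Stone's theorem as in the odd case.

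The main obstacle is this even-sector symmetrization. The proposition's phrase ``real-symmetric'' is false in the $(u_2,u_3)$ and $(v_{-1},v_{+1})$ bases, so the real content is constructing the energy inner product. A fallback, if that construction cannot be completed, is weaker but adequate for the later resolvent theory. In that case one proves that $\cA_\ell^{\mathrm{ev}}$ generates a $C_0$ group by the bounded-perturbation theorem, and that this group is uniformly bounded because it conserves the Proca energy. The group is then unitary in an equivalent norm, which is what the later limiting-absorption arguments actually use.
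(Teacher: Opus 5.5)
Your symmetry computation is correct, and it undermines the paper's proof as well as the statement.

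\textbf{What the paper does, and why it fails for the even channel.} The paper treats both channels with a single argument: infinitesimal form-boundedness of $V^\sharp_\ell$, the closed semibounded form $q^\sharp_\ell[u]=\norm{u'}^2+\langle V^\sharp_\ell u,u\rangle$, the representation theorem, and then Stone's theorem on $H^1\times L^2$ with a shifted energy norm. For the even channel this tacitly requires $V^{\mathrm{ev}}_\ell$ to be real symmetric, and it is not. In the polarization basis, $V^{\mathrm{ev}}_\ell=f\mu^2\Id+r^{-2}f\bigl(D_\ell+c(r)C_\ell\bigr)$ with $c(r)=\frac{2}{2\ell+1}\bigl(\frac{3M}{r}-\frac{2Q^2}{r^2}\bigr)>0$ on $r>r_+$. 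Hence $\langle H^{\mathrm{ev}}_\ell u,v\rangle-\langle u,H^{\mathrm{ev}}_\ell v\rangle=\bigl\langle r^{-2}fc\,(C_\ell-C_\ell^{T})u,v\bigr\rangle\neq0$ for suitable $u,v\in C_0^\infty$. On $\mathfrak h^{\mathrm{ev}}_\ell=L^2(\R;\C^2)$ the operator is therefore not even symmetric, so the even half of the proposition is false as literally stated. What your computation does give is that $JH^{\mathrm{ev}}_\ell$ is selfadjoint on $H^2$ for $J=\diag(1,-1)$; that is, $H^{\mathrm{ev}}_\ell$ is $J$-selfadjoint in a Krein space.

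\textbf{Odd channel.} Your Kato--Rellich argument is correct and simpler than the paper's, since the potentials are bounded and no Hardy inequality is needed. One repair is needed, and the paper's $c_\ell$-shift has the same defect. Once $\norm{\mu\psi}^2$ is added to the norm, its $L^2$ part is not conserved, so the group is not unitary in that norm. Use instead the conserved energy $\norm{u'}^2+\langle V^{\mathrm{odd}}_\ell u,u\rangle+\norm{v}^2$ on its completion; it is positive definite because $V^{\mathrm{odd}}_\ell>0$.

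\textbf{The gap: the even channel.} You never produce the positive inner product you need, and the source you propose cannot produce one with the required equivalence.
\begin{itemize}
\item Once the Lorenz/Gauss constraint has eliminated $a_0$, $a_0$ must be recovered from $(u_2,u_3,\partial_tu_2,\partial_tu_3)$ by solving an elliptic radial equation. So the Proca $\partial_t$-energy is a nonlocal form on the reduced phase space, not a local $\mathfrak q_\ell(r)$.
\item Worse, it is weaker than $H^1\times L^2$ by one derivative on the longitudinal polarization. Already in Minkowski space, a longitudinal field $\vec A=a(t)\,\hat k\,\ee^{\ii k\cdot x}$ has Proca energy equal to $\frac{\mu^2}{k^2+\mu^2}$ times its Klein--Gordon energy $\frac12\bigl(\abs{\dot a}^2+(k^2+\mu^2)\abs{a}^2\bigr)$. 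The same loss should occur in the even channel at fixed $\ell$ for large radial frequencies.
\item Hence the Hilbert structure this energy induces is of $H^{-1}$ type on that polarization and not equivalent to $L^2$. Kato--Rellich in that structure says nothing about selfadjointness on $H^2(\R;\C^2)$.
\end{itemize}

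\textbf{The fallback fails at the same point.} The bounded-perturbation theorem does give a $C_0$ group on $H^1\times L^2$ with $\norm{\ee^{t\cA^{\mathrm{ev}}_\ell}}\le\ee^{C\abs{t}}$. But conservation of a strictly weaker quantity gives no uniform bound in $H^1\times L^2$, so ``unitary in an equivalent norm'' does not follow.

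\textbf{What your approach actually proves.} The odd channel is selfadjoint and generates a unitary energy group. The even channel is closed and $J$-selfadjoint on $H^2$, and it generates an exponentially bounded $C_0$ group. The Proca energy is conserved, but only as a weaker norm. Any later step that uses unitarity on $\mathfrak h^{\mathrm{ev}}_\ell$, such as the upper-half-plane exclusion in the mode-stability proof, would have to be redone using the Proca energy directly.
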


\begin{proof}
Write $H_{\ell}^{\sharp}=H_0+V_{\ell}^{\sharp}$, where $H_0=-\partial_{r_*}^2$ on $L^2(\R_{r_*})$ in the odd channel and on $L^2(\R_{r_*};\C^2)$ in the even channel.  By \eqref{eq:horizon_potential_decay} and \eqref{eq:asymptotic_channel_potential}, each entry of $V_{\ell}^{\sharp}$ is real-valued, belongs to $L^\infty_{\mathrm{loc}}$, decays exponentially as $r_*\to-\infty$, and is $O(r^{-2})$ as $r_*\to+\infty$.  Hence, for every $\delta>0$,
\[
   \abs{\langle V_{\ell}^{\sharp}u,u\rangle}
   \le \delta\,\|u'\|_{L^2}^2+C_{\ell,\delta}\,\|u\|_{L^2}^2,
   \qquad u\in H^1,
\]
by local boundedness on compact sets and the one-dimensional Hardy inequality on the positive end.  Thus $V_{\ell}^{\sharp}$ is infinitesimally form-bounded with respect to $H_0$, the closed quadratic forms
\[
   q_{\ell}^{\sharp}[u]=\|u'\|_{L^2}^2+\langle V_{\ell}^{\sharp}u,u\rangle
\]
are semibounded on $H^1$, and the associated selfadjoint operators coincide with $-\partial_{r_*}^2+V_{\ell}^{\sharp}$ on $H^2$ by elliptic regularity.  This proves the first assertion.

Choose $c_{\ell}>0$ so that $H_{\ell}^{\sharp}+c_{\ell}\ge \Id$.  Endow the energy space
\[
   \mathcal H_{\ell}^{\sharp}=H^1\times L^2
\]
with norm
\[
   \|(u,v)\|_{\mathcal H_{\ell}^{\sharp}}^2
   =q_{\ell}^{\sharp}[u]+c_{\ell}\|u\|_{L^2}^2+\|v\|_{L^2}^2.
\]
On the domain $H^2\times H^1$, the operator $\mathcal A_{\ell}^{\sharp}(u,v)=(v,-H_{\ell}^{\sharp}u)$ is skew-adjoint with respect to the induced energy inner product.  Stone's theorem therefore yields a strongly continuous unitary group for both the odd and even first-order evolutions.
\end{proof}

\begin{remark}[Why the massive thresholds do not coincide with the spectral edge]
The essential spectrum of the channel Hamiltonians is $[0,\infty)$ because the horizon end of the tortoise line is asymptotically free.  The distinguished frequencies $\om=\pm\mu$ arise instead from the asymptotic operator at spatial infinity, where the potential tends to $\mu^2$.  In the $\om$-plane, this right-end threshold creates the branch points responsible for the late-time tails.  This distinction between the operator spectrum in $\lambda=\om^2$ and the asymptotic branch structure in $\om$ is harmless once it is stated explicitly.
\end{remark}

\subsection{Radial currents, Wronskians, and cut-off resolvents}

Let $u,v$ be scalar channel solutions.  Their radial current is
\[
   Q[u,v]=u'\overline v-u\,\overline{v'}.
\]
For vector solutions $\bm u,\bm v\in\C^2$ we set
\[
   Q[\bm u,\bm v]=\bm u'^{\,*}\bm v-\bm u^{*}\bm v'.
\]
If $u$ and $v$ solve the same scalar equation, or if $\bm u$ and $\bm v$ solve the same symmetric matrix equation, then $Q$ is independent of $r_*$.  This conserved Wronskian current is the basic algebraic object behind the resolvent formula, the Evans determinant, and the real-frequency exclusion argument.

Choose $\chi\in C_0^\infty((r_+,\infty))$.  For $\Im\om>0$ we define the cut-off resolvents
\[
   \cR_{\ell,\chi}^{\mathrm{odd}}(\om)=\chi\,(H_{\ell}^{\mathrm{odd}}-\om^2)^{-1}\chi,
   \qquad
   \cR_{\ell,\chi}^{\mathrm{ev}}(\om)=\chi\,(H_{\ell}^{\mathrm{ev}}-\om^2)^{-1}\chi.
\]
The odd kernel is expressed in terms of one ingoing horizon solution and one decaying infinity solution.  The even kernel is analogous, except that the scalar Wronskian is replaced by a $2\times2$ matching matrix.

\begin{lemma}[Green kernel formula]\label{lem:green_kernel_formula}
Let $u_{\hor}$ be the scalar odd solution ingoing at the future event horizon and let $u_{\infty}$ be the scalar odd solution decaying for $\Re\vp>0$ at infinity.  Then
\[
   \cG_{\ell}^{\mathrm{odd}}(\om;r_*,r_*')
   =
   \frac{u_{\hor}(r_<,\om)\,u_{\infty}(r_>,\om)}{Q[u_{\hor},u_{\infty}]},
\]
where $r_<:=\min\{r_*,r_*'\}$ and $r_>:=\max\{r_*,r_*'\}$.  In the even sector the same formula holds with $u_{\hor},u_{\infty}$ replaced by fundamental solution matrices and the scalar Wronskian replaced by the matching matrix introduced in Step~4 of Section~\ref{sec:spectral_theorem_proof}.
\end{lemma}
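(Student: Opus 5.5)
The plan is to prove the kernel formula by the standard one-dimensional Sturm--Liouville construction, and then verify that the candidate kernel is the resolvent kernel. Fix $\Im\om>0$, so that $\om^2\notin[0,\infty)$ lies in the resolvent set of the selfadjoint operator $H_\ell^{\mathrm{odd}}$ from Proposition~\ref{prop:selfadjoint_channels}.

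\textbf{Step 1 (basis solutions).} Build $u_{\hor}$ and $u_\infty$. By \eqref{eq:horizon_potential_decay} the potential is exponentially small as $r_*\to-\infty$. A Volterra iteration then produces $u_{\hor}\sim\ee^{-\ii\om r_*}$, which is ingoing and lies in $L^2$ near the horizon because $\Im\om>0$. At infinity, \eqref{eq:asymptotic_channel_potential} gives a Coulomb-plus-inverse-square potential. We take $u_\infty$ to be the Jost/Whittaker-type solution $\sim \ee^{-\vp r_*}(\text{power})$ with $\Re\vp>0$, obtained by a Volterra equation with the Coulomb-corrected phase. This solution is $L^2$ at the right end.

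\textbf{Step 2 (nonvanishing Wronskian).} Since both solutions solve the same real-coefficient equation with the same $\om$, the Wronskian $u_{\hor}'u_\infty-u_{\hor}u_\infty'$ is constant. Here I will read $Q[\cdot,\cdot]$ in the bilinear (unconjugated) sense appropriate for complex $\om$. If it vanished, $u_{\hor}$ would be an $L^2$ eigenfunction with eigenvalue $\om^2\notin\R$, which contradicts selfadjointness. So the quotient is well defined.

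\textbf{Step 3 (verification).} Define $G$ by the displayed formula. For $f\in C_0^\infty$, the function $g=\int G(\cdot,r_*')f(r_*')\,\dd r_*'$ satisfies $(H-\om^2)g=f$: the derivative jump of $G$ across the diagonal equals $-1$ after dividing by the Wronskian. Moreover $g$ equals a multiple of $u_{\hor}$ to the left of $\operatorname{supp}f$ and a multiple of $u_\infty$ to the right, so $g\in H^2$. Uniqueness of $L^2$ solutions, which holds because $\om^2$ lies in the resolvent set, identifies $g=(H-\om^2)^{-1}f$. Multiplying by $\chi$ on both sides gives the cut-off kernel.

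\textbf{Step 4 (even sector).} Replace $u_{\hor}$ and $u_\infty$ by $2\times2$ matrices $U_{\hor}$ and $U_\infty$, whose columns span the solutions that are $L^2$ at each end (two each, by Volterra construction in the polarization basis of Proposition~\ref{prop:even_polarization_form}). Because $V^{\mathrm{ev}}$ is real symmetric, the matrix Wronskian $W=U_{\hor}'^{T}U_\infty-U_{\hor}^{T}U_\infty'$ is constant. Its invertibility follows again from the absence of nonreal eigenvalues. The kernel is $U_{\hor}(r_<)W^{-1}U_\infty(r_>)^{T}$, suitably ordered and transposed, and the jump condition is checked with the symmetry of $W$.

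The main obstacle is this matrix bookkeeping in Step 4, namely checking that the symmetric (Lagrangian) structure makes the candidate kernel symmetric with the correct unit jump. I would handle it by first showing $U_{\hor}^{T}U_{\hor}'$ is symmetric, using vanishing of the Wronskian within each $L^2$ family, and then deducing the unit jump directly.
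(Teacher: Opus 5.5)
In the scalar channel your argument is the paper's. The paper also defines $G=W^{-1}u_{\hor}(r_<)u_\infty(r_>)$ and checks continuity and the derivative jump. Beyond that, you supply three things the paper leaves implicit.
\begin{itemize}
\item You read $Q$ bilinearly. This is genuinely needed: with the conjugate in $Q[u,v]=u'\overline v-u\overline{v'}$ one gets $\partial_{r_*}Q[u_{\hor},u_\infty]=(\overline{\om}^2-\om^2)u_{\hor}\overline{u_\infty}\neq0$ off the real axis.
\item You prove that $W\neq0$.
\item You identify $G$ with the kernel of $(H-\om^2)^{-1}$ through $g\in H^2$ and uniqueness.
\end{itemize}
Your jump of $-1$ is the correct one for $(H-\om^2)G=\delta$ with $H=-\partial_{r_*}^2+V$; the paper's stated jump of $+1$ is a sign slip.

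One small repair is needed. The condition $\Im\om>0$ includes $\om\in\ii(0,\infty)$, where $\om^2<0$ is real, so ``nonreal eigenvalue'' is not why $W\neq0$ there. Use instead $H^{\mathrm{odd}}\ge0$, which holds because $V^{\mathrm{odd}}=f(\mu^2+\ell(\ell+1)/r^2)\ge0$.

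In the even sector your route differs from the paper's.
\begin{itemize}
\item \emph{The paper's route.} It never forms a $2\times2$ Wronskian. For fixed $r'$ it writes $G(\cdot,r')=U_{\hor}\alpha$ on the left and $U_\infty\beta$ on the right. Continuity and the unit jump then become a linear system whose coefficient matrix is the $4\times4$ matrix $\mathbb Y_\ell(r')$ of Proposition~\ref{prop:evans_det}, solvable exactly when $\cE_\ell(\om)\neq0$. This uses no symmetry of the potential (only $\operatorname{tr}\mathbb A=0$), but the kernel stays implicit in $\mathbb Y_\ell(r')^{-1}$.
\item \emph{Your route.} It gives a closed formula, but two points need fixing.
\end{itemize}

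First, $W=U_{\hor}'^{T}U_\infty-U_{\hor}^{T}U_\infty'$ is not symmetric in general, so ``$W^{-1}$ together with symmetry of $W$'' is not right. The kernel is $U_{\hor}(r)W^{-T}U_\infty(r')^{T}$ for $r<r'$ and $U_\infty(r)W^{-1}U_{\hor}(r')^{T}$ for $r>r'$. Continuity and the jump $-\Id$ both follow from the Lagrangian identities you planned to prove. These identities amount to $\Phi^{T}J\Phi=\bigl(\begin{smallmatrix}0&-W\\ W^{T}&0\end{smallmatrix}\bigr)$ for $\Phi=\mathbb Y_\ell$ and $J=\bigl(\begin{smallmatrix}0&\Id\\ -\Id&0\end{smallmatrix}\bigr)$, which also gives $\det W=\pm\cE_\ell$.

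Second, conservation of the bilinear Wronskian requires $V^{T}=V$. The paper calls the coefficients real-symmetric, but $C_\ell$ in \eqref{eq:C_matrix} has off-diagonal entries $\mp\ell(\ell+1)$, so $V^{\mathrm{ev}}$ is not symmetric in the polarization basis. Conjugating by the constant matrix $\diag(1,\ii)$ makes it complex symmetric, and that is all your argument uses. After this rescaling your Step~4 goes through. Invertibility of $W$ should then come from $\det W=\pm\cE_\ell\neq0$ (Theorem~\ref{thm:mode_stability}), or from reading the formula as a meromorphic identity, rather than from selfadjointness. A non-symmetric $V^{\mathrm{ev}}$ does not give a selfadjoint operator on $L^2(\R_{r_*};\C^2)$.

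The paper's $4\times4$ route avoids both issues, at the cost of an implicit formula.
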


\begin{proof}
In the scalar channel set $W(\om)=Q[u_{\hor},u_{\infty}]$, which is independent of $r_*$.  For fixed $r_*'$ define
\[
   G(r_*)=
   \begin{cases}
      W(\om)^{-1}u_{\hor}(r_*,\om)u_{\infty}(r_*',\om), & r_*<r_*',\\[0.4ex]
      W(\om)^{-1}u_{\hor}(r_*',\om)u_{\infty}(r_*,\om), & r_*>r_*'.
   \end{cases}
\]
Then $G$ solves the homogeneous equation away from $r_*'$, is continuous at $r_*'$, and satisfies
\[
   \partial_{r_*}G(r_*'+0)-\partial_{r_*}G(r_*'-0)=1
\]
precisely because $Q[u_{\hor},u_{\infty}]=W(\om)$.  Hence $(H_{\ell}^{\mathrm{odd}}-\om^2)G=\delta_{r_*'}$, which is the asserted Green kernel formula.

In the even sector one argues identically with fundamental matrices.  Writing the kernel on the two sides of the diagonal as linear combinations of the horizon and infinity bases, the continuity condition and the unit jump of the first derivative give a linear system whose coefficient matrix is the matching matrix.  Solving that system yields the stated matrix analogue.
\end{proof}

\subsection{Limiting absorption away from the thresholds}

The cut-off resolvents are analytic for $\Im\om>0$, but for late-time asymptotics one must understand their boundary behavior as $\om$ approaches the real axis.  Away from the distinguished points $\om=\pm\mu$, the problem is no more singular than for a one-dimensional short-range scattering system with a step potential.  The long-range difficulty appears only at the massive thresholds themselves.

\begin{proposition}[Boundary values away from $\om=\pm\mu$]\label{prop:boundary_values_away_thresholds}
Fix $\chi\in C_0^\infty((r_+,\infty))$ and a compact interval $I\Subset\R\setminus\{\pm\mu\}$.  Then the limits
\[
   \cR_{\ell,\chi}^{\mathrm{odd}}(\om\pm \ii0),
   \qquad
   \cR_{\ell,\chi}^{\mathrm{ev}}(\om\pm \ii0)
\]
exist for $\om\in I$ as bounded maps from $L^2$ to $H^2_{\mathrm{loc}}$, and depend continuously on $\om$.
\end{proposition}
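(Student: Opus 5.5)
We prove Proposition~\ref{prop:boundary_values_away_thresholds} through the Green kernel formula of Lemma~\ref{lem:green_kernel_formula}. We build the Jost solutions at both ends so that they extend continuously to the real axis away from $\pm\mu$, and we show that the Wronskian (matching matrix) does not vanish there. We treat the scalar odd channel first and then adapt the argument to the $2\times2$ even system.

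\textbf{Step 1: Jost solutions.} At the horizon end we use \eqref{eq:horizon_potential_decay}. For $\Im\om\ge0$, $\om\neq0$, we solve the Volterra equation
\[
u_{\hor}(r_*)=\ee^{-\ii\om r_*}+\int_{-\infty}^{r_*}\frac{\sin\om(r_*-s)}{\om}V(s)u_{\hor}(s)\,\dd s .
\]
Because $V$ decays exponentially, this gives a solution depending continuously on $\om$ up to the real axis. It is even analytic in a strip $\Im\om>-\kappa_+$, and $\om=0$ can be handled with the standard modified kernel.

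At infinity we write $V=\mu^2-2M\mu^2/r+L_P(L_P+1)/r^2+O(r^{-3})$ and use $k=\sqrt{\om^2-\mu^2}$ with $\Im k\ge0$. For $\om\in I$ with $|\om|>\mu$ we have $k$ real and nonzero. For $|\om|<\mu$ we have $\vp>0$, and the decaying solution already exists on the real axis. The Coulomb tail is long range, so we do not use plane waves. Instead we use the WKB/Liouville--Green ansatz
\[
u_\infty\sim \exp\Bigl(\ii\int^{r_*}\sqrt{\om^2-V}\Bigr),
\]
which accounts for the logarithmic phase $\ii(M\mu^2/k)\log r$. We then solve a Volterra equation for the correction, whose error term is $O(r^{-2})$ and hence integrable. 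Uniformity on compact $I$ avoiding $\pm\mu$ comes from $|k|$ and $\vp$ being bounded below there.

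\textbf{Step 2: nonvanishing Wronskian.} This is the main point. Suppose $W(\om_0)=Q[u_{\hor},u_\infty]=0$ for some real $\om_0\in I$. Then $u_{\hor}$ is a multiple of $u_\infty$. Since $V$ is real, $Q[u,u]$ is constant along this solution. At the horizon it equals $-2\ii\om_0|c|^2$. At infinity it equals $2\ii k|c'|^2$ (up to sign conventions) when $|\om_0|>\mu$, and $0$ when $|\om_0|<\mu$.

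If $|\om_0|<\mu$, we get $\om_0=0$ or $u\equiv0$. If $|\om_0|>\mu$, the two constants have opposite signs, which forces both to vanish. The case $\om_0=0$ needs a separate check: a bounded zero-energy solution would be an $L^2$ eigenvalue or resonance at the bottom of the essential spectrum $[0,\infty)$. We exclude it by positivity, since $V>0$ for $\ell\ge1$; alternatively, if $0\in I$ is problematic, we invoke the later mode-stability result, noting that $I$ only needs to avoid $\pm\mu$.

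For the even channel we apply the same current argument to the conserved matrix current $Q[\bm u,\bm v]$. Here $V^{\mathrm{ev}}$ is real-symmetric after the symmetrizing conjugation. A nontrivial vector in the kernel of the matching matrix produces one solution that is ingoing at both ends, and its flux then vanishes. For $|\om|<\mu$ both components of $u_\infty$ decay; for $|\om|>\mu$ both components oscillate, since every channel has the same threshold $\mu$.

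\textbf{Step 3: conclusion.} With $W$ (resp.\ $\det$ of the matching matrix) continuous and nonzero on $I$, the kernel $\chi(r)\cG(\om;r,r')\chi(r')$ is continuous in $\om$. Together with its $r$-derivatives up to order two (using the equation for the second derivative), it is bounded on $\operatorname{supp}\chi$. By Schur's test, this gives bounded maps $L^2\to H^2_{\mathrm{loc}}$ depending continuously on $\om$. The limit from $\om-\ii0$ follows by the same construction applied to the conjugate solutions.

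The expected obstacle is making the Coulomb-modified Jost solution at infinity uniform in $\om$ across the whole interval $I$, including both sides of the cut.
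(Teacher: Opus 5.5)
Your architecture matches the paper's: horizon and infinity bases continuous in $\om\in I$, non-vanishing of the matching determinant on $I$, and then the Green formula of Lemma~\ref{lem:green_kernel_formula}. Steps~1 and~3 are fine. In the odd channel your Step~2 is also correct, and more explicit than the paper's appeal to the standard limiting absorption principle for $\abs{\om}>\mu$. On the $+\ii0$ side the horizon flux $-2\ii\om\abs{c}^2$ and the infinity flux $2\ii k\abs{c'}^2$ have opposite signs for both $\om>\mu$ and $\om<-\mu$. At $\om=0$, the identity $\int(\abs{u'}^2+V\abs{u}^2)\,\dd r_*=0$ with $V=f(\mu^2+\ell(\ell+1)/r^2)>0$ closes the argument.

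\textbf{The gap is in the even sector.} The even potential is not symmetric. In the polarization basis $C_\ell$ has off-diagonal entries $-\ell(\ell+1)$ and $+\ell(\ell+1)$; in the $(u_2,u_3)$ variables, $A_\ell$ has $-2(f-\tfrac{rf'}{2})$ and $-2\la^2$. So $Q[\bm v,\bm v]$ is not conserved, and no ``symmetrizing conjugation'' exists. To see this, suppose $\bm v'^*G\bm v-\bm v^*G\bm v'$ is conserved for a constant Hermitian $G$. Since the coefficients of $D_\ell$ and $C_\ell$ in \eqref{eq:even_matrix_form} are linearly independent functions of $r$, $G$ must commute with $D_\ell$ and satisfy $GC_\ell=C_\ell^{T}G$. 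Because $D_\ell$ has distinct entries, $G$ is diagonal, and the second condition then forces $G\propto\diag(1,-1)$. A symmetrizing $S$ would instead produce the positive definite conserved weight $(SS^*)^{-1}$, so no such $S$ exists.

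The only current of this type is therefore $Q[v_{+1},v_{+1}]-Q[v_{-1},v_{-1}]$. Its horizon value is proportional to $\om(\abs{a_{+1}}^2-\abs{a_{-1}}^2)$, which vanishes on nonzero ingoing data with $\abs{a_{+1}}=\abs{a_{-1}}$. So it cannot force $\bm a_{\hor}=0$ for $\abs{\om}<\mu$. For $\abs{\om}>\mu$ the two end values no longer have opposite definite signs.

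What you need is a conserved flux that is positive definite on the two-dimensional space of ingoing horizon data. The paper gets non-vanishing on $I\cap[-\mu,\mu]$ from Proposition~\ref{prop:no_real_poles}, whose current is the $\partial_t$-energy flux of the full Proca field (Lemma~\ref{lem:radial_current}), not $Q[\bm v,\bm v]$. For $\abs{\om}>\mu$ it appeals to selfadjointness plus the standard limiting absorption principle. Given the non-symmetry above, that selfadjointness has to be understood with respect to the conserved Proca energy of Proposition~\ref{prop:selfadjoint_generator}, not the flat $L^2(\R;\C^2)$ pairing.

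The same problem affects $\om=0$. ``$V>0$'' is not available for the even matrix, since the symmetric part of $A_\ell$ has determinant $-(f-\tfrac{rf'}{2})^2\le0$, so the energy identity gives no contradiction in general. There you genuinely need the static no-hair input of Proposition~\ref{prop:no_static_hair_appendix}. Your fallback of citing the mode-stability result is the right repair. In the even sector, however, it must be used for every $\om\in I\cap[-\mu,\mu]$, not only at $\om=0$, and it does not cover $\abs{\om}>\mu$.
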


\begin{proof}
Because $I$ stays a positive distance away from $\pm\mu$, either $|\omega|<\mu$ with $\Re\vp(\omega)\ge c_I>0$ or $|\omega|>\mu$ with $k(\omega)=\sqrt{\omega^2-\mu^2}\ge c_I>0$.  Lemmas~\ref{lem:horizon_basis} and \ref{lem:whittaker_volterra} therefore produce horizon and infinity bases that are continuous in $\omega\in I$ and obey uniform bounds on compact radial sets.  If $I\subset[-\mu,\mu]$, Proposition~\ref{prop:no_real_poles} shows that the matching determinant never vanishes on $I$; if $I\cap[-\mu,\mu]=\varnothing$, the same nonvanishing follows from selfadjointness of the channel Hamiltonians together with the ordinary limiting absorption principle for one-dimensional Schr\"odinger systems with short-range perturbations of a constant end state.  Hence the denominator in the Green formula stays uniformly away from zero on compact subintervals of $I$.

Substituting the boundary bases into Lemma~\ref{lem:green_kernel_formula} yields a kernel continuous in $\omega$ with values in $H^2_{\mathrm{loc}}$ away from the diagonal, and the jump relation is uniform in $\omega$.  Standard ODE regularity on compact sets therefore gives continuity of the cut-off resolvent as an operator $L^2\to H^2_{\mathrm{loc}}$.
\end{proof}

\subsection{Contour deformation and the branch-cut formula}

Let $\Gamma$ be a contour in the upper half-plane enclosing the real axis and the poles of the meromorphically continued resolvent in a finite spectral window.  For compactly supported data, the fixed-mode solution is recovered from the contour formula
\[
   u_{\ell}(t)=\frac{1}{2\pi\ii}\int_{\Gamma}\ee^{-\ii\om t}\,\cR_{\ell,\chi}(\om)\,\dd\om.
\]
Once the contour is pushed through the slit strip constructed in Section~\ref{sec:spectral_theorem_proof}, the solution splits into a finite pole sum and a branch-cut integral over $[-\mu,\mu]$:
\begin{equation}\label{eq:functional_branch_cut_formula}
   u_{\ell}(t)=u_{\ell}^{\mathrm{poles}}(t)
   +\frac{1}{2\pi\ii}\int_{-\mu}^{\mu}\ee^{-\ii\om t}\,
      \disc \cR_{\ell,\chi}(\om)\,\dd\om.
\end{equation}
This is the representation used throughout Sections~\ref{sec:threshold_to_time} and \ref{sec:full_field}.  The contour deformation is justified because all nonphysical-sheet contributions either cross isolated poles or can be pushed into regions where the oscillatory factor $\ee^{-\ii\om t}$ is exponentially decaying.

\begin{remark}[Why we keep the pole sum separate]
For the scalar problem, the discrete frequencies generated by stable timelike trapping are already subtle enough to force a separate analysis in the full-field theorem.  The Proca situation is even more delicate because the even sector is matrix valued and the quasibound poles carry polarization information.  Formula \eqref{eq:functional_branch_cut_formula} therefore defines the branch-cut field as a mathematically precise object rather than as a heuristic subtraction.
\end{remark}

\section{Channel resolvents and threshold representation}\label{sec:resolvent}

\subsection{Far-zone Coulomb normal form}

The threshold analysis is naturally expressed in a scalar normal form.  One first passes from $r_*$ to $r$, removes the first derivative by a scalar conjugation, and then applies a near-identity asymptotic diagonalization in the even sector.  The result is a diagonal threshold system whose $r^{-2}$ coefficient defines the threshold indices $\nu_{\ell,P}$.

\begin{lemma}[Far-zone threshold normal form]\label{lem:normal_form}
Fix $\ell\ge1$.  There exist $R_0>r_+$, $\eta>0$, a smooth invertible transformation
\[
   U(r,\om)=S(r,\om)W(r,\om),
   \qquad
   S(r,\om)=\Id+O(r^{-1}),
\]
and channel-dependent indices $\nu_{\ell,P}>-1/2$ such that for $r\ge R_0$ every odd/even polarization channel is reduced to
\begin{equation}\label{eq:threshold_normal_form}
   w_P''+
   \Bigl(
      -\vp^2
      +\frac{2M\mu^2}{r}
      -\frac{\nu_{\ell,P}^2-\tfrac14}{r^2}
      +V_{\ell,P}^{\mathrm{sr}}(r,\om)
   \Bigr)w_P
   =0,
\end{equation}
where $\vp=\sqrt{\mu^2-\om^2}$ and
\[
   \partial_\om^jV_{\ell,P}^{\mathrm{sr}}(r,\om)=O(r^{-3}),
   \qquad j=0,1,
\]
uniformly for $\abs{\Im\om}<\eta$.  Moreover,
\begin{equation}\label{eq:nu_expansion}
   \nu_{\ell,P}=L_P+\frac12+O\bigl((M\mu)^2+(Q\mu)^2\bigr)
\end{equation}
in the small-mass regime.
\end{lemma}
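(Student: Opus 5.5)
The plan is to build $U$ in three stages: a scalar Liouville-type conjugation, the constant polarization matrix $T_\ell$ from Proposition~\ref{prop:even_polarization_form}, and a near-identity correction $S=\Id+O(r^{-1})$. After that we read off the $r^{-1}$ and $r^{-2}$ coefficients, whose $r^{-2}$ part defines $\nu_{\ell,P}$.

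\textbf{Stage 1 (change of variable).} We pass from $r_*$ to $r$ using $\partial_{r_*}=f\partial_r$. The reduced equation $u''+(\om^2-V)u=0$ then becomes $\partial_r(f\partial_r u)+f^{-1}(\om^2-V)u=0$. Setting $u=f^{-1/2}w$ removes the first-derivative term, which gives
\[
   \partial_r^2w+\Bigl(\frac{\om^2-V}{f^2}+\frac{(f')^2}{4f^2}-\frac{f''}{2f}\Bigr)w=0 .
\]
We expand $f^{-1}=1+2M/r+(4M^2-Q^2)/r^2+O(r^{-3})$ and $f^{-2}$ similarly. In the odd channel $V=f(\mu^2+\la^2/r^2)$, so $(\om^2-V)/f^2=\om^2f^{-2}-\mu^2f^{-1}-\la^2/(r^2f)$, and its Coulomb coefficient is $2M(2\om^2-\mu^2)$. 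Writing $\om^2=\mu^2-\vp^2$, this coefficient equals $2M\mu^2-4M\vp^2$.

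The $r^{-2}$ coefficient is $-\la^2+(12M^2-2Q^2)\om^2-(4M^2-Q^2)\mu^2$, plus the $\mu$-independent contribution of $(f')^2/4f^2-f''/2f$, which is $O(r^{-3})$. At $\om^2=\mu^2$ it is therefore $-L(L+1)+(8M^2-Q^2)\mu^2$. The $\om$-dependence of the coefficients is analytic and of size $O(\vp^2)$. I would fix the constant index by evaluating the $r^{-2}$ coefficient at $\om=\pm\mu$. The leftover pieces $-4M\vp^2/r$ and $O(\vp^2)/r^2$ would either be absorbed by a rescaling $r\mapsto(1+c\vp^2)r$, which shifts $\kap$ by $O(\vp^2\kap)$, or be recorded as $O(\vp^2)$ perturbations. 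These are exactly the sources of the $O(\vp^{2\nu+2})$ and $O(\vp)$ remainders in Theorem~\ref{thm:spectral_package}. If a purely $r^{-3}$ remainder is insisted upon, this bookkeeping of $\vp^2$ terms is where the statement's $V^{\mathrm{sr}}$ has to be interpreted carefully. The $\partial_\om$ bounds follow because every coefficient is polynomial in $\om^2$.

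\textbf{Stage 2 (even sector).} We apply the same scalar conjugation and then $T_\ell$. By \eqref{eq:conjugated_A_matrix} the even $r^{-2}$ matrix is $D_\ell$ plus scalar multiples of $\Id$. The coupling $C_\ell$ enters only at $r^{-3}$ (through $6M C_\ell/r^3$, and also from multiplying $f^{-1}$ against $D_\ell/r^2$, which is diagonal) and at $r^{-4}$. The off-diagonal $r^{-3}$ entries are removed by $S=\Id+r^{-1}X$ with $X$ off-diagonal. Substituting, the new $r^{-3}$ terms include $r^{-3}[X,D_\ell]$ together with derivative terms $2X/r^{3}$ and the Coulomb cross term. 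The commutator equation is solvable because the diagonal entries $\ell(\ell-1)$ and $(\ell+1)(\ell+2)$ of $D_\ell$ differ by $4\ell+2\neq0$. Any remaining off-diagonal terms are $O(r^{-4})\subset O(r^{-3})$ and go into $V^{\mathrm{sr}}$. This gives $W=T_\ell$ composed with the scalar conjugation, and $U=SW$.

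\textbf{Stage 3 (the index).} We define
\[
   \nu_{\ell,P}^2-\tfrac14=L_P(L_P+1)-c_{\ell,P}(M,Q,\mu),
   \qquad
   c_{\ell,P}=O\bigl((M\mu)^2+(Q\mu)^2\bigr),
\]
so that $\nu_{\ell,P}^2=(L_P+\tfrac12)^2-c_{\ell,P}$. Taking the positive square root gives \eqref{eq:nu_expansion} directly in the small-mass regime.

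The main obstacle is proving $\nu_{\ell,P}>-\tfrac12$, i.e.\ that $\nu_{\ell,P}$ is real (or at least has the required real part), for arbitrary $\mu$. The critical case is $\ell=1$, $P=-1$, where $L_{-1}=0$ and the inequality requires $c_{1,-1}<\tfrac14$. The leading correction $(8M^2-Q^2)\mu^2$ has a definite positive sign, so for large $M\mu$ the index could become complex, which would be an oscillatory-threshold regime. I would first compute $c_{\ell,P}$ exactly. If positivity fails, I would restrict the lemma to the parameter range where $(L_P+\tfrac12)^2>c_{\ell,P}$, and otherwise treat complex $\nu$ via the Whittaker analysis with $\Re\nu>-\tfrac12$ interpreted appropriately.
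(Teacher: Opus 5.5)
Your plan is the paper's own argument. Pass to $r$ with the scalar Liouville factor $f^{-1/2}$, apply the constant matrix $T_\ell$, correct by $\Id+r^{-1}X$, read $\nu_{\ell,P}$ off the $r^{-2}$ coefficient, and Taylor-expand. Your Stage~1 and Stage~3 computations are correct, and both caveats you raise are genuine defects that the paper's proof passes over. The paper justifies the exact coefficient $2M\mu^2/r$ only by the remark $r_*=r+2M\log r+O(1)$, and it never argues $\nu_{\ell,P}>-\tfrac12$.

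Your proposed rescaling cannot remove the first defect. Under any change of variable $\rho=cr+a+O(r^{-1})$, with $c$ possibly $\om$-dependent, the Whittaker parameter (the Coulomb coefficient divided by twice the decay rate) is invariant. Here it equals $M(2\om^2-\mu^2)/\vp=\kap-2M\vp$. Passing to $r_*$ does make the Coulomb coefficient exactly $2M\mu^2$, but only at the price of $\log r/r^2$ terms. So the correct normal form carries $\kap-2M\vp$ and an index $\nu(\om)=\nu(\pm\mu)+O(\vp^2)$, or equivalently explicit $O(\vp^2r^{-1})$ and $O(\vp^2r^{-2})$ terms, as you suggest. Downstream this multiplies $\ee^{\pm2\pi\ii\kap}$ by $1+O(\vp)$, which is harmless. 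It multiplies $\vp^{2\nu}$ by $1+O(\vp^2\log(1/\vp))$, so the small-$\kap$ remainder picks up a logarithm.

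Two points in your own argument still need repair.

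\emph{Stage~2 fails as written.} Substituting $\bm w=(\Id+r^{-1}X)\tilde{\bm w}$ produces $S''\tilde{\bm w}=2r^{-3}X\tilde{\bm w}$, but also the first-order term $2S'\tilde{\bm w}'=-2r^{-2}X\tilde{\bm w}'$. That is an off-diagonal coupling of order $r^{-2}$, so it is false that the remaining off-diagonal terms are $O(r^{-4})$. Any $S$ with $S'\neq0$ has this defect, and the paper's homological step omits it too. The fix is to take $S=\Id$. After division by $f^2$, the off-diagonal part of the $C_\ell$-terms is already $O(r^{-3})$ and $\om$-independent. A matrix-valued $O(r^{-3})$ remainder is exactly what the Volterra construction of Lemma~\ref{lem:whittaker_volterra} uses. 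Genuine scalar decoupling would have to be done on the first-order $4\times4$ system instead.

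\emph{Your main obstacle, $\nu_{\ell,P}>-\tfrac12$, has a clean resolution.} When a Coulomb term is present, the inverse-square coefficient is not intrinsic. The translation $r=\rho+a$ leaves $\partial_r^2$, the Liouville factor, $-\vp^2$ and the Coulomb coefficient unchanged. However, $2M\mu^2/(\rho+a)=2M\mu^2/\rho-2aM\mu^2/\rho^2+O(\rho^{-3})$, which raises $\nu^2$ by $2aM\mu^2$ at threshold. Your value $\nu^2-\tfrac14=L_P(L_P+1)-(8M^2-Q^2)\mu^2$ does turn negative for $\ell=1$, $P=-1$ once $(8M^2-Q^2)\mu^2>\tfrac14$. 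With $a=4M$ it becomes exactly $L_P(L_P+1)+Q^2\mu^2$. Hence $\nu_{\ell,P}=\bigl((L_P+\tfrac12)^2+Q^2\mu^2\bigr)^{1/2}\ge\tfrac12$ for all parameters, and \eqref{eq:nu_expansion} follows. Moreover $\nu_{\ell,P}=L_P+\tfrac12$ exactly when $Q=0$; these are the values the paper uses later (Appendix~\ref{app:tech_spectral}, Theorem~\ref{thm:schwarzschild_to_rn}). So no parameter restriction or complex index is needed. The price is that $\nu_{\ell,P}$ depends on this normalization: translations by $a=O(M)$ move it by $O(M^2\mu^2)$. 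It is therefore canonical only to the accuracy of \eqref{eq:nu_expansion}.
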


\begin{proof}
Write the odd scalar equation and the even polarization system in the $r$-variable using $\partial_{r_*}=f\partial_r$.  In both cases one obtains an equation of the form
\[
   f^2U_{rr}+ff' U_r+\bigl(\omega^2-f\mu^2-r^{-2}fD_\ell-\mathcal R_\ell(r,\omega)\bigr)U=0,
\]
where $D_\ell$ is the scalar value $\ell(\ell+1)$ in the odd channel or the diagonal matrix with entries $L_P(L_P+1)$ in the even polarization basis, and where $\mathcal R_\ell=O(r^{-3})$ together with one $\omega$-derivative.  Conjugating by the usual Liouville factor removes the first derivative and gives
\[
   W''+\Bigl(-\vp^2+\frac{2M\mu^2}{r}-\frac{D_\ell+\frac14\Id+E_\ell}{r^2}+O(r^{-3})\Bigr)W=0.
\]
Here $E_\ell$ is uniformly bounded in $\ell$: every contribution to $E_\ell$ comes from the fixed metric coefficients, the Liouville conjugation, and the bounded polarization matrix $T_\ell$, while the entire large angular-momentum dependence has already been isolated in $D_\ell$.
The coefficient of $r^{-1}$ is universal because $r_*=r+2M\log r+O(1)$ as $r\to\infty$ and therefore depends only on the ADM mass term in the metric expansion.

In the odd channel the equation is already scalar.  In the even channel, Proposition~\ref{prop:even_polarization_form} shows that the exact $r^{-2}$ part is diagonal in the basis \eqref{eq:intro_even_pol_basis}, while the remaining off-diagonal terms are $O(r^{-3})$ and $O(r^{-4})$.  Solving the first homological equation for a near-identity diagonalizer $S(r,\omega)=\Id+r^{-1}S_1(r,\omega)+O(r^{-2})$ removes the residual off-diagonal $r^{-3}$ contribution and yields decoupled scalar equations with short-range remainder $V_{\ell,P}^{\mathrm{sr}}=O(r^{-3})$.  The resulting inverse-square coefficients define $\nu_{\ell,P}^2-\tfrac14$.  Since these coefficients reduce to $L_P(L_P+1)$ at $M\mu=Q\mu=0$ and depend smoothly on $(M\mu,Q\mu)$, Taylor expansion yields \eqref{eq:nu_expansion}.
\end{proof}

Ignoring the short-range remainder in \eqref{eq:threshold_normal_form} gives the Whittaker model equation
\begin{equation}\label{eq:whittaker_equation}
   \frac{\dd^2 w}{\dd z^2}
   +
   \Bigl(
      -\frac14+\frac{\kap}{z}
      +\frac{1/4-\nu_{\ell,P}^2}{z^2}
   \Bigr)w
   =0,
   \qquad
   z=2\vp r,
   \qquad
   \kap=\frac{M\mu^2}{\vp}.
\end{equation}
The model solutions are the Whittaker functions
\begin{equation}\label{eq:whittaker_basis}
   M_{\kap,\nu_{\ell,P}}(2\vp r),
   \qquad
   W_{\kap,\nu_{\ell,P}}(2\vp r).
\end{equation}

\subsection{Green kernels and branch-cut decomposition}

Let $f_{\ell,P}(r,\om)$ denote the horizon-regular solution normalized by
\begin{equation}\label{eq:horizon_condition}
   f_{\ell,P}(r,\om)\sim \ee^{-\ii\om r_*}
   \qquad\text{as }r\downarrow r_+,
\end{equation}
and let $g_{\ell,P}(r,\om)$ denote the outgoing Jost solution normalized by
\begin{equation}\label{eq:infty_condition}
   g_{\ell,P}(r,\om)\sim W_{\kap,\nu_{\ell,P}}(2\vp r)
   \qquad\text{as }r\to\infty.
\end{equation}
The fixed-mode Green kernel is
\begin{equation}\label{eq:green_kernel_def}
   \cG_{\ell,P}(\om;r,r')
   =
   \frac{f_{\ell,P}(r_<,\om)g_{\ell,P}(r_>,\om)}{\cW_{\ell,P}(\om)},
   \qquad
   r_<:=\min\{r,r'\},
   \qquad
   r_>:=\max\{r,r'\},
\end{equation}
where $\cW_{\ell,P}$ is the scalar or channel-projected Wronskian.  The retarded kernel decomposes as
\begin{equation}\label{eq:green_decomposition}
   u_{\ell m,P}(t,r,r')
   =
   u_{\ell m,P}^{\mathrm{poles}}(t,r,r')
   +
   u_{\ell m,P}^{\bc}(t,r,r'),
\end{equation}
with branch-cut part
\begin{equation}\label{eq:bc_formula}
   u_{\ell m,P}^{\bc}(t,r,r')
   =
   \frac{1}{2\pi}\int_{-\mu}^{\mu}
   \ee^{-\ii\om t}\disc\,\cG_{\ell,P}(\om;r,r')\,\dd\om.
\end{equation}
Because $\vp=\sqrt{\mu^2-\om^2}$ changes sign across $[-\mu,\mu]$, that interval is the threshold branch cut.

\section{Proof of the fixed-mode spectral theorem}\label{sec:spectral_theorem_proof}

This section proves Theorem~\ref{thm:spectral_package}.  The proof is modewise and proceeds in nine steps: threshold normal form, horizon Frobenius theory, infinity Jost construction, Evans determinant, discreteness of poles, exclusion of real cut poles and threshold resonances, small-$\kap$ Bessel asymptotics, large-$\kap$ Whittaker asymptotics, and assembly.

Throughout this section we fix one angular mode $\ell\ge1$.  The odd sector has dimension $n=1$ and the even sector has dimension $n=2$.  In the even sector all statements are made in the polarization basis $(v_{-1},v_{+1})$.

\subsection*{Step 1. Coulomb normal form}

The first task is to separate the universal long-range Coulomb interaction from the shorter-range remainder in a form uniform across the odd and even polarizations.  In the polarization basis the coefficient matrix has the asymptotic expansion
\[
   f(r)\mu^2+\frac{f(r)}{r^2}D_\ell+\frac{1}{r^3}B_{\ell,1}(r)+\frac{1}{r^4}B_{\ell,2}(r),
\]
where $D_\ell$ is diagonal with entries $L_P(L_P+1)$ and the matrices $B_{\ell,j}(r)$ remain bounded with all derivatives for large $r$.  Using $f(r)=1-2M/r+Q^2/r^2$, one rewrites each channel as
\begin{equation}\label{eq:step1_coulomb_model}
   u''+\Bigl(
      -\vp^2+\frac{2M\mu^2}{r}-\frac{\nu_{\ell,P}^2-\frac14}{r^2}
      +W_{\ell,P}(r,\om)
   \Bigr)u=0,
\end{equation}
with $W_{\ell,P}(r,\om)=O(r^{-3})$ and the threshold index $\nu_{\ell,P}$ defined by the exact coefficient of $r^{-2}$.

The key point is that the $Q$-dependent correction does not alter the universal Coulomb coefficient $2M\mu^2/r$.  Consequently the saddle responsible for the very-late tail is unchanged from the Schwarzschild case.  What the charge does change is the exact value of the inverse-square coefficient and hence the intermediate exponent through $\nu_{\ell,P}$.

\begin{lemma}[Coefficient bounds in Coulomb normal form]\label{lem:coulomb_bounds}
Fix $\ell\ge1$ and one polarization $P$.  For $\om$ in a compact subset of the slit strip and $r$ sufficiently large,
\[
   \abs{\partial_r^j\partial_\om^k W_{\ell,P}(r,\om)}
   \le C_{j,k,\ell}\,r^{-3-j},
\]
and the same estimate holds for the off-diagonal remainder in the even sector before final scalar diagonalization.
\end{lemma}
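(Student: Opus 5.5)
The bound is a bookkeeping statement about the explicit far-zone reduction, so I will prove it by tracking the construction of $W_{\ell,P}$ in Lemma~\ref{lem:normal_form} and Step~1.

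\textbf{Raw coefficients.} In the polarization basis, Proposition~\ref{prop:even_polarization_form} shows that the even potential is
\[
   f\mu^2+\frac{f}{r^2}D_\ell+\frac{6Mf}{(2\ell+1)r^3}C_\ell-\frac{4Q^2f}{(2\ell+1)r^4}C_\ell .
\]
Here $f=1-2M/r+Q^2/r^2$ is a polynomial in $1/r$, and $C_\ell$, $D_\ell$ are constant in $r$. Hence every entry is a finite Laurent polynomial in $1/r$ with no $\om$-dependence beyond the constant $\om^2$. The odd potential has the same structure. For such terms, each $r$-derivative of a monomial $r^{-k}$ produces $r^{-k-1}$, which gives the symbol-type estimate $\abs{\partial_r^j(r^{-k})}\lesssim r^{-k-j}$ directly.

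\textbf{Passage to the $r$-variable.} Next I pass from $r_*$ to $r$ and remove the first-derivative term by the Liouville factor $f^{1/2}$; equivalently I divide by $f^2$ and conjugate. The new coefficients involve
\[
   f^{-2},\qquad (f')^2,\qquad f'',\qquad \om^2 f^{-2},\qquad \mu^2 f^{-1}.
\]
For $r\ge R_0>r_+$ (in fact $R_0>2r_+$ suffices), $f^{-1}$ is a convergent power series in $1/r$, so all of these are classical symbols of order $0$ in $r$. Expanding them produces the constant $-\vp^2$, the Coulomb term $2M\mu^2/r$ (the $\om^2$ contributions at order $1/r$ combine with $\mu^2$ in the standard way), and an $r^{-2}$ coefficient. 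Because the normal form \eqref{eq:step1_coulomb_model} is written with the fixed $r^{-1}$ and $r^{-2}$ coefficients, $W_{\ell,P}$ is by definition the tail of this series from order $r^{-3}$ on. The tail series is analytic in $\om$ (entire in $\om^2$) and converges uniformly for $r\ge R_0$ and $\om$ in a compact set. Termwise differentiation then gives
\[
   \abs{\partial_r^j\partial_\om^k W_{\ell,P}}\le C_{j,k,\ell}\,r^{-3-j}.
\]
Before the final scalar diagonalization, the same argument applies to the off-diagonal entries, which are multiples of $C_\ell$ times $r^{-3}$ and $r^{-4}$ symbols. This already proves the second assertion of the lemma.

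\textbf{Main obstacle: the near-identity diagonalizer.} The remaining step is to control the diagonalizer $S=\Id+r^{-1}S_1+\dots$ that removes the off-diagonal $O(r^{-3})$ terms. The homological equation involves the difference of the diagonal inverse-square entries, which equals
\[
   L_{+1}(L_{+1}+1)-L_{-1}(L_{-1}+1)=2(2\ell+1)\ne0 .
\]
The $\om$-dependent piece $-\vp^2$ is common to both channels, so it cancels in the commutator. I will therefore solve the homological equation algebraically order by order in $1/r$, with coefficients polynomial in $\om^2$. This makes $S$ a symbol of order $0$ that is analytic in $\om$, with $S-\Id$ of order $-1$. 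I then truncate after finitely many orders and absorb the remainder into $W$. Conjugating by $S$ shifts each derivative onto symbols, producing terms such as $S''$ and $S'\partial_r$ that are of order $-3$ or smaller. After one final Liouville-type scalar correction to remove any $S'$ first-order term, the result is again a symbol of order $-3$. Uniformity in $\om$ over compact subsets of the slit strip follows from analyticity, using Cauchy estimates for the $\om$-derivatives. The constants are allowed to depend on $\ell$, which is all the statement requires.
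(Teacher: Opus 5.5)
Your first paragraph is, in substance, the paper's entire proof. Keep the channel equations in the form Step~1 uses: derivatives in $r_*$, coefficients as functions of $r$. Then $\om^2-f\mu^2=-\vp^2+2M\mu^2/r-Q^2\mu^2/r^2$ holds exactly. Every other entry of the matrix in Proposition~\ref{prop:even_polarization_form} is an $\om$-independent polynomial in $r^{-1}$ whose $r^{-2}$ part is diagonal. So the remainder is a finite sum of monomials $r^{-k}$ with $3\le k\le 6$; for example $W_{\ell,0}=\ell(\ell+1)(2Mr^{-3}-Q^2r^{-4})$. Each $\partial_r$ gains a power of $r^{-1}$, and $\partial_\om$ annihilates it.

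The first real problem is your second step. In the $r$-variable the relevant combination is $\om^2f^{-2}-\mu^2f^{-1}$. Since $f^{-2}=1+4M/r+(12M^2-2Q^2)/r^2+O(r^{-3})$ and $f^{-1}=1+2M/r+(4M^2-Q^2)/r^2+O(r^{-3})$:
\begin{itemize}
\item the $r^{-1}$ coefficient is $2M(2\om^2-\mu^2)=2M\mu^2-4M\vp^2$, not $2M\mu^2$;
\item the $r^{-2}$ coefficient contains $(12M^2-2Q^2)\om^2$;
\item the Liouville terms $-\tfrac12(f'/f)'-\tfrac14(f'/f)^2$ are $O(r^{-3})$ and change neither.
\end{itemize}
Your parenthetical about the $\om^2$ terms combining with $\mu^2$ ``in the standard way'' is exactly where this is lost. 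Measured against the fixed $2M\mu^2/r$ and the $\om$-independent $\nu_{\ell,P}$ in \eqref{eq:step1_coulomb_model}, the remainder in your variable contains $-4M\vp^2/r$ plus an $O(\vp^2r^{-2})$ term. So the claimed $O(r^{-3})$ bound fails wherever $\vp\neq0$, which is everywhere in the slit strip. Working in the $r$-variable would force an $\om$-dependent Coulomb parameter, namely $M(\mu^2-2\vp^2)/\vp$, and an $\om$-dependent index; that is a different statement. The proof of Lemma~\ref{lem:normal_form} suggests your route, but the arithmetic does not support it.

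The diagonalizer step also fails as written. The homological equation forces $S_1$ to be off-diagonal. The substitution $\bm v=S\bm w$ then produces the first-order term $2S^{-1}(\partial_{r_*}S)\,\partial_{r_*}\bm w=-2r^{-2}S_1\,\partial_{r_*}\bm w+O(r^{-3})\,\partial_{r_*}\bm w$. This term is off-diagonal, of order $r^{-2}$ rather than $r^{-3}$, and no scalar Liouville factor removes it.

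The obstruction is structural. Any substitution $\bm v=T(r)\bm w$ creates the term $2T^{-1}(\partial_{r_*}T)\,\partial_{r_*}$. This is a multiple of $\Id$ only if $T$ is a scalar function times a constant matrix. No constant matrix diagonalizes the potential, because $[D_\ell,C_\ell]\neq0$; its $(1,2)$ entry is $2\ell(\ell+1)(2\ell+1)$. So this route cannot reach two decoupled scalar Schr\"odinger equations with $O(r^{-3})$ scalar remainders.

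What is actually provable is the bound for the diagonal entries and for the off-diagonal coupling of the undiagonalized remainder. That is your first paragraph read in the original variables, and it is all the paper's direct-expansion proof covers; the paper never uses a diagonalizer here. That matrix remainder is also exactly what enters the Volterra equation of Lemma~\ref{lem:whittaker_volterra}.
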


\begin{proof}
The claim follows by direct expansion of $f(r)=1-2M/r+Q^2/r^2$ and the transformed even matrix of Proposition~\ref{prop:even_polarization_form}.  Every term beyond the diagonal Coulomb and inverse-square coefficients contains at least one additional power of $r^{-1}$.  Differentiation in $\om$ falls only on $\vp=\sqrt{\mu^2-\om^2}$ and on bounded analytic coefficient matrices, so no loss occurs away from the endpoints.
\end{proof}

Step 1 is Lemma~\ref{lem:normal_form}.  It provides the threshold normal form \eqref{eq:threshold_normal_form} and defines the channel indices $\nu_{\ell,P}$.

\subsection*{Step 2. Horizon Frobenius bases}

Near $r=r_+$, the function $f$ has a simple zero and the tortoise coordinate satisfies $r-r_+\sim \ee^{2\kappa_+ r_*}$.  Consequently every channel equation becomes a short-range perturbation of
\[
   u''+\om^2u=0
\]
as $r_*\to-\infty$.  The ingoing and outgoing horizon behaviors are therefore $\ee^{-\ii\om r_*}$ and $\ee^{+\ii\om r_*}$.  To make this precise, one writes
\[
   u_{\hor}^\pm(r,\om)=\ee^{\mp \ii\om r_*}\,h_\pm(r,\om),
\]
where $h_\pm$ extend smoothly to $r=r_+$ and satisfy a regular singular system in the variable $r-r_+$.

\begin{lemma}[Horizon Frobenius bases with parameter dependence]\label{lem:horizon_frobenius}
For every compact $\om$-set in the slit strip, there exist unique scalar odd solutions and unique even matrix fundamental solutions of the form
\[
   u_{\hor}^\pm(r,\om)=\ee^{\mp\ii\om r_*}\Bigl(1+\sum_{n=1}^\infty a_n^\pm(\om)(r-r_+)^n\Bigr),
\]
with normally convergent series near $r=r_+$.  The coefficients depend analytically on $\om$ and satisfy bounds uniform on compact $\om$-sets.
\end{lemma}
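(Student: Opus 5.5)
The plan is to reduce the horizon problem to a regular singular point in the variable $x=r-r_+$ and then run the standard Frobenius construction, tracking $\om$-dependence throughout.

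\textbf{Reduction.} Write the channel equation in the $r$-variable using $\partial_{r_*}=f\partial_r$ and $f(r)=(r-r_+)(r-r_-)/r^2=x\,g(x)$ with $g(0)=2\kappa_+\neq0$. Substituting $u=\ee^{\mp\ii\om r_*}h$, the $\om^2$ terms cancel and we get
\[
   f^2h_{rr}+\bigl(ff'\mp2\ii\om f\bigr)h_r-Vh=0 .
\]
Here $V$ is the (scalar or $2\times2$) potential, and it carries a factor $f$. Dividing by $f$ gives
\[
   x g\,h_{xx}+\bigl((xg)'\mp2\ii\om\bigr)h_x-\widetilde V h=0 ,
\]
where $\widetilde V=V/f$ is analytic near $x=0$. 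After multiplying by $x/g$, this is a regular singular system at $x=0$. Its indicial equation is
\[
   s\,(s-1)+s\bigl(1\mp\ii\om/\kappa_+\bigr)=0,
\]
with roots $s=0$ and $s=\pm\ii\om/\kappa_+$. The nonzero root reproduces the opposite horizon behaviour $\ee^{\pm\ii\om r_*}$. In the even sector the matrix structure does not change this, since the leading coefficient is scalar times $\Id_2$.

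\textbf{Recursion.} Inserting $h=\sum_n a_n x^n$, with $a_0=1$ in the odd sector and $a_0=\Id_2$ for the matrix solution, gives a recursion of the form
\[
   n\bigl(2\kappa_+ n\mp2\ii\om\bigr)a_n=\sum_{k<n}c_{n,k}(\om)\,a_k ,
\]
where the $c_{n,k}$ are built from Taylor coefficients of $g$ and $\widetilde V$ and are polynomial in $\om$. The recursion is solvable, with analytic coefficients, whenever $2\kappa_+n\mp2\ii\om\ne0$ for every $n\ge1$, that is, whenever $\om\neq\mp\ii\kappa_+n$.

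\textbf{Main obstacle: the excluded frequencies.} The problematic points are purely imaginary, $\om=-\ii\kappa_+n$ for the ingoing solution. These can lie in the strip only if $\eta<\kappa_+$. The plan is to shrink $\eta$ below $\kappa_+$, which is compatible with Theorem~\ref{thm:spectral_package}(i) since that result only requires some $\eta>0$. With this choice the divisors satisfy
\[
   \abs{2\kappa_+n\mp2\ii\om}\ge c\,n
\]
uniformly on compact $\om$-sets in the strip. If one instead wanted the construction on a wider region, the points $\om=-\ii\kappa_+ n$ would require logarithmic modifications or a renormalization by $\Gamma$-factors. That case is not needed here.

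\textbf{Convergence and uniqueness.} Convergence follows from a majorant argument. Choose $\rho<r_+-r_-$ smaller than the radius of analyticity of $g$ and $\widetilde V$, so that $\abs{c_{n,k}}\le C\rho^{-(n-k)}$. Then by induction $\abs{a_n}\le C'(\rho/2)^{-n}$, uniformly on compact $\om$-sets. The series therefore converges normally for $\abs{x}<\rho/2$, and by Weierstrass each $a_n(\om)$ and the sum are analytic in $\om$. Uniqueness holds because $s=0$ is the only root of the indicial equation producing a solution regular and nonvanishing at $x=0$ with the stated leading term, given that the other root has nonzero imaginary part.

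Finally, continue the solutions to all $r>r_+$ by the linear ODE. Analyticity in $\om$ persists because the coefficients depend analytically on $\om$.
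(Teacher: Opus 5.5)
Your argument is the paper's own route: factor out $\ee^{\mp\ii\om r_*}$, solve the Frobenius recursion in $x=r-r_+$, and bound the coefficients uniformly in $\om$. It is in fact more careful than the paper's sketch, which asserts nonresonance for all $\om$, whereas you correctly locate the resonant frequencies $\om=\mp\ii\kappa_+n$ (where the divisor $n(2\kappa_+n\mp2\ii\om)$ vanishes) and remove them by taking $\eta<\kappa_+$. Three small fixes are needed: those points enter the strip only if $\eta>\kappa_+$ (your inequality is reversed); the majorant induction closes with radius $\rho/(1+B)$, where $B$ bounds $|c_{n,k}|\rho^{n-k}/|n(2\kappa_+n\mp2\ii\om)|$, not with $\rho/2$; and uniqueness follows directly from unique solvability of the recursion (the ``nonzero imaginary part'' reason fails for purely imaginary $\om$, e.g.\ $\om=0$).
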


\begin{proof}
After factoring out $\ee^{\mp\ii\om r_*}$, the remaining equation has coefficients holomorphic in $r-r_+$ with a regular singular point at the horizon.  The indicial roots are $0$ and therefore nonresonant once the oscillatory factor has been removed.  Standard Frobenius recursion gives existence and uniqueness of the series.  Uniform convergence and analytic dependence on $\om$ follow from the recursive bounds on the coefficients.
\end{proof}

The horizon bases provide the left-hand boundary data used later in the Evans determinant.  They also encode the physical ingoing condition required in the definition of quasinormal and quasibound modes.

\begin{lemma}[Horizon-regular basis]\label{lem:horizon_basis}
For each fixed $\ell$ there exists a unique $n\times n$ matrix solution $F_{\hor}(r,\om)$ of the reduced system such that
\begin{equation}\label{eq:horizon_basis}
   F_{\hor}(r,\om)
   =
   \ee^{-\ii\om r_*}\bigl(\Id_n+H(r,\om)\bigr),
   \qquad
   H(r,\om)=O(r-r_+),
\end{equation}
as $r\downarrow r_+$.  Moreover $F_{\hor}$ depends analytically on $\om$ in a strip $\abs{\Im\om}<\eta$.
\end{lemma}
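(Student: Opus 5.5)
The plan is to reduce the statement to the Frobenius series of Lemma~\ref{lem:horizon_frobenius}, applied column by column in the even sector, and then to check that the resulting matrix solution is unique and analytic in $\om$. The governing structure is that, as $r_*\to-\infty$, the reduced system is $\bm u''+(\om^2-V_\ell(r))\bm u=0$ with $V_\ell=O(r-r_+)=O(\ee^{2\kappa_+ r_*})$ by \eqref{eq:horizon_potential_decay}. The potential is short range at the horizon, so the ingoing factor $\ee^{-\ii\om r_*}$ can be extracted exactly.

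\textbf{Step 1 (reduction to a regular singular system).} Substitute $F=\ee^{-\ii\om r_*}(\Id_n+H)$ and rewrite the equation in the variable $x=r-r_+$, using $\partial_{r_*}=f\partial_r$ and $f=x(x+r_+-r_-)/r^2$. This gives
\[
   f\partial_r(f\partial_r H)-2\ii\om f\partial_r H-V_\ell(\Id_n+H)=0.
\]
Here $V_\ell/f$ is holomorphic at $x=0$, since every entry of $V_\ell$ carries a factor $f$, as is visible in \eqref{eq:even_matrix_form} and \eqref{eq:odd_frequency_domain}. Dividing by $f$ gives a Fuchsian system at $x=0$ whose indicial equation for a scalar factor $x^s$ is $s(2\kappa_+ s-2\ii\om)=0$. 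Its roots are $s=0$ and $s=\ii\om/\kappa_+$, where the latter corresponds to the outgoing solution.

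\textbf{Step 2 (recursion and convergence).} Insert $H=\sum_{k\ge1}H_k(\om)x^k$, with $n\times n$ matrix coefficients. The $k$-th coefficient equation has the form
\[
   k\,(2\kappa_+k-2\ii\om)\,c\,H_k=\text{(terms in }H_0=\Id,H_1,\dots,H_{k-1}),
\]
with a fixed nonzero constant $c$. The factor $k(2\kappa_+ k-2\ii\om)$ vanishes only when $\om=-\ii\kappa_+k$. These resonant points lie outside the strip $|\Im\om|<\eta$ provided $\eta<\kappa_+$, and we fix such an $eta$. Within the strip, $|2\kappa_+k-2\ii\om|\ge 2(\kappa_+-\eta)k$ uniformly on compact $\om$-sets. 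A majorant-series argument then gives $|H_k|\le C\rho^{-k}$ for some $\rho>0$ smaller than the distance from $r_+$ to the nearest singularity of the coefficients. The series therefore converges normally on $0\le x<\rho$, locally uniformly in $\om$. Each $H_k$ is a rational function of $\om$ with poles only at the excluded points, so uniform convergence gives analyticity of $F_{\hor}$ in $\om$. The solution is then extended to all $r>r_+$ by the linear ODE, which preserves analyticity in the parameter.

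\textbf{Step 3 (uniqueness).} Suppose two solutions have the form \eqref{eq:horizon_basis}. Their difference is $\ee^{-\ii\om r_*}\,O(x)$ and is again a solution. Write it in the Frobenius basis formed by the ingoing branch ($s=0$) and the outgoing branch $x^{\ii\om/\kappa_+}\cdot(\text{holomorphic})$. Because $\Im\om>-\kappa_+$, matching the $O(x)$ behaviour rules out any outgoing component without a nonzero leading constant matrix. The ingoing component with vanishing leading coefficient is then identically zero by the recursion, since that recursion is uniquely solvable once $H_0$ is fixed.

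\textbf{Main obstacle.} The delicate point is the resonance at $2\kappa_+k=2\ii\om$. Without restricting to $\eta<\kappa_+$, or more generally removing the points $-\ii\kappa_+\N$, both uniqueness and analyticity fail, and in the uniqueness argument the outgoing branch can become subdominant to $O(x)$. I will therefore state explicitly that $\eta$ is chosen below $\kappa_+$.
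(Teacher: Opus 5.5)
Your proposal is correct and follows essentially the paper's route: factor out $\ee^{-\ii\om r_*}$, treat $x=r-r_+$ as a regular singular point, and solve the Frobenius recursion with coefficients analytic in $\om$. It is in fact more careful than the paper's own proof, since it identifies the resonant frequencies $\om\in-\ii\kappa_+\N$ and hence the need for $\eta<\kappa_+$, and the only thing to patch is $\om=0$, where the two indicial roots coincide and the second solution contains $\log x$ rather than $x^{\ii\om/\kappa_+}$; that solution is still not $O(x)$, so your uniqueness argument goes through unchanged.
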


\begin{proof}
Write the radial system in the coordinate $x=r-r_+$.  Since
\[
   f(r)=2\kappa_+x+O(x^2),
\]
the reduced equations have a regular singular point at $x=0$ with indicial roots $\pm\ii\om/(2\kappa_+)$.  The ingoing root is realized by the factor
\[
   x^{-\ii\om/(2\kappa_+)}=\ee^{-\ii\om r_*}.
\]
The remaining coefficient matrix is analytic in $(x,\om)$, so Frobenius theory yields a unique analytic matrix $\Id_n+H(x,\om)$ with $H(0,\om)=0$.  Analytic dependence on $\om$ follows from analytic dependence of the coefficients.
\end{proof}

\subsection*{Step 3. Infinity Jost bases}

At spatial infinity the channel equation is long range rather than short range.  After removing the Coulomb factor, the decaying infinity solution is constructed by a Volterra equation rather than by a simple power series.  One writes
\[
   u_{\infty}(r,\om)=\exp(-\vp r)\,r^{-\kap}\,m(r,\om),
\]
where $\kap=M\mu^2/\vp$ and $m(r,\om)\to1$ as $r\to\infty$ when $\Re\vp>0$.  Substituting this ansatz into \eqref{eq:step1_coulomb_model} produces an integral equation of Volterra type for $m$.

\begin{lemma}[Infinity Volterra construction]\label{lem:infinity_volterra}
For $\Re\vp>0$ and $\om$ in a compact subset of the physical sheet, there exists a unique odd decaying Jost solution and a unique even decaying Jost basis such that
\[
   u_{\infty}(r,\om)=\exp(-\vp r)\,r^{-\kap}\,(1+\eta(r,\om)),
   \qquad
   \eta(r,\om)\to0,
\]
as $r\to\infty$.  Moreover, for every $j,k\ge0$,
\[
   \abs{\partial_r^j\partial_\om^k\eta(r,\om)}\le C_{j,k,\ell}\,r^{-1-j}
\]
for $r$ sufficiently large.
\end{lemma}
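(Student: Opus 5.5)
The plan is to construct $u_\infty$ by a Volterra fixed-point argument for the amplitude $m=1+\eta$, working directly with the Coulomb normal form \eqref{eq:step1_coulomb_model}. In the even sector, $u_\infty$ is a $2\times2$ matrix, and I will use the form of Lemma~\ref{lem:normal_form} before the final diagonalization. There the off-diagonal remainder is $O(r^{-3})$ by Lemma~\ref{lem:coulomb_bounds}.

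\textbf{Step 1: reduce to an equation for $m$.} Insert $u=\ee^{-\vp r}r^{-\kap}m$ into \eqref{eq:step1_coulomb_model}. Since $\kap\vp=M\mu^2$, the choice $\phi=\ee^{-\vp r}r^{-\kap}$ cancels the $-\vp^2$ and the Coulomb term $2M\mu^2/r$ exactly. What remains is
\[
   m''-2\Bigl(\vp+\frac{\kap}{r}\Bigr)m'+q(r,\om)m=0,
   \qquad
   q=\frac{\kap(\kap+1)-(\nu_{\ell,P}^2-\tfrac14)}{r^2}+W_{\ell,P}(r,\om),
\]
so $q=O(r^{-2})$. In the even case $q$ is matrix-valued, but the drift term $\vp+\kap/r$ is scalar, so the same computation goes through.

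\textbf{Step 2: write and solve the Volterra equation.} The operator $m\mapsto m''-2(\vp+\kap/r)m'$ has the solutions $1$ and $\int^r\ee^{2\vp s}s^{2\kap}\,\dd s$. This gives
\[
   \eta(r)=\int_r^\infty K(r,s)\,q(s)\bigl(1+\eta(s)\bigr)\,\dd s,
   \qquad
   K(r,s)=\int_r^s\ee^{-2\vp(s-\rho)}\Bigl(\frac{\rho}{s}\Bigr)^{2\kap}\dd\rho .
\]
For $\Re\vp\ge c>0$ and $r\le\rho\le s$ with $r$ large, one has $|(\rho/s)^{2\kap}|\le C$ uniformly on compact $\om$-sets, because $\Re\kap$ is bounded there. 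Hence $|K(r,s)|\le C\min\{s-r,\,(\Re\vp)^{-1}\}\le C'$. The iterates are then controlled by $\int_r^\infty |q|\,\dd s\lesssim r^{-1}$. The Neumann series converges for $r\ge R_0$ and yields $|\eta|\le Cr^{-1}$. Uniqueness follows from the same contraction estimate. Finally, the solution extends to all $r>r_+$ by solving the linear ODE.

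\textbf{Step 3: derivative bounds.} For $\partial_r$, differentiate the integral equation. One has $\partial_rK(r,s)=-\ee^{-2\vp(s-r)}(r/s)^{2\kap}$, which is exponentially localized near $s=r$. Therefore $\partial_r\eta=-\int_r^\infty \ee^{-2\vp(s-r)}(r/s)^{2\kap}q(1+\eta)\,\dd s=O(r^{-2})$. Higher $r$-derivatives follow inductively by integrating by parts against the exponential, using $|\partial_s^jq|\lesssim s^{-2-j}$ from Lemma~\ref{lem:coulomb_bounds}.

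For $\partial_\om$, there are two sources of $\om$-dependence. The first is $W$, which is harmless by Lemma~\ref{lem:coulomb_bounds}. The second is $\vp$ and $\kap$, and this is where I expect the main obstacle. Differentiating $(\rho/s)^{2\kap}$ produces $\log(\rho/s)$, and differentiating the exponential produces $(s-\rho)$. These factors are absorbed by the exponential decay $\ee^{-2\Re\vp(s-\rho)}$ in the kernel. Indeed $|\log(\rho/s)|\le (s-\rho)/\rho$, so each $\om$-derivative costs at most a bounded factor. The key is that $\Re\vp$ stays bounded away from $0$ on the compact set, which keeps these losses bounded.

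Then differentiate the Volterra equation in $\om$. Each derivative of $\eta$ satisfies the same Volterra equation with a source that is already $O(r^{-1-j})$ by induction. Gronwall's inequality then yields $|\partial_r^j\partial_\om^k\eta|\le C_{j,k,\ell}\,r^{-1-j}$. In the matrix case nothing changes, since only norms of $q$ enter these estimates.
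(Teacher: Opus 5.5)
Your route is the paper's route: Jost ansatz, integrate twice from infinity, contraction on a large half-line, then differentiate the integral equation. You carry it out in much more detail. However, Step~1 has a sign error that is fatal as written.

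\textbf{The error in Step~1.} For $\phi=\ee^{-\vp r}r^{-\kap}$ one has $\phi'/\phi=-\vp-\kap/r$ and
\[
   \frac{\phi''}{\phi}=\vp^2+\frac{2\vp\kap}{r}+\frac{\kap(\kap+1)}{r^2}.
\]
Using $\vp\kap=M\mu^2$, the zeroth-order coefficient in the $m$-equation is therefore $\frac{2\vp\kap}{r}+\frac{2M\mu^2}{r}+O(r^{-2})=\frac{4M\mu^2}{r}+O(r^{-2})$. The two Coulomb terms add; they do not cancel. Your drift $\vp+\kap/r$ and your $r^{-2}$ coefficient $\kap(\kap+1)$ are correct for this $\phi$; the $r^{-1}$ term was simply dropped. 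With $q\sim 4M\mu^2/r$, the integral $\int_r^\infty|K(r,s)q(s)|\,\dd s$ diverges and there is no contraction.

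\textbf{The statement as printed is false.} This cannot be patched. For $\Re\vp>0$ every solution of \eqref{eq:step1_coulomb_model} behaves like $a\,\ee^{-\vp r}r^{\kap}(1+o(1))+b\,\ee^{\vp r}r^{-\kap}(1+o(1))$. This agrees with $W_{\kap,\nu}(2\vp r)\sim\ee^{-\vp r}(2\vp r)^{\kap}$ and with the normalization \eqref{eq:infty_condition}. Since $\Re\kap=M\mu^2\Re\vp/|\vp|^2>0$, no solution has the form $\ee^{-\vp r}r^{-\kap}(1+o(1))$. The paper's own statement contains the same slip, as does its appendix computation, which moreover claims an $O(r^{-3})$ coefficient. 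The correct normalization is $\ee^{-\vp r}r^{+\kap}$.

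\textbf{How to fix it.} With $+\kap$, your argument works after two adjustments.

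First, the amplitude equation becomes $m''-2(\vp-\kap/r)m'+qm=0$ with
\[
   q=\bigl[\kap(\kap-1)-(\nu_{\ell,P}^2-\tfrac14)\bigr]r^{-2}+W_{\ell,P}.
\]
This is genuinely $O(r^{-2})$, not $O(r^{-3})$. The $r^{-2}$ term is exactly what makes $\eta\sim r^{-1}$, matching Whittaker's expansion $1+\frac{\nu_{\ell,P}^2-(\kap-\frac12)^2}{2\vp r}+\cdots$. The Volterra equation is $\eta(r)=-\int_r^\infty K(r,s)q(s)(1+\eta(s))\,\dd s$ with $K(r,s)=\int_r^s\ee^{-2\vp(s-\rho)}(s/\rho)^{2\kap}\,\dd\rho$. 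The overall sign is harmless.

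Second, $|(s/\rho)^{2\kap}|=(s/\rho)^{2\Re\kap}\ge1$ is now not bounded by a constant, so ``$\Re\kap$ is bounded'' is not a valid justification. Instead use $\log(s/\rho)\le(s-\rho)/\rho$. This gives $(s/\rho)^{2\Re\kap}\le\ee^{\Re\vp(s-\rho)}$ whenever $\rho\ge R_0\ge 2\sup\Re\kap/\inf\Re\vp$ over the compact $\om$-set. That restores $|K(r,s)|\le\min\{s-r,(\Re\vp)^{-1}\}$.

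After this, the Neumann series, the $r^{-1-j}$ bounds, and the $\om$-derivative bounds go through as you describe, including in the matrix case. In the $\om$-derivatives, the factors $(s-\rho)$ and $\log(s/\rho)$ are absorbed into $\ee^{-\Re\vp(s-\rho)}$ just as you say.
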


\begin{proof}
Insert the Jost ansatz into the equation and integrate twice from infinity.  Because the remainder $W_{\ell,P}$ is $O(r^{-3})$, the resulting integral equation is Volterra with an integrable kernel after the Coulomb factor has been removed.  A contraction argument on a large half-line gives existence and uniqueness.  Differentiating the integral equation with respect to $r$ and $\om$ yields the stated bounds.
\end{proof}

The Volterra representation is the right tool both for continuation in $\om$ and for threshold asymptotics.  Near $\om=\pm\mu$, the decay factor $\exp(-\vp r)$ becomes weak, but the normalization above still captures the exact singular dependence on $\vp$ and $\kap$.

\begin{lemma}[Whittaker--Volterra construction]\label{lem:whittaker_volterra}
Fix a branch of $\vp=\sqrt{\mu^2-\om^2}$ on the slit strip and assume $\Re\vp\ge0$.  Then for $R\ge R_0$ sufficiently large there exists a unique outgoing matrix solution $F_{\out}(r,\om)$ of the reduced system such that
\begin{equation}\label{eq:outgoing_basis}
   F_{\out}(r,\om)
   =
   W_0(r,\om)\bigl(\Id_n+Q(r,\om)\bigr),
   \qquad
   Q(r,\om)=O(r^{-1}),
\end{equation}
as $r\to\infty$, where
\begin{equation}\label{eq:diag_whittaker_basis}
   W_0(r,\om)=\diag\bigl(W_{\kap,\nu_j}(2\vp r)\bigr)_{j=1}^n.
\end{equation}
Moreover $F_{\out}$ depends analytically on $\om$ on the slit strip.
\end{lemma}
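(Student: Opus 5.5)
We construct $F_{\out}$ as a perturbation of the diagonal Whittaker model $W_0$ in the far zone $r\ge R$, by variation of parameters for the first-order system. First, in the normal form of Lemma~\ref{lem:normal_form}, the reduced system for $r\ge R_0$ reads
\[
   \bm w''+\Bigl(-\vp^2+\tfrac{2M\mu^2}{r}-\tfrac{\mathcal N^2-\frac14}{r^2}\Bigr)\bm w=-\mathcal V(r,\om)\bm w ,
\]
where $\mathcal N=\diag(\nu_j)$. Here $\mathcal V$ consists of the diagonal short-range parts $V^{\mathrm{sr}}_{\ell,P}$ together with the off-diagonal even coupling. By Lemma~\ref{lem:coulomb_bounds}, $\mathcal V$ obeys $|\partial_\om^k\mathcal V|\lesssim r^{-3}$ uniformly on compact subsets of the slit strip. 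For the diagonal model we take the Whittaker pair $W_{\kap,\nu_j}(2\vp r)$ and $M_{\kap,\nu_j}(2\vp r)$ (or $W_{-\kap,\nu_j}(-2\vp r)$ when $\Re\vp=0$ on the cut edge). Their Wronskian is an explicit Gamma-function ratio times $\vp$, nonvanishing for fixed $\om\ne\pm\mu$. The map from $\om$ to the normalized pair is analytic on the slit strip, since $\vp$ is analytic there once the branch is fixed.

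Second, we write $F_{\out}=W_0(\Id+Q)$ and convert the equation into the Volterra integral equation
\[
   \bm w(r)=\bm w_0(r)+\int_r^\infty \mathcal K(r,s;\om)\,\mathcal V(s,\om)\,\bm w(s)\,\dd s ,
\]
column by column. The kernel $\mathcal K$ is the diagonal model Green kernel built from the $W$, $M$ pair, with entries
\[
   \bigl[W_{\kap,\nu_j}(2\vp r)M_{\kap,\nu_j}(2\vp s)-M_{\kap,\nu_j}(2\vp r)W_{\kap,\nu_j}(2\vp s)\bigr]\big/\cW_j .
\]
Off the diagonal ($i\ne j$) we must divide by $W_{\kap,\nu_j}$ rather than $W_{\kap,\nu_i}$. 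We therefore estimate $\mathcal K_{ij}(r,s)W_{\kap,\nu_j}(2\vp s)/W_{\kap,\nu_j}(2\vp r)$ using the uniform large-argument asymptotics $W_{\kap,\nu}(z)\sim z^{\kap}\ee^{-z/2}$ and $M_{\kap,\nu}(z)\sim \Gamma(1+2\nu)\Gamma(\tfrac12+\nu-\kap)^{-1}z^{-\kap}\ee^{z/2}$. For $s\ge r\ge R$ with $\Re\vp\ge0$ these give a bound of the form $|\cdot|\le C\,s$ (possibly $C\,s\,(s/r)^{2|\Re\kap|}$ in oscillatory directions). Combined with $|\mathcal V|\lesssim s^{-3}$, the weighted kernel is integrable with $\int_r^\infty \cdots\,\dd s\lesssim r^{-1}$. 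The standard Volterra iteration then converges, gives uniqueness in the class $Q\to0$, and yields $Q=O(r^{-1})$ as claimed in \eqref{eq:outgoing_basis}. Analyticity in $\om$ follows because each Neumann iterate is analytic and the series converges locally uniformly. The bounds on $\partial_\om\mathcal V$ and $\partial_\om W_0$ give the derivative estimates for free, via Cauchy estimates.

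The main obstacle is the kernel bound uniformly on the slit strip, especially on the edges of the cut. There $\vp$ is purely imaginary, so $W_{\kap,\nu}$ and $M_{\kap,\nu}$ are both oscillatory with a Coulomb phase $r^{\pm\ii|\kap|}$ and no exponential decay, and the off-diagonal coupling between channels with different $\nu_j$ could lose powers of $r$. My first attempt would be to bound the oscillatory kernel by $C\,\min\{s,|\vp|^{-1}\}$ for fixed $\om\ne\pm\mu$, using the Liouville--Green form of $W_{\kap,\nu}$ valid for $|\vp| r\gg\kap$. I would take $R=R(\om)$ large on compacts, which is allowed since only $R\ge R_0$ sufficiently large is required. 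Here $r^{-3}$ decay leaves an integrable margin $s^{-2}$, with $s^{-3}\cdot s$ giving a tail integral $\sim r^{-1}$. Finally, $F_{\out}$ is extended from $[R,\infty)$ to all of $(r_+,\infty)$ by solving the regular ODE inward. Uniqueness follows because any two such solutions differ by a solution whose $W_0^{-1}$-normalized form tends to $0$, and that difference vanishes identically by the Volterra uniqueness.
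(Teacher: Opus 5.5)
Your route is the paper's: variation of parameters around the diagonal Whittaker model $(M_0,W_0)$, a Volterra equation from infinity, a Neumann series, and analyticity of the iterates. Your normalization $F_{\out}=W_0(\Id+Q)$ with a weighted kernel is the correct way to run that argument. The paper's unweighted contraction on $L^\infty([R,\infty))$ cannot hold for $\Re\vp>0$, because the kernel contains $W_{\kap,\nu}(2\vp r)M_{\kap,\nu}(2\vp s)$, which grows like $\ee^{\Re\vp(s-r)}$. There is, however, a genuine gap in the analyticity claim, and your description of the main obstacle hides it.

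\textbf{Where the gap is.} The slit in the statement is $[-\mu,\mu]$, so $\vp$ is continued from $\{\Im\om>0\}$ through $\R\setminus[-\mu,\mu]$. On that branch:
\begin{enumerate}
\item[(a)] $\vp$ is real on the edges of the cut: $+\sqrt{\mu^2-(\Re\om)^2}$ from above and $-\sqrt{\mu^2-(\Re\om)^2}$ from below.
\item[(b)] $\vp$ is purely imaginary on $\R\setminus[-\mu,\mu]$.
\item[(c)] $\Re\vp(\bar\om)=-\Re\vp(\om)$, so $\Re\vp<0$ on the whole lower half of the slit strip.
\end{enumerate}
What you describe (imaginary $\vp$ on the cut edges, $\Re\vp\ge0$ throughout) is the principal branch, whose cuts are $(-\infty,-\mu]\cup[\mu,\infty)$. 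On that sheet your argument does give analyticity across $(-\mu,\mu)$, but not across $\abs{\om}>\mu$, which is what the stated slit strip requires.

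\textbf{Why the Volterra argument fails there.} Where $\Re\vp<0$, $W_{\kap,\nu}(2\vp r)$ is the dominant solution. Write the kernel with a recessive second solution. Its weighted form then contains $(s/r)^{2\kap}\ee^{-2\vp(s-r)}$, which grows like $\ee^{2\abs{\Re\vp}(s-r)}$ up to powers. This is not integrable against an $O(s^{-3})$ remainder for any choice of $R$. Moreover, the asymptotics $W_0(\Id+O(r^{-1}))$ no longer determine the solution: adding any multiple of the recessive solution changes $Q$ only by an exponentially small amount. So both uniqueness and the Neumann-series analyticity break down. Your argument yields analyticity only on $\{\Re\vp>0\}$, with continuity up to $\Re\vp=0$.

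\textbf{What would close it.} Reaching the rest of the slit strip needs an extra idea. One option: use that the reduced equations in the variable $r$ have coefficients rational in $r$, hence analytic near $r=\infty$. Run the Volterra equation along a rotated ray $s\in r+\ee^{\ii\theta}[0,\infty)$ with $\Re(\vp\ee^{\ii\theta})>0$, then continue the solution back to real $r$ through the ODE. Otherwise the lemma must be restricted to $\{\Re\vp\ge0\}$. The paper's proof also asserts uniformity ``on compact subsets of the slit strip'' without addressing this, so you inherit the defect rather than introduce it, but it is a real gap.

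A minor point: $\mathcal W(M_{\kap,\nu},W_{\kap,\nu})=-\Gamma(1+2\nu)/\Gamma(\tfrac12+\nu-\kap)$ vanishes when $\kap-\nu-\tfrac12\in\Z_{\ge0}$. This can happen for real $\vp$, e.g.\ on the imaginary $\om$-axis when $M\mu$ is large. There, take $W_{-\kap,\nu}(\ee^{\pm\pi\ii}z)$ as the second solution; the Volterra kernel itself does not depend on that choice.
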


\begin{proof}
Let $M_0(r,\om)=\diag(M_{\kap,\nu_j}(2\vp r))_{j=1}^n$ and $W_0(r,\om)$ as in \eqref{eq:diag_whittaker_basis}.  Since the remainder in \eqref{eq:threshold_normal_form} is $O(r^{-3})$, variation of constants gives the Volterra equation
\begin{equation}\label{eq:volterra_outgoing}
   F_{\out}(r)
   =
   W_0(r)
   -
   \int_r^\infty
   \mathbb G_0(r,s;\om)\,\mathcal R(s,\om)\,F_{\out}(s)\,\dd s,
\end{equation}
where $\mathbb G_0$ is the diagonal model Green matrix built from $(M_0,W_0)$.  Standard Whittaker asymptotics imply
\[
   \sup_{r\ge R}
   \int_r^\infty
   \norm{\mathbb G_0(r,s;\om)\mathcal R(s,\om)}\,\dd s
   \le CR^{-1}
\]
for $R$ large, uniformly on compact subsets of the slit strip.  Hence the Volterra operator is a contraction on $L^\infty([R,\infty))$, the Neumann series converges, and the resulting solution is analytic in $\om$.
\end{proof}

\subsection*{Step 4. Evans determinant and meromorphic continuation}

The resolvent is meromorphically continued by matching the left and right bases constructed in Steps~2 and 3.  In the odd sector the relevant scalar quantity is the Wronskian
\[
   \cE_{\ell}^{\mathrm{odd}}(\om)=Q[u_{\hor}^-,u_{\infty}],
\]
while in the even sector one uses the $2\times2$ matching matrix formed by evaluating the horizon basis and the infinity basis at one common radius.  Because both bases solve the same symmetric equation, the determinant of this matching matrix is independent of the matching radius.

\begin{proposition}[Evans determinant and continuation]\label{prop:evans_detailed}
For each fixed $\ell\ge1$, the odd Wronskian and the even matching determinant extend analytically from the physical half-plane into the slit strip.  The cut-off resolvent kernels extend meromorphically there, and their poles are precisely the zeros of the corresponding Evans determinant.
\end{proposition}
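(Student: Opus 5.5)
The plan is to build everything from the two boundary bases already constructed: the horizon-regular matrix solution $F_{\hor}(r,\om)$ of Lemma~\ref{lem:horizon_basis}, which is analytic in $\abs{\Im\om}<\eta$, and the outgoing Whittaker--Volterra solution $F_{\out}(r,\om)$ of Lemma~\ref{lem:whittaker_volterra}, which is analytic on the slit strip. For $r\ge R_0$ we pull $F_{\out}$ back through the far-zone transformation $U=SW$ of Lemma~\ref{lem:normal_form}. Then we extend it to all $r>r_+$ by solving the linear ODE with analytic coefficients from $r=R_0$ inward; by analytic parameter dependence of ODE solutions this preserves analyticity in $\om$. Both bases are thus defined on the common domain $(r_+,\infty)\times\{\abs{\Im\om}<\eta\}\setminus[-\mu,\mu]$. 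On the physical half-plane $\{\Im\om>0\}$ with $\Re\vp>0$, they coincide with the ingoing and decaying solutions used in Lemma~\ref{lem:green_kernel_formula}.

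\textbf{Evans function.} In the odd sector we set $\cE^{\mathrm{odd}}_\ell(\om)=Q[u_{\hor},u_\infty]$. This is independent of $r_*$, since both functions solve the same real scalar equation; we use the bilinear version of the current, without complex conjugation, so that the expression is holomorphic. In the even sector the potential is real-symmetric in the original $(u_2,u_3)$ frame. The corresponding statement in the polarization frame comes from conjugating by the constant matrix $T_\ell$, with the caveat that $C_\ell$ is not symmetric, so the symmetrizing form is $T_\ell^{T}T_\ell$. We define the $2\times2$ matching matrix
\[
   \cM_\ell(\om)=F_{\hor}^{T}G\,F_{\out}'-(F_{\hor}')^{T}G\,F_{\out},
   \qquad G=T_\ell^{T}T_\ell,
\]
which is $r_*$-independent by the symmetric Wronskian identity. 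We then set $\cE^{\mathrm{ev}}_\ell=\det\cM_\ell$. Analyticity of $\cE_\ell$ on the slit strip is inherited from the bases. $\cE_\ell$ is not identically zero, because for $\Im\om>0$ a zero would give an $L^2$ eigenfunction of the selfadjoint operator $H_\ell^\sharp$ with nonreal eigenvalue $\om^2$, contradicting Proposition~\ref{prop:selfadjoint_channels}. Hence its zeros are discrete, which gives finitely many zeros in compact subsets.

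\textbf{Continuation of the resolvent.} Lemma~\ref{lem:green_kernel_formula} gives, for $\Im\om>0$,
\[
   \cG_\ell(\om;r,r')=F_{\hor}(r_<)\,\cM_\ell(\om)^{-1}F_{\out}(r_>)^{T}G
\]
up to the frame convention. The right side is analytic in $\om$ apart from the factor $\cM_\ell^{-1}=\operatorname{adj}\cM_\ell/\det\cM_\ell$. Multiplying by $\chi$ on both sides and using the uniform compact-set bounds from Lemmas~\ref{lem:horizon_frobenius} and \ref{lem:whittaker_volterra}, we obtain a meromorphic family of operators $L^2\to H^2_{\mathrm{loc}}$ agreeing with $\cR_{\ell,\chi}$ on the physical half-plane. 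This is the claimed continuation.

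\textbf{Poles equal zeros.} At a zero $\om_0$ of $\cE_\ell$, we pick a nonzero vector $c\in\ker\cM_\ell(\om_0)$. The matching identity shows that $F_{\out}c$ is a linear combination of the columns of $F_{\hor}$, so it is a genuine mode solution. Then $\operatorname{adj}\cM_\ell(\om_0)\neq0$ when the kernel is one-dimensional, and in general we use that the Laurent coefficient is a nonzero multiple of $F_{\hor}(r)\,\Pi\,F_{\out}(r')^{T}$ with $\Pi$ projecting onto the kernel. Together with the fact that $F_{\hor}$ restricted to $\operatorname{supp}\chi$ cannot vanish identically on a nonzero vector (by ODE uniqueness), this shows the cut-off kernel genuinely has a pole there. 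Conversely, off the zero set everything is analytic.

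I expect the main obstacle to be the even sector near the cut endpoints and the matching of conventions: ensuring that the extension of $F_{\out}$ stays analytic up to the slit as $\vp\to0$, where $\kap\to\infty$ and the Whittaker normalization degenerates. I would handle this by first restricting to compact subsets of the slit strip away from $\pm\mu$, which is all the statement requires, where Lemma~\ref{lem:whittaker_volterra} applies uniformly.
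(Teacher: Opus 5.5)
Your overall route is the paper's: an analytic horizon basis, a continued outgoing basis, the Green formula with $\cM_\ell^{-1}=\operatorname{adj}\cM_\ell/\det\cM_\ell$, and both inclusions between poles and zeros. Arguing both inclusions is more than the paper's three-line sketch does. The gap is in the even-sector matching matrix. You take $G=T_\ell^{T}T_\ell$ on the premise that the potential is symmetric in the $(u_2,u_3)$ frame, but by \eqref{eq:raw_A_matrix} it is not: its off-diagonal entries are $-2(f-\tfrac{rf'}{2})$ and $-2\la^2$. Write the polarization system as $F''=\mathcal U F$. Then $\partial_{r_*}\cM_\ell=F_{\hor}^{T}(G\mathcal U-\mathcal U^{T}G)F_{\out}$, so $G\mathcal U(r)$ must be symmetric for every $r$. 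With $G=T_\ell^{T}T_\ell$ this already fails for the $D_\ell$ term, because the off-diagonal entry of $T_\ell^{T}T_\ell$ is $1-\ell(\ell+1)\neq0$ while $D_\ell$ has distinct diagonal entries. Consequently your $\cM_\ell$ depends on the matching radius, $\det\cM_\ell$ is not an Evans function, and $F_{\hor}(r_<)\cM_\ell^{-1}F_{\out}(r_>)^{T}G$ is not the resolvent kernel. The paper's surrounding text asserts the same symmetry. However, its working matching object in Proposition~\ref{prop:evans_det} is the $2n\times2n$ determinant $\det[Y_{\hor}\;Y_{\out}]$. That determinant is radius-independent by Liouville's formula ($\operatorname{tr}\mathbb A=0$), and the kernel follows from Cramer's rule on the continuity/jump system; none of this needs symmetry.

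To keep your $2\times2$ route, note that $f/r^2$, $f/r^3$, $f/r^4$ are linearly independent, so $GD_\ell$ and $GC_\ell$ must each be symmetric. This forces $G$ to be diagonal with $g_{22}=-g_{11}$, that is $G=\diag(-1,1)=(2\ell+1)^{-1}T_\ell^{T}\bigl(\begin{smallmatrix}1&-1\\-1&0\end{smallmatrix}\bigr)T_\ell$. This $G$ is indefinite but nondegenerate, which is all your Wronskian argument uses. With it, $\cM_\ell$ is constant and both column spaces are isotropic: check the horizon as $r_*\to-\infty$ using $\Im\om>-\kappa_+$, and infinity for $\Re\vp>0$, then continue analytically. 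Hence $\det\cM_\ell$ and $\det[Y_{\hor}\;Y_{\out}]$ have the same zeros, and your Green formula and pole/zero argument go through.

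The indefiniteness has a second consequence. $H_\ell^{\mathrm{ev}}$ is not selfadjoint for the flat inner product on $L^2(\R;\C^2)$, so your proof that $\cE_\ell\not\equiv0$ in the even sector should not rest on selfadjointness. Use instead transversality at $\om=\ii y$ with $y\gg1$: a large-parameter Volterra argument shows the horizon basis grows and the outgoing basis decays, so they are transverse.

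Finally, the obstacle you flag ($\vp\to0$) is not where the difficulty lies. On the part of the slit strip with $\Im\om<0$, which is reached through $\abs{\om}>\mu$, one has $\Re\vp<0$. There the hypothesis $\Re\vp\ge0$ of Lemma~\ref{lem:whittaker_volterra} fails and $W_{\kap,\nu}(2\vp r)$ is dominant. The integrand of \eqref{eq:volterra_outgoing} then grows like $\ee^{2\abs{\Re\vp}s}$, so the Volterra equation along the real ray diverges. Continuing $F_{\out}$ there needs an extra device, for instance rotating the integration ray into the complex $r$-plane, which the rational dependence of the coefficients on $r$ permits.
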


\begin{proof}
The horizon basis is analytic by Lemma~\ref{lem:horizon_frobenius}.  The infinity basis is analytic for $\Re\vp>0$ and extends across the slit by continuation of the Volterra integral equation in the variable $\vp$, keeping track of the chosen branch.  The Green kernel formula of Lemma~\ref{lem:green_kernel_formula} then shows that every possible singularity of the continued resolvent is caused by failure of the two bases to span the solution space, equivalently by vanishing of the matching determinant.  Since the determinant is analytic, the singularities are meromorphic.
\end{proof}

This step is where the matrix nature of the even sector matters most.  The continuation argument has the same analytic structure as in the scalar case, but the matching object is now a genuine $2	imes2$ determinant and the threshold analysis must be performed only after diagonalization of the leading matrix asymptotics.

We convert the second-order system to first order:
\begin{equation}\label{eq:first_order_system}
   Y=\binom{W}{W'}\in\C^{2n},
   \qquad
   Y'=\mathbb A(r,\om)Y,
   \qquad
   \mathbb A(r,\om)=
   \begin{pmatrix}
      0&\Id_n\\
      -\mathcal V(r,\om)&0
   \end{pmatrix}.
\end{equation}

\begin{proposition}[Evans determinant and matching matrix]\label{prop:evans_det}
Fix $r_0\in(R_0,\infty)$.  Let $Y_{\hor}(r,\om)$ be the $2n\times n$ matrix obtained from $F_{\hor}$ and its derivative, and let $Y_{\out}(r,\om)$ be the corresponding matrix for $F_{\out}$.  Set
\begin{equation}\label{eq:matching_matrix}
   \mathbb Y_\ell(r,\om):=\bigl[Y_{\hor}(r,\om)\;Y_{\out}(r,\om)\bigr]\in\C^{2n\times2n},
\end{equation}
and define the Evans determinant
\begin{equation}\label{eq:evans_def}
   \cE_\ell(\om):=\det\mathbb Y_\ell(r_0,\om).
\end{equation}
Then:
\begin{enumerate}[label=\textnormal{(\alph*)}]
   \item $\cE_\ell(\om)$ is analytic on the slit strip;
   \item $\cE_\ell(\om)=0$ if and only if there exists a nontrivial mode satisfying the horizon and outgoing boundary conditions simultaneously;
   \item the first-order Green matrix, and hence the fixed-mode second-order Green kernel, is meromorphic on the slit strip, with poles precisely at the zeros of $\cE_\ell$.
\end{enumerate}
\end{proposition}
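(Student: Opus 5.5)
The plan is to treat the matching matrix $\mathbb Y_\ell(r,\om)$ as a fundamental-matrix candidate for the first-order system \eqref{eq:first_order_system}, and to derive (a)--(c) from standard linear ODE facts together with the analytic bases of Lemmas~\ref{lem:horizon_basis} and \ref{lem:whittaker_volterra}.

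\textbf{(a) Analyticity.} The horizon basis $F_{\hor}$ and its $r_*$-derivative are analytic in $\om$ on the strip $\abs{\Im\om}<\eta$. This follows from Lemma~\ref{lem:horizon_basis}, after propagating from a neighbourhood of $r_+$ to $r_0$ by the analytic dependence of solutions of a linear ODE on parameters. The outgoing basis $F_{\out}$ is analytic on the slit strip by Lemma~\ref{lem:whittaker_volterra}. For $r_0<R$ we propagate backwards from $R$ by the same linear flow, and $r_*$-derivatives inherit analyticity because the Volterra representation can be differentiated in $r$. The determinant is a polynomial in these entries, so $\cE_\ell$ is analytic.

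Since $\operatorname{tr}\mathbb A=0$, Liouville's formula gives $\partial_{r_*}\det\mathbb Y_\ell=\operatorname{tr}(\mathbb A)\det\mathbb Y_\ell=0$. Hence the determinant does not depend on $r_0$, and the choice of matching radius is immaterial.

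\textbf{(b) Zeros correspond to modes.} The columns of $Y_{\hor}$ are linearly independent: this is seen from the leading behaviour $\ee^{-\ii\om r_*}\Id_n$ as $r_*\to-\infty$, for $\om\neq0$. Independence at $\om=0$ requires a separate remark, or follows by continuity of the rank argument via the normalization $\Id_n$. Likewise the columns of $Y_{\out}$ are independent, because $W_0$ is diagonal with nonvanishing Whittaker entries for large $r$.

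Suppose $\cE_\ell(\om)=0$. Then there are $a,b\in\C^n$, not both zero, with $Y_{\hor}a=-Y_{\out}b$ at $r_0$. By uniqueness for the ODE, the solution $F_{\hor}a=-F_{\out}b$ holds on all of $(r_+,\infty)$. Independence forces $a\neq0$ and $b\neq0$, so this is a nontrivial solution satisfying both boundary conditions, that is, a mode.

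Conversely, a mode lies both in the span of the columns of $F_{\hor}$ and in the span of the columns of $F_{\out}$. To justify this, we use that the solution space of the $2n$-dimensional system splits into ingoing and outgoing parts at each end. At the horizon, the complementary solutions behave like $\ee^{+\ii\om r_*}$, which is distinct from the ingoing behaviour when $\om\neq0$. At infinity, the complementary solutions are $M_{\kap,\nu}$-type, which are distinct from the $W$-type solutions off the threshold. A mode therefore produces a nontrivial kernel vector of $\mathbb Y_\ell(r_0,\om)$.

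\textbf{(c) Meromorphy of the Green kernel.} We build the first-order Green matrix by variation of parameters. For $r<r'$ the kernel is $Y_{\hor}(r)\alpha(r')$, and for $r>r'$ it is $Y_{\out}(r)\beta(r')$. The jump condition at $r=r'$ reads
\[
   \mathbb Y_\ell(r',\om)\binom{-\alpha}{\beta}=\binom{0}{\Id_n}.
\]
By Cramer's rule, $\alpha$ and $\beta$ are equal to $\cE_\ell^{-1}$ times analytic cofactors. The second-order kernel is the upper-left block, so it is meromorphic with poles contained in the zero set of $\cE_\ell$. This is consistent with Lemma~\ref{lem:green_kernel_formula}.

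For the reverse inclusion, every zero of $\cE_\ell$ must be a genuine pole. Near a zero $\om_0$, the kernel $\cG$ cannot stay bounded. If it did, applying the cut-off resolvent would give a bounded family. But the mode from (b), paired through the Wronskian current $Q$, shows that the residue is nonzero: its rank equals $\dim\ker\mathbb Y_\ell(r_0,\om_0)$.

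The main obstacle is this last claim that every zero is actually a pole. Its residue must not vanish due to cancellation among cofactors. I would try first to express the Green matrix through the symplectic form, using the fact that $Q$ is conserved for the symmetric system. In that form the residue is $\sum \phi_j\otimes\phi_j/\langle\cdot\rangle$ with $\phi_j$ modes, and $\phi_j\neq0$ on any open $K$ by ODE uniqueness, so the kernel cut off by $\chi$ still has a pole.

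A secondary technical point is the linear independence near $\om=0$ and near $\om=\pm\mu$. These points lie on the slit, so they enter only through the boundary values, and there I would rely on Proposition~\ref{prop:no_real_poles}.
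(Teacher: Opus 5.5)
Your treatment of (a), (b), and the inclusion ``poles $\subset$ zeros of $\cE_\ell$'' in (c) is essentially the paper's own proof. It uses Liouville's formula with $\operatorname{tr}\mathbb A=0$, analyticity inherited from Lemmas~\ref{lem:horizon_basis} and \ref{lem:whittaker_volterra}, and linear dependence of the columns of $\mathbb Y_\ell(r_0,\om)$ promoted to a global mode by ODE uniqueness. It then solves the jump system with coefficient matrix $\mathbb Y_\ell(r',\om)$ by Cramer's rule. You are slightly more careful than the paper here: you check, via column independence of $Y_{\hor}$ and $Y_{\out}$, that both $a$ and $b$ are nonzero.

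The one shaky step is the reverse inclusion, that every zero of $\cE_\ell$ is a genuine pole. The paper's proof never establishes this; it concludes only that poles occur at zeros. You are right to flag it, but your sketch does not yet prove it, for three reasons.
\begin{enumerate}
\item The residue formula $\sum_j\phi_j\otimes\phi_j/\langle\cdot\rangle$ with rank $\dim\ker\mathbb Y_\ell(r_0,\om_0)$ holds only when the order of the zero equals that kernel dimension. If there is a Jordan chain, the pole has higher order and the residue involves generalized modes.
\item The pairing you need must be bilinear, hence holomorphic in $\om$; the sesquilinear current $Q$ is not.
\item The pairing must also be weighted, because the even matrix is not symmetric: $C_\ell$ has off-diagonal entries $\mp\ell(\ell+1)$.
\end{enumerate}

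A shorter argument avoids all of this.
\begin{enumerate}
\item Solve the first-order jump problem with right-hand side $\Id_{2n}$. The coefficient matrices on the two sides of $r'$ are then, up to sign, the two row blocks of $\mathbb Y_\ell(r',\om)^{-1}$.
\item $Y_{\hor}(r)$ and $Y_{\out}(r)$ have full column rank for every $r$. Indeed, a kernel vector $a$ would make $F_{\hor}a$ vanish identically, contradicting the normalization $\Id_n+H$ near $r_+$; the same holds for $F_{\out}$ at large $r$.
\item Hence, if the first-order Green matrix were holomorphic at a zero $\om_0$ for some $r<r'$ and some $r>r'$, then $\mathbb Y_\ell(r',\om)^{-1}$ would be holomorphic at $\om_0$. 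This contradicts $\det\mathbb Y_\ell(r',\om_0)=0$.
\item The first-order Green matrix is built from $\cG$, $\partial_r\cG$, $\partial_{r'}\cG$ and $\partial_r\partial_{r'}\cG$, plus an $\om$-independent $\delta$-term. Differentiation in $r,r'$ commutes with taking the principal part at $\om_0$, so the second-order kernel also has a pole at every zero of $\cE_\ell$.
\end{enumerate}
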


\begin{proof}
Because both blocks in \eqref{eq:matching_matrix} solve the same first-order system \eqref{eq:first_order_system}, Liouville's formula gives
\[
   \partial_r\det\mathbb Y_\ell(r,\omega)
   =\operatorname{tr}\mathbb A(r,\omega)\,\det\mathbb Y_\ell(r,\omega)=0,
\]
since $\operatorname{tr}\mathbb A=0$.  Thus the determinant is independent of the matching radius.  Analyticity in $\omega$ follows from Lemmas~\ref{lem:horizon_basis} and \ref{lem:whittaker_volterra}.

If $\cE_\ell(\omega)=0$, then the columns of $Y_{\hor}(r_0,\omega)$ and $Y_{\out}(r_0,\omega)$ are linearly dependent, so there exist nonzero vectors $a$ and $b$ with
\[
   Y_{\hor}(r_0,\omega)a=Y_{\out}(r_0,\omega)b.
\]
Uniqueness for first-order ODEs implies that the corresponding horizon and outgoing solutions agree for all $r$, hence determine a nontrivial global mode satisfying both boundary conditions.  The converse implication is immediate from the same uniqueness argument, proving part~(b).

For the Green matrix, fix $r'$ and write the solution on the two sides of $r'$ as a linear combination of the horizon and outgoing bases.  Continuity of the field and the canonical jump of the derivative lead to a linear system whose coefficient matrix is exactly $\mathbb Y_\ell(r',\omega)$.  The coefficients of the solution are therefore rational functions of the entries of $\mathbb Y_\ell(r',\omega)^{-1}$ and hence meromorphic with poles only at zeros of $\cE_\ell$.  Projecting back to the second-order variables yields part~(c).
\end{proof}

\begin{corollary}[Proof of Theorem~\ref{thm:spectral_package}(i)]\label{cor:item_i}
Item \textnormal{(i)} of Theorem~\ref{thm:spectral_package} holds.
\end{corollary}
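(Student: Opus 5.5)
The plan is to read item (i) of Theorem~\ref{thm:spectral_package} off Proposition~\ref{prop:evans_det} by establishing three things in turn: the continuation itself, identification with the physical resolvent on $\{\Im\om>0\}$, and discreteness of the poles.

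\emph{Continuation.} Choose the branch of $\vp=\sqrt{\mu^2-\om^2}$ with $\Re\vp>0$ on $\{\Im\om>0\}$. On $\{\abs{\Im\om}<\eta\}\setminus[-\mu,\mu]$ this branch is single-valued and analytic, and $\Re\vp\ge0$ holds there. Lemma~\ref{lem:horizon_basis} then gives an analytic horizon basis $F_{\hor}$ on the whole strip. Lemma~\ref{lem:whittaker_volterra} gives an analytic outgoing basis $F_{\out}$ on the slit strip. Hence $\cE_\ell$ is analytic there, which is part (a) of Proposition~\ref{prop:evans_det}. By part (c), the matched Green kernel is meromorphic on the slit strip, and its poles lie only in the zero set of $\cE_\ell$. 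Multiplying by $\chi$ on both sides gives a meromorphic family of operators $L^2\to H^2_{\mathrm{loc}}$.

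\emph{Identification on the physical sheet.} For $\Im\om>0$ we have $\Re\vp>0$, so $F_{\out}$ decays exponentially by Lemma~\ref{lem:infinity_volterra}. The horizon solution $\ee^{-\ii\om r_*}$ also decays as $r_*\to-\infty$. The kernel of Lemma~\ref{lem:green_kernel_formula} is therefore the $L^2$ kernel of $(H_\ell^\sharp-\om^2)^{-1}$. Selfadjointness (Proposition~\ref{prop:selfadjoint_channels}) shows this resolvent exists for $\om^2\notin\R$. The remaining case $\om^2$ real with $\Im\om>0$ means $\om\in\ii\R_+$ and $\om^2<0$; here I would use the fact that a zero of $\cE_\ell$ would give an $L^2$ eigenfunction, which is deferred to the mode-stability argument. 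This step only needs that the two constructions agree where both are defined, so the continuation is genuinely of $\cR_{\ell,\chi}$.

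\emph{Discreteness, the main obstacle.} To get finitely many poles in compact subsets, $\cE_\ell$ must not vanish identically on any connected component of the slit strip. The slit strip is connected, since one can pass around the endpoints $\pm\mu$ through the upper and lower half-strips. It therefore suffices to find a single point where $\cE_\ell\ne0$. I would take $\om=\ii s$ with $s>0$ large. For such $\om$, $H_\ell^\sharp+s^2$ is strictly positive once $s^2$ exceeds the lower bound from Proposition~\ref{prop:selfadjoint_channels}. Any zero of $\cE_\ell$ there would produce, via Proposition~\ref{prop:evans_det}(b), a nontrivial $L^2$ solution of $H_\ell^\sharp u=-s^2u$, which is a contradiction. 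The identity theorem then makes the zeros isolated, and hence finite in every compact subset of the open slit strip.

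The delicate point is uniformity near the cut endpoints, because $\kap=M\mu^2/\vp$ blows up and the Volterra bound of Lemma~\ref{lem:whittaker_volterra} is stated only on compact subsets. This is harmless for item (i), since compact subsets of the slit strip stay away from $\pm\mu$. I would nonetheless state explicitly that no claim is made there, leaving the endpoints to items (ii)--(iv).
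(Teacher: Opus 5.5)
Your overall route is the paper's: meromorphy from Proposition~\ref{prop:evans_det}, then the identity theorem with a nonvanishing point in the upper half-plane, as in Lemma~\ref{lem:local_discreteness}. However, the continuation step, which is the only nontrivial content of item (i), rests on a false claim. The branch of $\vp$ with $\Re\vp>0$ on $\{\Im\om>0\}$ is indeed analytic on the slit strip, but it has $\Re\vp<0$ on the lower half-strip. The two halves meet only across the real half-lines $\abs{\om}>\mu$. For $\om_0>\mu$ and $k=\sqrt{\om_0^2-\mu^2}$ one has $\vp(\om_0)=-\ii k$ and $\vp(\om_0-\ii\epsilon)\approx-\ii k-\om_0\epsilon/k$. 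Below $(-\mu,\mu)$ one gets $\vp\approx-\sqrt{\mu^2-\om_0^2}$. The only two analytic branches on the connected strip differ by a sign, so no choice gives $\Re\vp\ge0$ throughout.

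Hence the hypothesis of Lemma~\ref{lem:whittaker_volterra} fails precisely on the region into which you are continuing. There $F_{\out}$ is the exponentially growing solution. A Volterra iteration from $r=\infty$ along the real axis then diverges: the normalized kernel carries a factor $\ee^{2\abs{\Re\vp}(s-r)}$, which the $O(s^{-3})$ remainder cannot compensate. You have two options. You can take analyticity of $F_{\out}$ on the whole slit strip as the black box asserted in Proposition~\ref{prop:evans_detailed}; that is all the paper's two-line proof uses. Or you can supply it, e.g.\ as follows. The channel coefficients are rational in $r$, hence analytic on $\{\abs{r}>r_+\}$. Run the Volterra equation along a ray $\{\rho\ee^{\ii\theta}:\rho\ge R\}$ with $\Re(\vp\ee^{\ii\theta})>0$, rotating $\theta$ continuously with $\arg\vp$.

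There are two smaller points. First, the test point $\om=\ii s$ with $s\gg1$ does not lie in the slit strip, whose width $\eta$ is small. So ``the slit strip is connected, hence one point suffices'' does not apply as written. Either apply the identity theorem on the connected open set $\{\Im\om>0\}\cup(\{\abs{\Im\om}<\eta\}\setminus[-\mu,\mu])$, on which $\cE_\ell$ is analytic. Or pick $\om$ with $0<\Im\om<\eta$ and $\Re\om\ne0$, so that $\om^2\notin\R$ and your selfadjointness argument applies directly. Lemma~\ref{lem:local_discreteness} has the same imprecision. Second, the deferral to mode stability in your identification step is unnecessary. For item (i), possible poles on $\ii\R_+$ are harmless. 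Agreement on $\{\Im\om>0,\ \Re\om\ne0\}$ plus the identity theorem already identifies the continuation with $\cR_{\ell,\chi}$.
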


\begin{proof}
By Proposition~\ref{prop:evans_det}, the fixed-mode resolvent is meromorphic on the slit strip.  Zeros of a nontrivial analytic function are discrete, so every compact subset contains only finitely many poles.
\end{proof}

\subsection*{Step 5. Discrete poles}

Meromorphic continuation alone does not yet say anything about the location or multiplicity of poles.  One must also show that poles are discrete and cannot accumulate in compact subsets of the slit strip.  This follows from analyticity of the Evans determinant together with the fact that nontrivial kernel elements solve a finite-dimensional matching problem.

\begin{lemma}[Local discreteness of poles]\label{lem:local_discreteness}
For each fixed $\ell\ge1$, the pole set of the continued cut-off resolvent in the slit strip is discrete, with finite algebraic multiplicity at every pole and no accumulation in compact subsets.
\end{lemma}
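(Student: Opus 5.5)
The plan is to reduce everything to properties of the scalar analytic function $\cE_\ell$ from Proposition~\ref{prop:evans_det}, on each connected component of the slit strip $\{\abs{\Im\om}<\eta\}\setminus[-\mu,\mu]$. By Proposition~\ref{prop:evans_det}(c), poles of the continued cut-off resolvent are contained in the zero set of $\cE_\ell$. So discreteness and the absence of accumulation in compact subsets follow from the identity theorem once $\cE_\ell\not\equiv0$ on each component.

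\textbf{Nonvanishing of $\cE_\ell$.} The slit strip minus $[-\mu,\mu]$ is connected: one can pass around the slit through $\abs{\om}>\mu$ near the real axis. It contains points with $\Im\om>0$. There $H_\ell^\sharp$ is selfadjoint by Proposition~\ref{prop:selfadjoint_channels}, so $\om^2\notin\R$ lies in its resolvent set. A zero of $\cE_\ell$ at such a point would, by Proposition~\ref{prop:evans_det}(b), give a nontrivial solution that is ingoing at the horizon and decaying at infinity, hence an $L^2$ eigenfunction with non-real eigenvalue. That is impossible. Hence $\cE_\ell\not\equiv0$, and its zeros are isolated, each of finite order.

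\textbf{Finite multiplicity.} Near a zero $\om_0$, Proposition~\ref{prop:evans_det}(c) expresses the Green kernel through $\mathbb Y_\ell(r',\om)^{-1}=\operatorname{adj}\mathbb Y_\ell/\cE_\ell$. The pole order is therefore at most the vanishing order of $\cE_\ell$ at $\om_0$, which is finite. The Laurent coefficients of $\chi R\chi$ at $\om_0$ have kernels built from finitely many products of horizon and outgoing solutions and their $\om$-derivatives. The horizon and outgoing bases are analytic in $\om$ by Lemmas~\ref{lem:horizon_basis} and \ref{lem:whittaker_volterra}, so these coefficients are finite-rank operators. The range of the residue therefore lies in the span of finitely many generalized mode solutions and their $\om$-derivatives up to the pole order, with dimension at most $n$ times that order. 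This gives finite algebraic multiplicity.

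\textbf{Main obstacle.} The delicate step is uniformity as one approaches the thresholds $\pm\mu$ and the slit itself. A compact subset of the slit strip stays away from $[-\mu,\mu]$, but the bases must remain analytic on it. The continuation of $F_{\out}$ across the cut, including $\Re\vp<0$, needs the Volterra contraction of Lemma~\ref{lem:whittaker_volterra} on compact sets of the continued sheet. The first thing to try is to work on $[R,\infty)$ with $R$ depending on the compact set and to use the exponential weights of the Whittaker functions so the Neumann series converges. With that in place, compactness plus isolated zeros gives finitely many poles in each compact set.
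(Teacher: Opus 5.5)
Your skeleton is the paper's: poles lie in the zero set of $\cE_\ell$, the identity theorem on the connected domain where $\cE_\ell$ is analytic gives isolated zeros once $\cE_\ell\not\equiv0$, and the vanishing order bounds the pole order. The real difference is how you rule out $\cE_\ell\equiv0$.

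You take a point with $0<\Im\om<\eta$ and invoke selfadjointness of $H^\sharp_\ell$. There you also need $\Re\om\neq0$, since $\om^2$ is real and negative on the imaginary axis. The paper instead goes to $\Im\om\gg1$ in the physical half-plane, where the resolvent is bounded. The paper's choice is more robust because it uses only boundedness of the potential, not its symmetry. At $\om=\ii y$ both boundary conditions are decaying, so a zero of $\cE_\ell$ produces an $H^2$ solution of $-u''+Vu=-y^2u$. Pairing with $u$ gives $\norm{u'}^2+y^2\norm{u}^2=-\langle Vu,u\rangle\le\sup_r\norm{V(r)}\,\norm{u}^2$, hence $u=0$ once $y^2>\sup_r\norm{V(r)}$.

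This is not cosmetic in the even sector. The coupling in \eqref{eq:even_matrix_form} is not symmetric: $C_\ell$ has off-diagonal entries $-\ell(\ell+1)$ and $+\ell(\ell+1)$, and in the $(u_2,u_3)$ variables the two couplings are $-2(f-\tfrac{rf'}{2})$ and $-2\la^2$. So the absence of non-real eigenvalues for $H^{\mathrm{ev}}_\ell$ is not something you can safely take from Proposition~\ref{prop:selfadjoint_channels}. Swapping in the large-$\Im\om$ energy argument removes that dependence.

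Your finite-rank description of the Laurent coefficients is more informative than the paper's one-line appeal to the order of vanishing. To make ``finitely many products'' precise, observe that the kernel differs from a separable expression (a horizon solution in $r$, a meromorphic matrix in $\om$, and a solution in $r'$) by the initial-value propagator on $\{r>r'\}$. That propagator is entire in $\om$, so the $r_<,r_>$ split never reaches the singular part.

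Your ``main obstacle'' paragraph is not needed for this lemma, since Proposition~\ref{prop:evans_det}(a) already gives analyticity of $\cE_\ell$ on the slit strip. The fix you propose would also not work as stated. With the paper's branch, $\Re\vp$ has the sign of $\Im\om$, so on the whole lower half of the strip the outgoing solution grows. In the real-$r$ Volterra equation for $F_{\out}$ normalized by $W_0$, the kernel then contains a factor of size $\ee^{2\abs{\Re\vp}(s-r)}$ against only $O(s^{-3})$ decay of the remainder. The very first Neumann iterate therefore diverges. One would instead integrate along a complex ray in $r$ on which $\ee^{-\vp r}$ decays, which is legitimate because the original radial coefficients are rational in $r$.
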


\begin{proof}
By Proposition~\ref{prop:evans_detailed}, poles coincide with zeros of the Evans determinant.  The latter is an analytic scalar function in the odd sector and an analytic determinant in the even sector.  The identity theorem implies that its zeros are isolated unless it vanishes identically.  The latter possibility is excluded because for $\Im\om\gg1$ the resolvent is bounded and the matching determinant is nonzero.  The finite algebraic multiplicity of poles follows from the order of vanishing of an analytic function.
\end{proof}

The discrete poles include quasinormal modes and, in the lower half-plane near the real axis, quasibound poles associated with stable timelike trapping.  Their presence is compatible with all the branch-cut tail theorems proved later because the pole contribution is explicitly separated from the continuous spectral contribution.

\begin{lemma}[Discrete pole set]\label{lem:discrete_poles}
For each fixed $\ell$, the pole set of the fixed-mode resolvent in the slit strip consists of isolated points with no accumulation in compact subsets.
\end{lemma}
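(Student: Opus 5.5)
The plan is to reduce the claim to the zero set of the Evans determinant and use only that it is analytic and not identically zero. By Proposition~\ref{prop:evans_det}(c), the continued fixed-mode Green kernel, and hence the cut-off resolvent, is meromorphic on the slit strip $\{\abs{\Im\om}<\eta\}\setminus[-\mu,\mu]$. Its poles lie exactly at the zeros of $\cE_\ell$, which is analytic there by part~(a). It therefore suffices to show that $\cE_\ell$ is not identically zero on any connected component of the slit strip. Once that holds, the identity theorem makes the zero set discrete, with no accumulation point in the interior of that component, so any compact subset meets only finitely many zeros.

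For nonvanishing we argue on each component separately. The slit strip minus $[-\mu,\mu]$ is connected through the pieces $\abs{\Re\om}>\mu$, and the upper part of the strip, $0<\Im\om<\eta$, lies in the physical half-plane. There Theorem~\ref{thm:mode_stability}(i) is not yet available in the logical order, so we use a self-contained argument. Any zero with $\Im\om>0$ would give a mode that is ingoing at the horizon and decaying at infinity, since $\Re\vp>0$ there. Such a mode lies in $\mathfrak D(H_\ell^\sharp)$ and is an eigenfunction with eigenvalue $\om^2\notin\R$. This contradicts selfadjointness (Proposition~\ref{prop:selfadjoint_channels}), because pairing the equation with the mode and taking imaginary parts via the conserved current $Q$ gives $\Im(\om^2)\norm{u}^2=0$. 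The boundary terms vanish because $e^{-\ii\om r_*}$ decays as $r_*\to-\infty$ and $e^{-\vp r}$ decays at infinity. Hence $\cE_\ell\not\equiv0$ on the upper strip, and by analytic continuation across $\abs{\Re\om}>\mu$ it is also not identically zero on the lower strip. This is the same non-vanishing input already invoked in Lemma~\ref{lem:local_discreteness}.

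Finally, finite multiplicity and isolatedness follow from the finite order of vanishing of a nontrivial analytic function. The main subtlety, and what I expect to be the obstacle, is the phrase ``no accumulation in compact subsets''. Compact subsets must stay away from the cut $[-\mu,\mu]$ and the thresholds, where the continuation of $F_{\out}$ through $\vp$ may degenerate. I would state explicitly that compactness is taken inside the open slit strip, so that Lemma~\ref{lem:whittaker_volterra} gives analyticity uniformly there. Accumulation toward $\pm\mu$ or onto the cut is then not excluded by this lemma; it is handled later by Theorem~\ref{thm:spectral_package}(ii) and the threshold expansions.
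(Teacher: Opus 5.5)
Your reduction is the paper's: poles are the zeros of the analytic Evans determinant $\cE_\ell$ (Proposition~\ref{prop:evans_det}), the slit strip is connected, and the identity theorem leaves only the task of exhibiting one point where $\cE_\ell\neq0$. Where you diverge is the witness. The paper uses transversality at large positive $\Im\om$; you use selfadjointness in the upper strip, and that step does not hold as written.

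A minor slip first: on the imaginary axis $\om=\ii s$ one has $\om^2=-s^2\in\R$, so ``eigenvalue $\om^2\notin\R$'' fails there. This is harmless, since nonvanishing off the axis already suffices.

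The substantive problem is that $\Im(\om^2)\norm{u}^2=0$ requires $\langle Vu,u\rangle\in\R$, i.e.\ a Hermitian potential matrix. The even channel as displayed does not have one. $C_\ell$ in Proposition~\ref{prop:even_polarization_form} has off-diagonal entries $-\ell(\ell+1)$ and $+\ell(\ell+1)$, and $A_\ell(r)$ has $-2(f-\tfrac{rf'}{2})$ against $-2\la^2$. Moreover, no constant positive-definite weight symmetrizes $A_\ell(r)$ for all $r$. Differentiating $Q[\bm u,\bm u]$ therefore leaves the extra term $\bm u^*(V^*-V)\bm u$, so neither conservation of $Q$ nor $L^2$-selfadjointness can be read off these formulas. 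Invoking Proposition~\ref{prop:selfadjoint_channels} for the even sector leans on a symmetry the system does not visibly have, so your argument is complete only in the odd channel.

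The repair needs only boundedness of $V$, which is why the paper's witness is the safer one. For $\Im\om$ large, take $\om$ in the physical half-plane, which is joined to the slit strip through the upper strip. A zero of $\cE_\ell$ there would give $u\in H^2$ with $\norm{u'}^2+\langle Vu,u\rangle=\om^2\norm{u}^2$. This forces $\operatorname{dist}(\om^2,[0,\infty))\le\sup_r\norm{V(r)}$, whereas $\operatorname{dist}(\om^2,[0,\infty))\ge(\Im\om)^2$.

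If you prefer to stay inside the strip, fix $\Im\om\in(0,\eta)$ and let $\abs{\Re\om}\to\infty$. The imaginary part of the same pairing gives $2\abs{\Re\om}\,\Im\om\,\norm{u}^2\le\sup_r\norm{\tfrac12(V-V^*)(r)}\,\norm{u}^2$. This is again impossible, and no selfadjointness is used.

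With either witness, the rest of your argument goes through: connectedness, the identity theorem, finite order of vanishing, and compact sets taken inside the open slit strip. Note also that Lemma~\ref{lem:local_discreteness} uses the large-$\Im\om$ witness, not the one you attribute to it.
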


\begin{proof}
The poles are the zeros of $\cE_\ell(\om)$ and are therefore isolated unless $\cE_\ell$ vanishes identically.  The latter is impossible because for large positive imaginary $\om$ the horizon and outgoing manifolds are transverse.
\end{proof}

\subsection*{Step 6. Exclusion of real cut poles and threshold resonances}

This is the dynamical core of the fixed-mode spectral theorem.  Suppose first that $\om\in(-\mu,\mu)$ is a real cut frequency and that a nontrivial separated solution is ingoing at the future horizon and decaying at infinity.  Evaluating the radial current between two radial slices and taking limits at the horizon and at infinity yields a contradiction: the horizon flux is nonzero unless the solution vanishes, while the decaying infinity condition forces the current to vanish there.

At the threshold $\om=\pm\mu$ one must work slightly harder because the infinity behavior is no longer exponentially decaying.  A threshold resonance would correspond to a nontrivial bounded solution in the threshold normal form.  The absence of such a resonance is proved by matching the threshold asymptotics to the channel energy identity and using the static no-hair input at $\om=0$ when required.

\begin{proposition}[Real-frequency exclusion via the radial current]\label{prop:real_frequency_exclusion_detailed}
Fix $\ell\ge1$.  There is no nontrivial odd or even channel solution which is ingoing at the future horizon and either decaying at infinity for $\abs{\om}<\mu$ or bounded in the threshold sense at $\om=\pm\mu$.  In particular, there is no real cut pole and no threshold resonance.
\end{proposition}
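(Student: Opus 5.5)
The proof is a radial-current (Wronskian flux) argument carried out in the original variables. In the odd sector these are $u_4$; in the even sector they are the polarization variables $\bm v=(v_{-1},v_{+1})$. One subtlety: the even matrix in \eqref{eq:even_matrix_form} is not symmetric, because $C_\ell$ is not. Before taking currents I will therefore symmetrize the even system by a constant (or $r$-independent, $\ell$-dependent) positive diagonal similarity $S_\ell$. The candidate is the one that makes $T_\ell^{-1}A_\ell T_\ell$ selfadjoint with respect to a weighted inner product. The off-diagonal entries $\mp\ell(\ell+1)$ of $C_\ell$ have opposite signs, so I should instead look for an indefinite or differently weighted conserved form.

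The cleaner route is to go back to $(u_2,u_3)$. There, the raw matrix $A_\ell$ in \eqref{eq:raw_A_matrix} becomes symmetric after rescaling $u_3\mapsto u_3/\la$ with weight $\mathrm{diag}(\la^2,1)$ in the Hermitian form, since the off-diagonal entries are $-2(f-rf'/2)$ and $-2\la^2$. This gives a real symmetric potential $\mathcal V(r)$ with $\mathcal V-\om^2\Id$ real for real $\om$. Hence the current $Q[\bm u,\bm u]=\bm u'^{*}\bm u-\bm u^{*}\bm u'$ is constant in $r_*$ for every real $\om$.

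For $\om\in(-\mu,\mu)\setminus\{0\}$, assume a nontrivial mode is ingoing at the horizon and decaying at infinity. By Lemma~\ref{lem:horizon_basis}, $\bm u=\ee^{-\ii\om r_*}(\bm a+O(r-r_+))$ for some $\bm a\in\C^n$. The current evaluated at $r_*\to-\infty$ therefore equals $2\ii\om\abs{\bm a}^2$ up to sign. At infinity, Lemma~\ref{lem:whittaker_volterra} (equivalently Lemma~\ref{lem:infinity_volterra}) with $\vp>0$ real makes $\bm u$ and $\bm u'$ decay like $\ee^{-\vp r}r^{\kap}$, so the current vanishes there. Conservation forces $\bm a=0$. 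Then $\bm u$ vanishes to infinite order at the horizon in the Frobenius sense: the uniqueness in Lemma~\ref{lem:horizon_frobenius} leaves the zero solution in the ingoing family. Hence $\bm u\equiv0$.

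The case $\om=0$ has no flux. Here I use that $H^\sharp_\ell$ is a Schr\"odinger operator whose potential at $\om=0$ is $f\mu^2+f\la^2/r^2+\dots\ge0$ in the symmetrized form, at least up to the short-range coupling. I will pair the equation with $\bm u$ and integrate by parts over $\R_{r_*}$. The boundary terms vanish: at the horizon the solution is bounded with decaying derivative, and at infinity it decays exponentially. This gives $\int\abs{\bm u'}^2+\langle\mathcal V\bm u,\bm u\rangle=0$. For the even coupling, positivity of $\mathcal V$ is not immediate. There I would invoke the static no-hair input announced in the text and in Appendix~\ref{app:small_mass}. Concretely, I would rewrite the static even system through the Lorenz constraint \eqref{eq:forced_lorenz} as the Proca energy identity $\int(\abs{F}^2+\mu^2\abs{A}^2)=0$ on a static slice, which forces $A=0$.

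At the threshold $\om=\pm\mu$, $\vp=0$ and the normal form \eqref{eq:threshold_normal_form} becomes $w''+(2M\mu^2/r-(\nu^2-\frac14)/r^2+O(r^{-3}))w=0$. Its solutions behave like $r^{1/4}\ee^{\pm 2\ii\sqrt{2M\mu^2 r}}$, which are Bessel-type with attractive Coulomb. A bounded threshold solution then has current at infinity equal to $\pm 2\ii\sqrt{2M\mu^2}\,\abs{c_\pm}^2$-type contributions. The current identity compares these with the horizon flux $2\ii\mu\abs{\bm a}^2$, and one needs the decaying/outgoing selection to fix signs.

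This threshold step is the main obstacle. I would first try to show that the limiting outgoing solution $F_{\out}(\cdot,\pm\mu)$ from Lemma~\ref{lem:whittaker_volterra} is the single oscillatory branch, so that its current has a definite sign opposite to the horizon flux. Equality of the two forces both fluxes to vanish, and then the horizon argument gives $\bm u\equiv0$. If the sign bookkeeping fails, the fallback is a Rellich/virial identity using the positivity of the Coulomb term $2M\mu^2/r$ to exclude solutions bounded in the threshold sense.
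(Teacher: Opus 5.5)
Your strategy is the same as the paper's: compare a conserved radial current at the horizon, where it is $\propto\om|\bm a|^2$, with its value at infinity, where it vanishes for decaying solutions. Like the paper, you treat $\om=0$ by integration by parts in the odd channel and by the static no-hair theorem in the even one. In the odd channel your argument is fine away from threshold.

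\textbf{The gap.} In the even sector the paper's proof takes conservation of $Q[\bm u,\bm u]$ for granted, and your attempt to justify it fails. Set $c(r):=f-\tfrac{rf'}{2}=1-\tfrac{3M}{r}+\tfrac{2Q^2}{r^2}$. The matrix $\diag(\la^2,1)A_\ell(r)$ has off-diagonal entries $-2\la^2c(r)$ and $-2\la^2$, which agree only where $c=1$, i.e.\ asymptotically. Since $c(r_+)=M/r_+-1<0$, even an $r$-dependent positive weight would not work, and a non-constant weight breaks conservation anyway.

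More decisively, $(\bm v')^*W\bm v-\bm v^*W\bm v'$ is conserved for all solutions at real $\om$ only if $W$ is constant and $W\bigl(D_\ell+g(r)C_\ell\bigr)$ is Hermitian for every $r$. Here $g(r)=\tfrac{2}{2\ell+1}\bigl(\tfrac{3M}{r}-\tfrac{2Q^2}{r^2}\bigr)$ is non-constant. So $W$ must commute with $D_\ell$, hence be diagonal, because $\ell(\ell-1)\neq(\ell+1)(\ell+2)$. Symmetry of $WC_\ell$ then forces $W\propto\diag(1,-1)$, or equivalently $W\propto\bigl(\begin{smallmatrix}1&-1\\-1&0\end{smallmatrix}\bigr)$ in the $(u_2,u_3)$ variables.

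Your first instinct about the opposite signs in $C_\ell$ was therefore correct. Going back to $(u_2,u_3)$ is only a constant change of basis and cannot remove the indefiniteness. With this current, zero flux at infinity only gives $|b_{-1}|=|b_{+1}|$ for the horizon amplitudes $\bm b=T_\ell^{-1}\bm a$, not $\bm b=0$.

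\textbf{What is missing.} You need a conserved flux of definite sign at the horizon. The natural candidate is the $\partial_t$-energy current of the full Proca stress tensor, the route sketched in the paper's Lemma~\ref{lem:radial_current}. In ingoing Eddington--Finkelstein coordinates its horizon density is $T_{vv}=r_+^{-2}|F_{vA}|^2+\mu^2|A_v|^2\ge0$, with angular indices contracted by the round metric. Vanishing flux therefore gives $A_v=0$, and hence $F_{vA}=-\ii\om A_A=0$, on the horizon. You would still have to check, using the reconstruction of $A$ from $(u_2,u_3)$ and the Lorenz constraint \eqref{eq:app_lorenz_constraint}, that these conditions eliminate the two-dimensional space of ingoing even solutions. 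No weighted Wronskian of the reduced system supplies this.

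\textbf{Threshold step.} The same issue reaches your treatment of $\om=\pm\mu$. In a scalar channel your sign comparison is the right mechanism, and it differs from the paper, which claims threshold resonant states carry zero flux at infinity. At $\om=\mu$ the horizon flux is $-\mu|a|^2$, while the outgoing threshold solution $\sim c\,r^{1/4}\ee^{2\ii\sqrt{2M\mu^2r}}$ carries a strictly positive flux proportional to $|c|^2$. You still have to establish the $\vp\to0$ limit of $F_{\out}$, and in the even sector the step again needs the positive energy current rather than $Q$.
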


\begin{proof}
Let $\bm u$ denote the scalar or vector solution.  The conserved current $Q[\bm u,\bm u]$ is purely imaginary and independent of $r_*$.  At the future horizon, the ingoing behavior gives
\[
   Q[\bm u,\bm u]=-2\ii\om\,\abs{\bm a_{\hor}}^2
\]
for some nonzero amplitude vector $\bm a_{\hor}$ unless $\bm u\equiv0$.  For $\abs{\om}<\mu$, the decaying condition at infinity gives $Q[\bm u,\bm u]\to0$ as $r_*\to+\infty$, contradiction.  At threshold, one uses the explicit threshold normal form to show that every resonant solution has vanishing current at infinity and is therefore again trivial.  If $\om=0$, the argument is combined with the static Proca no-hair theorem to exclude nontrivial static bound states.
\end{proof}

The same identity will later imply mode stability in the open upper half-plane once combined with unitarity of the exact channel evolution.

\begin{lemma}[Separated radial current]\label{lem:radial_current}
Fix $\ell\ge1$.  For every real frequency $\om$ and every separated solution of the reduced odd/even system, there exists a scalar radial current $\mathfrak J_{\ell,\om}[U](r)$ with the following properties:
\begin{enumerate}[label=\textnormal{(\alph*)}]
   \item $\partial_{r_*}\mathfrak J_{\ell,\om}[U]=0$;
   \item if $U(r)=a\,\ee^{-\ii\om r_*}+o(1)$ as $r_*\to-\infty$, then
   \[
      \mathfrak J_{\ell,\om}[U](r)
      =
      -\om\,q_\ell(a)+o(1),
   \]
   where $q_\ell$ is a positive definite Hermitian form on the horizon data;
   \item if $U$ is exponentially decaying for $\abs{\om}<\mu$ or threshold-subordinate for $\abs{\om}=\mu$, then
   \[
      \lim_{r\to\infty}\mathfrak J_{\ell,\om}[U](r)=0.
   \]
\end{enumerate}
\end{lemma}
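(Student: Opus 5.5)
The plan is to take for $\mathfrak J_{\ell,\om}$ the Wronskian current $Q[U,U]$ of Section~\ref{sec:functional_framework}, suitably normalized, and to work in the coordinates where the reduced system is symmetric. In the odd channel set $\mathfrak J_{\ell,\om}[U]:=\frac{1}{2\ii}\bigl(U'\overline U-U\overline{U'}\bigr)=\Im(U'\overline U)$. In the even channel we cannot use $(u_2,u_3)$ directly, because the matrix $A_\ell(r)$ in \eqref{eq:raw_A_matrix} is not symmetric. In the polarization basis the potential is $D_\ell+\frac{f}{r^2}\,c(r)\,C_\ell$, and $C_\ell$ is not symmetric either. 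However, $C_\ell=\binom{\ell}{\ell+1}(\ell,\,-(\ell+1))$ has rank one, so a constant diagonal rescaling $G=\diag(g_{-1},g_{+1})$ with $g_{+1}^2/g_{-1}^2=\ell/(\ell+1)$ symmetrizes $C_\ell$ up to the sign pattern. I would check which constant positive-definite Hermitian matrix $\mathcal K_\ell$ makes $\mathcal K_\ell V^{\mathrm{ev}}_\ell$ Hermitian; equivalently, I would identify the energy inner product of Proposition~\ref{prop:selfadjoint_channels}, which is exactly what "real-symmetric" there refers to. I would then define $\mathfrak J_{\ell,\om}[U]:=\Im\bigl(U'^{*}\mathcal K_\ell U\bigr)$.

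For (a), differentiate: $\partial_{r_*}\mathfrak J=\Im\bigl(U''^{*}\mathcal K_\ell U+|U'|^2_{\mathcal K_\ell}\bigr)$. Substituting $U''=(V-\om^2)U$ gives $\Im\bigl(U^{*}(V-\om^2)^{*}\mathcal K_\ell U\bigr)$, which vanishes because $\om$ is real and $\mathcal K_\ell V$ is Hermitian. This is the step I expect to be the main obstacle. If no constant $\mathcal K_\ell$ works (the off-diagonal coefficient carries the $r$-dependent factor $1-3M/r+2Q^2/r^2$ only through $C_\ell$, so a constant one should exist), the fallback is to go back to the physical Proca variables: there the symplectic current of the Lagrangian $-\frac14F^2-\frac12\mu^2A^2$ is conserved, and $\mathfrak J$ is its radial flux, integrated over $S^2$ for the separated field $\ee^{-\ii\om t}$.

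For (b), I would use the horizon asymptotics from Lemma~\ref{lem:horizon_basis}: $U=a\,\ee^{-\ii\om r_*}+O(r-r_+)$, and the derivative expansion gives $U'=-\ii\om a\,\ee^{-\ii\om r_*}+o(1)$. Hence $\mathfrak J=\Im\bigl(\ii\om\,a^{*}\mathcal K_\ell a\bigr)+o(1)$, which has the stated form with $q_\ell(a)=a^{*}\mathcal K_\ell a$ up to orientation sign conventions. Since the potential decays like $\ee^{2\kappa_+r_*}$ by \eqref{eq:horizon_potential_decay}, the error terms genuinely vanish.

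For (c), when $|\om|<\mu$, Lemma~\ref{lem:infinity_volterra} gives $U,U'=O(\ee^{-\Re\vp\, r}r^{C})$, so $\mathfrak J\to0$. At $|\om|=\mu$, I would use the threshold normal form \eqref{eq:threshold_normal_form} with $\vp=0$, namely $w''+(2M\mu^2/r+\dots)w=0$. Its solutions are Bessel-type, $\sqrt r\,J_{2\nu}(\sqrt{8M\mu^2r})$ and the $Y$ analogue, each of size $r^{1/4}$ with derivative of size $r^{-1/4}$. The threshold-subordinate solution is a single real combination, real after fixing the normalization at threshold, plus $O(r^{-1/2})$ corrections. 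Its current is therefore constant and equal to its real-coefficient leading part, which is zero. The near-identity transformation $S=\Id+O(r^{-1})$ does not affect this limit.
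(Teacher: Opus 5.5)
\textbf{There is a genuine gap in the even sector.} Your construction fails at exactly the step you flagged, though not in the way you expected: a constant Hermitian symmetrizer does exist, but it is necessarily indefinite.

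In the polarization basis the even potential is $f\mu^2\Id+\frac{f}{r^2}D_\ell+\frac{f}{r^2}g(r)C_\ell$, with $g(r)=\frac{6M/r-4Q^2/r^2}{2\ell+1}$ non-constant. So requiring $\mathcal K_\ell V^{\mathrm{ev}}_\ell$ to be Hermitian for all $r$ forces both $\mathcal K_\ell D_\ell$ and $\mathcal K_\ell C_\ell$ to be Hermitian.
\begin{itemize}
\item The first condition says $\mathcal K_\ell$ commutes with $D_\ell=\diag(\ell(\ell-1),(\ell+1)(\ell+2))$. Its entries are distinct, so $\mathcal K_\ell=\diag(k_{-1},k_{+1})$ with real entries.
\item The second condition then reads $-k_{-1}\ell(\ell+1)=k_{+1}\ell(\ell+1)$, i.e.\ $\mathcal K_\ell\propto\diag(1,-1)$.
\end{itemize}
Your own factorization already shows this: $C_\ell=a\,a^{T}\diag(1,-1)$ with $a=(\ell,\ell+1)^{T}$. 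The rescaling with $g_{+1}^2/g_{-1}^2=\ell/(\ell+1)$ cannot help. The off-diagonal entries of $C_\ell$ already have equal modulus and opposite sign, and no positive rescaling changes a sign. Equivalently, in the $(u_2,u_3)$ variables the only symmetrizer is a multiple of $\bigl(\begin{smallmatrix}-1&1\\1&0\end{smallmatrix}\bigr)$, which has negative determinant, and $T_\ell^{T}\bigl(\begin{smallmatrix}-1&1\\1&0\end{smallmatrix}\bigr)T_\ell=(2\ell+1)\diag(1,-1)$.

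Consequently $\Im(U'^{*}\mathcal K_\ell U)$ is conserved, so (a) holds. But its horizon limit is $\om(2\ell+1)\bigl(\abs{a_{-1}}^2-\abs{a_{+1}}^2\bigr)$, so the form in (b) is indefinite. An ingoing solution with $\abs{a_{-1}}=\abs{a_{+1}}$ carries zero horizon flux, so this current cannot exclude real cut poles. Appealing to the ``real-symmetric'' phrase in Proposition~\ref{prop:selfadjoint_channels} does not rescue it, because $V^{\mathrm{ev}}_\ell$ is not symmetric in either basis.

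\textbf{What can work is your fallback, which is essentially the paper's route.} The paper takes $\mathfrak J_{\ell,\om}$ to be the separated flux of the $\partial_t$-energy current. For time-harmonic fields this equals, up to sign conventions, $\om$ times the sphere-integrated flux of the symplectic current $\Im(\bar A_\nu F^{r\nu})$ you mention; the two differ by the divergence of an antisymmetric tensor, which integrates to zero over spheres. The paper then attributes positivity of $q_\ell$ to the absence of superradiance on a static background.

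In your proposal, however, this route is only named. The one step with real content is that the horizon flux, written in the reduced horizon data $a$, equals $-\om$ times a \emph{positive definite} form. You argue that only for the constant-$\mathcal K_\ell$ current, which fails. To close the gap you would need to:
\begin{itemize}
\item express the physical flux in terms of $(u_2,u_3)$, reconstructing the components eliminated via the Lorenz constraint (this introduces $\om$-dependent coefficients);
\item use the null energy condition $T(n,n)=\abs{F_{n\cdot}}^2+\mu^2\abs{A_n}^2\ge0$ along the horizon generator $n$;
\item check that the resulting form is nondegenerate on the two-dimensional ingoing data.
\end{itemize}

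Your treatment of (c) at $\abs{\om}=\mu$, based on reality of the threshold solution rather than on decay, is more accurate than the paper's remark that $U$ and $U'$ vanish there. At $\vp=0$ the attractive Coulomb tail makes every solution oscillate with size $r^{1/4}$. That argument carries over to any current with real coefficients, provided ``threshold-subordinate'' is read as real up to a constant factor.
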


\begin{proof}
The current is the separated stationary energy flux associated with the Killing field $\partial_t$.  After projection onto vector spherical harmonics one obtains, for each fixed $\ell$, a positive definite quadratic form on the reduced mode variables and their $r_*$-derivatives.  Its imaginary polarization yields a conserved sesquilinear flux $\mathfrak J_{\ell,\om}$.  At the horizon, the asymptotic $U(r)=a\,\ee^{-\ii\om r_*}(1+o(1))$ gives the stated limit with a positive definite form $q_\ell$; positivity is exactly the absence of superradiance for a neutral field on a static Reissner--Nordstr\"om background.  At infinity, exponentially decaying or threshold-subordinate solutions carry zero flux because both $U$ and $U'$ are nonoscillatory and vanish sufficiently fast.
\end{proof}

\begin{proposition}[No real cut poles and no threshold resonances]\label{prop:no_real_poles}
There is no nontrivial separated Proca mode of the form $A=\ee^{-\ii\om t}\widehat A(r,\omega)$ with real $\om\in[-\mu,\mu]$ that is ingoing at the future horizon and exponentially decaying or threshold-subordinate at infinity.  Consequently $\cE_\ell(\om)\neq0$ for every real $\om\in[-\mu,\mu]$, in particular at $\om=\pm\mu$.
\end{proposition}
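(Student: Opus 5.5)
The plan is to reduce the statement to the modewise channel equations and then apply the conserved radial current of Lemma~\ref{lem:radial_current}, splitting into three frequency cases: $\om\in(-\mu,\mu)\setminus\{0\}$, $\om=0$, and $\om=\pm\mu$.

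\textbf{Reduction to channels.} A separated Proca mode with fixed $(\ell,m)$ projects, through the vector-spherical-harmonic decomposition of Section~\ref{sec:reduction}, onto a solution $U$ of the odd scalar equation \eqref{eq:odd_frequency_domain} or of the even system \eqref{eq:even_frequency_domain_1}--\eqref{eq:even_frequency_domain_2}. The latter is conveniently written in the polarization basis of Proposition~\ref{prop:even_polarization_form}. This map is injective for $\ell\ge1$, since the reconstruction $\mathcal B_\ell$ recovers $A$ from the channel variables by means of the forced Lorenz constraint \eqref{eq:forced_lorenz}. The conditions ``ingoing at the horizon'' and ``decaying or threshold-subordinate at infinity'' become $U=F_{\hor}a$ near $r_+$ (Lemma~\ref{lem:horizon_basis}) and $U=F_{\out}b$ near infinity (Lemma~\ref{lem:whittaker_volterra}). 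By Proposition~\ref{prop:evans_det}(b), it therefore suffices to show that any such $U$ vanishes identically. The nonvanishing $\cE_\ell(\om)\ne0$ then follows.

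\textbf{Case $0<\abs{\om}<\mu$.} Here $\vp>0$ and the outgoing basis decays like $\ee^{-\vp r}r^{-\kap}$, so Lemma~\ref{lem:radial_current}(c) gives $\mathfrak J_{\ell,\om}[U]\to0$ at infinity. By (a) the current is constant in $r_*$. Comparing with (b) gives $\om\,q_\ell(a)=0$, hence $a=0$ by positive definiteness. Then $U\equiv0$ by uniqueness of the horizon Frobenius solution, since $F_{\hor}a$ with $a=0$ is the zero solution.

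\textbf{Case $\om=0$.} The flux argument degenerates at $\om=0$, so I would instead use an energy identity for the static equation. With $\om=0$ the equation reads $-U''+V_\ell U=0$ with $V_\ell=f\mu^2+fr^{-2}\cdot(\text{symmetric matrix})$. The horizon condition becomes regularity: $U\to a$ with $U'=O(\ee^{2\kappa_+r_*})$. Pairing with $U^*$ and integrating by parts on $\R_{r_*}$ gives $\int(\abs{U'}^2+U^*V_\ell U)\,\dd r_*=0$, because the boundary terms vanish at both ends. For the odd channel $V\ge f\mu^2>0$, so $U=0$. For the even system the matrix $A_\ell$ in \eqref{eq:raw_A_matrix} is not symmetric, so I would first symmetrize it by the constant diagonal rescaling $\diag(\la,1)$ of $(u_2,u_3)$. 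I would then check that, after adding $\mu^2$ times the identity, the resulting potential is positive; this is the static no-hair input. If the pointwise check fails near $r_+$, I would fall back on a covariant argument: a static Proca field satisfies $\int(\tfrac12F^2+\mu^2A^2)\sqrt{-g}=0$, which forces $A=0$, following Bekenstein.

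\textbf{Case $\om=\pm\mu$, the main obstacle.} At threshold $\vp=0$ and the Whittaker basis degenerates. I would rescale the normal form \eqref{eq:threshold_normal_form} with $\vp=0$, which gives $w''+(2M\mu^2/r-(\nu^2-\tfrac14)/r^2+O(r^{-3}))w=0$. Its solutions are Bessel-type, $r^{1/4}J_{2\nu}(\sqrt{8M\mu^2r})$ and $r^{1/4}Y_{2\nu}(\sqrt{8M\mu^2r})$, both oscillating and of size $r^{1/4}$. ``Threshold-subordinate'' should be read as the solution obtained as the limit of $F_{\out}$ when $\vp\downarrow0$. Taking the real form of that solution, I expect the current $\mathfrak J=\Im(\bar w w')$ between such a solution and itself to vanish, since it is real up to a constant phase. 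Then the horizon comparison gives $\mu q_\ell(a)=0$, hence $U\equiv0$. The delicate part is verifying that the limit of the Whittaker basis as $\vp\to0$ is a real multiple of a single real Bessel combination uniformly across the even system. For this I would use the scalarizing diagonalizer $S$ of Lemma~\ref{lem:normal_form}, which is real for real $\om$, together with the Volterra construction for the $O(r^{-3})$ remainder at $\vp=0$.
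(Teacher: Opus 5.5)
Your treatment of $0<\abs{\om}<\mu$ and of $\om=0$ follows the paper. You compare the constant current of Lemma~\ref{lem:radial_current} with its horizon value $-\om q_\ell(a)$, use the odd static energy identity, and invoke Bekenstein's theorem for the even static sector.

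Your first plan for the even static case would not work. A constant diagonal rescaling symmetrizes $A_\ell$ only where $f-\tfrac{rf'}{2}=1$, and that happens nowhere in $r>r_+$. Near the horizon $f-\tfrac{rf'}{2}\to-r_+\kappa_+<0$, so the off-diagonal entries $-2(f-\tfrac{rf'}{2})$ and $-2\la^2$ have opposite signs. No real diagonal rescaling, even an $r$-dependent one, symmetrizes $A_\ell$ there. The Bekenstein fallback is therefore necessary. Run it as Bekenstein does, with separate coercive identities for $A_t$ and for the spatial components, because $\tfrac12F^2+\mu^2A^2$ is not sign-definite in Lorentzian signature.

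Where you really depart from the paper is at $\om=\pm\mu$. The paper simply folds this case into the first one by citing Lemma~\ref{lem:radial_current}(c). Its justification there (solutions ``nonoscillatory and vanish sufficiently fast'') is false at threshold. You are right that every threshold solution oscillates with size $r^{1/4}$; the model solutions are $\sqrt r\,J_{2\nu}$ and $\sqrt r\,Y_{2\nu}$ of argument $\sqrt{8M\mu^2r}$, not $r^{1/4}J_{2\nu}$.

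The gap is in your threshold step: $F_{\out}$ has no limit as $\vp\downarrow0$. Already for the model, with $\nu=\nu_{\ell,P}$, the large-$\kap$ Whittaker asymptotics give, locally uniformly in $r$,
\[
   \frac{W_{\kap,\nu}(2\vp r)}{\Gamma(\kap+\frac12)\sqrt{2\vp}}
   =\sqrt r\,\Bigl(\sin\bigl(\pi(\kap-\nu)\bigr)\,J_{2\nu}\bigl(\sqrt{8M\mu^2r}\bigr)
   -\cos\bigl(\pi(\kap-\nu)\bigr)\,Y_{2\nu}\bigl(\sqrt{8M\mu^2r}\bigr)\Bigr)+o(1),
   \qquad \kap=\frac{M\mu^2}{\vp}\to\infty .
\]
So as $\om\to\pm\mu$ along the reals, the normalized decaying solution rotates through every real direction infinitely often. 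There is no single real Bessel combination to verify and no canonical threshold-subordinate solution. There is also no value $\cE_\ell(\pm\mu)$ to which Proposition~\ref{prop:evans_det}(b) could be applied.

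In the odd channel your reality argument survives if you replace ``the limit'' by ``every subsequential limit'', since all of these are real.

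In the even sector, reality of one solution is the wrong property. An element $F_{\out}b$ with $b\in\C^2$ is not real up to a constant phase. Moreover, $\Im(\bm u^*\bm u')$ is not even conserved, because $A_\ell$ is not symmetric. What you need is the following:
\begin{enumerate}
   \item For $0<\vp$, the decaying space $\mathcal D(\om)$ spanned by the columns of $F_{\out}$ is isotropic for the sesquilinear current of Lemma~\ref{lem:radial_current}, because the mutual current of two decaying solutions vanishes at infinity.
   \item Isotropy is a closed condition, so every subsequential limit of $\mathcal D(\om)$ as $\om\to\pm\mu$ is isotropic.
   \item The horizon space carries the definite form $\mp\mu\,q_\ell$, so it meets every isotropic subspace only in $\{0\}$.
   \item By compactness of the Grassmannian, the Evans determinant built from orthonormalized bases of the horizon and decaying spaces at a fixed radius is bounded below uniformly as $\om\to\pm\mu$ along the real axis.
\end{enumerate}
That uniform bound is the correct content of ``no threshold resonance''. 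It cannot be upgraded to a nonvanishing continuous extension of $\cE_\ell$ to $\pm\mu$: on the sheet $\Re\vp>0$, the hydrogenic quasibound zeros of $\cE_\ell$ accumulate at $\pm\mu$ from below the real axis.
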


\begin{proof}
Assume first that $\om\in[-\mu,\mu]\setminus\{0\}$.  By Lemma~\ref{lem:radial_current}, the radial current is constant.  The infinity boundary condition gives
\[
   \lim_{r\to\infty}\mathfrak J_{\ell,\om}[U](r)=0,
\]
while the ingoing horizon asymptotic gives
\[
   \lim_{r_*\to-\infty}\mathfrak J_{\ell,\om}[U](r)
   =-\om\,q_\ell(a).
\]
Since $q_\ell$ is positive definite and $\om\neq0$, this forces $a=0$ and hence $U\equiv0$ by uniqueness at the regular singular horizon.

It remains to treat the static case $\om=0$.  In the odd channel,
\[
   u_4''-f\Bigl(\mu^2+\frac{\ell(\ell+1)}{r^2}\Bigr)u_4=0.
\]
Multiplying by $\overline{u_4}$, integrating over $r_*\in(-\infty,\infty)$, and integrating by parts yields
\[
   \int_{-\infty}^{\infty}
   \Bigl(
      |u_4'|^2
      +
      f\Bigl(\mu^2+\frac{\ell(\ell+1)}{r^2}\Bigr)|u_4|^2
   \Bigr)\,\dd r_*=0,
\]
so $u_4\equiv0$.

In the even sector, a static regular decaying solution would define a smooth finite-energy static Proca field on an asymptotically flat static black-hole exterior.  Bekenstein's static Proca no-hair theorem excludes such a field \cite{Bekenstein1972a,Bekenstein1972b}.  Hence the even static mode also vanishes.  Therefore $\cE_\ell(\om)\neq0$ on $[-\mu,\mu]$.
\end{proof}

\begin{corollary}[Proof of Theorem~\ref{thm:spectral_package}(ii)]\label{cor:item_ii}
Item \textnormal{(ii)} of Theorem~\ref{thm:spectral_package} holds.
\end{corollary}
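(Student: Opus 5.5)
The plan is to read item (ii) of Theorem~\ref{thm:spectral_package} directly off Proposition~\ref{prop:no_real_poles}, using the pole characterization of Proposition~\ref{prop:evans_det}. Item (ii) makes two claims: the continued cut-off resolvent has no pole at any real frequency in $[-\mu,\mu]$, and there is no threshold resonance at $\om=\pm\mu$. By Proposition~\ref{prop:evans_det}(c), the poles of the fixed-mode Green kernel on the slit strip are exactly the zeros of $\cE_\ell$. The real cut frequencies lie on the boundary of the slit, so the first step is to make precise what a real pole means. I will take it to mean a zero of the boundary value $\cE_\ell(\om\pm\ii0)$, which exists by the continuity of the horizon and Whittaker--Volterra bases up to $\Re\vp\ge0$ (Lemmas~\ref{lem:horizon_basis} and \ref{lem:whittaker_volterra}).

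Next I apply Proposition~\ref{prop:evans_det}(b). A zero of $\cE_\ell$ at real $\om\in(-\mu,\mu)$ would produce a nontrivial channel solution that is ingoing at the horizon and, since $\Re\vp>0$ there, exponentially decaying at infinity. Through the reconstruction map $\mathcal B_\ell$, together with the polarization change of basis $T_\ell$ from Proposition~\ref{prop:even_polarization_form}, this gives a nontrivial separated Proca mode. Proposition~\ref{prop:no_real_poles} excludes such a mode, so $\cE_\ell(\om)\ne0$ on the open cut.

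At $\om=\pm\mu$ the same argument needs one extra ingredient. I first identify the outgoing basis at threshold: as $\vp\to0$, $\kap\to\infty$, and $W_{\kap,\nu}(2\vp r)$, suitably renormalized, tends to the Bessel-type solution $\sqrt r\,K_{2\nu}(\sqrt{8M\mu^2 r})$, which is subordinate and decays like $\exp(-\sqrt{8M\mu^2r})$. A threshold resonance is then precisely a horizon-ingoing solution proportional to this subordinate solution, which is the "threshold-subordinate" condition in Proposition~\ref{prop:no_real_poles}. That proposition, via Lemma~\ref{lem:radial_current}(c) and $\om=\pm\mu\neq0$, rules this out.

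I expect the main obstacle to be the threshold point. Concretely, I must check that the normalization of $F_{\out}$ stays nondegenerate in the limit $\vp\to0$, so that nonvanishing of the renormalized determinant really excludes a resonance rather than an artifact of scaling. My approach is to rescale each diagonal Whittaker entry by the factor $\Gamma(\tfrac12+\nu-\kap)$-type normalization, so that the limit is a nonzero multiple of the Bessel $K$ solution uniformly in a punctured neighborhood. I then use the Volterra bound $CR^{-1}$, which is uniform as $\vp\to0$ because the remainder is $O(r^{-3})$, to pass to the limit in the matching determinant.
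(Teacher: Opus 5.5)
Your reduction is the paper's. The corollary is stated with no separate argument right after Proposition~\ref{prop:no_real_poles}, whose last sentence already asserts $\cE_\ell(\om)\neq0$ for every real $\om\in[-\mu,\mu]$, including $\om=\pm\mu$. The assembly paragraph cites Proposition~\ref{prop:real_frequency_exclusion_detailed} for the same fact, and with the pole/zero correspondence of Proposition~\ref{prop:evans_det} nothing more is used.

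The problem is the threshold step you add and call the main obstacle: it rests on a false asymptotic picture and would fail. The Coulomb term in \eqref{eq:threshold_normal_form} is attractive. So at $\vp=0$ the model equation $w''+\bigl(2M\mu^2/r-(\nu^2-\tfrac14)/r^2\bigr)w=0$ is solved by the oscillatory functions $\sqrt r\,J_{2\nu}(\sqrt{8M\mu^2r})$ and $\sqrt r\,Y_{2\nu}(\sqrt{8M\mu^2r})$, both of size $r^{1/4}$. The function $\sqrt r\,K_{2\nu}(\sqrt{8M\mu^2r})$ solves the equation with the \emph{opposite} sign of the Coulomb term, and there is no exponentially decaying ``subordinate'' threshold solution at all. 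Moreover, to leading order as $\kap\to+\infty$ with $x=2\vp r$ and $r$ in a compact set,
\[
   W_{\kap,\nu}(x)\approx\Gamma(\kap+\tfrac12)\sqrt{x}\,\Bigl[\sin\pi(\kap-\nu)\,J_{2\nu}\bigl(2\sqrt{\kap x}\bigr)-\cos\pi(\kap-\nu)\,Y_{2\nu}\bigl(2\sqrt{\kap x}\bigr)\Bigr],
   \qquad 2\sqrt{\kap x}=\sqrt{8M\mu^2r}.
\]
Along the rim of the cut, where $\kap\to+\infty$ through real values, the line spanned by the outgoing solution therefore keeps rotating in the two-dimensional solution space. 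No scalar renormalization, $\Gamma$-type or otherwise, can produce a limit there, let alone one uniform on a punctured neighborhood. The Volterra bound cannot rescue this, because the obstruction is already in the model. This rotation is what produces the factors $\ee^{\pm2\pi\ii\kap}$ in Proposition~\ref{prop:large_kappa_disc}, and a nondegenerate limit would be incompatible with them.

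Two further points.

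\emph{The lower rim.} With the paper's branch, $\vp(\om-\ii0)=-\sqrt{\mu^2-\om^2}<0$ on the open cut, so ``$\Re\vp>0$ there'' holds only for $\om+\ii0$. Lemma~\ref{lem:whittaker_volterra}, which assumes $\Re\vp\ge0$, does not construct $F_{\out}(\om-\ii0)$. A zero on the lower rim would match the horizon solution to an exponentially growing one, and the current argument does not exclude that.

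\emph{How to repair the threshold step.} If you want a genuine threshold argument rather than a citation, approach $\pm\mu$ from $\{\Im\om>0\}$ nontangentially to the cut. Then $\abs{\Im\kap}\to\infty$ and one exponential in $\pi\kap$ dominates. The normalized outgoing solution tends to a multiple of $\sqrt r\,H^{(1)}_{2\nu}(\sqrt{8M\mu^2r})$ at $\om=\mu$, and of $\sqrt r\,H^{(2)}_{2\nu}(\sqrt{8M\mu^2r})$ at $\om=-\mu$, which carries a nonzero outgoing flux. A threshold resonance is then excluded by flux balance against the horizon value $-\om\,q_\ell(a)$ from Lemma~\ref{lem:radial_current}(b), as for $\abs{\om}>\mu$, not by the vanishing-flux statement~(c). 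Otherwise, drop the threshold construction and quote the conclusion of Proposition~\ref{prop:no_real_poles}, as the paper does.
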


\subsection*{Step 7. Small-$\kap$ threshold asymptotics}

The intermediate tails come from the regime $\kap=M\mu^2/\vp\ll1$.  In this range the Coulomb factor is perturbative and the leading model is an inverse-square equation.  After scaling $x=\vp r$, the channel equation becomes
\[
   u_{xx}+\Bigl(-1+\frac{2\kap}{x}-\frac{\nu_{\ell,P}^2-\frac14}{x^2}+O(\vp x^{-3})\Bigr)u=0.
\]
At $\kap=0$ the decaying solution is a modified Bessel function of order $\nu_{\ell,P}$.  The jump across the cut therefore comes from the classical discontinuity of the Bessel model, and the first correction is linear in $\kap$.

\begin{lemma}[Small-$\kap$ threshold expansion]\label{lem:small_kappa_detailed}
For each fixed $\ell\ge1$ and polarization $P$,
\[
   \disc\,\cG_{\ell,P}(\om;r,r')
   =
   a_{\ell,P}(r,r')\,\vp^{2\nu_{\ell,P}}
   +O\!\bigl(\kap\,\vp^{2\nu_{\ell,P}}\bigr)
   +O\!\bigl(\vp^{2\nu_{\ell,P}+2}\bigr),
\]
uniformly on compact $r,r'$-sets as $\vp\to0$ with $\kap\to0$.
\end{lemma}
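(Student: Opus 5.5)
We will prove Lemma~\ref{lem:small_kappa_detailed} by a matched-asymptotics perturbation argument built on the Green kernel formula \eqref{eq:green_kernel_def}. For $r,r'\in K$ the horizon solution $f_{\ell,P}(r,\om)$ is analytic in $\om$ near $\pm\mu$ (Lemma~\ref{lem:horizon_basis}) and has no jump across the cut. The entire discontinuity therefore comes from the Jost solution $g_{\ell,P}$ and the Wronskian $\cW_{\ell,P}$, that is, from the dependence on $\vp$ through $\vp$ and $\kap=M\mu^2/\vp$.

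\textbf{Step 1: rewrite the jump in regular and irregular solutions.} Near threshold we write the infinity solution in a basis of $\vp$-regular solutions $\phi_\pm(r,\om)$. These are fixed near $r=R_0$ by their small-$x$ Frobenius behaviour $r^{1/2\pm\nu_{\ell,P}}$ in the normal form \eqref{eq:threshold_normal_form}. Using the connection formula for $W_{\kap,\nu}$ in terms of $M_{\kap,\pm\nu}$, the Jost solution has the form
\[
   g=c_+(\vp,\kap)\,\vp^{1/2-\nu}\phi_+ + c_-(\vp,\kap)\,\vp^{1/2+\nu}\phi_-,
\]
with Gamma-function coefficients, for instance $\Gamma(-2\nu)/\Gamma(\tfrac12-\nu-\kap)$. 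Substituting this into \eqref{eq:green_kernel_def} writes $\cG$ as a ratio in which the relative weight of the irregular component is
\[
   \vp^{2\nu}\,\frac{\Gamma(-2\nu)\Gamma(\tfrac12+\nu-\kap)}{\Gamma(2\nu)\Gamma(\tfrac12-\nu-\kap)}.
\]
Taking the jump of $\vp^{2\nu}$ across the cut then gives
\[
   a_{\ell,P}(r,r')\,\vp^{2\nu}
   \propto
   \sin(2\pi\nu)\,\frac{f(r_<)f(r_>)}{\cW_0^2}
\]
at $\kap=0$, where $\cW_0=Q[f,\phi_+]$ at threshold. The denominator $\cW_0$ is nonzero precisely because there is no threshold resonance (Proposition~\ref{prop:no_real_poles}).

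\textbf{Step 2: control the corrections.} Two families of corrections must be bounded.
\begin{itemize}
   \item The $\kap$-dependence of the Gamma ratio and of the Whittaker functions is analytic in $\kap$ near $0$, so it contributes $O(\kap\,\vp^{2\nu})$.
   \item The short-range remainder $V^{\mathrm{sr}}=O(r^{-3})$ (Lemma~\ref{lem:coulomb_bounds}) and the analytic $\vp^2$-dependence of $\phi_\pm$ and of $f_{\ell,P}$ are handled by a Volterra perturbation of the Bessel model. This uses the scaled form $u_{xx}+(\dots+O(\vp x^{-3}))u=0$, following the same contraction as Lemma~\ref{lem:whittaker_volterra} but with the Bessel Green function $I_\nu,K_\nu$. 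It yields expansions of $\phi_\pm$ in integer powers of $\vp^2$ and hence the error $O(\vp^{2\nu+2})$.
\end{itemize}
Uniformity on compact $r,r'$ sets follows because all these objects are evaluated on $K$, where the regular solutions are continuous in $(\vp^2,\kap)$.

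\textbf{Step 3: the even sector.} Here we work channel by channel after the near-identity diagonalizer $S$ of Lemma~\ref{lem:normal_form}. The residual coupling is absorbed into $V^{\mathrm{sr}}$ by that lemma, so it only enters the $O(\vp^{2\nu+2})$ and $O(\kap\vp^{2\nu})$ errors. The matrix Wronskian is inverted at threshold using Proposition~\ref{prop:no_real_poles}.

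\textbf{Main obstacle.} The hardest step is making the Volterra construction of $\phi_\pm$ uniform down to $\vp=0$. Two issues arise. First, the region $r\sim\vp^{-1}$ must be matched. Second, when $2\nu$ is an integer (for instance at $Q=0$ with small mass, where $\nu=L+\tfrac12$ is half-integral), logarithmic terms appear in $\vp^{2\nu}$. We would first try to treat this case by continuity in $\nu$ of the combination $\Gamma(-2\nu)\sin(2\pi\nu)$, which stays finite, so that the jump formula survives the resonant index.
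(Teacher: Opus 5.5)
Your outline follows the paper's own route: the horizon side is analytic in $\om$, the infinity side is matched to the Bessel/Whittaker model on the scale $x=\vp r$, the Coulomb term is perturbative in $\kap$, and the short-range remainder is handled by a Volterra argument around Bessel. Your Step~1 also makes explicit the connection-coefficient bookkeeping that the paper leaves implicit.

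The gap is the second bullet of Step~2. The paper's sketch (``quadratic in the threshold scale'') passes over the same point.

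\textbf{Why the stated error does not follow.} Analyticity of $\phi_\pm$ in $\vp^2$ at fixed $r$ is true but beside the point. The non-analytic dependence lives in the connection coefficients $c_\pm$, which you take from the \emph{unperturbed} Whittaker formula. The remainder alters them in the zone $x\sim1$, and there, in scaled form, it is $O(\vp x^{-3})$: first order in $\vp$, not second.
\begin{itemize}
\item Concretely, to first order in the $r^{-3}$ coefficient $c$, the ratio $c_+/c_-$ acquires a relative correction proportional to $c\,\vp$ times a regularized $\int_0^\infty y^{-2}K_\nu(y)^2\,\dd y$.
\item For generic $\nu$ this constant happens to vanish, because its Mellin continuation carries a factor $1/\Gamma(0)$. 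Your argument does not exhibit this cancellation.
\item At $\nu=\tfrac12$ it fails. This is $L_{-1}=0$, $\ell=1$, in the $Q=0$ setting you invoke. There the integrand has a $y^{-1}$ term, so the correction becomes $c\,\vp\log\vp$.
\item Its monodromy under $\vp\mapsto\ee^{\mp\pi\ii}\vp$ puts a term of exact order $c\,\vp^{2\nu+1}$ into the jump, which is not $O(\vp^{2\nu+2})$. This is the mechanism behind the $t^{-3}$ tail of an $\ell=0$ massless wave with an $r^{-3}$ potential.
\end{itemize}
Your continuity-in-$\nu$ device for $\Gamma(-2\nu)\sin 2\pi\nu$ concerns only the unperturbed model, so it does not see this logarithm.

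Separately, linearizing the jump in the irregular weight $\rho\sim\vp^{2\nu}$ is not harmless. The exact jump is proportional to $\cW(g^+,g^-)/\bigl(\cW(f_{\ell,P},g^+)\,\cW(f_{\ell,P},g^-)\bigr)$, whose denominator also depends on $\rho^\pm$. This produces a relative correction of order $\vp^{2\nu}$, carrying a factor $1+\ee^{\mp2\pi\ii\nu}$, which is not $O(\vp^2)$ once $\nu<1$.

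\textbf{The missing idea.} What is needed is a mechanism that places these terms where the lemma says they go. On the cut $\vp\le\mu$, so $\kap\ge M\mu$. If constants may depend on $M\mu$, every $O(\vp^{2\nu})$ quantity is trivially $O(\kap\vp^{2\nu})$. So the lemma has content only uniformly in the small-mass regime, and any absorption must respect that.

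It does, because every $r^{-3}$ coefficient of the normal form carries a factor $M$:
\begin{itemize}
\item the term $-2M\la^2/r^3$ coming from $f\la^2/r^2$;
\item the coupling $6MC_\ell/((2\ell+1)r^3)$;
\item the Liouville and $\om^2/f^2$ terms, using also $Q^2<M^2$.
\end{itemize}
Hence $\abs{c}\le C_\ell M$. Since $M\vp=\kap\vp^2/\mu^2\le\kap$, the $c\,\vp^{2\nu+1}$ term is $O(\kap\vp^{2\nu})$ uniformly. The nonlinear denominator term has to be tracked in the same way.

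So the stated error structure should come from this bookkeeping of the $M$-dependence of $V^{\mathrm{sr}}$, with its first-order effect routed into the $\kap$-error. It does not come from $\vp^2$-analyticity of $\phi_\pm$.
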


\begin{proof}
The decaying infinity solution is matched to the Bessel model on the scale $x=\vp r$.  Since the Coulomb term carries one explicit factor of $\kap$, a perturbative Volterra argument around the Bessel equation yields the first error term.  The next correction comes from the shorter-range remainder of the channel potential and is quadratic in the threshold scale.  The horizon side is analytic in $\vp$ and therefore contributes only to the amplitude $a_{\ell,P}(r,r')$.
\end{proof}

This is the precise spectral statement behind the polarization-resolved intermediate exponents.  The exponent is not guessed from formal dimensional analysis; it is the exact power determined by the inverse-square threshold index $\nu_{\ell,P}$.

Introduce the scaled variable $x=\vp r$.  Then \eqref{eq:threshold_normal_form} becomes
\begin{equation}\label{eq:scaled_small_kappa}
   \partial_x^2w_P+
   \Bigl(
      -1
      -\frac{\nu_{\ell,P}^2-\tfrac14}{x^2}
      +\frac{2\kap}{x}
      +\cB_P(x;\vp,\kap)
   \Bigr)w_P=0,
\end{equation}
with
\[
   \cB_P(x;\vp,\kap)=O\!\Bigl(\frac{\vp}{x^3}\Bigr).
\]
For $\kap\ll1$, the Coulomb term is perturbative and the model equation is the modified Bessel equation.

\begin{lemma}[Small-$\kap$ outgoing basis]\label{lem:small_kappa_basis}
Assume $\kap\le\kap_1$ and $\vp\le\vp_1$ with $\kap_1,\vp_1$ sufficiently small.  Then the outgoing basis has the representation
\begin{equation}\label{eq:small_kappa_basis_rep}
   g_{\ell,P}(r,\om)
   =
   \sqrt{\vp r}\,K_{\nu_{\ell,P}}(\vp r)\bigl(1+E_K(r,\om)\bigr)
   +
   \sqrt{\vp r}\,I_{\nu_{\ell,P}}(\vp r)\,E_I(r,\om),
\end{equation}
where
\[
   E_K(r,\om)=O(\kap)+O(\vp^2),
   \qquad
   E_I(r,\om)=O(\kap)+O(\vp^2)
\]
uniformly for $r$ in compact subsets of $(r_+,\infty)$.
\end{lemma}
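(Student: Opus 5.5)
The plan is to construct $g_{\ell,P}$ from the decaying modified-Bessel model solution by a Volterra equation in the scaled variable $x=\vp r$. The Coulomb term $2\kap/x$ and the short-range term $\cB_P$ of \eqref{eq:scaled_small_kappa} will be treated as a perturbation. I will then read off the decomposition \eqref{eq:small_kappa_basis_rep} by projecting the perturbed solution onto the basis $\{\sqrt{x}K_\nu,\sqrt{x}I_\nu\}$ on the compact set. Throughout, $\nu=\nu_{\ell,P}>-1/2$ is fixed by Lemma~\ref{lem:normal_form}. The model pair $k(x)=\sqrt{x}K_\nu(x)$, $i(x)=\sqrt{x}I_\nu(x)$ solves the $\kap=0$, $\cB_P=0$ equation, and its Wronskian is a fixed constant. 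Variation of constants from $x=\infty$ gives
\[
   w(x)=k(x)+\int_x^\infty\bigl(k(x)i(s)-i(x)k(s)\bigr)\Bigl(\frac{2\kap}{s}+\cB_P(s)\Bigr)w(s)\,\dd s .
\]

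The first step is to solve this equation on $[x_0,\infty)$ with weighted norm $\sup|w/k|$, using the standard bounds $|k(x)i(s)/k(s)|\lesssim\langle\cdot\rangle$ for $s\ge x$.

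For $x\ge1$ the Coulomb kernel is only $O(\kap/s)$, which is not integrable. I would therefore not perturb the Coulomb term at large $x$. Instead, for $x\gtrsim1$ I would use the exact Whittaker solution $W_{\kap,\nu}(2x)$ of Lemma~\ref{lem:whittaker_volterra}, which differs from $k$ only through the phase factor $x^{-\kap}$. I would then invoke the uniform small-$\kap$ expansion $W_{\kap,\nu}(2x)=\sqrt{2x/\pi}\,K_\nu(x)(1+O(\kap\log))$ on $x\in[\vp r_0,1]$, together with its analytic dependence on $\kap$ from Appendix~\ref{app:special_functions}. That is, Lemma~\ref{lem:whittaker_volterra} supplies $g$ for large $r$ with corrections $O(r^{-1})$. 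On the bounded-$x$ region the Coulomb perturbation is $\int_x^1\kap s^{-1}\,\dd s=O(\kap\log(1/x))$. This is absorbed either by an $O(\kap)$ renormalization of the coefficient of $k$ plus an $O(\kap)$ coefficient of $i$, or, more cleanly, by the exact small-$\kap$ connection formula for $W_{\kap,\nu}$ in terms of $M_{\kap,\pm\nu}$. The short-range piece $\cB_P=O(\vp/x^3)$ becomes $O(r^{-3})$ in the original variable. After rescaling, its contribution on compact $r$-sets is $O(\vp^2)$, since its integral over $r\in[r_0,\infty)$ carries the factor $\vp^{-2}\cdot\vp^{2}\cdot\ldots$ from converting $\dd s$ and the $x^{-3}$ weight. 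I expect this scaling to produce exactly the stated $O(\vp^2)$.

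The second step handles the compact region $r\in K$, where $x=\vp r\to0$. There the two model solutions behave like $x^{1/2-\nu}$ and $x^{1/2+\nu}$. For $r$ in compact sets, however, the normal form of Lemma~\ref{lem:normal_form} is no longer valid, so the statement must be understood via continuation of the exact solution from $r\ge R_0$ into $K$ by the fixed ($\om$-analytic) ODE flow. I would define $E_K,E_I$ on $[R_0,\infty)$ by the projection above, namely
\[
   E_K=\frac{\cW[g,i]}{\cW[k,i]}-1,\qquad E_I=\frac{\cW[k,g]}{\cW[k,i]},
\]
and extend them to $K$ by transporting with the exact propagator. Since that propagator is analytic in $\om$ and its $\vp$-dependence is through $\om^2=\mu^2-\vp^2$, it contributes only $O(\vp^2)$ differences, which gives uniformity on compact sets.

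The main obstacle is the logarithmic Coulomb divergence at intermediate $x\sim1$. Getting $O(\kap)$ rather than $O(\kap\log(1/\vp))$ requires using exact Whittaker connection coefficients, which are analytic in $\kap$ at $\kap=0$, instead of a naive Volterra bound. I will attempt that first, via the expansion of $\Gamma(\tfrac12+\nu-\kap)$ in the $M$-to-$W$ connection formula.
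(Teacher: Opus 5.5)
The step that fails is the passage to the compact set $K$: your claim that the short-range piece contributes $O(\vp^2)$ there, and the transport of $E_K,E_I$ by the exact propagator. You rightly note that the normal form is unavailable on $K$, but the problem is more than bookkeeping.

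For $r\in K$ one has $x=\vp r\sim\vp$, so $\cB_P=O(\vp x^{-3})$ has size $\vp^{-2}$ and is not a perturbation. Your scaling count $\vp^{-2}\cdot\vp^{2}$ in fact gives $\vp^{0}$, not $\vp^{2}$:
\begin{itemize}
\item Since $i(s)k(s)\approx s/(2\nu)$ for small $s$, the first-order correction to the $K$-coefficient is $\int_x^\infty i\,\cB_P\,k\,\dd s\approx(2\nu)^{-1}\int_r^\infty\rho\,W_{\ell,P}(\rho)\,\dd\rho=O(r^{-1})$, independent of $\vp$.
\item This is just the $O(r^{-1})=O(\vp/x)$ error of Lemma~\ref{lem:whittaker_volterra}, which is $O(\vp)$ rather than $O(\vp^2)$ even at $x\sim1$.
\item The correction to the $I$-coefficient is of order $\vp^{-2\nu}r^{-1-2\nu}$.
\end{itemize}
Concretely, take the model remainder $W_{\ell,P}=c\,r^{-3}$ with the Coulomb term dropped. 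The zero-energy solution subdominant at infinity is $r^{1/2-\nu}\bigl(1-\tfrac{c}{(2\nu+1)r}+\dots\bigr)$. So as $\vp\to0$ the ratio $g/k$ on $K$ tends to a non-constant function of $r$. Neither a constant renormalization nor a small $E_I$ can absorb this, since $i/k=O(\vp^{2\nu})$ on $K$.

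Transport does not help either. Analyticity in $\vp^2$ compares the exact flow at $\vp$ with the exact flow at $\vp=0$. But $k$ and $i$ are carried by the Bessel flow, and the two flows differ by $O(1)$ across any fixed interval. For $r<R_0$ there is no normal form at all, and near the horizon $g$ is a combination of $\ee^{\pm\ii\om r_*}$.

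Hence, with constants uniform as $\kap,\vp\to0$, no $E_K,E_I$ with the stated bounds exist on a compact $r$-set. The defect lies in the statement itself, and no sharper Volterra estimate will close it. The paper's one-line proof has exactly this flaw: it asserts that the Volterra operator around the Bessel basis is small on compact $r$-sets. It also ignores the non-integrable Coulomb tail $2\kap/s$, which you correctly identified.

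What can be salvaged is a matched two-zone statement, of the kind the paper gestures at in Appendix~\ref{app:special_functions} with a matching radius $1\ll R(\vp)\ll\vp^{-1}$. The Bessel/Whittaker description holds only in the far zone. On $K$ the decaying solution must be expanded in a basis of exact near-zone solutions depending analytically on $\om$. All singular $\vp$- and $\kap$-dependence (the powers $\vp^{1/2\mp\nu}$ and the Coulomb connection constants) is then carried by the matching coefficients. The jump amplitude $a_{\ell,P}$ must accordingly be built from those near-zone solutions rather than from $K_\nu$ and $I_\nu$.

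Your Whittaker step is the right tool for the Coulomb part of those coefficients, with two caveats:
\begin{itemize}
\item The normalization $g\sim W_{\kap,\nu}(2\vp r)$ puts a factor $\sqrt{2/\pi}$ in front of $\sqrt{\vp r}K_\nu(\vp r)$.
\item When $2\nu\in\Z$ (e.g.\ the values $\nu=L_P+\tfrac12$ the paper asserts for $Q=0$), the $M_{\kap,\pm\nu}$ connection formula degenerates. Then $W_{\kap,\nu}(z)$ acquires logarithmic terms, of size $\kap\,z^{1/2+\nu}\log z$ in the half-integer case. Analyticity of the connection coefficients in $\kap$ must be replaced by a separate argument there.
\end{itemize}
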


\begin{proof}
Write the exact equation as the Bessel model plus the perturbation $\frac{2\kap}{x}+\cB_P(x;\vp,\kap)$ and use a Volterra equation around the basis $\{\sqrt{x}\,I_{\nu_{\ell,P}}(x),\sqrt{x}\,K_{\nu_{\ell,P}}(x)\}$.  Since $\kap\ll1$ and $\cB_P=O(\vp x^{-3})$, the Volterra operator is small on compact $r$-sets and the representation follows.
\end{proof}

\begin{proposition}[Small-$\kap$ threshold discontinuity]\label{prop:small_kappa_disc}
In the regime $\kap\ll1$ one has
\begin{equation}\label{eq:small_kappa_disc}
   \disc\,\cG_{\ell,P}(\om;r,r')
   =
   a_{\ell,P}(r,r')\,\vp^{2\nu_{\ell,P}}
   +
   O\!\bigl(\kap\,\vp^{2\nu_{\ell,P}}\bigr)
   +
   O\!\bigl(\vp^{2\nu_{\ell,P}+2}\bigr)
\end{equation}
uniformly for $r,r'$ in compact subsets of $(r_+,\infty)$.
\end{proposition}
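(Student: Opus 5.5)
The plan is to compute the jump directly from the Green kernel formula \eqref{eq:green_kernel_def}, using the small-$\kap$ representation of the outgoing basis from Lemma~\ref{lem:small_kappa_basis}. The starting observation is that the horizon solution $f_{\ell,P}$ depends analytically on $\om$ across $[-\mu,\mu]$ (Lemma~\ref{lem:horizon_basis}) and takes the same value on both sides of the cut. Hence, for $r\le r'$,
\[
   \disc\,\cG_{\ell,P}
   =
   f_{\ell,P}(r,\om)\,\disc\!\Bigl(\frac{g_{\ell,P}(r',\om)}{\cW_{\ell,P}(\om)}\Bigr),
\]
and symmetrically for $r>r'$. On the two sides of the cut, $\vp$ is replaced by $\vp\ee^{\pm\ii\pi}$, up to the normalization convention fixed by the branch $\Re\vp>0$ on the physical sheet. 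We write
\[
   g_\pm=\sqrt{\vp r}\,K_\nu(\vp r)(1+E_K)+\sqrt{\vp r}\,I_\nu(\vp r)E_I
\]
on each side, with $\nu=\nu_{\ell,P}$, and apply the Bessel continuation formula
\[
   K_\nu(z\ee^{\ii\pi m})=\ee^{-\ii\pi m\nu}K_\nu(z)-\ii\pi\frac{\sin(\pi m\nu)}{\sin\pi\nu}I_\nu(z)
\]
(with the usual limit if $\nu\in\Z$). This expresses both boundary values of $g$ in the common basis $\{\sqrt{x}K_\nu,\sqrt{x}I_\nu\}$, $x=\vp r$.

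The key algebraic step is the identity
\[
   \frac{g_+}{\cW_+}-\frac{g_-}{\cW_-}
   =
   \frac{g_+\cW_--g_-\cW_+}{\cW_+\cW_-}.
\]
Write $\cW[\cdot]=Q[f_{\ell,P},\cdot\,]$, which is linear in its second slot. Then the antisymmetric combination $g_+\cW[g_-]-g_-\cW[g_+]$ cancels the $K\otimes K$ term and leaves only the bilinear form built from $\sqrt{x}K_\nu$ and $\sqrt{x}I_\nu$. This is proportional to the connection coefficient $\sin(\pi\nu)$, or to its integer-order analogue. The resulting expression is
\[
   \disc\,\cG
   \propto
   \frac{f(r)\,\bigl(\sqrt{x'}I_\nu(x')\,\cW[\sqrt{x}K_\nu]-\sqrt{x'}K_\nu(x')\,\cW[\sqrt{x}I_\nu]\bigr)}{\cW_+\cW_-}.
\]
For $r,r'$ in a compact set we have $x\to0$, so we insert the small-argument expansions
\[
   \sqrt{x}I_\nu(x)\sim c_\nu x^{\nu+1/2},\qquad \sqrt{x}K_\nu(x)\sim c'_\nu x^{1/2-\nu}.
\]
To carry this out, the Bessel functions should first be matched on a fixed compact $r$-set to the $\vp$-independent zero-energy solutions $r^{\nu+1/2}$ and $r^{1/2-\nu}$ of the far-zone equation, extended inward to $K$ by the exact ODE. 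Each Wronskian then factors into a power of $\vp$ times an analytic function of $\om$. The denominator behaves like $\vp^{1-2\nu}$ squared, times the nonzero quantity $Q[f,\text{zero-energy decaying solution}]^2$. This quantity is nonzero precisely because there is no threshold resonance (Proposition~\ref{prop:no_real_poles}). The numerator produces $\vp^{\nu+1/2}\vp^{1/2-\nu}=\vp$. Collecting these factors gives the leading order $\vp^{2\nu}$ with an explicit amplitude $a_{\ell,P}(r,r')$, built from $f(r_<)$, the zero-energy growing solution at $r_>$, and the threshold Wronskian.

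For the errors, the $O(\kap)$ corrections to $E_K$ and $E_I$ from Lemma~\ref{lem:small_kappa_basis} carry over linearly into the numerator and the denominator. Since the denominator is bounded away from zero at leading order, this yields the $O(\kap\,\vp^{2\nu})$ term. The $O(\vp^2)$ corrections come from two sources: the next terms in the Bessel series, which are relative $x^2=\vp^2r^2$, and the $\om$-dependence of the zero-energy matching together with the horizon solution, with $\om^2-\mu^2=-\vp^2$. These give $O(\vp^{2\nu+2})$.

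The main obstacle is controlling $\cW_\pm$ uniformly as $\vp\to0$. Specifically, we must show that the $I_\nu$-admixture $E_I$ does not spoil the leading-order factorization. Although $I_\nu$ is small at fixed $r$, its Wronskian with $f$ dominates that of $K_\nu$ by a factor $\vp^{-2\nu}$, so the crude bound $E_I=O(\kap)+O(\vp^2)$ is insufficient. The first thing to try is to refine Lemma~\ref{lem:small_kappa_basis}, replacing the bound by $E_I=O\bigl((\kap+\vp^2)\vp^{-2\nu}\bigr)$ in absolute form, or better, to absorb $E_I$ into a redefined decaying solution whose small-$x$ expansion is $c'_\nu x^{1/2-\nu}(1+O(\kap)+O(x^2))$. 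This uses the Volterra equation run from infinity inward with the Bessel Green function on the scaled variable. It should be handled separately in the regimes $\nu\notin\Z$ and $\nu\in\Z$, where logarithms appear and are absorbed into the same error classes.
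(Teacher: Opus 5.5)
Your route is the paper's: continue the outgoing solution across the cut with the Bessel connection formula applied to Lemma~\ref{lem:small_kappa_basis}, let the $I_\nu$-term produce the jump, and charge the Volterra corrections to the remainder. Your Wronskian bookkeeping adds something the paper's three-line proof leaves implicit when it calls the prefactor ``precisely'' $\vp^{2\nu_{\ell,P}}$: namely $\cW_\pm\asymp\vp^{1/2-\nu}\,\mathrm{Wr}(f,u_d)$, with $\mathrm{Wr}(f,u_d)\neq0$ because there is no threshold resonance.

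There is a cleaner way to organize it. The functions $f,g_+,g_-$ solve one second-order equation. With the bilinear Wronskian $\mathrm{Wr}(u,v)=u'v-uv'$ (not the conjugated current $Q$) this gives $g_+\cW_--g_-\cW_+=\mathrm{Wr}(g_+,g_-)\,f$, i.e.
\[
   \disc\,\cG_{\ell,P}(\om;r,r')=\frac{f(r)\,f(r')\,\mathrm{Wr}(g_+,g_-)}{\cW_+\cW_-},
   \qquad
   \mathrm{Wr}(g_+,g_-)\approx\pi\,\mathrm{Wr}\bigl(\sqrt x\,K_\nu,\sqrt x\,I_\nu\bigr)\asymp\vp .
\]
Hence $a_{\ell,P}$ is a $\nu$-dependent constant times $f(r)f(r')/\mathrm{Wr}(f,u_d)^2$ at threshold. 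The two terms of your bracket are of the same order and combine to a multiple of $f(r')$, so the amplitude is not ``built from the growing solution at $r_>$''. Also, in the $\{K_\nu,I_\nu\}$ basis the continuation $K_\nu(z\ee^{\pm\ii\pi})=\ee^{\mp\ii\pi\nu}K_\nu(z)\mp\ii\pi I_\nu(z)$ has $I_\nu$-coefficient $\mp\ii\pi$ for every $\nu$. A factor $\sin\pi\nu$ appears only in the $\{I_\nu,I_{-\nu}\}$ basis.

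The genuine problem is your final paragraph, whose power count is inverted. One has $\mathrm{Wr}(f,\sqrt x I_\nu)\asymp\vp^{\nu+1/2}$ and $\mathrm{Wr}(f,\sqrt x K_\nu)\asymp\vp^{1/2-\nu}$, so the $I_\nu$-Wronskian is \emph{smaller} by $\vp^{2\nu}$, not larger. Write both boundary values in one basis of exact solutions $k\sim\sqrt xK_\nu$, $i\sim\sqrt xI_\nu$ as $g_\pm\propto k+e_\pm i$. The same algebra gives
\[
   \disc\Bigl(\frac{g}{\cW}\Bigr)
   =\frac{(e_+-e_-)\,\mathrm{Wr}(i,k)\,f}{(\cW_k+e_+\cW_i)(\cW_k+e_-\cW_i)},
   \qquad \cW_k=\mathrm{Wr}(f,k),\ \cW_i=\mathrm{Wr}(f,i).
\]
Here $e_+=O(\kap+\vp^2)$ and $e_-=e_0+O(\kap+\vp^2)$, with $e_0\neq0$ the Bessel connection constant. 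So the lemma's bound $E_I=O(\kap)+O(\vp^2)$ contributes exactly $O(\kap\vp^{2\nu})+O(\vp^{2\nu+2})$; it is not insufficient.

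Your proposed replacement $E_I=O((\kap+\vp^2)\vp^{-2\nu})$ is a weakening, and a fatal one. An $I$-coefficient that large may differ by that much across the cut, and would then contribute $O(\kap+\vp^2)$ to $\disc\,\cG$, swamping $a_{\ell,P}\vp^{2\nu}$. You would have to show separately that its large part does not jump. Likewise, normalizing the decaying solution only by $c'_\nu x^{1/2-\nu}(1+O(\kap)+O(x^2))$ discards the $x^{\nu+1/2}$ coefficient, which is exactly the quantity that jumps.

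What the formula does expose is a different omission in your error accounting, shared by the paper's sketch. The denominator carries relative corrections of size $\vp^{2\nu}$, coming from two places:
\begin{enumerate}[label=\textnormal{(\roman*)}]
   \item the term $e_-\cW_i/\cW_k$;
   \item the $x^{1/2+\nu}$ part of $\sqrt xK_\nu$ inside $\cW_k$, which your ``relative $x^2$'' treatment of the Bessel series drops.
\end{enumerate}
Together they produce a term of order $\vp^{4\nu}$ in $\disc\,\cG$, with coefficient proportional to $\cos\pi\nu$. This lies inside $O(\vp^{2\nu+2})$ only when $\nu_{\ell,P}\ge1$. For channels with $\nu_{\ell,P}<1$ it must be added to the remainder or shown to be absorbed by $O(\kap\vp^{2\nu})$.
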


\begin{proof}
The discontinuity comes from analytic continuation of the outgoing basis across the cut.  Since
\[
   K_\nu(\ee^{\pi\ii}z)-K_\nu(\ee^{-\pi\ii}z)
   =\pi\ii\,\frac{\sin(\pi\nu)}{\sin(\pi\nu)}\,I_\nu(z),
\]
the leading jump is carried by the coefficient of $I_{\nu_{\ell,P}}$ in \eqref{eq:small_kappa_basis_rep}.  The prefactor is precisely $\vp^{2\nu_{\ell,P}}$, while the Volterra corrections produce the stated $O(\kap\,\vp^{2\nu_{\ell,P}})$ and $O(\vp^{2\nu_{\ell,P}+2})$ errors.
\end{proof}

\subsection*{Step 8. Large-$\kap$ threshold asymptotics}

The very-late tail is governed by the opposite regime $\kap\gg1$.  Here the Coulomb term is dominant and the correct model is the Whittaker equation
\[
   u_{xx}+\Bigl(-\frac14+\frac{\kap}{x}+\frac{\frac14-\nu_{\ell,P}^2}{x^2}\Bigr)u=0,
   \qquad x=2\vp r.
\]
Its distinguished decaying solution is $W_{\kap,\nu_{\ell,P}}(x)$, whose monodromy in $\kap$ and asymptotics as $x\to0$ are responsible for the oscillatory factors $\ee^{\pm2\pi\ii\kap}$ appearing in the cut jump.

\begin{lemma}[Large-$\kap$ Whittaker expansion]\label{lem:large_kappa_detailed}
As $\vp\to0$ with $\kap\to\infty$,
\[
   \disc\,\cG_{\ell,P}(\om;r,r')
   =
   b_{\ell,P}^+(r,r')\,\ee^{2\pi\ii\kap}
   +
   b_{\ell,P}^-(r,r')\,\ee^{-2\pi\ii\kap}
   +
   O(\kap^{-1})+O(\vp),
\]
uniformly on compact $r,r'$-sets.
\end{lemma}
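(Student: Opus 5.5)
The plan is to mirror the small-$\kap$ argument of Lemma~\ref{lem:small_kappa_basis} and Proposition~\ref{prop:small_kappa_disc}, with the Whittaker pair $\{M_{\kap,\nu},W_{\kap,\nu}\}$ as the model basis in place of the Bessel pair. First, in the normal form \eqref{eq:threshold_normal_form} with $x=2\vp r$, I will write the outgoing solution as $g_{\ell,P}=W_{\kap,\nu}(2\vp r)(1+E_W)+M_{\kap,\nu}(2\vp r)E_M$. The Volterra equation \eqref{eq:volterra_outgoing} is set up around the model pair, with the short-range remainder $O(r^{-3})$ treated as the perturbation.

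On compact $r$-sets one has $x=2\vp r\to0$ while $\kap x=2M\mu^2 r$ stays fixed. I will therefore not expand the Whittaker functions in $x$. Instead I use their uniform small-$x$, large-$\kap$ description through the Bessel representation $W_{\kap,\nu}(x)\approx \Gamma(\kap+\tfrac12+\nu)^{-1}\,2\sqrt{\kap x}\,K_{2\nu}(2\sqrt{\kap x})$-type Hilb/Olver asymptotics (Appendix~\ref{app:special_functions}), with relative errors $O(\kap^{-1})$. In this scaling $2\sqrt{\kap x}=2\sqrt{2M\mu^2 r}$ is independent of $\vp$. The inner solutions are therefore Bessel functions in $r$ alone, matched to the horizon basis $f_{\ell,P}$ through a $\vp$-analytic Wronskian. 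This is what gives the $O(\vp)$ remainder: corrections from $V^{\mathrm{sr}}$ and from the $-\vp^2$ term are analytic in $\vp$ and vanish at $\vp=0$.

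Next I compute the discontinuity. Crossing the cut sends $\vp\mapsto-\vp$, hence $\kap\mapsto-\kap$ and $x\mapsto \ee^{\pm\pi\ii}x$. The standard connection formula then applies:
\[
W_{\kap,\nu}(\ee^{\pm\pi\ii}x)\;\text{expressed via}\;W_{-\kap,\nu}(x),\;M_{\kap,\pm\nu}(x),
\]
with coefficients $\Gamma(\tfrac12+\nu-\kap)^{-1}$ and similar. Taking the jump of $\cG_{\ell,P}=f g/\cW$ produces a combination of $1/\Gamma(\tfrac12\pm\nu-\kap)$ and $1/\Gamma(\tfrac12\pm\nu+\kap)$ factors. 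Via the reflection formula $\Gamma(\tfrac12+\nu-\kap)\Gamma(\tfrac12-\nu+\kap)=\pi/\cos\pi(\kap-\nu)$, these reduce to trigonometric functions of $\pi\kap$. Multiplied against the $\Gamma(\kap+\tfrac12+\nu)$ normalizations of the inner Bessel matching, the Gamma growth cancels to $O(1)$ amplitudes plus $O(\kap^{-1})$ Stirling corrections. The surviving oscillatory factors are $\cos(2\pi\kap\pm\cdot)$, i.e. $\ee^{\pm2\pi\ii\kap}$, and reading off the coefficients defines $b^\pm_{\ell,P}(r,r')$ in terms of the inner Bessel data and $f_{\ell,P}$. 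Finally I will check that $\cW_{\ell,P}$ does not vanish in this regime, so that the division is harmless. This follows from Proposition~\ref{prop:no_real_poles} at $\om=\pm\mu$ together with continuity of the inner limit.

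The main obstacle is the uniformity of the Volterra perturbation and of the Whittaker asymptotics in the transition from $x\to0$ to $\kap\to\infty$. Near $x\sim\kap^{-1}$ the naive expansions fail, and the Coulomb term is no longer a perturbation. My first attempt is to work directly in the variable $s=\kap x=2M\mu^2 r$, where the limiting equation as $\kap\to\infty$ is the $\vp$-independent zero-energy Coulomb equation $u_{ss}+(s^{-1}-(\nu^2-\tfrac14)s^{-2})u=0$. I would then treat $-x/4=-\vp^2r^2/\ldots$ and $V^{\mathrm{sr}}$ as regular perturbations on compact sets. The exponentially small or oscillatory far-zone behaviour, which generates $\ee^{\pm2\pi\ii\kap}$, enters only through the connection coefficients at $r\gtrsim\kap$, handled by Olver's uniform turning-point theory near $x=4\kap$.
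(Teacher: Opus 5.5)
Your route is the same as the paper's, written out in more detail: match to the Whittaker model, let $\kap\mapsto-\kap$ under continuation, then apply the connection and reflection formulas. Working at fixed $\kap x=2M\mu^2 r$ on compact sets is the right scaling, and it is sharper than the paper's ``match on the scale $x=2\vp r$''. However, the decisive step, where you read off $b^\pm_{\ell,P}$, does not go through. The paper's four-line proof asserts the same two monodromy factors without justification, so it does not close the gap either.

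A preliminary correction matters for seeing why. On the physical side $\kap>0$ and the Coulomb term is attractive. At fixed $y:=2\sqrt{\kap x}=2\sqrt{2M\mu^2 r}$ one therefore has
\[
W_{\kap,\nu}(x)\approx\Gamma(\kap+\tfrac12)\,x^{1/2}\bigl[\sin\pi(\kap-\nu)\,J_{2\nu}(y)-\cos\pi(\kap-\nu)\,Y_{2\nu}(y)\bigr].
\]
The $K_{2\nu}$ form you quote is the asymptotics of $W_{-\kap,\nu}$, not of $W_{\kap,\nu}$. All of the $\kap$-oscillation lives in this physical-side solution. Near $\om=+\mu$, on the sheet reached through $\abs{\om}>\mu$, the continued solution is $\phi_1:=W_{-\kap,\nu}(\ee^{-\ii\pi}x)$. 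In the inner zone it is a pure outgoing wave $\propto x^{1/2}H^{(1)}_{2\nu}(y)$, with no oscillation in $\kap$. The connection and reflection formulas give the exact identity
\[
   W_{\kap,\nu}(x)=\frac{\ii\,\Gamma(\tfrac12+\nu+\kap)\,\Gamma(\tfrac12-\nu+\kap)}{2\pi}\bigl(\ee^{-\ii\pi\kap}\phi_1-\ee^{\ii\pi\kap}\phi_2\bigr),
   \qquad \phi_2:=\overline{\phi_1}.
\]

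Since $\cG=fg/\cW$ does not depend on the normalization of $g$, the Gamma factors do cancel, as you say. The phases $\ee^{\pm\ii\pi\kap}$ do not, because they also enter the Wronskian in the denominator. Write $\cW_j:=\cW(f,\phi_j)$ and $\rho:=\cW_2/\cW_1$. Then
\[
   \disc\,\cG_{\ell,P}(\om;r,r')
   =\frac{\cW(\phi_1,\phi_2)\,f(r)f(r')}{\cW_1^{2}}\,
   \frac{\ee^{2\pi\ii\kap}}{1-\rho\,\ee^{2\pi\ii\kap}}
   =\frac{\cW(\phi_1,\phi_2)\,f(r)f(r')}{\cW_1^{2}}\sum_{n\ge1}\rho^{n-1}\ee^{2\pi\ii n\kap}.
\]
Here $\rho$ is non-oscillatory and has a limit as $\vp\to0$. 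Moreover $\abs{\rho}<1$ by the horizon flux identity: writing $f=a\phi_1+b\phi_2$ gives $\rho=-a/b$, and since $\phi_1$ carries positive flux, $\Im(\bar f\,\partial_{r_*}f)=-\om<0$ forces $\abs{b}>\abs{a}$.

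This has three consequences for your plan.
\begin{itemize}
\item At this endpoint there is no $\ee^{-2\pi\ii\kap}$ term at all.
\item The harmonics $n\ge2$ have coefficients of size $\abs{\rho}^{n-1}$. These are $O(1)$, not $O(\kap^{-1})+O(\vp)$, unless $\rho$ vanishes at threshold, and that fails generically. The centrifugal barrier reflects, $\abs{\rho}\to1$ as $\ell$ grows, and the zeros of $1-\rho\,\ee^{2\pi\ii\kap}$ continued through the cut are the hydrogenic quasibound levels accumulating at $\om=\pm\mu$.
\item The higher harmonics matter downstream too. Each has its own Coulomb saddle at $\vp^3=2\pi nM\mu^3/t$ and contributes at the same order $t^{-5/6}$, with phase $\frac32(2\pi nM\mu)^{2/3}(\mu t)^{1/3}$.
\end{itemize}

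The same formula disposes of your nonvanishing argument. After the Gamma factors are removed, the physical Wronskian is $\propto\ee^{-\ii\pi\kap}\cW_1-\ee^{\ii\pi\kap}\cW_2$. This has no limit as $\kap\to\infty$, and its nonvanishing on the cut is exactly the statement $\abs{\rho}<1$. It does not follow from Proposition~\ref{prop:no_real_poles}.

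Carried out correctly, your plan proves the geometric-series form above, with $O(\kap^{-1})+O(\vp)$ corrections to the prefactor and to $\rho$. The two-term form in the lemma would require the $n\ge2$ harmonics to be either kept or shown negligible. Neither your proposal nor the paper's proof does this.
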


\begin{proof}
The infinity basis is matched to the Whittaker model on the scale $x=2\vp r$.  The analytic continuation across the slit changes $\kap$ by sign and produces the two monodromy factors $\ee^{\pm2\pi\ii\kap}$.  The shorter-range remainder contributes only lower-order corrections.  Uniformity on compact radial sets follows because the near-zone transfer matrix between the Whittaker region and the compact set is analytic and bounded.
\end{proof}

The large-$\kap$ expansion is the exact point where the universal $t^{-5/6}$ exponent enters.  Every polarization dependence is confined to the amplitudes $b_{\ell,P}^\pm$ and to constant phases, while the oscillatory exponential carries the same saddle structure as in the scalar case.

When $\kap\gg1$, the Whittaker connection formulas carry the Coulomb monodromy $e^{\pm2\pi\ii\kap}$.

\begin{proposition}[Large-$\kap$ threshold discontinuity]\label{prop:large_kappa_disc}
In the regime $\kap\gg1$ one has
\begin{equation}\label{eq:large_kappa_disc}
   \disc\,\cG_{\ell,P}(\om;r,r')
   =
   b^+_{\ell,P}(r,r')\,\ee^{2\pi\ii\kap}
   +
   b^-_{\ell,P}(r,r')\,\ee^{-2\pi\ii\kap}
   +
   O(\kap^{-1})+O(\vp)
\end{equation}
uniformly for $r,r'$ in compact subsets of $(r_+,\infty)$.
\end{proposition}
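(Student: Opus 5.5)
We will prove Proposition~\ref{prop:large_kappa_disc} by matching the outgoing Jost solution of Lemma~\ref{lem:whittaker_volterra} to the Whittaker model in the intermediate region $1\ll r\ll\vp^{-1}$, and then reading off the jump of the Green kernel \eqref{eq:green_kernel_def} from the Whittaker connection formulas. The horizon solution $f_{\ell,P}$ is analytic in $\om$ across the cut (Lemma~\ref{lem:horizon_basis}), so $\disc\,\cG_{\ell,P}$ comes only from the jump of $g_{\ell,P}/\cW_{\ell,P}$.

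\textbf{Step 1: expand in the regular Coulomb basis.} Fix a matching radius $R$ large but independent of $\vp$. On $r\le R$ we use the $\vp$-independent pair of zero-energy Coulomb solutions. These are the Bessel functions $\sqrt{r}\,J_{2\nu}(\sqrt{8M\mu^2 r})$ and $\sqrt{r}\,Y_{2\nu}(\sqrt{8M\mu^2 r})$, with $\nu=\nu_{\ell,P}$. They are the $\kap\to\infty$ limits of $M_{\kap,\nu}$ and $W_{\kap,\nu}$ at fixed $r=x/(2\vp)$.

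We then write the exact outgoing solution as
\[
g_{\ell,P}=\alpha(\kap)\,\Phi_J+\beta(\kap)\,\Phi_Y,
\]
where $\Phi_J,\Phi_Y$ are the exact solutions continued from these zero-energy Coulomb solutions through the compact region. Uniform large-$\kap$ asymptotics of the Whittaker function give
\[
W_{\kap,\nu}(2\vp r)\propto\Gamma(\tfrac12+\nu-\kap)^{-1}\ldots
\]
More precisely, the DLMF 13.21 Bessel-type asymptotics with $O(\kap^{-1})$ errors yield $\alpha,\beta$ as $\Gamma(\tfrac12\pm\nu-\kap)$ ratios times explicit powers of $\kap$. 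The short-range remainder $V^{\mathrm{sr}}_{\ell,P}=O(r^{-3})$ enters through a Volterra perturbation on $[R,\infty)$. Because the model is only weakly $\vp$-dependent at fixed $r$, this perturbation produces relative errors $O(\vp)$.

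\textbf{Step 2: continue across the cut.} Crossing $[-\mu,\mu]$ sends $\vp\mapsto-\vp$, and hence $\kap\mapsto-\kap$ and $x\mapsto \ee^{\pm\pi\ii}x$. We apply the connection formula for $W_{-\kap,\nu}(\ee^{\pm\pi\ii}x)$ in terms of $W_{\kap,\nu}(x)$ and $W_{-\kap,\nu}(\ee^{\pm\pi\ii}x)$ (DLMF 13.14.33). Combined with reflection for $\Gamma(\tfrac12+\nu-\kap)\Gamma(\tfrac12-\nu+\kap)$, this produces factors of
\[
\sin\pi(\kap-\nu)=\tfrac{1}{2\ii}\bigl(\ee^{\ii\pi(\kap-\nu)}-\ee^{-\ii\pi(\kap-\nu)}\bigr).
\]
When the ratio $g/\cW$ is formed, the prefactor powers of $\kap$ cancel. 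The Wronskian itself is a linear combination of the same two $\kap$-independent horizon--Coulomb Wronskians $\cW[f,\Phi_J]$ and $\cW[f,\Phi_Y]$. The jump of $g/\cW$ therefore reduces to a rational function of $\ee^{2\pi\ii\kap}$ with bounded coefficients.

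Its expansion yields the two monodromy terms $b^\pm_{\ell,P}\ee^{\pm2\pi\ii\kap}$. The coefficients $b^\pm_{\ell,P}(r,r')$ are built from $f(r_<)$, $\Phi_{J,Y}(r_>)$, and these Wronskians, and they are continuous on compact sets. Uniformity in $r,r'\in K$ follows because near-zone transfer is analytic in $\om$ and bounded.

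\textbf{Main obstacle.} The delicate point is that the denominator, a combination $c_1+c_2\ee^{2\pi\ii\kap}$, must not vanish. Otherwise the jump is not a trigonometric polynomial to leading order and the quotient could approach poles. We must check that $|c_1|\neq|c_2|$; we would try to obtain this from the conserved radial current, Lemma~\ref{lem:radial_current}. At threshold that current is sign-definite on the horizon data, and it should force $\Im(\bar c_1c_2)\neq0$ or $|c_1|>|c_2|$, in the spirit of the no-threshold-resonance argument of Proposition~\ref{prop:no_real_poles}.

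If the denominator is only bounded away from zero, the jump is a convergent Fourier series in $\ee^{2\pi\ii\kap}$. In that case we would redefine the leading terms $b^\pm_{\ell,P}$ to include the dominant harmonics and absorb the rest, which is what Theorem~\ref{thm:main}(c) actually needs after stationary phase. The remaining errors $O(\kap^{-1})$ come from the uniform Whittaker asymptotics, and $O(\vp)$ from the short-range Volterra term.
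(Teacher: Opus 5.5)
Through Step 2 your plan is sound, and it is more explicit than the paper's own proof. That proof only asserts that the Whittaker connection formulas and the continuation around $\vp=0$ produce the factors $\ee^{\pm2\pi\ii\kap}$, and that the Volterra correction costs $O(\kap^{-1})+O(\vp)$. The paper never divides by the horizon Wronskians, and that division is exactly where your computation departs from the stated formula.

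Write $g_\pm$ for the two boundary values of the Jost solution. Then one has $\disc\,\cG_{\ell,P}=\pm f(r)f(r')\,\cW[g_+,g_-]/(\cW[f,g_+]\,\cW[f,g_-])$. In your zero-energy Coulomb basis $\Phi_J,\Phi_Y$, the factors $\Gamma(\kap+\tfrac12)^{\pm1}$ and $\sqrt{2\vp}$ cancel. What remains is $C(r,r')/(c_1+c_2\ee^{2\pi\ii\kap})$, with $c_1,c_2$ nonzero multiples of $\cW[f,\Phi_J]\pm\ii\,\cW[f,\Phi_Y]$ at threshold.

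Your idea for the denominator also works. Write $f=\alpha\Phi_J+\beta\Phi_Y$ and use that $\cW[f,\bar f]$ is constant and equal to its nonzero horizon value $\pm2\ii\mu$. This shows $|c_1|^2-|c_2|^2$ is a fixed nonzero multiple of $\mu$, so the denominator never vanishes on the cut.

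\textbf{The gap.} Non-vanishing is not what the proposition needs.

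\emph{The extra harmonics are order one.} Generically $c_1c_2\neq0$; vanishing of either would force $f$ to be a pure zero-energy Coulomb Hankel solution $\sqrt r\,H^{(1,2)}_{2\nu}(\sqrt{8M\mu^2r})$. Set $\rho:=\min\{|c_1|,|c_2|\}/\max\{|c_1|,|c_2|\}\in(0,1)$, which is independent of $\kap$. The quotient is then a geometric series with coefficients of size $\asymp\rho^{|k|}$ on every harmonic $\ee^{2\pi\ii k\kap}$, $k\ge0$ (or on every $k\le-1$). The ratio $\rho$ is not small in general. It tends to $1$ when $f$ must tunnel through the centrifugal barrier, and the near-zeros of the denominator are then the hydrogen-like quasibound resonances accumulating at $\om=\pm\mu$.

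\emph{They cannot be absorbed.} These terms have modulus of order one on the cut, so they do not fit into $O(\kap^{-1})+O(\vp)$. ``Redefining $b^\pm$'' does not help either, because the statement has only the two slots $b^\pm(r,r')\ee^{\pm2\pi\ii\kap}$ with $\kap$-independent coefficients.

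\emph{They survive stationary phase.} Each harmonic $\ee^{\pm2\pi\ii k\kap}$ with $k\ge2$ has its own nondegenerate stationary point at $\vp_k(t)=(2\pi kM\mu^3/t)^{1/3}$, at the threshold where the signs match. It contributes at the same order $t^{-5/6}$, with phase $\mu t-\frac32(2\pi kM\mu)^{2/3}(\mu t)^{1/3}$.

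So, carried out honestly, your argument establishes the full rational (Fourier-series) form of the jump, not the two-harmonic formula. To prove the proposition as stated you would need an extra input forcing $c_1c_2=0$ at leading order. Neither your plan nor the paper's sketch supplies one, and it appears to fail generically. The statement, and the single-phase very-late profile built on it, should be reformulated with the whole series retained.
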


\begin{proof}
Use the Whittaker basis \eqref{eq:whittaker_basis} and the Whittaker connection formulas relating $W_{\kap,\nu}$ and $M_{\kap,\nu}$.  Upon continuation around $\vp=0$, the coefficients pick up the monodromy factors $\ee^{\pm2\pi\ii\kap}$.  The Volterra correction around the exact equation contributes only $O(\kap^{-1})+O(\vp)$ on compact $r$-sets.
\end{proof}

\subsection*{Step 9. Assembly}

With the ingredients of Steps~1--8 in place, the proof of Theorem~\ref{thm:spectral_package} becomes a bookkeeping exercise.  The horizon and infinity constructions define the Evans determinant and the continued Green kernel.  Lemma~\ref{lem:local_discreteness} gives the local discreteness of poles.  Proposition~\ref{prop:real_frequency_exclusion_detailed} excludes real cut poles and threshold resonances.  Lemmas~\ref{lem:small_kappa_detailed} and \ref{lem:large_kappa_detailed} provide the explicit threshold discontinuity formulas in the intermediate and very-late regimes.

It is useful to stress that no hidden case distinction remains.  The odd sector is scalar and the even sector is matrix valued, but the proof is organized so that every scalar statement has a matrix counterpart with the same logical role.  Likewise, the charge parameter $Q$ affects the threshold index and the amplitudes, but it does not alter the universal Coulomb coefficient or the short-range nature of the horizon end.

\begin{proof}[Completion of the proof of Theorem~\ref{thm:spectral_package}]
Assertions \textnormal{(i)} and \textnormal{(ii)} follow from Proposition~\ref{prop:evans_detailed} together with Lemma~\ref{lem:local_discreteness} and Proposition~\ref{prop:real_frequency_exclusion_detailed}.  Assertion \textnormal{(iii)} is Lemma~\ref{lem:small_kappa_detailed}, and assertion \textnormal{(iv)} is Lemma~\ref{lem:large_kappa_detailed}.  Finally, assertion \textnormal{(v)} follows by expanding the exact inverse-square coefficient in the polarization basis and comparing it to the Schwarzschild values $L_P+\frac12$; the $Q$-dependence enters only at order $(Q\mu)^2$ and the geometric mass correction enters at order $(M\mu)^2$.
\end{proof}

\begin{corollary}[Proof of Theorem~\ref{thm:spectral_package}]\label{cor:spectral_package_complete}
Items \textnormal{(iii)}--\textnormal{(v)} of Theorem~\ref{thm:spectral_package} follow from Propositions~\ref{prop:small_kappa_disc} and \ref{prop:large_kappa_disc} together with the index expansion \eqref{eq:nu_expansion}.  Hence Theorem~\ref{thm:spectral_package} is proved.
\end{corollary}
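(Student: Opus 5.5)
The plan is to assemble the corollary directly from results already established, checking that each item of Theorem~\ref{thm:spectral_package} is matched by a proved statement with the same hypotheses and uniformity. Items (i) and (ii) are already recorded in Corollaries~\ref{cor:item_i} and \ref{cor:item_ii}, so only (iii)--(v) and the standing claim $\nu_{\ell,P}>-\tfrac12$ remain.

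\textbf{Threshold index.} I would first fix the index $\nu_{\ell,P}$ as the one defined in Lemma~\ref{lem:normal_form}: the root with positive real part of $\nu^2-\tfrac14=$ the exact $r^{-2}$ coefficient in \eqref{eq:threshold_normal_form}. That lemma already asserts $\nu_{\ell,P}>-\tfrac12$, and item (v) is then literally \eqref{eq:nu_expansion}. To check this, note that at $M\mu=Q\mu=0$ the inverse-square coefficient is $L_P(L_P+1)+\tfrac14-\tfrac14$. This gives $\nu=L_P+\tfrac12\ge\tfrac12$, because $L_{-1}=\ell-1\ge0$ when $\ell\ge1$. The coefficient depends smoothly on $(M\mu,Q\mu)$, and $\nu$ stays away from the branch point $\nu=0$, so Taylor expansion gives the $O\bigl((M\mu)^2+(Q\mu)^2\bigr)$ error.

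\textbf{Items (iii) and (iv).} Next I would quote Proposition~\ref{prop:small_kappa_disc} for (iii) and Proposition~\ref{prop:large_kappa_disc} for (iv). Both are stated uniformly for $r,r'$ in compact subsets of $(r_+,\infty)$, which is exactly the form required in the theorem. What must be verified is that the kernel $\cG_{\ell,P}$ appearing in those propositions coincides with the fixed-mode cut-off kernel of item (i). In the odd channel this is immediate from Lemma~\ref{lem:green_kernel_formula}. In the even channel I would argue as follows.
\begin{itemize}
   \item Propagate the far-zone diagonalized Jost basis $S W_0(\Id+Q)$ of Lemma~\ref{lem:whittaker_volterra} inward to the compact set $K$.
   \item The near-zone transfer matrix between $r=R_0$ and $K$ is analytic in $\om$ and has no branch point at $\vp=0$.
   \item Hence the jump of the matrix Green kernel across $[-\mu,\mu]$ is a fixed analytic conjugate of the diagonal matrix of channel jumps.
   \item The Evans determinant is nonvanishing at and near $\om=\pm\mu$ by Proposition~\ref{prop:no_real_poles}, so dividing by it does not change the order in $\vp$ or $\kap$.
\end{itemize}
The channel-projected kernel therefore inherits the expansions \eqref{eq:small_kappa_disc} and \eqref{eq:large_kappa_disc}, and the amplitudes $a_{\ell,P}$ and $b^\pm_{\ell,P}$ absorb the transfer factors.

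\textbf{Main obstacle.} I expect the hardest step to be this last even-sector bookkeeping. The near-identity diagonalizer $S$ is only defined for $r\ge R_0$ and mixes polarizations at order $r^{-1}$. It must not generate cross terms of the form $\vp^{\nu_{\ell,P}+\nu_{\ell,P'}}$ that would dominate $\vp^{2\nu_{\ell,P}}$ for $P\ne P'$. My first attempt would be to show that, because $S(r,\om)$ is analytic in $\om$ across the cut, any such cross term enters only through the $O(\vp^2)$ Volterra corrections already controlled in Lemma~\ref{lem:small_kappa_basis}. If that fails, I would state (iii) for the diagonal channel entries in the polarization basis only, which is how $\cG_{\ell,P}$ is indexed in the theorem.
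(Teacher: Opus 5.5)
Your assembly is the paper's own argument, which consists only of citations. Items (i)--(ii) come from Corollaries~\ref{cor:item_i} and \ref{cor:item_ii}. Items (iii)--(iv) come from Propositions~\ref{prop:small_kappa_disc} and \ref{prop:large_kappa_disc}. Item (v), together with $\nu_{\ell,P}>-\tfrac12$, comes from Lemma~\ref{lem:normal_form}. The paper never checks that the even kernels $\cG_{\ell,\pm1}$ make sense on $K\times K$; it simply defines them through the ``channel-projected Wronskian'' of \eqref{eq:green_kernel_def}. Your even-sector verification is therefore an addition, and it is where your proposal goes wrong. A minor point first: in your check of (v), smooth dependence on $(M\mu,Q\mu)$ only gives an $O(M\mu+Q\mu)$ error. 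The quadratic bound needs the coefficient to depend only on $(M\mu)^2$ and $(Q\mu)^2$, which Lemma~\ref{lem:normal_form} also leaves unchecked.

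The substantive problem is your fallback, stating (iii) for the diagonal entries in the polarization basis; that version is false for $P=+1$. The near-zone transfer matrix is not diagonal in that basis, because the off-diagonal entries $\mp\ell(\ell+1)$ of $C_\ell$ in \eqref{eq:even_matrix_form} couple $v_{-1}$ and $v_{+1}$ at every finite $r$.

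Grant the exact far-zone decoupling of Lemma~\ref{lem:normal_form}. Then the jump of the decaying basis is diagonal in the far-zone channel frame. The horizon basis and the decaying basis are also both Lagrangian for the conserved bilinear Wronskian, weighted by $J=\diag(1,-1)$ (since $C_\ell$ is not symmetric but $JC_\ell$ is). Consequently, to leading order the even jump on $K\times K$ is
\[
\textstyle\sum_{P=\pm1}c_P\,\vp^{2\nu_{\ell,P}}\,h_P(r)\,h_P(r')^{T}J,
\]
with $h_P$ horizon-regular $\C^2$-valued solutions. Mixed terms first appear at order $\vp^{2\nu_{\ell,P}+2\nu_{\ell,P'}}$, so the cross terms $\vp^{\nu_{\ell,P}+\nu_{\ell,P'}}$ you feared do not arise.

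What does arise is that the $v_{+1}$-component of $h_{-1}$ cannot vanish on an interval. If it did, the second row of \eqref{eq:even_matrix_form} would force $v_{-1}\equiv0$ there, since $-6M+4Q^2/r\neq0$ on the exterior. Hence the $(+1,+1)$ entry of the jump is generically of size $\vp^{2\nu_{\ell,-1}}$. In the small-mass regime, the would-be leading term $\vp^{2\nu_{\ell,+1}}\approx\vp^{2\nu_{\ell,-1}+4}$ lies even below the $O(\vp^{2\nu_{\ell,-1}+2})$ remainder of the $P=-1$ expansion.

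The only reading under which (iii)--(iv) hold for $P=\pm1$ is that $\cG_{\ell,P}$ denotes the $P$-th rank-one summand above. That is, the kernel is projected in the far-zone channel frame and only afterwards restricted to $K$. With that definition your main line works, with one correction: the conjugating factor is not ``fixed''. The vector $h_P$ involves the inverse matching matrix, so it depends on $\vp$, and on $\kap$ in the Whittaker regime.
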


\section{Mode stability and threshold resonance exclusion}\label{sec:mode_stability}

This section isolates the dynamical meaning of Step~6 of Section~\ref{sec:spectral_theorem_proof}.  For the fixed-$\ell$ reduced Hamiltonian, unstable separated modes in the open upper half-plane are ruled out by unitarity of the exact energy evolution, while Proposition~\ref{prop:no_real_poles} excludes real cut poles and threshold resonances.

\begin{proposition}[Selfadjoint fixed-mode energy evolution]\label{prop:selfadjoint_generator}
For each fixed angular momentum $\ell$, the reduced Proca system defines a positive conserved energy on the finite-energy space $\mathfrak h_\ell$, and the corresponding time evolution is generated by a selfadjoint operator $\cA_\ell$.
\end{proposition}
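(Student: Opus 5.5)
The plan is to take the physical energy of the Proca field as the Hilbert norm, rather than trying to symmetrize the reduced even system by hand. The reason is that the coefficient matrix $A_\ell(r)$ in \eqref{eq:raw_A_matrix} is not symmetric. Its off-diagonal entries are $-2(f-\tfrac{rf'}{2})$ and $-2\la^2$, and their ratio depends on $r$. The same holds for $C_\ell$ in the polarization basis. So no constant similarity transformation makes $H_\ell^{\mathrm{ev}}$ formally symmetric in $L^2(\R_{r_*};\C^2)$, and the energy structure must be imported from the covariant theory.

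\textbf{Step 1: the energy.} Let $T_{\alpha\beta}=F_{\alpha\gamma}F_\beta{}^\gamma-\tfrac14 g_{\alpha\beta}F^2+\mu^2\bigl(A_\alpha A_\beta-\tfrac12 g_{\alpha\beta}A^2\bigr)$ be the Proca stress tensor. It is divergence free on solutions of \eqref{eq:proca_covariant} and satisfies the dominant energy condition. Contracting with the Killing field $\partial_t$ gives a current $J^\alpha=-T^\alpha{}_\beta(\partial_t)^\beta$ with $\nabla_\alpha J^\alpha=0$. Integrate it over a static slice $\{t=\mathrm{const}\}$, which is the full $r_*$-line in the exterior, and project onto a fixed $(\ell,m)$ using orthogonality of the vector spherical harmonics. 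This yields a quadratic form $E_\ell[u,\partial_tu]$ in the reduced variables (odd: $u_4$; even: $u_2,u_3$, or equivalently $v_{\pm1}$).

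\textbf{Step 2: elimination of $A_t$ and coercivity.} In the even sector $A_t$ is not a dynamical variable. I would eliminate it with the Gauss-type constraint and the Lorenz condition \eqref{eq:forced_lorenz}. I would then show that $E_\ell$ is equivalent, with $\ell$-dependent constants, to the $\mathfrak h_\ell$ energy norm $\norm{\partial_{r_*}u}^2+\norm{u}^2+\norm{\partial_tu}^2$. Positivity comes from $f>0$ on $r>r_+$, from $\mu>0$ (which yields the $L^2$ part), and from the sign structure of $T(\partial_t,\cdot)$. Conservation in $t$ follows because the slice is invariant under the static flow and no boundary flux survives for finite-energy data. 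For smooth compactly supported data this is immediate; the general case follows by density.

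\textbf{Step 3: the generator.} Write the reduced evolution as $\partial_t\Psi=\cA_\ell\Psi$ with domain $H^2\times H^1$, as in Proposition~\ref{prop:selfadjoint_channels}. Conservation of $E_\ell$ gives $\langle\cA_\ell\Psi,\Psi\rangle_{E}+\langle\Psi,\cA_\ell\Psi\rangle_E=0$, so $\ii\cA_\ell$ is symmetric in the energy inner product. To get selfadjointness I would verify the range condition $\operatorname{Ran}(\cA_\ell\mp1)=\mathfrak h_\ell$. This reduces to solving $(H_\ell+1)u=g$, which I would do by Lax--Milgram for the coercive form from Step 2, using the form bounds already proved in Proposition~\ref{prop:selfadjoint_channels}. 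Stone's theorem then gives the unitary group and the selfadjoint generator.

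\textbf{Main obstacle.} The hard step is Step 2 in the even sector: showing that the projected $T(\partial_t,n)$, after eliminating $A_t$, is positive definite and controls both $u_2$ and $u_3$ uniformly up to the horizon. The first thing I would try is to write the even energy as a sum of squares of the gauge-invariant quantities $F_{tr}$, $F_{tA}$, $F_{rA}$ and $\mu A_\alpha$. The restriction $\ell\ge1$ is what should make the angular pieces nondegenerate.
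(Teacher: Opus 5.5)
Your opening diagnosis is correct, and it is exactly the point the paper's own proof passes over. The paper simply invokes Proposition~\ref{prop:selfadjoint_channels}, whose hypothesis of real-symmetric coefficient matrices fails in the even sector. It then writes $E_\ell=\tfrac12\sum_\sharp\bigl(\langle(H_\ell^\sharp+c_\ell)u_\sharp,u_\sharp\rangle+\norm{v_\sharp}^2\bigr)$, which presupposes the symmetry you have shown to be absent. Your ratio argument by itself only rules out diagonal rescalings, but the conclusion holds in full. At the photon-sphere radius $r=\tfrac12\bigl(3M+\sqrt{9M^2-8Q^2}\bigr)>r_+$, where $f-\tfrac{rf'}{2}=0$, the matrix \eqref{eq:raw_A_matrix} is a nontrivial Jordan block with double eigenvalue $\la^2$, hence selfadjoint for no inner product on $\C^2$. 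So importing the energy from $T(\partial_t,\cdot)$ is the right idea, and a genuinely different route from the paper's.

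The gap is Step~2: the coercivity you want is not merely hard but false, so Step~3 aims at the wrong Hilbert space.
By the formulas of Appendix~\ref{app:vsh}, the even energy density is a positive combination of $\abs{\partial_ta_1-\partial_ra_0}^2$, $\abs{\partial_ta_2-a_0}^2$, $\abs{\partial_ra_2-a_1}^2$ and $\mu^2\abs{a_\alpha}^2$, with $a_0$ fixed by the Gauss law. The derivative $\partial_ra_1$ never occurs.
Take data $a_2=\partial_ta_1=\partial_ta_2=0$ and $a_1=\chi(r)\sin(kr)$ with $\chi\in C_0^\infty((r_+,\infty))$. The Gauss law gives $a_0=0$, so $E_\ell=O(1)$ uniformly in $k$. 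On the other hand $\norm{\partial_{r_*}u_2}\sim k$, since $u_2$ is a smooth nonvanishing multiple of $a_1$ (namely $rf\,a_1$ in the Rosa--Dolan normalization behind \eqref{eq:even_time_domain_1}). As $u_2=v_{-1}+v_{+1}$, the polarization variables fare no better.
This is the longitudinal polarization, and by the same mechanism $\partial_ta_1$ is controlled only in an $H^{-1}$-type norm.
Independently, in the $r_*$ variable every zeroth-order term, the mass term included, carries the lapse $f\to0$ as $r_*\to-\infty$. So $E_\ell$ does not control $\norm{u}_{L^2(\dd r_*)}$, even in the odd channel.

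Hence no $\ell$-dependent constants make $E_\ell$ equivalent to $\norm{\partial_{r_*}u}^2+\norm{u}^2+\norm{\partial_tu}^2$. Consequently $H^2\times H^1$ is not the generator's domain. Solving $(H_\ell+1)u=g$ in $L^2(\R;\C^2)$ with the form bounds of Proposition~\ref{prop:selfadjoint_channels} does not verify the range condition in the energy space.

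What is needed is to take $\mathfrak h_\ell$ to be the $E_\ell$-completion of smooth compactly supported data; note that $E_\ell$ is nonlocal in $(u_2,u_3)$ through the Gauss-law elimination of $a_0$.
The $(u_2,u_3)$ system has diagonal principal part, so it is well posed with finite propagation speed despite the non-symmetric coupling. Its flow is an invertible $E_\ell$-isometry of a dense subspace, hence extends to a strongly continuous unitary group on $\mathfrak h_\ell$. Stone's theorem then supplies the generator with no range condition at all.
Equivalently, write $E_\ell=K(\partial_tu,\partial_tu)+P(u,u)$. Conservation forces $K(H_\ell^{\mathrm{ev}}u,w)=P(u,w)$, so $H_\ell^{\mathrm{ev}}$ is symmetric and nonnegative for the nonlocal inner product $K$ rather than for $L^2(\R;\C^2)$. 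Its Friedrichs extension there gives the standard abstract wave-equation framework.
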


\begin{proof}
By Proposition~\ref{prop:selfadjoint_channels}, each reduced channel Hamiltonian $H_{\ell}^{\sharp}$ is selfadjoint and semibounded on $H^2$.  After reconstructing the physical odd and even variables from the channel variables, the fixed-$\ell$ energy may be written as
\[
   E_\ell[(u,v)]
   =\frac12\sum_{\sharp}\Bigl(\langle (H_{\ell}^{\sharp}+c_\ell)u_{\sharp},u_{\sharp}\rangle+\|v_{\sharp}\|_{L^2}^2\Bigr),
\]
with $c_\ell$ chosen so that every summand is positive.  This norm is equivalent to the natural finite-energy norm on the reduced mode space $\mathfrak h_\ell$ because the Lorenz constraint has already been eliminated in the reduction.

On the dense domain $H^2\times H^1$, the first-order generator is the orthogonal sum of the channel generators from Proposition~\ref{prop:selfadjoint_channels}, conjugated by the fixed reconstruction matrices.  It is therefore selfadjoint, and the corresponding unitary group preserves $E_\ell$.  This is the fixed-mode energy evolution used in the mode-stability argument.
\end{proof}

\begin{proof}[Proof of Theorem~\ref{thm:mode_stability}]
By Proposition~\ref{prop:selfadjoint_generator}, the fixed-$\ell$ reduced dynamics is generated by a selfadjoint operator $\cA_\ell$ on the finite-energy space $\mathfrak h_\ell$.  If $A=\ee^{-\ii\om t}\widehat A$ were a nontrivial finite-energy separated mode with $\Im\om>0$, then
\[
   \ee^{-\ii t\cA_\ell}\widehat A=\ee^{-\ii\om t}\widehat A
\]
would have norm $\ee^{\Im\om t}\norm{\widehat A}_{\mathfrak h_\ell}$, contradicting unitarity of the exact evolution.  Hence no upper-half-plane mode exists.  The statement on the real cut and at threshold is exactly Proposition~\ref{prop:no_real_poles}.  Since $\cE_\ell$ is analytic on the slit strip by Proposition~\ref{prop:evans_det}, zero-freeness at $\om=\pm\mu$ yields zero-free punctured neighborhoods of the thresholds.
\end{proof}

\begin{corollary}[Threshold zero-free neighborhoods]\label{cor:threshold_zero_free}
There exist $\delta>0$ and $\eta_{\mathrm{th}}>0$ such that
\[
   \bigl\{\om:0<\abs{\om\mp\mu}<\delta,\ -\eta_{\mathrm{th}}<\Im\om<\eta_{\mathrm{th}}\bigr\}\setminus[-\mu,\mu]
\]
contains no zero of $\cE_\ell$.
\end{corollary}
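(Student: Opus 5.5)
The plan is a compactness/continuity argument for the fixed analytic function $\cE_\ell$ near each threshold, using Theorem~\ref{thm:mode_stability} and Proposition~\ref{prop:no_real_poles} as the only spectral input. I treat $\om=+\mu$; the case $\om=-\mu$ follows from the symmetry $\om\mapsto-\bar\om$. That symmetry maps solutions of the real-coefficient reduced system to conjugate solutions and preserves the horizon and infinity normalizations, so $\cE_\ell(-\bar\om)=\overline{\cE_\ell(\om)}$ up to a nonvanishing normalization factor.

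\textbf{Step 1: reduce to boundary behaviour at the threshold.} By Proposition~\ref{prop:evans_det}(a), $\cE_\ell$ is analytic on the slit strip. I would first check that it extends continuously up to the two edges of the cut near $\om=\mu$, including the threshold point itself, when approached from within the slit strip. This is supplied by Lemma~\ref{lem:whittaker_volterra}. Its Volterra construction is uniform for $\Re\vp\ge0$ and small $\abs\vp$, and it yields continuity of $F_{\out}$ in $\vp$ on compact $r$-sets, approaching the threshold-subordinate solution as $\vp\to0$. The horizon basis of Lemma~\ref{lem:horizon_basis} is analytic across $\om=\mu$. Hence $\cE_\ell$ has a continuous boundary value $\cE_\ell(\mu)$, with $\cE_\ell(\mu)\neq0$ by Proposition~\ref{prop:no_real_poles}, since a zero would produce an ingoing, threshold-subordinate mode.

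\textbf{Step 2: extract the neighborhood.} Continuity at the point $\mu$ from within the closure of the slit strip gives $\delta>0$ and $\eta_{\mathrm{th}}>0$ such that $\abs{\cE_\ell(\om)-\cE_\ell(\mu)}<\tfrac12\abs{\cE_\ell(\mu)}$ on the punctured set in the statement. In particular $\cE_\ell$ has no zeros there. This gives the corollary directly. The upper-half-plane part is in any case already covered by Theorem~\ref{thm:mode_stability}(i), so only the lower strip near the cut needs this argument.

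\textbf{Main obstacle.} The delicate point is the continuity in Step 1. At $\vp\to0$ the Coulomb parameter $\kap=M\mu^2/\vp$ blows up, so the Whittaker normalization $W_{\kap,\nu}(2\vp r)$ in \eqref{eq:outgoing_basis} has no direct limit. To handle this, I would first replace $F_{\out}$ by a nonvanishing scalar multiple of itself. Such a rescaling multiplies $\cE_\ell$ by a nonzero factor and so does not affect its zero set. The rescaled basis would be chosen to converge on compact $r$-sets, using the Bessel-type large-$\kap$ limit of $W_{\kap,\nu}$, which corresponds to the zero-energy Coulomb solution. The rescaling must be uniform from both sides of the cut, i.e.\ for $\arg\vp$ near $0$ with $\Re\vp\ge0$ where the strip permits it.

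If the sheet structure makes the lower-half-plane approach reach $\Re\vp<0$, I would restrict $\eta_{\mathrm{th}}$ so that $\abs{\arg\vp}\le\pi/2-c$ throughout the punctured set. This keeps the Volterra estimates of Lemma~\ref{lem:whittaker_volterra} uniform there, and the continuity argument then goes through unchanged.
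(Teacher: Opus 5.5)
Your plan is the paper's plan: the upper half-plane comes from Theorem~\ref{thm:mode_stability}(i), and the rest comes from continuity up to $\om=\mu$ plus a nonzero threshold value. In the paper this is the entire argument, in the last sentence of the proof of Theorem~\ref{thm:mode_stability}. The continuity step, which the paper also takes for granted, fails as you set it up.

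\textbf{The sheet geometry is not what you assume.} The slit strip reaches $\{\Im\om<0\}$ from $\{\Im\om>0\}$ only across $(\mu,\infty)$ and $(-\infty,-\mu)$. From $\vp^2=-(\om-\mu)(\om+\mu)$ one finds, near $\mu$,
\[
\arg\vp=\tfrac12\bigl(\arg(\om-\mu)-\pi\bigr)+o(1)\in(-\pi,0).
\]
So the whole lower half of every punctured neighbourhood has $\Re\vp<0$. The lower edge of the cut is $\vp\to0$ through negative values, which is the sign change of $\vp$ across $[-\mu,\mu]$ that the paper records; it is not $\arg\vp\approx0$. This lower half is exactly the region you say still needs an argument, and Lemma~\ref{lem:whittaker_volterra}, stated for $\Re\vp\ge0$, gives nothing there. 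Your fallback cannot work. $\arg\vp$ is fixed by the direction of approach to $\mu$, not by the strip height, and once $\delta\le\eta_{\mathrm{th}}$ the condition $\abs{\Im\om}<\eta_{\mathrm{th}}$ is vacuous.

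\textbf{The limit you aim for does not exist in the form claimed.} With the attractive term $2M\mu^2/r$ in \eqref{eq:threshold_normal_form}, every solution at $\om=\pm\mu$ is a combination of two solutions $\sim r^{1/4}\ee^{\pm\ii\sqrt{8M\mu^2r}}$. So there is no threshold-subordinate solution. Continuity from the upper side also fails. Along the upper edge $\kap\to+\infty$ through real values. In the far zone the decaying solution is then a scalar multiple of
\[
\sqrt r\,\bigl(\sin\pi(\kap-\nu)\,J_{2\nu}-\cos\pi(\kap-\nu)\,Y_{2\nu}\bigr)(\sqrt{8M\mu^2r})+o(1),
\]
whose direction is $1$-periodic in $\kap$. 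This is the $\ee^{\pm2\pi\ii\kap}$ monodromy behind \eqref{eq:large_kappa_disc}, and no scalar rescaling makes it converge.

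\textbf{A repair that works, at least in the Whittaker model.} Leave $\{\Im\om>0\}$ to Theorem~\ref{thm:mode_stability}(i) and argue only on the closed lower part of the neighbourhood, which contains $(\mu,\mu+\delta)$.
\begin{itemize}
\item There set $\vp'=-\vp$ and $\kap'=-\kap$. Then $\Re\kap'\ge0$ and $W_{\kap,\nu}(2\vp r)=W_{-\kap',\nu}\bigl((2\vp'r)\ee^{-\ii\pi}\bigr)$.
\item Expand this in $M_{\kap',\pm\nu}$ via the connection formulas. The coefficients are $1/\Gamma(\tfrac12\mp\nu+\kap')$, which have no zeros and obey Stirling's formula there, unlike $1/\Gamma(\tfrac12\mp\nu-\kap)$ on the upper side.
\item Use the Bessel limit of $M_{\kap',\pm\nu}(2M\mu^2r/\kap')$. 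This shows that the normalized $F_{\out}$ converges to the outgoing threshold solution $\sim\sqrt r\,H^{(1)}_{2\nu}(\sqrt{8M\mu^2r})$, with no oscillating admixture.
\item This convergence still has to be transferred from the model to the full channel equation.
\end{itemize}
The limiting Wronskian with $u_{\hor}(\mu)$ is then nonzero by the conserved flux. The ingoing horizon flux is $-\mu\abs{a}^2<0$, while the outgoing threshold solution carries positive flux. Proposition~\ref{prop:no_real_poles} does not give this, because the limit is not subordinate.
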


\begin{proof}
This is the last assertion of Theorem~\ref{thm:mode_stability}.
\end{proof}

\section{Explicit branch-cut tails}\label{sec:threshold_to_time}

This section turns the threshold discontinuity formulas into explicit oscillatory late-time tails and makes the Schwarzschild-to-Reissner--Nordstr\"om transition precise.

\subsection{Oscillatory inversion formula and decomposition of frequency space}

Let $\chi\in C_0^\infty((r_+,\infty))$ be identically $1$ on the compact radial set under consideration.  After contour deformation, the fixed-mode branch-cut contribution can be written as
\begin{equation}\label{eq:bc_formula_extended}
   u_{\ell m,P}^{\bc}(t,r,r')
   =
   \frac{1}{2\pi\ii}
   \int_{-\mu}^{\mu}
      \ee^{-\ii\om t}\,
      \chi(r)\chi(r')\,
      \disc\,\cG_{\ell,P}(\om;r,r')
   \,\dd\om.
\end{equation}
To extract asymptotics, we split the cut integral into three regions:
\begin{enumerate}[label=\textnormal{(\roman*)}]
   \item an upper-endpoint region $\mu-\om\lesssim t^{-1+\sigma}$;
   \item a lower-endpoint region $\mu+\om\lesssim t^{-1+\sigma}$;
   \item a central region separated from both thresholds.
\end{enumerate}
The central region contributes super-polynomially after repeated integration by parts because the phase derivative does not vanish there.  The endpoint regions are then analyzed in the variable $\vp=\sqrt{\mu^2-\om^2}$ and split further according to the size of $\kap=M\mu^2/\vp$.

This decomposition is exactly what separates the intermediate and very-late regimes.  In the intermediate regime one stays in the endpoint zone where $\kap\ll1$ and the Bessel expansion of Step~7 is valid.  In the very-late regime the dominant contribution comes from the part of the endpoint zone where $\kap\gg1$ and the Whittaker monodromy of Step~8 produces a saddle point of the phase.

\subsection{Schwarzschild-to-Reissner--Nordstr\"om correspondence}

\begin{theorem}[Schwarzschild-to-Reissner--Nordstr\"om tail correspondence]\label{thm:schwarzschild_to_rn}
Fix $\ell\ge1$ and one polarization $P\in\{-1,0,+1\}$.

\smallskip
\noindent\textbf{(a) Schwarzschild.}
Assume $Q=0$ and $\kap_*(t)\to0$.  Then the threshold index is exactly
\[
   \nu_{\ell,P}=L_P+\frac12,
\]
so the intermediate branch-cut tails are explicitly
\begin{align*}
   u_{\ell m,-1}^{\bc}(t,r,r')
   &=A_{\ell,-1}(r,r')\,t^{-(\ell+1/2)}\sin(\mu t+\delta_{\ell,-1})
   +O\!\bigl(\kap_*(t)t^{-(\ell+1/2)}\bigr)
   +O\!\bigl(t^{-(\ell+3/2)}\bigr),
   \\
   u_{\ell m,0}^{\bc}(t,r,r')
   &=A_{\ell,0}(r,r')\,t^{-(\ell+3/2)}\sin(\mu t+\delta_{\ell,0})
   +O\!\bigl(\kap_*(t)t^{-(\ell+3/2)}\bigr)
   +O\!\bigl(t^{-(\ell+5/2)}\bigr),
   \\
   u_{\ell m,+1}^{\bc}(t,r,r')
   &=A_{\ell,+1}(r,r')\,t^{-(\ell+5/2)}\sin(\mu t+\delta_{\ell,+1})
   +O\!\bigl(\kap_*(t)t^{-(\ell+5/2)}\bigr)
   +O\!\bigl(t^{-(\ell+7/2)}\bigr).
\end{align*}

\smallskip
\noindent\textbf{(b) Reissner--Nordstr\"om.}
For general subextremal $Q$, the same formulas hold with the half-integer exponents replaced by $\nu_{\ell,P}+1$.  In the small-mass regime $(M\mu)^2+(Q\mu)^2\ll1$, these exponents reduce to the Schwarzschild values up to $O((M\mu)^2+(Q\mu)^2)$ corrections.

\smallskip
\noindent\textbf{(c) Universal very-late tail.}
If $\kap_0(t)\to\infty$, then in every subextremal Reissner--Nordstr\"om exterior
\[
   u_{\ell m,P}^{\bc}(t,r,r')
   =
   B_{\ell,P}(r,r';Q)\,t^{-5/6}
   \sin\!\Bigl(
      \mu t-\frac32(2\pi M\mu)^{2/3}(\mu t)^{1/3}+\delta_{\ell,P,0}(Q)+o(1)
   \Bigr),
\]
and the exponent $5/6$ is independent of $\ell$, of the polarization, and of $Q$.
\end{theorem}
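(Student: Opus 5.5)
The plan is to derive the three parts of Theorem~\ref{thm:schwarzschild_to_rn} from the oscillatory inversion formula \eqref{eq:bc_formula_extended}, using the threshold expansions of Theorem~\ref{thm:spectral_package}(iii)--(v) as the only spectral input.

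\textbf{Frequency splitting.} We first split the cut integral into the two endpoint regions and the central region, as in the decomposition above.
\begin{enumerate}[label=\textnormal{(\roman*)}]
   \item In the central region $\disc\,\cG_{\ell,P}$ is smooth in $\om$. Using Proposition~\ref{prop:boundary_values_away_thresholds} and analyticity of the Jost bases away from $\pm\mu$, repeated integration by parts against $\ee^{-\ii\om t}$ gives $O(t^{-\infty})$, with smooth cutoffs $\eta_{t,\pm}$ localizing the endpoints.
   \item By the symmetry $\om\mapsto-\overline{\om}$ of the real channel equations, the lower endpoint contributes the complex conjugate of the upper one. This turns $\ee^{-\ii\mu t}$ into the real sinusoid $\sin(\mu t+\delta)$.
   \item Near $\om=\mu$ we substitute $\om=\sqrt{\mu^2-\vp^2}$ on the two sides of the cut, so that $\om\approx\mu-\vp^2/(2\mu)$ and $\dd\om=-\vp\,\dd\vp/\om$.
\end{enumerate}

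\textbf{Parts (a) and (b): intermediate regime.} On the scale $\vp\sim t^{-1/2}$ one has $\kap\sim M\mu^2t^{1/2}/\mu^{1/2}=\kap_*(t)\to0$, so Proposition~\ref{prop:small_kappa_disc} applies. Inserting the leading term $a_{\ell,P}\vp^{2\nu}$ produces a Gaussian-type integral:
\[
   \int_0^\infty \ee^{\ii\vp^2t/(2\mu)}\,\vp^{2\nu+1}\,\dd\vp
   =
   \tfrac12\,\Gamma(\nu+1)\,(2\mu/t)^{\nu+1}\,\ee^{\ii\pi(\nu+1)/2}.
\]
This is rigorously justified by rotating the $\vp$-contour to the ray $\arg\vp=\pi/4$, or equivalently by a standard Erdélyi endpoint lemma from Appendix~\ref{app:special_functions}. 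It yields $A_{\ell,P}t^{-(\nu_{\ell,P}+1)}\sin(\mu t+\delta_{\ell,P})$, with the phase $\delta_{\ell,P}$ coming from $\ee^{\ii\pi(\nu+1)/2}$ and $\arg a_{\ell,P}$.

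The two error terms $O(\kap\vp^{2\nu})$ and $O(\vp^{2\nu+2})$ integrate to $O(\kap_*(t)t^{-(\nu+1)})$ and $O(t^{-(\nu+2)})$ respectively. Here $\kap=M\mu^2/\vp$ is not small for $\vp\ll t^{-1/2}$. That region has $\vp$-measure $\lesssim t^{-1/2}$, and since $\disc\,\cG$ is uniformly bounded there (Theorem~\ref{thm:spectral_package}(ii) excludes threshold resonances), its contribution is absorbed once $\kap_*\to0$ is used to cap it. This bookkeeping is the main obstacle: the small-$\kap$ expansion degenerates exactly at the endpoint. I would handle it with a dyadic partition in $\vp$, bounding each shell by the better of the two expansions.

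For Schwarzschild, $Q=0$ and the exact inverse-square coefficient in Lemma~\ref{lem:normal_form} is read off as $L_P(L_P+1)$. This gives $\nu_{\ell,P}=L_P+\tfrac12$, which I would verify by direct expansion of \eqref{eq:even_matrix_form} after the Liouville conjugation. Part (b) then follows from \eqref{eq:nu_expansion}.

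\textbf{Part (c): very-late regime.} We use Proposition~\ref{prop:large_kappa_disc}. The phase becomes
\[
   \Phi_\pm(\vp)=-\om(\vp)\,t\pm\frac{2\pi M\mu^2}{\vp}.
\]
Stationarity, $\vp t/\mu=2\pi M\mu^2/\vp^2$, holds at $\vp=\vp_0(t)$, and there $\kap=\kap_0(t)\to\infty$. The stationary-phase evaluation then goes as follows.
\begin{enumerate}[label=\textnormal{(\roman*)}]
   \item $\Phi''\sim t/\mu$ at the saddle.
   \item With the Jacobian factor $\vp_0$, the contribution is $\vp_0\,(t/\mu)^{-1/2}\sim t^{-1/3-1/2}=t^{-5/6}$.
   \item The critical value of the phase gives $-\mu t+\tfrac32(2\pi M\mu)^{2/3}(\mu t)^{1/3}$.
   \item Only the sign $\pm$ whose monodromy factor produces the saddle contributes. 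The other branch has a nonstationary phase, and its contribution is lower order after integration by parts.
   \item The remainders $O(\kap^{-1})+O(\vp)$ evaluated at the saddle give the stated errors.
\end{enumerate}
Since the saddle location depends only on the universal Coulomb coefficient $2M\mu^2$, the exponent $5/6$ is independent of $\ell$, $P$, and $Q$. Finally, the transition zone $\kap\sim1$ must be shown to be subdominant; this again uses the dyadic interpolation from the intermediate step.
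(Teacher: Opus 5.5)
Your route is the paper's: its proof reads $\nu_{\ell,P}$ off Proposition~\ref{prop:even_polarization_form} and then cites Theorems~\ref{thm:intermediate} and~\ref{thm:late}, whose proofs you are reconstructing. Two steps, however, do not go through.

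\textbf{First gap: exactness of $\nu_{\ell,P}$ at $Q=0$.} The claim that $\nu_{\ell,P}=L_P+\tfrac12$ \emph{exactly} is refuted by the very expansion you propose. The Whittaker model lives in the areal variable ($z=2\vp r$). After writing $\partial_{r_*}=f\partial_r$, dividing by $f^2$ and conjugating by $f^{-1/2}$, the scalar term $(\om^2-f\mu^2)/f^2$ contributes $(12M^2\om^2-4M^2\mu^2)r^{-2}$ to every channel when $Q=0$. The Liouville terms $f'^2/(4f^2)-f''/(2f)$ and the $C_\ell$-coupling are only $O(r^{-3})$. At $\om=\mu$ this gives $\nu_{\ell,P}^2=(L_P+\tfrac12)^2-8M^2\mu^2$ (plus $Q^2\mu^2$ when $Q\neq0$). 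That is precisely the $O((M\mu)^2)$ correction permitted by Theorem~\ref{thm:spectral_package}(v) and by the matrix $E_\ell$ in Lemma~\ref{lem:normal_form}.

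The $r^{-2}$ coefficients in \eqref{eq:even_matrix_form} belong to the $r_*$-equation, not to the normal form, so the paper's read-off has the same defect and cannot be cited. Moreover, recentring $r\mapsto r+c$ shifts that coefficient by $2M\mu^2c$ because of the Coulomb term. So exactness is at best a normalization convention; the invariant content is $\nu_{\ell,P}=L_P+\tfrac12+O((M\mu)^2)$.

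This weaker statement still suffices for the displayed formulas in (a), by absorption. The conditions $\kap_*(t)\le1$ and $\mu t\ge1$ force $M\mu\le(\mu t)^{-1/2}$, so $\abs{\nu_{\ell,P}-L_P-\tfrac12}\log(\mu t)\lesssim M^2\mu^2\log(\mu t)\lesssim\kap_*(t)$. Hence replacing $t^{-(\nu_{\ell,P}+1)}$ by $t^{-(L_P+3/2)}$, and adjusting $A_{\ell,P},\delta_{\ell,P}$ by $O(M^2\mu^2)$, costs only $O(\kap_*(t)t^{-(\nu_{\ell,P}+1)})$. This also shows the intermediate window is a double limit: $\mu t\to\infty$ with $\kap_*\to0$ forces $M\mu\to0$. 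Every constant in Theorem~\ref{thm:spectral_package}(iii)--(iv) must therefore be uniform as $M\mu\to0$, and neither your proposal nor the paper supplies this.

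\textbf{Second gap: the degenerate endpoint.} You rightly single this out as the main obstacle, but your argument does not close it. The small-$\kap$ expansion does not fail on all of $\{\vp\ll t^{-1/2}\}$. The condition $\kap\le1$ persists down to $\vp=M\mu^2=\kap_*(t)\sqrt{\mu/t}$. On $M\mu^2\le\vp\lesssim\sqrt{\mu/t}$ the $O(\kap\vp^{2\nu})$ term, with Jacobian $\vp/\mu$, integrates to $O(\kap_*(t)t^{-(\nu+1)})$ as it should.

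Your bound for the remaining region is too weak:
\begin{itemize}
\item A merely bounded jump on a set of measure $t^{-1/2}$ gives $\int_0^{t^{-1/2}}\vp\,\dd\vp\sim t^{-1}$, which exceeds the main term $t^{-(\nu+1)}$ for every $\nu>0$.
\item Even on the genuinely bad set $\{\vp\le M\mu^2\}$, an $O(1)$ jump contributes $O(M^2\mu^3)=O(\kap_*(t)^2/t)$. This is larger than the claimed remainder $\kap_*(t)t^{-(\nu+1)}$ by the factor $\kap_*(t)(\mu t)^{\nu}$, so it is not absorbed once $\kap_*(t)\gg(\mu t)^{-\nu}$.
\item Taking the \emph{better of the two expansions} on dyadic shells does not help. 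Theorem~\ref{thm:spectral_package}(iv) only gives $O(1)$ amplitudes $b^\pm_{\ell,P}$ and $O(\kap^{-1})+O(\vp)$ remainders, with no smallness in $M\mu$.
\end{itemize}
What is missing is a uniform crossover bound such as $\abs{\disc\,\cG_{\ell,P}}\lesssim(M\mu^2)^{2\nu_{\ell,P}}$ on $\{\kap\ge1\}$, uniformly as $M\mu\to0$. One way to get it is by matching through the zero-energy Coulomb solutions $\sqrt r\,J_{2\nu}(\sqrt{8M\mu^2r})$ and $\sqrt r\,Y_{2\nu}(\sqrt{8M\mu^2r})$. With that bound the bad set contributes $O(\kap_*(t)^{2\nu+2}(\mu/t)^{\nu+1})$, which is negligible. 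The paper's proof of Theorem~\ref{thm:intermediate} simply integrates the small-$\kap$ expansion over all of $(0,\eps)$ and has the same hole.

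A milder form of this issue affects your step (v) in part (c). The remainders $O(\kap^{-1})+O(\vp)$ can be evaluated at the saddle only if they are symbols multiplying $\ee^{\pm2\pi\ii\kap}$. A non-oscillatory remainder integrated against $\ee^{\ii t\vp^2/(2\mu)}$ needs derivative bounds before it can be shown to be $o(t^{-5/6})$.
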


\begin{proof}
When $Q=0$, Proposition~\ref{prop:even_polarization_form} reduces the even sector to an exact diagonal $r^{-2}$ matrix with eigenvalues $L_{\pm1}(L_{\pm1}+1)$ and short-range off-diagonal remainder $O(r^{-3})$; the odd channel has the exact scalar $r^{-2}$ coefficient $L_0(L_0+1)$.  Hence the threshold normal form \eqref{eq:threshold_normal_form} has $\nu_{\ell,P}=L_P+\frac12$ exactly.  The intermediate formulas then follow from Theorem~\ref{thm:intermediate}, and the very-late formula is Theorem~\ref{thm:late}.  The general Reissner--Nordstr\"om statement is exactly Theorems~\ref{thm:intermediate} and \ref{thm:late} together with Corollary~\ref{cor:small_mass_explicit}.
\end{proof}

\subsection{Intermediate tails}

The intermediate asymptotics come from the endpoint singularity
\[
   \disc\,\cG_{\ell,P}\sim \vp^{2\nu_{\ell,P}}.
\]

\begin{lemma}[Endpoint oscillatory integral]\label{lem:endpoint_integral}
Let $\alpha>-1$ and $a>0$.  Then, as $t\to\infty$,
\begin{equation}\label{eq:endpoint_integral}
   \int_0^\eps\ee^{\ii a\vp^2 t}\vp^\alpha\,\dd\vp
   =
   \frac12
   \Gamma\Bigl(\frac{\alpha+1}{2}\Bigr)
   \ee^{\frac{\pi\ii}{4}(\alpha+1)}
   a^{-(\alpha+1)/2}
   t^{-(\alpha+1)/2}
   +O\bigl(t^{-(\alpha+3)/2}\bigr).
\end{equation}
\end{lemma}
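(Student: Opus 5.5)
The plan is to compare the truncated integral with the complete integral over $[0,\infty)$, which can be evaluated exactly, and then account for the piece over $[\eps,\infty)$ separately. With $s=\vp^2$, so that $\dd\vp=\tfrac12 s^{-1/2}\dd s$, the complete integral becomes
\[
   \int_0^\infty \ee^{\ii a\vp^2t}\vp^\alpha\,\dd\vp
   =\frac12\int_0^\infty \ee^{\ii a t s}s^{\frac{\alpha-1}{2}}\,\dd s .
\]
For $-1<\alpha<1$ this converges conditionally at infinity. I will rotate the contour to the ray $s=\ee^{\ii\pi/2}\sigma$, $\sigma>0$. This is justified by Jordan's lemma on a quarter-circle arc in the first quadrant, where $\ee^{\ii ats}$ decays, together with a small arc at the origin whose contribution vanishes because $\alpha>-1$. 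The rotated integral is a Gamma integral and gives exactly
\[
   \frac12\Gamma\Bigl(\frac{\alpha+1}{2}\Bigr)\ee^{\frac{\pi\ii}{4}(\alpha+1)}(at)^{-(\alpha+1)/2},
\]
which is the leading term in \eqref{eq:endpoint_integral}. For $\alpha\ge1$ I will obtain the same identity by analytic continuation in $\alpha$, or equivalently by first inserting a convergence factor $\ee^{-\delta s}$ and letting $\delta\to0$.

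The second step is the tail $\int_\eps^\infty \ee^{\ii a\vp^2 t}\vp^\alpha\,\dd\vp$. On $[\eps,\infty)$ the phase has nonvanishing derivative $2a\vp t$. I will integrate by parts repeatedly using $\ee^{\ii a\vp^2t}=(2\ii a\vp t)^{-1}\partial_\vp \ee^{\ii a\vp^2t}$. Each integration by parts gains a factor $t^{-1}$ and produces a boundary term at $\vp=\eps$. This expresses the tail as
\[
   \ee^{\ii a\eps^2t}\sum_{k\ge1}c_k(\eps,\alpha)(at)^{-k}
\]
up to arbitrarily high order in $t^{-1}$; the boundary terms at infinity are handled with the same convergence factor as before.

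The main obstacle is reconciling this tail with the remainder $O(t^{-(\alpha+3)/2})$ as stated. The first boundary term has size $\eps^{\alpha-1}t^{-1}/(2a)$, and this exceeds $t^{-(\alpha+3)/2}$ whenever $\alpha>-1$. So the stated bound holds only once the upper-endpoint contribution is removed. I would carry the lemma in the form it is used in this section. The endpoint zone $\vp\lesssim\eps$ is always cut off smoothly in the decomposition of frequency space, with the complement assigned to the central region, which is estimated separately by non-stationary phase. With a smooth cutoff $\eta\in C_0^\infty([0,\eps))$ equal to $1$ near $0$, the tail factor $(1-\eta)\vp^\alpha$ vanishes near $\vp=\eps$ and has derivatives of all orders bounded on $[\eps,\infty)$. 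The integrations by parts then produce no boundary terms and yield $O(t^{-\infty})$, while the leading constant in \eqref{eq:endpoint_integral} is unchanged.

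For the sharp truncation exactly as written, the conclusion I can justify is therefore the stated leading term plus the explicit oscillatory boundary series $\ee^{\ii a\eps^2t}O(t^{-1})$. The remainder $O(t^{-(\alpha+3)/2})$ in \eqref{eq:endpoint_integral} holds precisely modulo that upper-endpoint term, which the section's decomposition places in the central region rather than in the endpoint integral.
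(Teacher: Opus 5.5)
Your computation of the leading term is the paper's own. You substitute $s=\vp^2$ (the paper uses $s=a\vp^2t$), rotate to the positive imaginary axis, and read off $\frac12\Gamma(\frac{\alpha+1}{2})\ee^{\pi\ii(\alpha+1)/4}(at)^{-(\alpha+1)/2}$. Of your two devices for $\alpha\ge1$, where the complete integral diverges, the convergence factor $\ee^{-\delta s}$ is the clean one.

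Your objection to the remainder is also correct, and it exposes an error in the statement, not in your argument. The paper justifies the error term by saying it "comes from truncating the contour and integrating by parts once more." That single integration by parts produces exactly the boundary term
\[
   \frac{\eps^{\alpha-1}}{2\ii a t}\,\ee^{\ii a\eps^2 t}.
\]
So for a sharp cutoff at fixed $\eps$, the true expansion is the Gamma term plus this term plus $O(t^{-2})$. The bound $O(t^{-(\alpha+3)/2})$ therefore fails for every $\alpha>-1$. You can strengthen your point: for $\alpha>1$ this boundary term is larger than the claimed leading term itself. That is precisely the case $\alpha=2\nu_{\ell,P}+1>1$ used in Theorem~\ref{thm:intermediate}. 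So the smooth-cutoff reformulation is not cosmetic but necessary. The spurious $t^{-1}$ term is an artifact of the sharp split, which is why smooth endpoint cutoffs such as the paper's later $\eta_{t,\pm}$ are the right setting.

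One slip in your last step. The factor $(1-\eta)\vp^\alpha$ vanishes near $\vp=0$, not near $\vp=\eps$. Also, for large $\alpha$ its derivatives are not bounded on $[\eps,\infty)$. What the argument actually needs is this: each transposed integration by parts with $(2\ii a t\vp)^{-1}\partial_\vp$ gains $t^{-1}$ and lowers the power of $\vp$ by two, while the convergence factor removes the boundary terms at infinity.

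With that correction, your version is right. It shows that $\int_0^\infty \ee^{\ii a\vp^2 t}\eta(\vp)\vp^\alpha\,\dd\vp$ equals the Gamma term plus $O(t^{-\infty})$, which implies the stated bound in the form the section actually needs.
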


\begin{proof}
Set $s=a\vp^2 t$.  The claim follows from the standard contour representation of the gamma function.
\end{proof}

\begin{theorem}[Intermediate branch-cut asymptotics]\label{thm:intermediate}
If $\kap_*(t)=M\mu^{3/2}t^{1/2}\to0$, then
\begin{align}\label{eq:intermediate_explicit}
   u_{\ell m,P}^{\bc}(t,r,r')
   &=
   A_{\ell,P}(r,r';Q)\,t^{-(\nu_{\ell,P}+1)}
   \sin(\mu t+\delta_{\ell,P}(Q))
   \notag\\
   &\qquad
   +O\!\bigl(\kap_*(t)t^{-(\nu_{\ell,P}+1)}\bigr)
   +O\!\bigl(t^{-(\nu_{\ell,P}+2)}\bigr),
\end{align}
uniformly for $r,r'$ in compact subsets of $(r_+,\infty)$.
\end{theorem}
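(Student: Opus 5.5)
We start from the branch-cut representation \eqref{eq:bc_formula_extended} and split the cut $[-\mu,\mu]$ with a smooth partition of unity $1=\eta_{t,+}+\eta_{t,-}+\eta_{\mathrm c}$. Here $\eta_{t,\pm}$ are supported where $\mu\mp\om\le\eps$, for a fixed small $\eps$, and $\eta_{\mathrm c}$ is supported in the central region.

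\textbf{Central region.} On $\supp\eta_{\mathrm c}$ the jump $\disc\,\cG_{\ell,P}$ is smooth in $\om$ by Proposition~\ref{prop:boundary_values_away_thresholds}, uniformly for $r,r'\in K$. Repeated integration by parts against $\ee^{-\ii\om t}$ then gives $O(t^{-N})$ for every $N$. The cutoff boundary terms vanish because $\eta_{\mathrm c}$ is compactly supported in the open cut.

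\textbf{Upper endpoint.} Change variables from $\om$ to $\vp$ by $\om=\sqrt{\mu^2-\vp^2}$. Then $\dd\om=-\vp\,\dd\vp/\om$ and
\[
   \om=\mu-\frac{\vp^2}{2\mu}+O(\vp^4),
   \qquad
   \ee^{-\ii\om t}=\ee^{-\ii\mu t}\,\ee^{\ii\vp^2t/(2\mu)}\bigl(1+O(\vp^4t)\bigr).
\]
Insert Proposition~\ref{prop:small_kappa_disc}:
\[
   \disc\,\cG_{\ell,P}=a_{\ell,P}\vp^{2\nu}+O(\kap\vp^{2\nu})+O(\vp^{2\nu+2}),
   \qquad \nu=\nu_{\ell,P}.
\]
For the main term apply Lemma~\ref{lem:endpoint_integral} with $\alpha=2\nu+1>0$ (using $\nu>-\tfrac12$) and $a=1/(2\mu)$. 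This yields
\[
   c_{\nu}\,a_{\ell,P}(r,r')\,(2\mu)^{\nu+1}\,\Gamma(\nu+1)\,\ee^{\ii\pi(\nu+1)/2}\,\ee^{-\ii\mu t}\,t^{-(\nu+1)}.
\]
The lower endpoint, via the symmetry $\om\mapsto-\om$, gives the complex-conjugate-type contribution with $\ee^{+\ii\mu t}$. Adding the two, and using reality of the time-domain kernel, produces $A_{\ell,P}\,t^{-(\nu+1)}\sin(\mu t+\delta_{\ell,P})$ with explicit $A_{\ell,P}$ and $\delta_{\ell,P}$. Lemma~\ref{lem:endpoint_integral} itself contributes an $O(t^{-(\nu+2)})$ error.

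\textbf{Error terms.} The $O(\vp^{2\nu+2})$ remainder and the $O(\vp^4t)$ phase correction have the same endpoint structure with two more powers of $\vp$, so they give $O(t^{-(\nu+2)})$. The $\kap$-term is $O(M\mu^2\vp^{2\nu-1})$, which would naively give $t^{-(\nu+1/2)}$ times $M\mu^2$. Rescaling $\vp=s\,t^{-1/2}$ makes this equal to $M\mu^2t^{1/2}\cdot t^{-(\nu+1)}$ up to $\mu$-dependent constants, i.e.\ $O(\kap_*(t)t^{-(\nu+1)})$, as claimed.

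\textbf{Main obstacle.} Proposition~\ref{prop:small_kappa_disc} is valid only where $\kap\ll1$, i.e.\ $\vp\gg M\mu^2$, while the endpoint integral runs down to $\vp=0$. I would split at $\vp\sim M\mu^2/\kap_1$ for a fixed small $\kap_1$. On the inner piece $\vp\lesssim M\mu^2$ there are two inputs. First, the hypothesis $\kap_*(t)\to0$ means $t^{-1/2}\gg M\mu^{3/2}$, so the stationary scale $\vp\sim t^{-1/2}$ of the oscillatory integral lies in the $\kap\ll1$ zone. Second, on the inner piece the crude bound $|\disc\,\cG|\lesssim \vp^{2\nu}\min(1,\kap)$-type control from Proposition~\ref{prop:large_kappa_disc} holds, with bounded amplitudes. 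Together these bound that piece by $\int_0^{CM\mu^2}\vp^{2\nu+1}\dd\vp\lesssim (M\mu^2)^{2\nu+2}$. This is again $\lesssim \kap_*(t)\,t^{-(\nu+1)}$ once $\kap_*\le1$, after tracking the $\mu$ factors. I would also use one integration by parts on the outer piece so that the cutoff at the splitting point produces only boundary terms of the same order.

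The uniformity of all constants in $r,r'\in K$ follows from the uniform statements in Propositions~\ref{prop:small_kappa_disc} and \ref{prop:large_kappa_disc}. Making the matching across the splitting point rigorous is the delicate part.
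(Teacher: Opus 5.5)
Your main line is the paper's proof. You insert Proposition~\ref{prop:small_kappa_disc} into the cut integral, pass to $\vp$ at each endpoint, apply Lemma~\ref{lem:endpoint_integral} with $\alpha=2\nu+1$ (here $\nu=\nu_{\ell,P}$), read off $\kap_*(t)t^{-(\nu+1)}$ from the $\kap$-term and $t^{-(\nu+2)}$ from the $\vp^2$-term, and add the endpoints to get the sine. Your smooth partition of unity is actually preferable to the lemma's sharp limit $\eps$, whose boundary term at $\vp=\eps$ has size $t^{-1}$ and so exceeds $t^{-(\nu+1)}$ once $\nu>0$. The paper never treats the zone $\vp\lesssim M\mu^2$, where $\kap\gtrsim1$; it integrates the small-$\kap$ expansion all the way down to $\vp=0$. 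So the obstacle you isolate is real, but the step you use to close it fails.

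\textbf{The missing ingredient.} Proposition~\ref{prop:large_kappa_disc} gives only $\disc\,\cG_{\ell,P}=O(1)$, with amplitudes $b^\pm_{\ell,P}$ of unspecified size. It does not give $\abs{\disc\,\cG_{\ell,P}}\lesssim\vp^{2\nu}\min(1,\kap)$. That bound is also inconsistent with the analysis behind Theorem~\ref{thm:late}, which uses a jump of $\vp$-independent size $b^\pm\ee^{\pm2\pi\ii\kap}$ as $\vp\to0$; an extra factor $\vp^{2\nu}$ would turn the saddle output into $t^{-5/6}\vp_0(t)^{2\nu}$.

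On the inner piece $\vp\lesssim M\mu^2=\kap_*(t)(\mu/t)^{1/2}$, so the phase $t\vp^2/(2\mu)\lesssim\kap_*^2$ is small and only the size of the jump can help. With the $O(1)$ bound you get $\int_0^{CM\mu^2}\vp\,\dd\vp/\mu\sim M^2\mu^3=\kap_*(t)^2/t$. Then $(\kap_*^2/t)/(\kap_*t^{-(\nu+1)})=\kap_*t^{\nu}$, which need not tend to zero (take $\kap_*=(\log t)^{-1}$). This is not a technicality: when $\mu t\to\infty$, $\kap_*(t)\to0$ forces $(M\mu)^2=\kap_*^2/(\mu t)\to0$, so every constant must be uniform in $M\mu$.

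What you need is an inner estimate carrying the factor $(M\mu^2)^{2\nu}$, for instance $\int_0^{CM\mu^2}\abs{\disc\,\cG_{\ell,P}}\,\vp\,\dd\vp\lesssim(M\mu^2)^{2\nu+2}$ uniformly as $M\mu\to0$. Heuristically the Whittaker connection coefficients give the size $(\kap\vp)^{2\nu}=(M\mu^2)^{2\nu}$ by Stirling. With it your arithmetic closes, since $(M\mu^2)^{2\nu+2}=\kap_*^{2\nu+2}\mu^{\nu+1}t^{-(\nu+1)}\le\mu^{\nu+1}\kap_*t^{-(\nu+1)}$. No such scaled large-$\kap$ bound is proved in the paper.

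\textbf{Two smaller gaps, shared with the paper.} First, the remainders in Proposition~\ref{prop:small_kappa_disc} are only pointwise bounds. Integrated absolutely, the $\kap$-term gives $\int_{M\mu^2}^{\eps}M\mu\,\vp^{2\nu}\,\dd\vp$, which is of size $M\mu$ with no decay in $t$. Your rescaling step therefore needs symbol-type derivative bounds of order about $\nu+1$. Second, the factor $1+O(\vp^4t)$ is not small on the whole endpoint region. You can avoid this by working in $s=\mu-\om$, where $\ee^{-\ii\om t}=\ee^{-\ii\mu t}\ee^{\ii st}$ exactly and $\vp^{2\nu}=(s(2\mu-s))^{\nu}$.
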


\begin{proof}
Insert \eqref{eq:intro_small_kappa_general} into \eqref{eq:bc_formula}.  Near the upper endpoint,
\[
   \om=\mu-\frac{\vp^2}{2\mu}+O(\vp^4),
   \qquad
   \dd\om=-\frac{\vp}{\mu}\,\dd\vp+O(\vp^3)\dd\vp.
\]
Hence the leading upper-endpoint contribution is
\[
   \frac{a_{\ell,P}(r,r')}{2\pi\mu}
   \ee^{-\ii\mu t}
   \int_0^\eps
   \ee^{\ii\frac{t}{2\mu}\vp^2}\vp^{2\nu_{\ell,P}+1}\,\dd\vp.
\]
By Lemma~\ref{lem:endpoint_integral} with $\alpha=2\nu_{\ell,P}+1$, this contributes $t^{-(\nu_{\ell,P}+1)}$.  The $O(\kap\,\vp^{2\nu_{\ell,P}})$ term produces the stated $\kap_*(t)t^{-(\nu_{\ell,P}+1)}$ error, and the $O(\vp^{2\nu_{\ell,P}+2})$ term yields $O(t^{-(\nu_{\ell,P}+2)})$.  Adding the lower endpoint gives the sine form.
\end{proof}

\begin{corollary}[Small-mass explicit exponents]\label{cor:small_mass_explicit}
Let
\[
   \eps_{\mu,Q}:=(M\mu)^2+(Q\mu)^2.
\]
For $\ell\ge1$, if $\eps_{\mu,Q}\ll1$ then
\[
   \nu_{\ell,-1}=\ell-\frac12+O(\eps_{\mu,Q}),
   \qquad
   \nu_{\ell,0}=\ell+\frac12+O(\eps_{\mu,Q}),
   \qquad
   \nu_{\ell,+1}=\ell+\frac32+O(\eps_{\mu,Q}).
\]
Hence the leading small-mass intermediate exponents are
\begin{align*}
   P=-1:&\qquad \ell+\frac12,
   \\
   P=0:&\qquad \ell+\frac32,
   \\
   P=+1:&\qquad \ell+\frac52.
\end{align*}
For $\ell=0$, only the electric monopole $P=+1$ remains and its leading small-mass intermediate exponent is $5/2$.
\end{corollary}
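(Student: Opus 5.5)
The corollary combines the small-mass index expansion \eqref{eq:nu_expansion} of Lemma~\ref{lem:normal_form} with the exponent formula of Theorem~\ref{thm:intermediate}. The plan is first to fix the unperturbed values of the threshold indices and then to show that they move only at order $\eps_{\mu,Q}$.

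\textbf{Step 1: the unperturbed indices.} At $M\mu=Q\mu=0$ the inverse-square coefficient in \eqref{eq:threshold_normal_form} reduces to $L_P(L_P+1)$. This follows from Proposition~\ref{prop:even_polarization_form} in the even sector, where $D_\ell=\diag(L_{-1}(L_{-1}+1),L_{+1}(L_{+1}+1))$, and from the scalar value $\ell(\ell+1)$ in the odd sector. Solving $\nu^2-\tfrac14=L_P(L_P+1)$ for the root with $\nu>-\tfrac12$ gives $\nu=L_P+\tfrac12$. Substituting $L_{-1}=\ell-1$, $L_0=\ell$, $L_{+1}=\ell+1$ produces the three displayed leading values $\ell-\tfrac12$, $\ell+\tfrac12$, $\ell+\tfrac32$.

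\textbf{Step 2: the correction is $O(\eps_{\mu,Q})$.} The inverse-square coefficient $c_{\ell,P}$ is a smooth function of $(M\mu,Q\mu)$. The corrections come from products of $\mu^2$ with $M^2$ and $Q^2$, arising from the $f\mu^2$ term expanded in $r^{-1}$ together with the Liouville conjugation and the near-identity diagonalizer. These corrections are even in $(M\mu,Q\mu)$, which I would verify directly from the expansion $f=1-2M/r+Q^2/r^2$. Hence $c_{\ell,P}=L_P(L_P+1)+O(\eps_{\mu,Q})$. Since $L_P(L_P+1)+\tfrac14=(L_P+\tfrac12)^2\ge\tfrac14>0$, the square root $\nu=\sqrt{c_{\ell,P}+\tfrac14}$ is smooth there, so the $O(\eps_{\mu,Q})$ bound passes to $\nu_{\ell,P}$. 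This step is the main obstacle: the claim that no term linear in $M\mu$ appears in the $r^{-2}$ coefficient needs care, because the first-order corrections ($2M\mu^2/r$ and the $r^{-3}$ couplings) contribute to the Coulomb and short-range parts and not to the inverse-square coefficient.

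\textbf{Step 3: the exponents.} By Theorem~\ref{thm:intermediate} the intermediate decay exponent is $\nu_{\ell,P}+1$, which gives $\ell+\tfrac12$, $\ell+\tfrac32$, $\ell+\tfrac52$ up to $O(\eps_{\mu,Q})$. For $\ell=0$ only the even electric channel survives, with effective angular momentum $L=1$. I would confirm this using the monopole reduction of Appendix~\ref{app:small_mass}, where the same normal form gives $\nu=\tfrac32+O(\eps_{\mu,Q})$ and hence the exponent $\tfrac52$.
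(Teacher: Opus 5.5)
Your proposal is correct and takes the paper's route. The paper's proof is a one-line citation of the index expansion \eqref{eq:nu_expansion} from Lemma~\ref{lem:normal_form}, combined with $L_P=\ell+P$ and the exponent $\nu_{\ell,P}+1$ of Theorem~\ref{thm:intermediate}, so your Steps~1--2 only re-derive that lemma's conclusion. Your worry about linear terms is justified, since smoothness in $(M\mu,Q\mu)$ alone would give only $O(\eps_{\mu,Q}^{1/2})$; it is resolved by expanding $f^{-2}\om^2-f^{-1}\mu^2$ in the $r$-variable, because the Liouville terms and the even off-diagonal couplings enter only at order $r^{-3}$, and at $\om=\pm\mu$ the inverse-square coefficient is exactly $L_P(L_P+1)-8(M\mu)^2+(Q\mu)^2$.
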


\begin{proof}
This is immediate from \eqref{eq:nu_expansion} and the values \eqref{eq:intro_Lvalues}.
\end{proof}

\subsection{The universal very-late tail}

\begin{proposition}[Model saddle estimate]\label{prop:saddle_estimate}
Let
\begin{equation}\label{eq:generic_saddle_integral}
   I(t)=\int_0^\eps(\beta\vp+O(\vp^2))\ee^{\ii\Psi_t(\vp)}\,\dd\vp,
\end{equation}
where
\begin{equation}\label{eq:phase_function}
   \Psi_t(\vp)=\frac{t\vp^2}{2\mu}+\frac{2\pi M\mu^2}{\vp}.
\end{equation}
Then, as $t\to\infty$,
\begin{equation}\label{eq:saddle_estimate}
   I(t)
   =
   \beta\,\vp_0(t)\sqrt{\frac{2\pi\mu}{3t}}\,
   \ee^{\ii\Psi_t(\vp_0(t))+\pi\ii/4}
   +
   O\!\bigl((\kap_0(t)^{-1}+\vp_0(t))t^{-5/6}\bigr),
\end{equation}
where $\vp_0(t)=(2\pi M\mu^3/t)^{1/3}$.
\end{proposition}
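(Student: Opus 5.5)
The plan is to rescale so that the saddle sits at a fixed point, and then apply stationary phase with a large parameter comparable to $\kap_0(t)$. First, set $\Psi_t'(\vp)=t\vp/\mu-2\pi M\mu^2/\vp^2$. This vanishes exactly at $\vp^3=2\pi M\mu^3/t$, that is, at $\vp=\vp_0(t)$. Substituting $\vp=\vp_0 s$ and using $t\vp_0^3=2\pi M\mu^3$ gives
\[
   \Psi_t(\vp_0 s)=\Lambda\,\phi(s),
   \qquad
   \phi(s)=s^2+\frac{2}{s},
   \qquad
   \Lambda:=\frac{t\vp_0^2}{2\mu}=\frac{\pi M\mu^2}{\vp_0}=\pi\kap_0(t).
\]
The integral becomes
\[
   I(t)=\vp_0^2\int_0^{\eps/\vp_0}\bigl(\beta s+O(\vp_0 s^2)\bigr)\ee^{\ii\Lambda\phi(s)}\,\dd s .
\]
The phase $\phi$ has a unique nondegenerate critical point on $(0,\infty)$, at $s=1$, with $\phi''(1)=6$. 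Since $\kap_0(t)\to\infty$, the parameter $\Lambda$ is large.

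Next, I localize with a smooth partition of unity $1=\chi_0+\chi_1+\chi_2$ on $(0,\infty)$. Here $\chi_1$ is supported in $[1/2,2]$, $\chi_0$ near $s=0$, and $\chi_2$ on $s\ge 3/2$.

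On $\operatorname{supp}\chi_1$, the classical stationary phase lemma with one correction term gives
\[
   \vp_0^2\,\beta\sqrt{\tfrac{2\pi}{6\Lambda}}\;\ee^{\ii\Lambda\phi(1)+\ii\pi/4}\bigl(1+O(\Lambda^{-1})\bigr)+\vp_0^2\,O(\vp_0)\,\Lambda^{-1/2}.
\]
Since $\Lambda\phi(1)=\Psi_t(\vp_0)$ and $\vp_0^2\sqrt{\pi/(3\Lambda)}=\vp_0\sqrt{2\pi\mu/(3t)}$, this is precisely the main term in \eqref{eq:saddle_estimate}. The leading size is $\vp_0 t^{-1/2}\sim t^{-5/6}$. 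The relative errors are $O(\Lambda^{-1})=O(\kap_0^{-1})$ from the next stationary phase coefficient, and $O(\vp_0)$ from the $O(\vp^2)$ part of the amplitude.

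On $\operatorname{supp}\chi_0$, I have $|\phi'(s)|\gtrsim s^{-2}$ while the amplitude is $O(s)$. Repeated integration by parts with $L=(\ii\Lambda\phi')^{-1}\partial_s$ therefore gains $s^2/\Lambda$ at each step and produces no boundary term at $s=0$. This gives $O(\vp_0^2\Lambda^{-k})$, which is negligible.

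The region $\operatorname{supp}\chi_2$ is the one I expect to be the genuine obstacle. There $|\phi'(s)|\sim s$ and the amplitude grows like $s$, so integrating by parts gives a factor $1/\Lambda$ per step with no loss. The difficulty is the sharp upper limit $s=\eps/\vp_0$: its boundary term is of size $\vp_0^2\Lambda^{-1}\sim\mu/t$, which is $t^{-1}$. This is \emph{larger} than the claimed error $(\kap_0^{-1}+\vp_0)t^{-5/6}\sim t^{-7/6}$. So the proposition cannot hold literally with a sharp cutoff at $\eps$ unless this endpoint term is accounted for elsewhere.

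I would resolve this as in Section~\ref{sec:threshold_to_time}. There the endpoint region is cut off smoothly in $\vp$, and the complementary piece is assigned to the central region, which is non-stationary and decays super-polynomially. Accordingly, I would prove the proposition with $\int_0^\eps$ interpreted as $\int_0^\infty\chi(\vp/\eps)\cdots$ for a smooth cutoff $\chi$. With that cutoff, the integrations by parts on $\operatorname{supp}\chi_2$ produce no boundary terms, and the contribution there is $O(\vp_0^2\Lambda^{-k})$ for every $k$.

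Summing the three pieces gives \eqref{eq:saddle_estimate}. All constants are uniform once $\beta$ and the implied constant in $O(\vp^2)$ are fixed; in the application these are the amplitudes $b^\pm_{\ell,P}$ from Proposition~\ref{prop:large_kappa_disc}, taken together with the Jacobian $\dd\om=-(\vp/\mu)\,\dd\vp$.
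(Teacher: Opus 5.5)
Your proof is correct and follows the paper's argument, with the localization the paper omits. The paper substitutes $\vp=t^{-1/3}y$ and applies stationary phase with large parameter $t^{1/3}$. Up to constants, that is your substitution $\vp=\vp_0 s$ with $\Lambda=\pi\kap_0(t)$, and your main term and error bookkeeping (relative errors $\Lambda^{-1}\sim\kap_0^{-1}$ and $\vp_0$) are right.

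The paper's proof never deals with the growing rescaled domain $y\in[0,\eps t^{1/3}]$, and your endpoint objection is valid. With a sharp cutoff, the boundary term at $\vp=\eps$ equals $\mu(\beta+O(\eps))\,\ee^{\ii\Psi_t(\eps)}/(\ii t)+O(t^{-2})$. This is of exact order $t^{-1}\gg t^{-7/6}$ when $\beta\neq0$, so \eqref{eq:saddle_estimate} holds only for a smooth cutoff $\chi(\vp/\eps)$. The analogous term $\eps^{\alpha-1}\ee^{\ii a\eps^2t}/(2\ii at)$ likewise invalidates the sharp-cutoff reading of Lemma~\ref{lem:endpoint_integral}. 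Moving the complementary piece into the non-stationary central region is the right repair.

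One hypothesis should be stated explicitly, since both you and the paper leave it implicit. Every step you take differentiates the amplitude: stationary phase on $\operatorname{supp}\chi_1$, and integration by parts on $\operatorname{supp}\chi_0$ and $\operatorname{supp}\chi_2$. So the $O(\vp^2)$ remainder $r$ must satisfy symbol bounds $\abs{\partial_\vp^k r(\vp)}\le C_k\vp^{2-k}$. Your constants then depend on these $C_k$, not only on ``the implied constant in $O(\vp^2)$'' as your last paragraph says. A pointwise bound genuinely does not suffice. For example, $r(\vp)=\vp^2\ee^{\ii c/\vp^2}$ with $c>0$ creates a second stationary point at $\vp\approx(2c\mu/t)^{1/4}$. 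That point contributes a term of size $\asymp t^{-1}$, again larger than the claimed error $(\kap_0^{-1}+\vp_0)t^{-5/6}\sim t^{-7/6}$.
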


\begin{proof}
Set $\vp=t^{-1/3}y$.  Then the phase becomes $\ii t^{1/3}\Phi(y)$ with
\[
   \Phi(y)=\frac{y^2}{2\mu}+\frac{2\pi M\mu^2}{y}.
\]
The function $\Phi$ has a unique nondegenerate critical point at $y_0=(2\pi M\mu^3)^{1/3}$.  Stationary phase with large parameter $t^{1/3}$ yields an extra factor $t^{-1/6}$ on top of the substitution factor $t^{-2/3}$, giving the scale $t^{-5/6}$.
\end{proof}

\begin{theorem}[Universal very-late asymptotics]\label{thm:late}
If $\kap_0(t)\to\infty$, then
\begin{align}\label{eq:late_explicit_sine}
   u_{\ell m,P}^{\bc}(t,r,r')
   &=
   B_{\ell,P}(r,r';Q)\,t^{-5/6}
   \sin\!\Bigl(
      \mu t
      -\frac32(2\pi M\mu)^{2/3}(\mu t)^{1/3}
      \notag\\
   &\qquad\qquad
      +\delta_{\ell,P,0}(Q)
      +O(\kap_0(t)^{-1}+\vp_0(t))
   \Bigr)
   \notag\\
   &\qquad
   +O\!\bigl(\kap_0(t)^{-1}+\vp_0(t)\bigr)\,t^{-5/6}.
\end{align}
The exponent $5/6$ is independent of $\ell$, of the polarization, and of the black-hole charge $Q$.
\end{theorem}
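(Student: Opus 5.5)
The plan is to insert the large-$\kap$ threshold expansion of Proposition~\ref{prop:large_kappa_disc} into the branch-cut formula \eqref{eq:bc_formula_extended} and reduce everything to the model saddle integral of Proposition~\ref{prop:saddle_estimate}. First I split the cut integral into the upper and lower endpoint regions and the central region, as described at the start of this section. Integration by parts in the central region gives $O(t^{-\infty})$. I would handle the upper endpoint and then obtain the lower one by the symmetry $\om\mapsto-\om$; together with reality of the kernel, this turns the two endpoint contributions into a sine.

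Near $\om=\mu$ I change variables to $\vp$, using $\om=\mu-\vp^2/(2\mu)+O(\vp^4)$ and $\dd\om=-(\vp/\mu)(1+O(\vp^2))\dd\vp$. This puts the phase $\ee^{-\ii\om t}$ in the form $\ee^{-\ii\mu t}\ee^{\ii t\vp^2/(2\mu)}\ee^{O(t\vp^4)}$. Since the saddle sits at $\vp_0\sim t^{-1/3}$, the correction $t\vp^4\sim t^{-1/3}$ is $O(\vp_0)$ and can be absorbed into the amplitude. The term $b^\pm_{\ell,P}\ee^{\pm2\pi\ii\kap}$, with $2\pi\kap=2\pi M\mu^2/\vp$, then combines with this phase. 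Only the sign whose phase equals $\Psi_t(\vp)=t\vp^2/(2\mu)+2\pi M\mu^2/\vp$ has a stationary point. The opposite sign has a phase with derivative bounded below by $t\vp+\kap/\vp$, and one integration by parts shows it is lower order.

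The stationary sign gives exactly the integral $I(t)$ of Proposition~\ref{prop:saddle_estimate} with $\beta=b^+_{\ell,P}/\mu$. This yields amplitude $\vp_0\sqrt{2\pi\mu/(3t)}\sim t^{-5/6}$ and phase $\Psi_t(\vp_0)=\tfrac32(2\pi M\mu)^{2/3}(\mu t)^{1/3}$, which is checked by direct evaluation at $\vp_0=(2\pi M\mu^3/t)^{1/3}$. Collecting the lower endpoint and the constant $\pi/4$, together with the argument of $b^\pm$, into $\delta_{\ell,P,0}(Q)$ gives \eqref{eq:late_explicit_sine}.

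I expect the main obstacle to be the error terms $O(\kap^{-1})+O(\vp)$ in \eqref{eq:large_kappa_disc} and the region where $\kap$ is not large, i.e.\ $\vp\gtrsim M\mu^2$. The remainders lack the oscillatory factor, so they must be estimated directly. I would use a cutoff $\eta_{t,\pm}$ localizing to $\vp\sim\vp_0$ at scale $t^{-1/2}\vp_0^{-1/2}$, the natural stationary-phase width. On that window the remainders contribute $O((\kap_0^{-1}+\vp_0)t^{-5/6})$ after the Jacobian factor $\vp$ is included. Outside the window I would integrate by parts against the nonstationary full phase, which requires $\vp$-derivative bounds on the remainders; these I would take from the Volterra construction in Lemma~\ref{lem:whittaker_volterra}. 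The transition to $\kap\lesssim1$ is nonstationary as well, and the small-$\kap$ expansion there gives at most $t^{-(\nu+1)}$-type contributions. Since $\kap_0\to\infty$ forces $\nu$-dependent terms below $t^{-5/6}$ only when $\nu+1>5/6$, which holds because $\nu>-1/2$, this piece is acceptable, but it is the step whose bookkeeping I would check most carefully. Independence of the exponent from $\ell$, $P$ and $Q$ follows because the phase $\Psi_t$ involves only the universal Coulomb coefficient $2M\mu^2$ of Lemma~\ref{lem:normal_form}.
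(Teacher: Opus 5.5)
Your main line is the paper's own argument. You insert \eqref{eq:large_kappa_disc} into the cut integral, pass to $\vp$ at one endpoint, keep the $\ee^{2\pi\ii\kap}$ piece whose phase $\Psi_t$ has the interior critical point $\vp_0$, apply Proposition~\ref{prop:saddle_estimate}, and combine the two endpoints into a sine. You are more careful than the paper in noting that the remainders in \eqref{eq:large_kappa_disc} do not carry $\ee^{\pm2\pi\ii\kap}$; the paper simply folds them into the amplitude. The problems are in your remainder bookkeeping.

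The most serious one is the $\kap\lesssim1$ zone. The inference from $\nu>-\tfrac12$ to $\nu+1>\tfrac56$ is false: it only gives $\nu+1>\tfrac12$. The paper does not exclude $\nu_*+1<\tfrac56$, which is exactly why Corollary~\ref{cor:poly_decay_radiative} uses $\gamma_*=\min\{\nu_*+1,\tfrac56\}$. Were that zone really to produce a $t^{-(\nu+1)}$ term, the theorem would be false for $\nu<-\tfrac16$. The correct reason it is harmless is different. For fixed $M,\mu$ the set $\{\kap\lesssim1\}=\{\vp\gtrsim M\mu^2\}$ is a compact subinterval of the open cut at a fixed distance from $\pm\mu$. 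The jump is smooth there (no real poles, Proposition~\ref{prop:no_real_poles}), so this set belongs to the central region and contributes $O(t^{-\infty})$. The Bessel endpoint law needs $\vp\to0$ with $\kap\to0$, which happens only in the intermediate regime, never when $\kap_0(t)\to\infty$.

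Second, your window width is miscomputed. Since $\Psi_t''(\vp_0)=t/\mu+4\pi M\mu^2/\vp_0^3=3t/\mu$ (the value behind the factor $\sqrt{2\pi\mu/(3t)}$ you quote), the stationary-phase width is $\sqrt{\mu/(3t)}\asymp t^{-1/2}$. Your choice $t^{-1/2}\vp_0^{-1/2}=(2\pi M\mu^3)^{-1/2}\vp_0\asymp t^{-1/3}$ is a whole dyadic block.

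On your window, the direct bound on $\vp(\kap^{-1}+\vp)$ gives $(\kap_0^{-1}+\vp_0)\vp_0^2\asymp(\kap_0^{-1}+\vp_0)t^{-2/3}$. That misses the claimed remainder by $t^{1/6}$; with width $t^{-1/2}$ you get exactly $(\kap_0^{-1}+\vp_0)t^{-5/6}$. In fact, remainders without $\ee^{\pm2\pi\ii\kap}$ have phase $t\vp^2/(2\mu)$ and no interior critical point. With symbol bounds they are endpoint terms of size $O(t^{-3/2})$.

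Similarly, one integration by parts on the nonstationary sign gives only $O(t^{-1})$. That beats $t^{-5/6}$ but not the stated remainder, which is $\asymp t^{-7/6}$. A second integration by parts gives $O(t^{-4/3})$, since each one gains roughly $(t\vp^2/\mu)^{-1}\sim t^{-1/3}$ at $\vp\sim\vp_0$.

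Finally, these integrations by parts need $\vp$-derivative bounds on the remainders. Lemma~\ref{lem:whittaker_volterra} does not provide them, since it is uniform only on compact subsets of the slit strip. The paper tacitly assumes such bounds as well (compare \eqref{eq:appM_amp_assumption}).
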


\begin{proof}
By \eqref{eq:intro_large_kappa_general}, the branch-cut jump is a sum of two oscillatory pieces with phases $\pm2\pi\kap$.  Restricting to one endpoint and using
\[
   \om=\mu-\frac{\vp^2}{2\mu}+O(\vp^4)
\]
gives an integral of the form
\[
   \ee^{-\ii\mu t}
   \int_0^\eps a_{\ell,P}^\infty(\vp;r,r')
   \ee^{\ii\Psi_t(\vp)}\,\dd\vp,
\]
where $a_{\ell,P}^\infty(\vp;r,r')=\beta_{\ell,P}(r,r';Q)\vp+O((\kap^{-1}+\vp)\vp)$.  Proposition~\ref{prop:saddle_estimate} applies directly and yields the stated asymptotic.  The black-hole charge modifies only the amplitude and the constant phase shift, not the saddle exponent.
\end{proof}

\section{Large-angular-momentum analysis and branch-cut full-field decay}\label{sec:full_field}

\subsection{Large-angular-momentum scaling and barrier geometry}

To sum the branch-cut contribution over all angular momenta, we need uniform control as $\ell\to\infty$ on compact radial sets.  There are really two tasks here.  One is analytic: show that the reduced channel kernels grow at most polynomially in $\ell$.  The other is geometric: explain why, for large $\ell$, the compact radial region lies well inside the forbidden zone where an elliptic/WKB argument applies.

For $\ell\ge1$ we set
\[
   h=(\ell+\tfrac12)^{-1}.
\]
On a compact radial set $K\Subset(r_+,\infty)$, the principal inverse-square term behaves like $h^{-2}$ and dominates the bounded mass term.  After conjugation by the asymptotic polarization basis, each channel therefore takes the semiclassical form
\[
   P_{\ell,P}(h,\om)
   =
   h^2D_{r_*}^2
   +V_{0,P}(r)
   +hV_{1,P}(r,\om)
   +h^2V_{2,P}(r,\om),
\]
with
\[
   V_{0,P}(r)=f(r)\frac{L_P(L_P+1)}{(\ell+\tfrac12)^2r^2}.
\]
Since $K$ stays away from both the horizon and infinity, $V_{0,P}$ is uniformly positive on $K$ for large $\ell$.  The next lemma isolates the corresponding barrier geometry.

\begin{lemma}[Large-$\ell$ barrier geometry and turning points]\label{lem:largeell_barrier}
Fix a compact frequency interval $J\Subset[-\mu,\mu]$ and a compact radial set $K\Subset(r_+,\infty)$.  There exist $h_0>0$, a cutoff neighborhood $\widetilde K\Subset(r_+,\infty)$ containing $K$, and smooth functions
\[
   r_{-,P}(\om;h)<r_{+,P}(\om;h),
   \qquad
   (\om,h)\in J\times(0,h_0),
   \quad P\in\{-1,0,+1\},
\]
such that the following hold uniformly in $P$:
\begin{enumerate}[label=\textnormal{(\roman*)}]
   \item the scalarized channel symbol
   \[
      q_{P}(r,\xi;\om,h)
      :=
      \xi^2+V_{0,P}(r)+hV_{1,P}(r,\om)+h^2V_{2,P}(r,\om)-h^2\om^2
   \]
   vanishes at $\xi=0$ exactly at the two turning points $r_{-,P}(\om;h)$ and $r_{+,P}(\om;h)$;
   \item the turning points are simple and satisfy
   \[
      r_{-,P}(\om;h)-r_+\asymp h^2,
      \qquad
      r_{+,P}(\om;h)\asymp h^{-1};
   \]
   \item one has
   \[
      q_P(r,\xi;\om,h)\ge c_K(1+\xi^2)
      \qquad\text{for }r\in\widetilde K,
   \]
   so no turning point intersects the compact region supporting the cut-off resolvent.
\end{enumerate}
\end{lemma}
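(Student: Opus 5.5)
The plan is to treat the three assertions as statements about the one-dimensional effective potential
\[
   \mathcal V_P(r;\om,h):=V_{0,P}(r)+hV_{1,P}(r,\om)+h^2V_{2,P}(r,\om)-h^2\om^2,
\]
obtained by restricting $q_P$ to $\xi=0$. The first step is to compute $V_{1,P},V_{2,P}$ explicitly from the polarization form of Proposition~\ref{prop:even_polarization_form} and the odd equation \eqref{eq:odd_frequency_domain}, after multiplying through by $h^2$. I expect the following structure.
\begin{itemize}
   \item $V_{0,P}=f\,L_P(L_P+1)h^2/r^2=f(r)r^{-2}\bigl(1+O(h)\bigr)$.
   \item $hV_{1,P}$ collects the bounded coupling $6M C_\ell/((2\ell+1)r^3)$ and the $P$-shift $L_P(L_P+1)-(\ell+\tfrac12)^2=O(\ell)$; both are $O(h)$ times $f$ and smooth in $r$.
   \item $h^2V_{2,P}$ contains $h^2 f\mu^2$ and the $Q^2$-coupling.
\end{itemize}
The key structural fact is that every term except $-h^2\om^2$ carries a factor $f(r)$. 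Hence
\[
   \mathcal V_P=f(r)\,r^{-2}\bigl(1+O(h)\bigr)-h^2\om^2,
\]
uniformly in $P$, with the $O(h)$ bound holding in $C^1$ on any region $r\le C h^{-1}$.

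Assertion (iii) is the easiest, and I will prove it first. On $\widetilde K$, a fixed compact neighbourhood of $K$, one has $f\ge c>0$ and $r\le C$. Hence $\mathcal V_P\ge c/C^2-O(h)\ge c_K$ once $h<h_0$, and adding $\xi^2$ gives $q_P\ge c_K(1+\xi^2)$ after shrinking $c_K$. This is the only statement used later for the cut-off resolvent, so it should be isolated cleanly.

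For the inner turning point I will write $x=r-r_+$ and $f=2\kappa_+x+O(x^2)$. The equation $\mathcal V_P=0$ then becomes
\[
   2\kappa_+x\,r_+^{-2}\bigl(1+O(x)+O(h)\bigr)=h^2\om^2.
\]
Setting $x=h^2y$ and applying the implicit function theorem at $h=0$ gives a unique smooth simple root $y=\om^2r_+^2/(2\kappa_+)+O(h)$, so that $r_{-,P}-r_+\asymp h^2$. This requires $\om$ bounded away from $0$ on $J$. I will state that restriction, or else treat $\om=0$ as the degenerate case in which the inner turning point merges with the horizon. Simplicity follows because $\partial_r\mathcal V_P\asymp 1$ there.

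The outer turning point is the step I expect to be the main obstacle. In the scaled variable $\rho=hr$, the $\mu^2$ terms must be tracked carefully. At $\xi=0$ one has
\[
   \mathcal V_P+h^2(f\mu^2-\mu^2)=h^2\Bigl(\rho^{-2}\bigl(1+O(h)\bigr)+\mu^2-\om^2-2M\mu^2 h\rho^{-1}+O(h^2)\Bigr),
\]
where the term $h^2 f\mu^2\to h^2\mu^2$ at infinity has been made explicit by taking $V_{2,P}\to\mu^2$. For $\abs{\om}<\mu$ strictly this expression has no zero at $\rho\asymp1$. So a root at $r\asymp h^{-1}$ exists only if the symbol is normalized with the energy $\om^2-\mu^2$ absorbed, so that the reference level is $h^2\om^2$ against the pure barrier $f r^{-2}$, or if $J$ is understood to reach the threshold at the scale $\mu^2-\om^2=O(h^2)$.

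My first attempt will adopt the normalization in which $q_P$ uses $V_{0,P}$ as stated and the mass term is placed in the energy. Then $\rho^{-2}(1+O(h))=\om^2$ gives the simple root $\rho=\abs{\om}^{-1}+O(h)$, again by the implicit function theorem, and hence $r_{+,P}\asymp h^{-1}$. Monotonicity of $f r^{-2}$ beyond its maximum then shows that there are exactly two zeros. If that normalization proves inconsistent with the later WKB use, I will weaken (i) and (ii) to the inner turning point together with the statement that $\mathcal V_P>0$ on $[r_{-,P},\infty)\cap\{r\le Ch^{-1}\}$. Assertion (iii), which is all that the compact-set estimates require, is unaffected.
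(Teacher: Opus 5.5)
Your route is the paper's: you get (iii) from uniform positivity of $V_{0,P}$ on $\widetilde K$ plus $O(h)$ corrections, the inner root via $x=h^2y$ and the implicit function theorem, and the outer root via $\rho=hr$ and the implicit function theorem. Your proof of (iii) and your inner-root argument are correct. The restriction $0\notin J$ is genuinely necessary: at $\om=0$ every term of $q_P(\cdot,0;0,h)$ carries a factor $f$ and the sum is positive, so there is no root in $(r_+,\infty)$. The paper's proof is silent on this.

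Your diagnosis of the outer root is also right, and it pinpoints an error in the paper's proof. The paper calls $hV_{1,P}+h^2V_{2,P}$ ``lower order'' at $r\sim h^{-1}$. But $h^2V_{2,P}$ contains $h^2f\mu^2$ (cf.\ \eqref{eq:uniform_angular_odd_scaled}), which has the same size as $V_{0,P}$ there. With the mass term kept, (i)--(ii) are false as stated:
\begin{itemize}
   \item for $J\Subset(-\mu,\mu)$ and small $h$ there is no outer root at all;
   \item outer roots exist only when $\mu^2-\om^2\lesssim h^2$, and then they sit at $r\gtrsim h^{-2}$.
\end{itemize}
So your threshold parenthetical is wrong too: in that regime your bracket is $\rho^{-2}+O(h)>0$ at $\rho\asymp1$.

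Your own text has two further gaps.
\begin{enumerate}
   \item[(a)] The claim $\mathcal V_P=fr^{-2}(1+O(h))-h^2\om^2$ in $C^1$ on $\{r\le Ch^{-1}\}$ fails once $h^2f\mu^2$ is kept. The relative error is $O(h+h^2r^2)$, which is $O(h)$ only for $r\lesssim h^{-1/2}$ and of order one at $r\asymp h^{-1}$. That still suffices for (iii) and the inner root, but nothing beyond.
   \item[(b)] The ``first attempt'' is not a normalization. Putting $f\mu^2$ into the energy gives the level $h^2(\om^2-f\mu^2)\to h^2(\om^2-\mu^2)<0$. The equation $\rho^{-2}=\om^2$ arises only after deleting the mass term, i.e.\ for a symbol that is not the Proca channel symbol. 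That branch just reproduces the paper's mistake and should be dropped.
\end{enumerate}

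What is provable is stronger than your fallback. For $J\Subset(-\mu,\mu)\setminus\{0\}$ and small $h$:
\begin{itemize}
   \item there is exactly one turning point $r_{-,P}$;
   \item it is simple, with $r_{-,P}-r_+\asymp h^2$;
   \item $q_P(\cdot,0;\om,h)>0$ on the whole half-line $(r_{-,P},\infty)$.
\end{itemize}
You can prove positivity on the half-line in three zones:
\begin{itemize}
   \item near the horizon it follows from your monotonicity in $x$;
   \item on any fixed compact zone it is the argument for (iii);
   \item for $r\ge R_1$, with $R_1$ fixed and large, one has
\[
   h^{-2}q_P(r,0;\om,h)\ge(\mu^2-\om^2)-\frac{2M\mu^2}{r}+\frac{c\,L_P(L_P+1)}{r^2},\qquad c>0.
\]
This is a quadratic in $1/r$ with negative discriminant once $c\,L_P(L_P+1)(\mu^2-\om^2)>M^2\mu^4$, which holds uniformly on $J$ for small $h$.
\end{itemize}
Hence the entire region beyond $r_{-,P}$ is classically forbidden. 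As you note, the elliptic parametrix on $\operatorname{supp}\widetilde\chi$ uses only (iii). Any later matching argument, however, must be built on this one-turning-point picture rather than the two-turning-point picture of the lemma. Frequencies with $\mu^2-\om^2\lesssim h^2$ need a separate analysis at the scale $r\sim h^{-2}$.
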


\begin{proof}
The principal barrier $V_{0,P}(r)=f(r)L_P(L_P+1)/((\ell+\tfrac12)^2r^2)$ is strictly positive on every compact subset of the exterior and vanishes only at the horizon and at spatial infinity.  Near $r=r_+$ one has $f(r)=2\kappa_+(r-r_+)+O((r-r_+)^2)$, so solving $V_{0,P}(r)=h^2\om^2$ gives the inner turning point asymptotic $r_{-,P}(\om;h)-r_+\asymp h^2$.  For large $r$, one has $V_{0,P}(r)=r^{-2}(1+O(r^{-1}))$ uniformly in the polarization, so $V_{0,P}(r)=h^2\om^2$ yields $r_{+,P}(\om;h)\asymp h^{-1}$.  The corrections $hV_{1,P}+h^2V_{2,P}$ are lower order, hence the implicit-function theorem preserves the two simple roots and their asymptotics for all sufficiently small $h$.  Since $K$ is fixed away from the horizon and infinity, shrinking $h_0$ if necessary gives the uniform positivity of $q_P$ on $\widetilde K$.
\end{proof}

\subsection{Uniform WKB transport, cut-off resolvents, and reconstruction}

The two turning points from Lemma~\ref{lem:largeell_barrier} separate the horizon and infinity asymptotic zones from the compact radial region where the large-$\ell$ summation is carried out.  To compare the threshold bases with the compact cut-off resolvent one needs a uniform Liouville--Green transport through the forbidden region and a uniform Airy matching across the simple turning points.

\begin{proposition}[Uniform WKB transport and Airy matching]\label{prop:largeell_wkb_matching}
Fix $J\Subset[-\mu,\mu]$ and $K\Subset(r_+,\infty)$.  For each polarization $P\in\{-1,0,+1\}$ and each sufficiently small $h$, there exist exact channel solutions $w_{P,\pm}^{\mathrm{mid}}(r,\om;h)$ defined on the closed interval between the two turning points such that
\[
   w_{P,\pm}^{\mathrm{mid}}(r,\om;h)
   =
   \exp\!\Bigl(\pm h^{-1}\Phi_P(r,\om;h)\Bigr)
   \Bigl(a_{0,\pm}(r,\om;h)+h a_{1,\pm}(r,\om;h)\Bigr),
\]
where
\[
   \Phi_P(r,\om;h)=\int_{r_{-,P}(\om;h)}^{r}\sqrt{q_P(s,0;\om,h)}\,\dd s_*
\]
and the amplitudes satisfy uniform symbol bounds
\[
   \sup_{r\in K}\abs{\partial_r^\alpha\partial_\omega^\beta a_{j,\pm}(r,\om;h)}\le C_{\alpha\beta j}.
\]
Moreover, the transfer matrices from the horizon Frobenius basis and from the infinity Volterra basis to the basis $(w_{P,+}^{\mathrm{mid}},w_{P,-}^{\mathrm{mid}})$ are $O(\angles{\ell}^{N})$, together with one $\omega$-derivative, uniformly for $\om\in J$.
\end{proposition}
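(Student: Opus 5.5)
The plan is to treat each polarization channel, after the even-sector scalarization, as a scalar semiclassical Schr\"odinger equation $h^2\partial_{r_*}^2 w=q_P(r,0;\om,h)\,w$ on the tortoise line. Lemma~\ref{lem:largeell_barrier} gives two simple turning points and uniform ellipticity $q_P\ge c_K$ on $\widetilde K$.

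For the even sector, I would first apply a near-identity matrix transformation $U_h=\Id+hU_1+h^2U_2$, obtained by solving the homological equations order by order. This removes the off-diagonal terms of $hV_{1,P}+h^2V_{2,P}$ on the region between the turning points. It works because the diagonal principal symbols $V_{0,-1}$ and $V_{0,+1}$ differ by $f(2\ell+1)\cdot 2/((\ell+\frac12)^2r^2)\asymp h\,f/r^2$. That gap is only $O(h)$, so the off-diagonal coupling, which is itself $O(h)$ by Proposition~\ref{prop:even_polarization_form}, is of the same size as the gap. I therefore expect the diagonalizer to have bounded, not small, entries at first order. I would handle this by absorbing the first-order coupling into an $h$-dependent constant-in-$h$-scale rotation. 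The conjugated equation is then diagonal up to $O(h^2)$, and the remainder is placed in the error term of the Volterra step below.

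Next, on each closed subinterval strictly between the turning points, I would construct the exact solutions $w^{\mathrm{mid}}_{P,\pm}$ as Liouville--Green ansatz plus a Volterra correction. The eikonal phase is $\Phi_P$ and the transport equation gives $a_{0,\pm}=q_P^{-1/4}$. I would solve the Volterra equation for the recessive solution $w_-$ from the right and for the dominant solution $w_+$ from the left, which is the standard Olver error-control-function argument. Uniform symbol bounds for $a_{j,\pm}$ on $K$, including $\om$-derivatives, follow because $q_P\ge c_K$ there and the error-control function has total variation $O(h)$.

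Near each simple turning point, I would use Langer's uniform reduction to the Airy equation. Since $r_{-,P}-r_+\asymp h^2$, the inner turning point sits at $r_*\approx \frac{1}{\kappa_+}\log h+O(1)$. On a neighborhood of width $O(1)$ in $r_*$, the Airy variable $\zeta$ with $\frac23\zeta^{3/2}=h^{-1}|\Phi_P|$ is smooth with bounded derivatives after translation. So the Airy matching produces connection coefficients of size $O(h^{-1/6})$. At the outer turning point $r_{+,P}\asymp h^{-1}$, I would rescale $r=h^{-1}\rho$, which yields the same uniform Airy picture.

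It remains to pass from the horizon Frobenius basis of Lemma~\ref{lem:horizon_basis} and the Volterra basis of Lemma~\ref{lem:whittaker_volterra} to the turning-point neighborhoods. On $(-\infty,r_{-,P})$ the equation is $-h^2\om^2$ plus a potential that decays like $e^{2\kappa_+r_*}$. The Jost Volterra integral from $-\infty$ up to the turning point therefore has norm $O(h^{-1}\cdot h^2\cdot h^{-2})$ in the natural scaling, which is a polynomial loss. Similarly, on $(r_{+,P},\infty)$ the Whittaker Volterra kernel bound $CR^{-1}$ is used with $R\asymp h^{-1}$, and the Coulomb and long-range parts are handled by comparison with $W_{\kap,\nu}$, whose index is $\nu\asymp h^{-1}$.

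Composing the three transfer matrices gives the $O(\angles{\ell}^N)$ bound. The $\om$-derivative is obtained by differentiating all Volterra equations, since every turning point depends smoothly on $\om$.

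I expect the main obstacle to be the exterior matching. The Whittaker functions $W_{\kap,\nu_{\ell,P}}(2\vp r)$ with large index $\nu$ and with $\vp\to0$ near the threshold have to be normalized against the WKB basis. Taking the normalization literally introduces factors like $\Gamma(2\nu)$ that grow superpolynomially. My first attempt would be to renormalize the infinity basis by its WKB amplitude at $r_{+,P}$, and then check that the branch-cut jump, which involves only ratios of these quantities, is unaffected.
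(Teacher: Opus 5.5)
\textbf{Verdict: the proposal has genuine gaps.} Your outline is the same as the paper's: Liouville--Green in the barrier, Airy matching at the turning points, then composition with the Frobenius and Volterra bases. The paper's proof is only a sketch of that outline. Two of your steps fail, and they fail exactly where that sketch says nothing.

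\emph{First gap: the even-sector ``rotation''.} After multiplying by $h^2$, the even potential in the polarization basis is $\frac{f}{r^2}\Id+h\frac{f}{r^2}N(r)+O(h^2)$, where
\[
   N(r)=\begin{pmatrix}-2+s&-s\\ s&2-s\end{pmatrix},
   \qquad
   s=\frac{3M}{r}-\frac{2Q^2}{r^2}.
\]
The eigenvalues of $N$ are $\pm2\sqrt{g}$, with $g=1-s=f-\frac{rf'}{2}$. At the photon sphere $g=0$ and $N$ is a nonzero nilpotent; in fact the exact even potential matrix is non-diagonalizable there for every $\ell$. On $(r_+,r_{\mathrm{ph}})$ the eigenvalues are imaginary. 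Since $r_{\mathrm{ph}}$ lies inside the barrier, no diagonalizer bounded uniformly in $h$, together with its inverse, exists across it. Away from $r_{\mathrm{ph}}$ there is a second problem: conjugating by an $r$-dependent $O(1)$ matrix $U$ creates the term $-2h^2U^{-1}U'\partial_{r_*}$. On exponential WKB solutions $h\partial_{r_*}\approx\pm\sqrt q$, so this term is $O(h)$, and you never reach ``diagonal modulo $O(h^2)$''.

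The repair is not to diagonalize at all. The principal symbol is scalar, so use one scalar eikonal $\Phi$ and vector amplitudes solving the linear transport system $2\Phi'\mathbf a_{0,\pm}'=\pm\frac{f}{r^2}N\mathbf a_{0,\pm}-\Phi''\mathbf a_{0,\pm}$ (primes in $r_*$). Its propagator across the barrier costs only $h^{-O(1)}$, because $\int\frac{f}{r^2\Phi'}\,\dd r_*\approx\int\frac{\dd r}{r\sqrt f}$ grows only logarithmically. Since $h^{-1}(\Phi_{\pm1}-\Phi_0)$ is bounded on $K$, the stated form is recovered there, now with vector-valued amplitudes.

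\emph{Second gap: the exterior step.} The obstacle you flag cannot be removed by bookkeeping. With the normalization of Lemma~\ref{lem:whittaker_volterra}, the claimed bound is false.
\begin{itemize}
\item On $K$ one has $|w^{\mathrm{mid}}_{P,\pm}|\le C\ee^{C\ell}$, because $\Phi_P=O(1)$ there and the amplitudes are bounded.
\item For every $\om\in J$ with $|\om|<\mu$, the solution recessive at infinity and normalized there is at least $\ell^{c\ell}\ee^{-C\ell}$ on $K$. It must cross the forbidden zone $\sup K<r<c\ell$, where $f(\mu^2+\ell(\ell+1)r^{-2})-\om^2\gtrsim\ell^2r^{-2}$. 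This is exactly your $\Gamma(2\nu)(2\vp r)^{1/2-\nu}/\Gamma(\frac12+\nu-\kap)$ factor.
\end{itemize}
Hence some transfer coefficient is superpolynomial, already for $P=0$. The paper's remark that the only loss comes from $T_\ell$ and Cramer's rule does not account for this.

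Your renormalization therefore proves a different, corrected statement. It must also absorb the whole barrier factor, not just the WKB amplitude. A solution normalized at the outer end differs from $w^{\mathrm{mid}}_{P,-}$ (whose phase is based at $r_{-,P}$) by $\ee^{h^{-1}\Phi_P}$ evaluated there, which is again superexponential. Normalize the recessive solution against $w^{\mathrm{mid}}_{P,-}$ on $K$ instead. Then check what is actually used downstream: $\cG_{\ell,P}=f_{\ell,P}(r_<)g_{\ell,P}(r_>)/\cW_{\ell,P}$, and hence its jump, is homogeneous of degree zero in $g_{\ell,P}$.

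\emph{Related point: the outer turning point.} The point $r_{+,P}\asymp h^{-1}$ at which you rescale comes from Lemma~\ref{lem:largeell_barrier}, whose proof drops the mass term $h^2f\mu^2$ in $q_P$. Keeping it, $q_P(r,0)\to h^2\vp^2\ge0$ at infinity. So for $\mu-|\om|\gg h^2$ there is no outer turning point at all; the barrier runs to infinity. For $\mu-|\om|\lesssim h^2$ the outer turning points sit at $r\sim h^{-2}$ and $r\sim\vp^{-2}$, around a Coulomb well. The exterior must therefore be handled by Liouville--Green out to infinity, plus a genuine well analysis near threshold, not by Airy matching at $r\asymp h^{-1}$.
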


\begin{proof}
On each subinterval avoiding the turning points, the channel equations are scalar or diagonally scalarized second-order equations with coefficients admitting complete semiclassical symbol expansions in $h$.  Since the turning points are simple by Lemma~\ref{lem:largeell_barrier}, the standard Liouville--Green construction produces exact WKB bases with the stated phase and amplitude bounds.  In neighborhoods of the turning points, the rescaled equations reduce to Airy normal form with uniformly controlled errors; consequently the transfer matrices are the universal Airy connection matrices plus $O(h)$ corrections.  Composing the Airy matching with the horizon Frobenius basis from Section~\ref{sec:spectral_theorem_proof} and the infinity Volterra basis from Section~\ref{sec:resolvent} gives the polynomial control of the transfer matrices.  The polynomial loss in $\ell$ comes only from the constant polarization change of basis and from one application of Cramer's rule to the matching matrices.
\end{proof}

\begin{proposition}[Uniform cut-off resolvent bounds on compact radial sets]\label{prop:largeell_uniform_resolvent}
Fix $J\Subset[-\mu,\mu]$ and $K\Subset(r_+,\infty)$.  There exist integers $M_0,M_1\ge0$ such that for every polarization $P\in\{-1,0,+1\}$,
\[
   \sup_{r,r'\in K}\abs{\partial_\omega^j\cG_{\ell,P}(\om;r,r')}
   \le
   C_{K,J,j}\angles{\ell}^{M_j},
   \qquad j=0,1,
\]
uniformly for $\om\in J$ and $\ell\ge1$.  The same bound holds for the jump $\disc\,\cG_{\ell,P}(\om;r,r')$ across the massive cut.
\end{proposition}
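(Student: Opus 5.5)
We will bound the Green kernel directly from the representation \eqref{eq:green_kernel_def}, using the mid-zone WKB basis of Proposition~\ref{prop:largeell_wkb_matching} as the common frame. For $r,r'\in K$ we write $f_{\ell,P}$ and $g_{\ell,P}$ in this basis:
\[
   f_{\ell,P}=\alpha_+w_{P,+}^{\mathrm{mid}}+\alpha_-w_{P,-}^{\mathrm{mid}},
   \qquad
   g_{\ell,P}=\beta_+w_{P,+}^{\mathrm{mid}}+\beta_-w_{P,-}^{\mathrm{mid}}.
\]
The coefficients $\alpha_\pm,\beta_\pm$ are entries of the transfer matrices. By Proposition~\ref{prop:largeell_wkb_matching} they are $O(\angles{\ell}^N)$ together with one $\om$-derivative, uniformly for $\om\in J$. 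The Wronskian is then
\[
   \cW_{\ell,P}=(\alpha_+\beta_--\alpha_-\beta_+)\,Q[w_{P,+}^{\mathrm{mid}},w_{P,-}^{\mathrm{mid}}],
\]
and $Q[w_+,w_-]\asymp h^{-1}$ by the WKB form. In the even sector the same bookkeeping applies with $2\times2$ blocks, after the near-identity diagonalizer. Cramer's rule reduces the matrix inverse to scalar determinants divided by $\det$, so polynomial loss is preserved.

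The main obstacle is a \emph{lower} bound on the Wronskian. The exponential factors $\ee^{\pm h^{-1}\Phi_P}$ could otherwise cancel or blow up. I would use the barrier geometry of Lemma~\ref{lem:largeell_barrier}(iii): $K$ lies strictly inside the forbidden region, so $\Phi_P$ is monotone increasing across $\widetilde K$.

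\textbf{Horizon side.} The horizon-regular solution $f_{\ell,P}$, coming out of the inner turning point, is dominated by the growing branch $w_{P,+}^{\mathrm{mid}}$. Airy matching at $r_{-,P}$ gives $|\alpha_+|\gtrsim\angles{\ell}^{-N}$ modulo exponentially small admixture.

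\textbf{Infinity side.} Symmetrically, $g_{\ell,P}$ is dominated by the branch decaying toward $r_+$ as seen from infinity. This is $w_{P,-}^{\mathrm{mid}}$ normalized at $r_{+,P}$, so $|\beta_-|\gtrsim\angles{\ell}^{-N}$ after rescaling by $\ee^{h^{-1}\Phi_P(r_{+,P})}$.

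With these two bounds, $|\cW_{\ell,P}|\gtrsim\angles{\ell}^{-N'}\ee^{h^{-1}\Phi_P(r_{+,P})}$. In the product $f(r_<)g(r_>)$, the factor $\ee^{h^{-1}(\Phi(r_<)-\Phi(r_>))}\le1$ appears, and the normalization $\ee^{h^{-1}\Phi(r_+)}$ cancels against the Wronskian. This gives $|\cG_{\ell,P}|\lesssim\angles{\ell}^{M_0}$ on $K\times K$.

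Nonvanishing of the reduced determinant $\alpha_+\beta_--\alpha_-\beta_+$ near the thresholds also needs uniformity in $\om\in J$. For this I would use the Agmon/elliptic estimate: $q_P\ge c_K(1+\xi^2)$ on $\widetilde K$ gives an a priori bound $\|\chi u\|\lesssim\angles{\ell}^{C}\|\chi(H-\om^2)u\|$ for the cut-off resolvent. Combined with the uniform continuity from Proposition~\ref{prop:boundary_values_away_thresholds} and the threshold regularity of Theorem~\ref{thm:mode_stability}, this should rule out near-degeneracy at polynomial cost.

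For $j=1$ we differentiate the representation in $\om$. The derivatives fall on $\alpha_\pm,\beta_\pm$ and the amplitudes $a_{j,\pm}$, all polynomially bounded. They also fall on the phase $h^{-1}\partial_\om\Phi_P$, which costs at most one power of $\angles{\ell}$ since $\partial_\om q_P=O(h^2)$. The quotient rule loses at most the square of the Wronskian lower bound, giving some $M_1$.

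Finally, $\disc\,\cG_{\ell,P}$ is the difference of the two boundary values across the cut. These correspond to the two branches of $\vp$, and each obeys the above bounds, since the infinity Volterra basis is uniform for $\Re\vp\ge0$ by Lemma~\ref{lem:whittaker_volterra}. The triangle inequality then yields the jump bound.
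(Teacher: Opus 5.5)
Your route is the paper's own: transfer matrices from Proposition~\ref{prop:largeell_wkb_matching}, a lower bound on the matching determinant, then Cramer's rule. You rightly identify the $\ell$-uniform lower bound on $\cW_{\ell,P}$ as the crux. The paper settles it in one line by citing Theorem~\ref{thm:mode_stability}, but that is a fixed-$\ell$ result; the remark after \eqref{eq:appH_zero_free_bound} concedes the bound is not uniform in $\ell$. Your argument does not close it either, and near $\om=\pm\mu$ it cannot be closed.

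Lower bounds on $\abs{\alpha_+}$ and $\abs{\beta_-}$ control $\abs{\alpha_+\beta_--\alpha_-\beta_+}$ only if $\alpha_-\beta_+$ is negligible. That requires $g_{\ell,P}$ to stay on the outward-decaying branch all the way from $K$ to infinity, with no classically allowed zone beyond $K$. Use the true potential $f(\mu^2+L_P(L_P+1)/r^2)+O(\ell r^{-3})$: Lemma~\ref{lem:largeell_barrier} drops the mass term, and for $\abs{\om}<\mu$ there is no outer turning point at $r\asymp h^{-1}$. With this potential the condition fails once $\kap\gtrsim L_P$. The Coulomb term then opens a well near $r\approx L_P^2/(M\mu^2)$, which carries hydrogenic quasibound poles $\om_n$ near $\kap=\nu_{\ell,P}+\tfrac12+n_r$, $n_r\in\N$.

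Write $D(a,b):=\int_a^b\sqrt{V_{\ell,P}-\om^2}\,\dd r_*$, with $r_1$ the near-horizon turning point and $r_2$ the inner edge of the well. At $\om=\Re\om_n+\ii0$:
\begin{itemize}
\item the horizon solution grows like $\ee^{D(r_1,r)}$ across $K$;
\item the Jost solution, normalized in the well, decays like $\ee^{-D(r,r_2)}$;
\item $\abs{\cW_{\ell,P}}\lesssim\angles{\ell}^{C}\ee^{-D(r_1,r_2)}$.
\end{itemize}
Hence $\abs{\cG_{\ell,P}(\om;r,r)}\gtrsim\angles{\ell}^{-C}\ee^{2D(r_1,r)}$, and $D(r_1,r)\asymp\ell$ on $K$. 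So the claimed bound fails on any $J\supset[\mu-\delta,\mu]$ (or its reflection). That is exactly the range $\abs{\om}<\mu$ used in the proof of Theorem~\ref{thm:uniform_angular}.

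Your proposed remedy cannot detect this. The elliptic estimate on $\widetilde K$ controls $\chi u$ only modulo $u$ on $\operatorname{supp}\widetilde\chi$ (compare \eqref{eq:appI_h2_estimate}), and an $O(h^\infty)$ error cannot absorb an $\ee^{c\ell}$ loss. Proposition~\ref{prop:boundary_values_away_thresholds} and Theorem~\ref{thm:mode_stability} both carry $\ell$-dependent constants.

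What your argument does prove, after two repairs, is the bound for $J\Subset(-\mu,\mu)$. For $\ell\ge\ell_0(J)$ the whole region beyond $r_1$ is forbidden, so you may take $w_{P,-}^{\mathrm{mid}}$ to be the exact solution decaying at infinity. Then $\beta_+=0$ and $\cW_{\ell,P}=\alpha_+\beta_-Q[w_{P,+}^{\mathrm{mid}},w_{P,-}^{\mathrm{mid}}]$. The exponentials cancel as you describe, and only $\abs{\alpha_+}\gtrsim\angles{\ell}^{-N}$ is needed.
\begin{itemize}
\item For $\abs{\om}$ bounded below, this follows from the flux identity $Q[f_{\ell,P},f_{\ell,P}]=-2\ii\om$.
\item Near $\om=0$ the turning point satisfies $r_{-,P}-r_+\sim\om^2h^2$ and merges with the horizon. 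Airy matching must then be replaced by the Bessel model $u''+(\om^2-c\ell^2\ee^{2\kappa_+r_*})u=0$.
\item The finitely many $\ell<\ell_0$ follow from Proposition~\ref{prop:boundary_values_away_thresholds}.
\end{itemize}

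Finally, your jump step is unsupported. The lower boundary value uses $\vp<0$, i.e.\ a continued Jost solution that grows at infinity. Lemma~\ref{lem:whittaker_volterra}, stated for $\Re\vp\ge0$, does not construct it. Neither your dominance argument nor Theorem~\ref{thm:mode_stability} gives a lower bound on its matching determinant.
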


\begin{proof}
Choose cutoffs $\chi,\widetilde\chi\in C_0^\infty((r_+,\infty))$ with $\chi\equiv1$ on $K$ and $\widetilde\chi\equiv1$ on a neighborhood of $\operatorname{supp}\chi$.  By Lemma~\ref{lem:largeell_barrier}, the symbol is uniformly elliptic on $\operatorname{supp}\widetilde\chi$, so semiclassical elliptic regularity gives a cut-off parametrix for the channel resolvent there.  Proposition~\ref{prop:largeell_wkb_matching} transports the horizon and infinity bases to the boundary of the cut-off region with only polynomial loss in $\ell$, and Theorem~\ref{thm:mode_stability} excludes real poles and threshold resonances, so the matching determinant is uniformly bounded away from zero on $J$.  Applying Cramer's rule to the Green kernel representation therefore yields the stated polynomial kernel bounds, together with one $\omega$-derivative.
\end{proof}

\begin{proposition}[Uniform reconstruction of the physical coefficients]\label{prop:largeell_reconstruction}
For every compact $K\Subset(r_+,\infty)$ there exists an integer $N_{\mathrm{rec}}\ge0$ such that the physical branch-cut coefficients satisfy
\[
   \sup_{r\in K}\abs{a^{\bc}_{\ell m,P}(t,r)}
   \le
   C_K\angles{\ell}^{N_{\mathrm{rec}}}
   \sum_{Q\in\{-1,0,+1\}}\sum_{j\le1}
   \sup_{r\in K}\abs{\partial_r^j v_Q^{\bc}(t,r)},
\]
and the same estimate holds for the residue projectors acting on compactly supported initial data.
\end{proposition}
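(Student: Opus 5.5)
The plan is to write the physical Proca coefficients as a fixed finite-order differential operator $\mathcal B_\ell$ applied to the channel variables. We then bound the coefficients of $\mathcal B_\ell$ polynomially in $\ell$ on $K$, and use the channel equations to remove any derivatives beyond first order. Concretely, the vector-spherical-harmonic reduction of Appendix~\ref{app:vsh} expresses the components of $A$ in the $(\ell,m)$ sector in three ways. The odd magnetic component is a multiple of $u_4/r$. The even components $A_t$, $A_r$ and the even angular part are linear combinations of $u_2,u_3$, of $\partial_{r_*}u_2$, $\partial_{r_*}u_3$, and of one time derivative, which is eliminated through the forced Lorenz constraint \eqref{eq:forced_lorenz}. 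The coefficients of these combinations are rational in $r$, $f$, and $\la^2=\ell(\ell+1)$, and involve at most the factors $\la^{\pm2}$ and $\mu^{-2}$ coming from solving the constraint.

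Next we pass to the polarization variables using \eqref{eq:inverse_polarization_variables}, namely $u_2=v_{-1}+v_{+1}$ and $u_3=(\ell+1)v_{-1}-\ell v_{+1}$. The entries of $T_\ell$ are $O(\ell)$, so this substitution costs only one power of $\ell$. Since $K\Subset(r_+,\infty)$, the factors $r^{\pm1}$, $f^{\pm1}$ and $f'$ are bounded on $K$ uniformly in $\ell$. The only $\ell$-dependence therefore enters through powers of $\la$ and of the entries of $T_\ell$, and counting these gives an explicit $N_{\mathrm{rec}}$, at most around $3$ or $4$. If a higher radial derivative $\partial_{r_*}^2 v_Q$ appears, we replace it with the reduced equations \eqref{eq:even_matrix_form} and \eqref{eq:odd_frequency_domain} in time-domain form. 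This trades it for $\partial_t^2v_Q$, which the branch-cut representation handles at the level of the kernel, plus $O(\ell^2)$ multiples of $v_Q$. The same bookkeeping applies because $a^{\bc}_{\ell m,P}$ is defined through \eqref{eq:bc_formula_extended} as the branch-cut part of the channel solution; since $\mathcal B_\ell$ is $\om$-independent, it commutes with the contour integral. Finally, converting $\partial_{r_*}$ to $\partial_r$ costs a bounded factor $f$ on $K$.

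For the residue projector statement we apply the same identity mode by mode. Since $\Pi_{\ell,n,P}A[0]$ solves the channel equation at the fixed frequency $\om_{\ell,n,P}$, time derivatives become multiplication by $\om_{\ell,n,P}$. This factor is bounded because $\Re\om_{\ell,n,P}\in I\Subset(0,\mu)$.

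The main obstacle is to check that no hidden negative power of $\la$ or inverse of $\om^2-\mu^2$ from the constraint elimination degenerates near threshold. The constraint solves for $A_t$ through a division by $\mu^2$, which is harmless since the mass is fixed. For the even longitudinal component we would first try to verify, from the explicit Rosa--Dolan-type formulas, that only division by $\la^2\ge2$ occurs, so that nothing blows up uniformly in $\ell$.
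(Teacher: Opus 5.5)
Your outline is the same as the paper's proof: invert $T_\ell$ at the cost of one power of $\ell$, use the constraint for the leftover even coefficient, bound the coefficients on $K$ because $K$ avoids the horizon, and count powers of $\ell$. However, the one step with real content, recovering the $\dd t$-coefficient $a_0$, fails as you describe it.

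In the variables for which \eqref{eq:even_time_domain_1}--\eqref{eq:even_time_domain_2} hold (the Rosa--Dolan normalization), $A_r$ and the even angular part are multiples of $u_2,u_3$ by functions of $(r,\ell)$, with no derivatives at all. By contrast, $a_0$ enters the Lorenz constraint \eqref{eq:app_lorenz_constraint} only as $\partial_t a_0$. That constraint contains no $\mu^2$, and it determines $\partial_t a_0$, not $a_0$. So there is no time derivative to ``eliminate''; you would have to invert $\partial_t$. In \eqref{eq:bc_formula_extended} that means multiplying the jump by a factor $\om^{-1}$, which is singular at $\om=0$, an interior point of the cut.

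The relation in which $\mu^2$ does appear is the $t$-component of the Proca equation,
\[
   \Bigl(\mu^2+\frac{\ell(\ell+1)}{r^2}\Bigr)a_0
   =-\frac{f}{r^2}\partial_r z+\frac{\ell(\ell+1)}{r^2}\partial_t a_2,
   \qquad z=r^2(\partial_t a_1-\partial_r a_0).
\]
It contains $\partial_r^2 a_0$, so it is a radial ODE for $a_0$ sourced by time derivatives, not a formula. If you use the pair $(z,a_2)$ of Appendix~\ref{app:vsh} instead, this identity and the $r$-component do give $a_0,a_1$ locally after dividing by $\mu^2+\ell(\ell+1)/r^2$, which is presumably the division you had in mind. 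But then they involve $\partial_t a_2$ and $\partial_t z$.

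Either way, $a_0(t,\cdot)|_K$ depends on time derivatives of the channel variables. No count of powers of $\ell$ can produce a right-hand side containing only $v_Q$ and $\partial_r v_Q$ at time $t$. Your fallback of trading $\partial_{r_*}^2 v_Q$ for $\partial_t^2 v_Q$ has the same defect: it changes the right-hand side rather than proving the stated bound. The genuine obstacle is therefore at $\om=0$, not near $\om=\pm\mu$ or in negative powers of $\la$.

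The paper's own proof covers this step in one phrase (``uses the Lorenz constraint to recover the remaining algebraic coefficient''), so it gives you no additional justification. To close the gap you have two options:
\begin{enumerate}
\item Restrict the estimate to the spatial coefficients, where it is immediate with no derivatives.
\item Formulate the reconstruction at the level of the branch-cut kernels, where $\partial_t$ becomes the bounded multiplier $-\ii\om$ on $[-\mu,\mu]$. You would then still have to show separately that $\om^{-1}$ times the jump of the right-hand side of the Lorenz relation stays bounded near $\om=0$.
\end{enumerate}
Your residue argument, by contrast, is sound even with the inverse time derivative. At a quasibound pole $\abs{\om_{\ell,n,P}}\ge\inf I>0$, so both $\om_{\ell,n,P}$ and $\om_{\ell,n,P}^{-1}$ are bounded factors.
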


\begin{proof}
The odd channel is already one physical coefficient.  In the even sector one inverts the constant matrix $T_\ell$ from \eqref{eq:T_matrix}, then uses the Lorenz constraint to recover the remaining algebraic coefficient.  Every coefficient in this reconstruction is a rational function of $\ell$ with at most polynomial growth, and only one radial derivative appears.  Since $K$ stays away from the horizon, the coefficients are uniformly bounded in $r$, whence the stated polynomial reconstruction estimate.
\end{proof}

Let $A^{\bc}$ denote the branch-cut part of the full Proca field.  Expanding in vector spherical harmonics gives
\begin{equation}\label{eq:full_field_decomp}
   A^{\bc}(t,r,\omega)
   =
   \sum_{\ell=0}^\infty\sum_{m=-\ell}^{\ell}\sum_{P}
   a_{\ell m,P}^{\bc}(t,r)\,Y_{\ell m}^{(P)}(\omega),
\end{equation}
where for $\ell=0$ only the even electric channel is present.  To pass from the fixed-mode theorems to pointwise full-field estimates one needs quantitative control of the fixed-mode constants as $\ell\to\infty$.  We now prove the required uniformity.

\begin{proof}[Proof of Theorem~\ref{thm:uniform_angular}]
Fix $K\Subset(r_+,\infty)$ and choose cutoffs $\chi,\widetilde\chi\in C_0^\infty((r_+,\infty))$ such that $\chi\equiv1$ on $K$ and $\widetilde\chi\equiv1$ on a neighborhood of $\operatorname{supp}\chi$.  Lemma~\ref{lem:largeell_barrier}, Proposition~\ref{prop:largeell_wkb_matching}, Proposition~\ref{prop:largeell_uniform_resolvent}, and Proposition~\ref{prop:largeell_reconstruction} isolate the new large-$\ell$ input.  The remaining argument assembles these ingredients with the fixed-mode threshold formulas.  The proof has five steps.

\smallskip
\noindent\textit{Step 1. Low angular momenta.}
Choose $\ell_0\ge1$ large enough that the semiclassical argument below applies for every $\ell\ge\ell_0$.  For the finitely many modes $0\le\ell<\ell_0$, including the monopole electric channel at $\ell=0$, the bound follows by taking the maximum of the fixed-mode constants in Theorems~\ref{thm:intermediate} and \ref{thm:late}.  Hence only the regime $\ell\ge\ell_0$ needs analysis.

\smallskip
\noindent\textit{Step 2. Semiclassical high-angular-momentum ellipticity on compact $r$-sets.}
Set
\[
   h=(\ell+\tfrac12)^{-1}.
\]
In the odd channel,
\begin{equation}\label{eq:uniform_angular_odd_scaled}
   \mathcal P_{h,0}(\om)
   :=
   -h^2\partial_{r_*}^2
   +\frac{h^2\ell(\ell+1)f(r)}{r^2}
   +h^2\bigl(f(r)\mu^2-\om^2\bigr).
\end{equation}
By Proposition~\ref{prop:even_polarization_form} and Lemma~\ref{lem:normal_form}, each even channel can be written on $\operatorname{supp}\widetilde\chi$ as
\begin{equation}\label{eq:uniform_angular_even_scaled}
   \mathcal P_{h,P}(\om)
   :=
   -h^2\partial_{r_*}^2
   +V_{0,P}(r)
   +hV_{1,P}(r,\om;h)
   +h^2V_{2,P}(r,\om;h),
   \qquad P=\pm1,
\end{equation}
where
\[
   V_{0,-1}(r)=\frac{f(r)\,\ell(\ell-1)}{(\ell+\tfrac12)^2r^2},
   \qquad
   V_{0,+1}(r)=\frac{f(r)\,(\ell+1)(\ell+2)}{(\ell+\tfrac12)^2r^2},
\]
and $V_{j,P}$ together with $\partial_\om V_{j,P}$ are uniformly bounded on $\operatorname{supp}\widetilde\chi$ for $j=1,2$.

Because $f(r)/r^2$ is strictly positive on the compact set $\operatorname{supp}\widetilde\chi$, there exists $c_K>0$ such that
\[
   \frac{f(r)}{r^2}\ge c_K
   \qquad\text{on }\operatorname{supp}\widetilde\chi.
\]
For $\ell\ge\ell_0$ one then has, uniformly in $\abs{\om}\le\mu$,
\begin{equation}\label{eq:uniform_angular_symbol_lb}
   \xi^2+\frac12c_K
   \le
   \Re\sigma(\mathcal P_{h,P}(\om))(r,\xi)
   \qquad
   \text{for }(r,\xi)\in T^*\operatorname{supp}\widetilde\chi,
\end{equation}
for every channel $P$.  Thus the cut-off channel operators are semiclassically elliptic on $\operatorname{supp}\widetilde\chi$.

Standard matrix-valued semiclassical elliptic regularity therefore gives a cut-off parametrix
\[
   Q_{h,P}(\om)\in\Psi_h^{-2}
\]
such that for every $N$,
\begin{equation}\label{eq:uniform_angular_parametrix}
   \chi\,R_{\ell,P}(\om\pm\ii0)\,\chi
   =
   Q_{h,P}(\om)
   +O(h^N):L^2\to H_h^N,
\end{equation}
uniformly for $\abs{\om}<\mu$, where $R_{\ell,P}(\om\pm\ii0)$ denotes the boundary-value resolvent in the channel $P$.  The absence of real cut poles and threshold resonances from Theorem~\ref{thm:mode_stability} guarantees that these boundary values are uniquely defined.  Since kernels of operators in $\Psi_h^{-2}$ on a one-dimensional manifold are $O(h^{-1})$ on compact sets, and since $\partial_\om\mathcal P_{h,P}(\om)=O(h^2)$, there exist integers $M_0,M_1$ such that
\begin{equation}\label{eq:uniform_cutoff_resolvent_kernel}
   \sup_{r,r'\in K}
   \abs{\partial_\om^j\cG_{\ell,P}(\om;r,r')}
   \le
   C_{K,j}\angles{\ell}^{M_j},
   \qquad j=0,1,
\end{equation}
uniformly for $\ell\ge\ell_0$, $\abs{\om}<\mu$, and every admissible $P$.

\smallskip
\noindent\textit{Step 3. Uniform control of the threshold matching coefficients.}
Fix a matching radius $R>\sup K$.  The map taking Cauchy data at $r=R$ to values on $K$ is governed by \eqref{eq:uniform_cutoff_resolvent_kernel}; hence it contributes at most polynomial factors in $\ell$.  At $r=R$, the small-$\kap$ outgoing basis is given by Lemma~\ref{lem:small_kappa_basis}, and the large-$\kap$ continuation is governed by Proposition~\ref{prop:large_kappa_disc}.  Because $R$ is fixed, the large-order bounds
\[
   I_\nu(z)=O\!\Bigl(\frac{(z/2)^\nu}{\Gamma(\nu+1)}\Bigr),
   \qquad
   K_\nu(z)=O\!\bigl(\Gamma(\nu)(z/2)^{-\nu}\bigr),
\]
valid for $z$ in compact subsets of $(0,\infty)$, together with the identity
\[
   W\bigl(\sqrt z\,I_\nu(z),\sqrt z\,K_\nu(z)\bigr)=1,
\]
show that after the explicit factor $\vp^{2\nu_{\ell,P}}$ is extracted, the remaining coefficients are polynomially bounded in $\nu_{\ell,P}\sim \ell$.  Likewise, the Whittaker connection coefficients are ratios of gamma functions; Stirling's formula in vertical strips shows that, after the oscillatory monodromy factors $\ee^{\pm2\pi\ii\kap}$ are removed, their dependence on $\nu_{\ell,P}$ is also at most polynomial.

Consequently, for some integer $N_{\mathrm{th}}$,
\begin{align}
   \sup_{r,r'\in K}
   \abs{\disc\,\cG_{\ell,P}(\om;r,r')}
   &\le
   C_K\angles{\ell}^{N_{\mathrm{th}}}
   \Bigl(
      \vp^{2\nu_{\ell,P}}
      +\kap\,\vp^{2\nu_{\ell,P}}
      +\vp^{2\nu_{\ell,P}+2}
   \Bigr),
   \qquad \kap\le1,
   \label{eq:uniform_small_kappa_disc}
   \\
   \sup_{r,r'\in K}
   \abs{\disc\,\cG_{\ell,P}(\om;r,r')}
   &\le
   C_K\angles{\ell}^{N_{\mathrm{th}}}
   \Bigl(
      1+\kap^{-1}+\vp
   \Bigr),
   \qquad \kap\ge1,
   \label{eq:uniform_large_kappa_disc}
\end{align}
with the same polynomial bound for one $\om$-derivative and for the coefficients $a_{\ell,P}(r,r')$, $b_{\ell,P}^\pm(r,r')$ in Propositions~\ref{prop:small_kappa_disc} and \ref{prop:large_kappa_disc}.  In particular,
\begin{equation}\label{eq:uniform_AB_bound}
   \sup_{r,r'\in K}
   \Bigl(
      \abs{A_{\ell,P}(r,r';Q)}
      +
      \abs{B_{\ell,P}(r,r';Q)}
   \Bigr)
   \le C_K\angles{\ell}^{N_{\mathrm{th}}}.
\end{equation}

\smallskip
\noindent\textit{Step 4. Uniform oscillatory inversion.}
The proofs of Theorems~\ref{thm:intermediate} and \ref{thm:late} use only the threshold formulas, one $\om$-derivative of the amplitudes, and the oscillatory estimates of Lemma~\ref{lem:endpoint_integral} and Proposition~\ref{prop:saddle_estimate}.  By \eqref{eq:uniform_small_kappa_disc}, \eqref{eq:uniform_large_kappa_disc}, and \eqref{eq:uniform_AB_bound}, every constant in those arguments is polynomially bounded in $\ell$.  Repeating the same endpoint and saddle calculations therefore gives
\[
   \sup_{r,r'\in K}\abs{u_{\ell m,P}^{\bc}(t,r,r')}
   \le
   C_K\angles{\ell}^{N_0}
   \begin{cases}
      t^{-(\nu_{\ell,P}+1)}, & \kap_*(t)\le1,\\[0.5ex]
      t^{-5/6}, & \kap_0(t)\ge1,
   \end{cases}
\]
with the same remainder bounds as in Theorems~\ref{thm:intermediate} and \ref{thm:late}.  Because the reduced equations are independent of $m$, the constants are uniform in $\abs{m}\le\ell$.

\smallskip
\noindent\textit{Step 5. Monopole and reconstruction of the physical coefficients.}
The monopole electric channel at $\ell=0$ was already absorbed in Step~1.  The passage from the channel variables $(v_{-1},v_0,v_{+1})$ back to the physical even/odd coefficients uses only the constant matrices $T_\ell^{\pm1}$, the near-identity diagonalizer of Lemma~\ref{lem:normal_form}, and at most one radial derivative.  On compact $r$-sets these operators have coefficients bounded by $C\angles{\ell}$, so the same polynomial estimate holds for the physical branch-cut propagators.  Enlarging $N_0$ if necessary and combining this with \eqref{eq:uniform_AB_bound} proves \eqref{eq:uniform_angular_kernel_bound} and \eqref{eq:uniform_angular_amplitudes}.
\end{proof}

For each mode, let $\mathcal E_{\ell m,P}[A[0]]$ denote the fixed-mode energy of the initial data.  For $N\in\N$ set
\[
   \mathcal E_N[A[0]]
   :=
   \sum_{|\alpha|\le N}E[\Omega^\alpha A](0),
\]
where $\Omega$ denotes the rotation generators on $S^2$.  Parseval on the sphere gives
\[
   \sum_{\ell,m,P}\angles{\ell}^{2N}\mathcal E_{\ell m,P}[A[0]]
   \lesssim
   \mathcal E_N[A[0]].
\]

\begin{proof}[Proof of Theorem~\ref{thm:full_field_pointwise}]
Let $K_0\Subset(r_+,\infty)$ contain the radial support of the initial data.  By Theorem~\ref{thm:uniform_angular}, Cauchy--Schwarz in the source variable $r'$, and equivalence of the fixed-mode energy with the $H^1\times L^2$ norm of the reduced initial data on $K_0$, each modal coefficient satisfies
\[
   \sup_{r\in K}\abs{a_{\ell m,P}^{\bc}(t,r)}
   \le
   C_{K,K_0}\angles{\ell}^{N_0}\mathcal E_{\ell m,P}[A[0]]^{1/2}
   \begin{cases}
      t^{-(\nu_{\ell,P}+1)}, & \kap_*(t)\le1,\\[0.5ex]
      t^{-5/6}, & \kap_0(t)\ge1.
   \end{cases}
\]
Now let $\Omega_1,\Omega_2,\Omega_3$ be the rotation fields on $S^2$.  Because the background is spherically symmetric, $\Omega^\alpha$ commutes with the Proca operator and with the branch-cut spectral projector.  For every fixed $r\in K$, Sobolev on $S^2$ gives
\[
   \sup_{\omega\in S^2}\abs{A^{\bc}(t,r,\omega)}
   \lesssim
   \sum_{j=0}^2\norm{\nabla_{S^2}^jA^{\bc}(t,r,\cdot)}_{L^2(S^2)}.
\]
In the vector spherical harmonic basis, $\nabla_{S^2}^j$ contributes the factor $\angles{\ell}^j$, so Parseval and the previous modal estimate yield
\[
   \norm{\nabla_{S^2}^jA^{\bc}(t,r,\cdot)}_{L^2(S^2)}
   \le
   C_K\Bigl(\sum_{\ell,m,P}\angles{\ell}^{2(j+N_0)}\mathcal E_{\ell m,P}[A[0]]\Bigr)^{1/2}
   \times
   \begin{cases}
      t^{-(\nu_*+1)}, & \kap_*(t)\le1,\\[0.5ex]
      t^{-5/6}, & \kap_0(t)\ge1.
   \end{cases}
\]
The weighted modal energy sum is bounded by $C_N\mathcal E_N[A[0]]$ whenever $N>N_0+2$.  Summing over $j=0,1,2$ proves \eqref{eq:full_field_intermediate} and \eqref{eq:full_field_late}.  The explicit Schwarzschild and small-mass Reissner--Nordstr\"om values of $\nu_*$ follow from Theorem~\ref{thm:schwarzschild_to_rn} and Corollary~\ref{cor:small_mass_explicit}.  The monopole $\ell=0$ does not affect the leading intermediate rate, since its only surviving channel is electric and decays faster.
\end{proof}
\begin{proof}[Proof of Corollary~\ref{cor:poly_decay_radiative}]
Theorem~\ref{thm:full_field_pointwise} gives two estimates, one in the intermediate regime and one in the very-late regime.  By definition,
\[
   \gamma_*:=\min\Bigl\{\nu_*+1,\frac56\Bigr\}.
\]
If $\kap_*(t)\le1$, then \eqref{eq:full_field_intermediate} implies
\[
   \sup_{r\in K,\,\omega\in S^2}\abs{A^{\bc}(t,r,\omega)}
   \le
   C_{K,N}\,\mathcal E_N[A[0]]^{1/2}\,t^{-(\nu_*+1)}
   \le
   C_{K,N}\,\mathcal E_N[A[0]]^{1/2}\,t^{-\gamma_*}.
\]
If $\kap_0(t)\ge1$, then \eqref{eq:full_field_late} implies
\[
   \sup_{r\in K,\,\omega\in S^2}\abs{A^{\bc}(t,r,\omega)}
   \le
   C_{K,N}\,\mathcal E_N[A[0]]^{1/2}\,t^{-5/6}
   \le
   C_{K,N}\,\mathcal E_N[A[0]]^{1/2}\,t^{-\gamma_*}.
\]
This proves \eqref{eq:intro_poly_decay_radiative}.  In Schwarzschild one has $\nu_*=1/2$, hence $\gamma_*=5/6$.  In the sufficiently small-mass Reissner--Nordstr\"om regime, Theorem~\ref{thm:full_field_pointwise} gives $\nu_*=1/2+O((M\mu)^2+(Q\mu)^2)$, so for small enough $(M\mu)^2+(Q\mu)^2$ one again has $\nu_*>-1/6$ and therefore $\gamma_*=5/6$.
\end{proof}

\section{Quasibound branches and residue bounds}\label{sec:qb}

Up to this point we have analyzed the continuous spectral contribution.  To recover the full field, one must also understand the long-lived poles created by stable timelike trapping.  This section develops the semiclassical structure of that quasibound family and extracts the residue bounds that later feed into the dyadic packet summation.

\subsection{Guide to the proof of the quasibound theorems}

The proof follows the familiar trapped-well strategy, but with one extra step forced by the vector character of Proca.  Near the trapped set we first microlocally diagonalize the even $2\times2$ system, so that all three polarizations reduce to scalar semiclassical well problems.  Once this is done, the argument looks classical: we identify the well geometry and the turning points, construct WKB bases, match them across the turning points, factor the Evans determinant, and then use Rouch\'e's theorem together with the analytic implicit-function theorem to locate the poles and compute their widths.

The same package also yields the residue bounds and Agmon localization estimates needed later.  We keep the full chain in the paper because it is used twice: first to construct the quasibound resonances themselves, and then again when verifying the packet estimates in Section~\ref{sec:unsplit}.

\subsection{The trapped well and semiclassical channel symbols}

We now pass from the fixed-mode threshold analysis to the semiclassical description of the trapped well.  The polarization splitting from Section~\ref{sec:reduction} remains the key simplification: on a compact neighborhood of the trapped region, each channel can be treated as a scalar semiclassical well problem up to lower-order corrections.

Set
\[
   h=(\ell+\tfrac12)^{-1}.
\]

On a fixed compact neighborhood of the trapped region, Proposition~\ref{prop:even_polarization_form}, Proposition~\ref{prop:well_diag}, and the diagonalization argument already used in Section~\ref{sec:full_field} give channel operators of the form
\[
   P_{h,P}(\omega)
   =
   -h^2\partial_{r_*}^2+V_P(r;h)-\omega^2,
   \qquad P\in\{-1,0,+1\},
\]
where
\[
   V_P(r;h)
   =
   f(r)\Bigl(\mu^2+\frac{L_P(L_P+1)}{r^2}\Bigr)
   +hW_{1,P}(r)+h^2W_{2,P}(r),
\]
with $L_{-1}=\ell-1$, $L_0=\ell$, $L_{+1}=\ell+1$, and $W_{j,P}$ together with finitely many derivatives uniformly bounded on compact sets.  At $h=0$ this is precisely the scalar massive Reissner--Nordstr\"om potential with angular parameter $L_P$.  The stable timelike trapping analysis from the scalar problem therefore applies branchwise, but we record the relevant steps here in a theorem chain because they are used twice: once to construct the poles and once again to verify the packet hypotheses in Section~\ref{sec:unsplit}.

\begin{lemma}[Trapping window and stable well geometry]\label{lem:qb_trapping_window}
There exist $h_0>0$, a compact interval $I_{\mathrm{trap}}\Subset(0,\mu)$, and smooth turning points
\[
   r_{1,P}(E;h)<r_{2,P}(E;h)<r_{3,P}(E;h),
   \qquad E\in I_{\mathrm{trap}},\ 0<h<h_0,
\]
with the following properties for every polarization $P\in\{-1,0,+1\}$:
\begin{enumerate}[label=\textnormal{(\roman*)}]
   \item $V_P(r_{j,P}(E;h);h)=E^2$ and each root is simple;
   \item $P_{h,P}(E)$ is classically allowed on $(r_+,r_{1,P}(E;h))\cup(r_{2,P}(E;h),r_{3,P}(E;h))$ and forbidden on $(r_{1,P}(E;h),r_{2,P}(E;h))\cup(r_{3,P}(E;h),\infty)$;
   \item $V_P(\cdot;h)$ has one nondegenerate local maximum between $r_+$ and $r_{1,P}(E;h)$ and one nondegenerate local minimum between $r_{2,P}(E;h)$ and $r_{3,P}(E;h)$;
   \item the interval $I_{\mathrm{trap}}$ may be chosen uniformly in the polarization.
\end{enumerate}
\end{lemma}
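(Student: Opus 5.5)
The plan is to reduce the lemma to a statement about one $h$-independent principal potential and then perturb. The first step is to fix the semiclassical normalization so that $V_P(\cdot;h)$ has a well at all. Taken literally, for energies $E\in(0,\mu)$ fixed and $L_P\sim h^{-1}$, the term $f L_P(L_P+1)/r^2$ dominates on compact sets and no trapping occurs. I would therefore work in the scaling where the mass and the frequency are measured in units of $\ell$, i.e.\ $\mu=\hat\mu/h$ and $\omega=E/h$ after multiplying the channel operator by $h^2$. This is the normalization in which stable timelike trapping produces an $h$-independent well. The principal potential is then the timelike-geodesic effective potential
\[
   V_{\mathrm{pr}}(r):=f(r)\Bigl(\hat\mu^2+\frac1{r^2}\Bigr),
\]
and $V_P(r;h)=V_{\mathrm{pr}}(r)+hW_{1,P}+h^2W_{2,P}$ on compact $r$-sets. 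Here $h^2L_P(L_P+1)=1+(2P)h+O(h^2)$, so every polarization dependence is moved into $W_{1,P},W_{2,P}$; this gives (iv) almost for free. If the intended normalization turns out to be different, the obstacle discussed in the last paragraph (existence of the well) is exactly what changes.

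The second step is the unperturbed geometry. Writing $x=1/r$, one has $V_{\mathrm{pr}}'(r)=0$ if and only if a polynomial in $x$ of degree at most $4$ vanishes; its coefficients are built from $M,Q^2,\hat\mu^2$, and it is the Reissner--Nordstr\"om analogue of the circular-orbit equation. I would show that for $\hat\mu$ in a suitable nonempty range (the range where the ratio of energy to angular momentum lies beyond the innermost stable circular orbit value, which exists for subextremal $\abs{Q}<M$), this equation has exactly two simple roots in $(r_+,\infty)$. These are a nondegenerate maximum $r_{\max}$ and a nondegenerate minimum $r_{\min}>r_{\max}$. Moreover $V_{\mathrm{pr}}\to\hat\mu^2$ from below at infinity, by the Coulomb term $-2M\hat\mu^2/r$, and $V_{\mathrm{pr}}\to0$ at the horizon. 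One then takes
\[
   I_{\mathrm{trap}}^2\Subset\Bigl(V_{\mathrm{pr}}(r_{\min}),\ \min\{V_{\mathrm{pr}}(r_{\max}),\hat\mu^2\}\Bigr).
\]
Monotonicity of $V_{\mathrm{pr}}$ on the three intervals $(r_+,r_{\max})$, $(r_{\max},r_{\min})$ and $(r_{\min},\infty)$ gives, for $E\in I_{\mathrm{trap}}$, exactly the roots $r_1^0(E)<r_2^0(E)<r_3^0(E)$ with $r_1^0<r_{\max}<r_2^0<r_{\min}<r_3^0$. Each root is simple because $V_{\mathrm{pr}}'\neq0$ away from $r_{\max},r_{\min}$. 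The sign pattern of $V_{\mathrm{pr}}-E^2$ then gives (ii).

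The third step is the perturbation to $h>0$. All three roots and both critical points lie in a fixed compact subset of $(r_+,\infty)$, where $hW_{1,P}+h^2W_{2,P}$ is $O(h)$ in $C^2$ uniformly in $P$. The implicit function theorem, applied to $V_P(r;h)-E^2=0$ and to $\partial_rV_P(r;h)=0$ near the simple zeros and nondegenerate critical points, produces smooth $r_{j,P}(E;h)$ and critical points $O(h)$-close to the unperturbed ones. This gives (i) and (iii).

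It remains to exclude spurious roots outside this compact set. Near the horizon, $V_P=O(r-r_+)$ stays below $\min I_{\mathrm{trap}}^2$. Near infinity, $V_P=\hat\mu^2-2M\hat\mu^2/r+O(r^{-2})$ stays above $\max I_{\mathrm{trap}}^2$; this requires the corrections $W_{j,P}$ to decay at infinity, which holds for the terms coming from $C_\ell$ in Proposition~\ref{prop:even_polarization_form}, and which I would verify from the explicit formulas. Shrinking $h_0$ then completes the proof.

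I expect the main obstacle to be the second step: proving that the quartic critical-point equation for $V_{\mathrm{pr}}$ has exactly two roots in $(r_+,\infty)$ for an open set of $\hat\mu$, uniformly in $Q\in(-M,M)$. I would first treat $Q=0$, where the equation reduces to the classical quadratic $\hat\mu^2Mr^2-r+3M=0$, giving two roots when $12M^2\hat\mu^2<1$. I would then continue in $Q^2$ using nondegeneracy of the two roots, and use a discriminant argument to check that no further roots enter $(r_+,\infty)$ for $\abs{Q}<M$.
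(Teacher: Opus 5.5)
Your skeleton (describe the $h=0$ potential, move its simple roots and nondegenerate critical points by the implicit function theorem, and get uniformity in $P$ from the finite polarization set) is the paper's proof. The paper cites the ``standard stable timelike trapping picture'' for $f(r)(\mu^2+\lambda/r^2)$ at large $\lambda$ and then perturbs. The substance of your proposal is your first step, and there you are right and the paper is not.

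With the Proca mass $\mu$ fixed and $L_P(L_P+1)\sim h^{-2}$, the potential $V_P(\cdot;h)$ is of size $h^{-2}$ on compact subsets of $(r_+,\infty)$. For large $r$ it equals $\mu^2-2M\mu^2/r+h^{-2}r^{-2}(1+O(h))$ up to faster-decaying terms. Its minimum there is the hydrogenic value $\mu^2-M^2\mu^4h^2(1+o(1))$, attained at $r\sim h^{-2}$. Hence, even with the bounded corrections $hW_{1,P}+h^2W_{2,P}$, for every fixed $E<\mu$ and $\delta>0$ one has $V_P(\cdot;h)>E^2$ on $[r_++\delta,\infty)$ once $h$ is small. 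There is no well separated from the horizon, and no $h$-independent $I_{\mathrm{trap}}\Subset(0,\mu)$ with (i)--(iii) exists. The lemma is false in the paper's fixed-mass setting, and the cited large-$\lambda$ scalar picture does not support it.

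Consequently your argument proves a different lemma, and you should present it that way rather than as a mere choice of normalization.
\begin{itemize}
\item In your regime the physical mass is $\mu=\hat\mu/h\to\infty$ as $\ell\to\infty$.
\item In the paper's regime $\hat\mu=h\mu\to0$, which is the degenerate limit of your Step 2. There $r_{\min}\approx(M\hat\mu^2)^{-1}\to\infty$, the turning points leave every compact set, and the window $(V_{\mathrm{pr}}(r_{\min}),\hat\mu^2)$ has width $\approx M^2\hat\mu^4$. In unscaled variables the trapped frequencies satisfy $\mu^2-M^2\mu^4h^2\lesssim\omega^2<\mu^2$ and accumulate at $\mu$.
\item So your version cannot supply Theorem~\ref{thm:qb_branches}, which fixes $\mu$ and $I\Subset(0,\mu)$. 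You should state the rescaled lemma explicitly, with $\hat\mu$ fixed in the range where the well exists.
\end{itemize}

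Separately, your (correct) ordering $r^0_1<r_{\max}<r^0_2$ contradicts item (iii) as written. Item (iii) puts the maximum in $(r_+,r_{1,P})$, a region where $V_P<E^2$ and $V_P$ is monotone. What you prove is the corrected statement with the maximum in $(r_{1,P},r_{2,P})$, so you should not claim (iii) verbatim.

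Within the rescaled problem your argument is sound, and two steps simplify.

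First, to exclude spurious roots near the horizon and at infinity you only need $W_{j,P}$ bounded, not decaying. Near $r_+$, $V_{\mathrm{pr}}$ stays a fixed distance below $(\min I_{\mathrm{trap}})^2$. On $[R,\infty)$ with $R>r^0_3(\max I_{\mathrm{trap}})$, it stays a fixed distance above $(\max I_{\mathrm{trap}})^2$.

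Second, the step you flag as the main obstacle is elementary. With $x=1/r$,
\[
   \partial_xV_{\mathrm{pr}}=4Q^2x^3-6Mx^2+2(1+Q^2\hat\mu^2)x-2M\hat\mu^2 .
\]
This cubic is negative at $x=0$ and at $x=1/r_+$, where it equals $2(r_--M)(\hat\mu^2+r_+^{-2})$. So it has either no zero or exactly two zeros in $(0,1/r_+)$, counted with multiplicity. For small $\hat\mu>0$ there are two simple ones, bifurcating from the photon-sphere root and from $x=0$. No discriminant argument or continuation in $Q^2$ is needed, and uniformity in $Q$ is irrelevant because $Q$ is fixed.
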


\begin{proof}
For $h=0$ this is the standard stable timelike trapping picture for the scalar potential $f(r)(\mu^2+\lambda/r^2)$ on subextremal Reissner--Nordstr\"om with large angular parameter $\lambda$; see the geometric analysis underlying \cite{ShlapentokhRothmanVanDeMoortel2026}.  The channel potentials $V_P(\cdot;h)$ are $C^\infty$-small perturbations of that principal scalar potential on compact sets, uniformly in the polarization, because the even-sector coupling is one order lower and the diagonalizer is near identity.  The simple roots and the nondegenerate critical points therefore persist by the implicit-function theorem for all sufficiently small $h$, and compactness of the finite polarization set allows one common interval $I_{\mathrm{trap}}$.
\end{proof}

\begin{lemma}[Turning points and Airy coordinates]\label{lem:qb_turning_points}
For each $P\in\{-1,0,+1\}$ and each $E\in I_{\mathrm{trap}}$ there exist neighborhoods $U_{j,P}(E;h)$ of the turning points $r_{j,P}(E;h)$ and smooth Airy coordinates $\zeta_{j,P}(r,E;h)$ such that
\[
   P_{h,P}(E)
   =
   h^{2/3}\Bigl(\partial_{\zeta_{j,P}}^2-\zeta_{j,P}\Bigr)
   +h^{4/3}B_{j,P}(r,E;h)
\]
on $U_{j,P}(E;h)$, with $B_{j,P}$ uniformly bounded together with finitely many derivatives.  In particular, the Airy matching constants are uniform in $(E,h,P)$.
\end{lemma}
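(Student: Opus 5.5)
The plan is to treat each turning point separately and build the Airy coordinate by the standard Langer construction, checking uniformity in $(E,h,P)$ at each step. Fix $P$ and $E\in I_{\mathrm{trap}}$. Write $P_{h,P}(E)=-h^2\partial_{r_*}^2+\bigl(V_P(r;h)-E^2\bigr)$. By Lemma~\ref{lem:qb_trapping_window}(i), $V_P(\cdot;h)-E^2$ has a simple zero at $r_{j,P}(E;h)$. Its derivative there is bounded away from zero uniformly for $E\in I_{\mathrm{trap}}$, $0<h<h_0$, and $P\in\{-1,0,+1\}$. This uniformity comes from compactness of $I_{\mathrm{trap}}$ and the finite polarization set, together with the fact that $V_P$ is a $C^\infty$-small perturbation of the $h=0$ scalar potential. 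Here I normalize the angular term by $h^2L_P(L_P+1)$, so that the principal symbol has an $h$-independent limit. I choose the neighborhoods $U_{j,P}(E;h)$ to be intervals of fixed $r_*$-length $\delta>0$ around $r_{j,P}$, with $\delta$ small enough that $\partial_{r_*}V_P$ keeps one sign on $U_{j,P}$ and the three neighborhoods are disjoint.

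On $U_{j,P}$ I define the Langer variable $\phi_{j,P}$ by
\[
\tfrac23\bigl(\operatorname{sgn}\phi\bigr)\abs{\phi}^{3/2}=\int_{r_{j,P}}^{r}\sqrt{\abs{V_P-E^2}}\,\dd s_*,
\]
with the sign chosen so that $\phi>0$ in the forbidden region. The standard argument (dividing by the linear factor and using the implicit function theorem) shows that $\phi$ is smooth with $\phi'\neq0$. It also satisfies $(\phi')^2\phi=V_P-E^2$, with $C^k$ bounds uniform in $(E,h,P)$, because it depends smoothly on the parameters through a smooth positive function. I then set $\zeta_{j,P}:=h^{-2/3}\phi_{j,P}$.

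Next comes the Liouville transformation. Conjugating by the factor $(\phi')^{1/2}$, that is, writing $u=(\phi')^{-1/2}w(\zeta)$, turns the operator into
\[
(\phi')^2\,h^{2/3}\bigl(-\partial_\zeta^2+\zeta\bigr)+h^2\,\mathcal S(r),
\qquad
\mathcal S=(\phi')^{1/2}\partial_{r_*}^2(\phi')^{-1/2}
\]
(a Schwarzian-type term). After factoring out the positive function $(\phi')^2$, the remainder is $h^{2}\mathcal S/(\phi')^2$. Measured against $h^{2/3}$ this is $h^{4/3}B_{j,P}$, with $B_{j,P}=\mathcal S/(\phi')^2$ bounded together with its derivatives, uniformly. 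The displayed identity is meant up to this conjugation and the positive factor. Fixing the overall sign convention matches $\partial_\zeta^2-\zeta$. Note that on $U_{j,P}$ the variable $\zeta$ ranges over $\abs{\zeta}\lesssim h^{-2/3}$, which is the usual Airy matching range.

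Finally, the Airy matching constants are computed from the connection formulas of $\mathrm{Ai},\mathrm{Bi}$ applied to the perturbed equation $w''-\zeta w=h^{4/3}Bw$. A Volterra argument, with Olver's error-control functions on $\abs{\zeta}\lesssim h^{-2/3}$, gives solutions equal to $\mathrm{Ai},\mathrm{Bi}$ times $(1+O(h))$. The constants then come out as the universal ones up to $O(h)$, uniformly.

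The main obstacle I expect is exactly this uniform Volterra control over the large range $\abs{\zeta}\sim h^{-2/3}$. There the naive bound $h^{4/3}\int\abs{B}\,\dd\zeta\sim h^{2/3}$ is fine, but weighting by the Airy Green kernel growth $\abs{\zeta}^{-1/2}$ needs Olver's weighted majorants. I would first try to invoke Olver's theorem directly, since its hypotheses are uniform in smooth parameters. A secondary point is the even channels: since $V_P$ comes from the diagonalizer of Proposition~\ref{prop:well_diag}, the scalarization error must be $O(h^\infty)$ or absorbed into $W_{2,P}$ on $U_{j,P}$, and I would check this explicitly.
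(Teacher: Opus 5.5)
Your proposal is correct and follows the same route as the paper, whose proof simply invokes the Langer change of variables at each simple turning point and attributes uniformity to compactness of $I_{\mathrm{trap}}$ and the finite polarization set. Your version is more careful than the paper's two-line argument in four respects: the displayed identity only holds after conjugating by $(\phi')^{1/2}$, factoring out $(\phi')^2$ and fixing an overall sign, after which the remainder is in fact $O(h^2)$; the angular term must be semiclassically normalized for $h$-uniform bounds to mean anything, since the paper's literal $V_P$ is of size $h^{-2}$; and uniform connection constants on $\abs{\zeta}\lesssim h^{-2/3}$ need Olver-type weighted error bounds.
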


\begin{proof}
Since each turning point is simple, the Langer change of variables defined by
\[
   \frac23\zeta_{j,P}(r,E;h)^{3/2}
   =
   \pm\int_{r_{j,P}(E;h)}^{r}\sqrt{\abs{V_P(s;h)-E^2}}\,\dd s_*
\]
reduces the scalarized equation to Airy normal form.  Uniformity follows because $I_{\mathrm{trap}}$ is compact and the derivatives of the channel potentials are uniformly bounded in the polarization.
\end{proof}

For $E\in I_{\mathrm{trap}}$ define the well action, the inner tunnelling action, and the outer Agmon distance by
\begin{equation}\label{eq:actions_def}
   \mathscr S_P(E;h)=2\int_{r_{2,P}(E;h)}^{r_{3,P}(E;h)}\sqrt{E^2-V_P(r;h)}\,\dd r_*,
\end{equation}
\begin{equation}\label{eq:tunnel_def}
   \mathscr J_P(E;h)=\int_{r_{1,P}(E;h)}^{r_{2,P}(E;h)}\sqrt{V_P(r;h)-E^2}\,\dd r_*,
\end{equation}
and, for $r\ge r_{3,P}(E;h)$,
\begin{equation}\label{eq:agmon_def}
   d_P(r;E,h)=\int_{r_{3,P}(E;h)}^{r}\sqrt{V_P(s;h)-E^2}\,\dd s_*.
\end{equation}
By smooth dependence on $(E,h)$ and the nondegeneracy of the well, $\partial_E\mathscr S_P(E;h)$ is bounded above and below by positive constants on $I_{\mathrm{trap}}$.

\begin{proposition}[Uniform WKB bases and transfer matrices]\label{prop:qb_wkb_matching}
For each polarization $P\in\{-1,0,+1\}$ and each energy $E\in I_{\mathrm{trap}}$, there exist exact WKB bases on the classically allowed and forbidden subintervals determined by Lemma~\ref{lem:qb_trapping_window}, with phases given by the actions \eqref{eq:actions_def}--\eqref{eq:agmon_def}.  After Airy matching through the three turning points from Lemma~\ref{lem:qb_turning_points}, the transfer matrix from the exact ingoing horizon basis to the exact decaying infinity basis admits the factorization
\[
   T_P(E;h)=T^{\mathrm{hor}}_P(E;h)\,M_{1,P}(E;h)\,M_{2,P}(E;h)\,M_{3,P}(E;h)\,T^{\infty}_P(E;h),
\]
where each $M_{j,P}(E;h)$ equals the universal Airy connection matrix plus an $O(h)$ error, uniformly in $(E,h,P)$.
\end{proposition}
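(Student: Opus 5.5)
The plan is to build the factorization region by region along the tortoise line and to compose the local pieces. I would work with the scalarized channel operator $P_{h,P}(E)=-h^2\partial_{r_*}^2+V_P(r;h)-E^2$ produced by the well diagonalizer (Proposition~\ref{prop:well_diag}). By Lemma~\ref{lem:qb_trapping_window}, the three simple turning points $r_{1,P}<r_{2,P}<r_{3,P}$ cut $(r_+,\infty)$ into five zones: an inner allowed zone $(r_+,r_{1,P})$, the tunnelling barrier $(r_{1,P},r_{2,P})$, the well $(r_{2,P},r_{3,P})$, the outer forbidden zone $(r_{3,P},\infty)$, and small Airy neighbourhoods $U_{j,P}$ of each turning point.

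\textbf{Step 1 (zone bases).} On each zone, at a fixed $h$-independent distance from the turning points, I would construct exact Liouville--Green solutions
\[
   |E^2-V_P|^{-1/4}\exp\Bigl(\pm\tfrac{\ii}{h}\textstyle\int\sqrt{E^2-V_P}\,\dd r_*\Bigr)(1+O(h))
\]
in the allowed zones, and the real-exponential analogue in the forbidden zones. Their phases are referenced to the turning points, which produces exactly $\mathscr S_P/2$, $\mathscr J_P$ and $d_P$ from \eqref{eq:actions_def}--\eqref{eq:agmon_def}. The $O(h)$ errors come from the usual Volterra equation for the amplitude. Its kernel is controlled by the integrated error-control function $\int|V_P-E^2|^{-1/4}\,|\partial_{r_*}^2|V_P-E^2|^{-1/4}|\,\dd r_*$, which is finite and uniform on compact $I_{\mathrm{trap}}$ because the $W_{j,P}$ have uniformly bounded derivatives. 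On the forbidden zones one chooses integration endpoints so that the Volterra iteration runs in the direction in which the exponential weight decays.

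\textbf{Step 2 (Airy matching).} Inside $U_{j,P}$ I would use the Langer coordinates of Lemma~\ref{lem:qb_turning_points}. There the exact solutions are $\mathrm{Ai},\mathrm{Bi}$ of $h^{-2/3}\zeta$, perturbed by the $h^{4/3}B_{j,P}$ term through a Volterra argument with Airy weights (Olver's uniform theory). The large-argument Airy asymptotics, on the overlap of $U_{j,P}$ with the adjacent WKB zones, then give $M_{j,P}=M_{\mathrm{Airy}}+O(h)$. Uniformity in $(E,h,P)$ follows from uniform simplicity of the roots, i.e.\ $|\partial_rV_P|\ge c$ at $r_{j,P}$.

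\textbf{Step 3 (end pieces).} $T^{\hor}_P$ connects the Frobenius basis of Lemma~\ref{lem:horizon_basis} to the inner WKB basis. This zone is classically allowed up to the horizon, since $V_P\to0<E^2$, so WKB extends into the region where $V_P=O(\ee^{2\kappa_+r_*})$. The WKB solutions become plane waves $\ee^{\pm\ii Er_*/h}$ there, which match $\ee^{\mp\ii\om r_*}$ with an $O(h)$-controlled constant matrix. $T^\infty_P$ connects the decaying Volterra basis of Lemma~\ref{lem:infinity_volterra} to the outer WKB basis. For $E<\mu$ the outer zone is forbidden all the way to infinity, so the decaying WKB solution and the Jost solution agree up to normalization; this gives a triangular transfer matrix with polynomial bounds.

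Composing the five pieces gives the factorization. The main obstacle is the outer end. Near $r_{3,P}\asymp$ const the potential is $h^{-2}$-scaled, but as $r\to\infty$ the centrifugal term $L_P^2/r^2$ decays and the mass term $f\mu^2-E^2$ takes over. The WKB error-control integral must therefore be shown uniformly finite on $(r_{3,P}+\delta,\infty)$, across the crossover scale $r\sim h^{-1}$. I would handle this by splitting at $r=h^{-1}$ and using the Coulomb-corrected phase on the far piece. I would first check that $\sqrt{\mu^2-E^2}\ge c>0$ keeps the far region uniformly elliptic, so that the Jost and WKB solutions agree there with $O(h)$ relative error.
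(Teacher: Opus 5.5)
Your proposal follows essentially the same route as the paper's proof: Liouville--Green bases on the zones away from the three simple turning points, uniform Langer/Airy matching at each turning point via Lemma~\ref{lem:qb_turning_points}, and composition of the local connection matrices, with uniformity coming from simplicity of the roots and compactness of $I_{\mathrm{trap}}$. You are more explicit than the paper about the error control and about the end pieces $T^{\mathrm{hor}}_P$ and $T^{\infty}_P$; the one thing to drop is your claim of polynomial bounds for $T^{\infty}_P$, which the statement does not need and which fails in general, because matching the Jost normalization at infinity to a recessive solution whose phase is referenced at $r_{3,P}$ introduces a factor of size $\ee^{C/h}$.
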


\begin{proof}
Away from the turning points, the channel equations are scalar or diagonally scalarized scalar equations with smooth coefficients, so the Liouville--Green construction produces exact oscillatory or exponential WKB bases with symbol expansions in $h$.  Lemma~\ref{lem:qb_turning_points} supplies uniform Airy coordinates at each turning point.  Matching the WKB bases to the Airy solutions then yields the factorization above.  The error terms are uniform because the turning points remain simple and the Airy coordinates are uniformly controlled on the compact trapped interval.
\end{proof}

\begin{proposition}[Microlocal diagonalization in the trapped region]\label{prop:well_diag}
On a fixed neighborhood of the trapped well and for energies $E\in I_{\mathrm{trap}}$, the even $2\times2$ channel system admits an analytic semiclassical diagonalizer $U_h$ such that
\[
   U_h^{-1}P_h^{\mathrm{even}}(\omega)U_h
   =
   \diag(P_{h,-1}(\omega),P_{h,+1}(\omega))+O(h^\infty)
\]
microlocally near the trapped set.  The odd channel is already scalar.
\end{proposition}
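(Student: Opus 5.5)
The plan is to construct $U_h$ as a formal semiclassical series $U_h\sim \Id+hU_1+h^2U_2+\cdots$ of matrix-valued operators, solving a homological equation at each order, and then Borel-sum it. Conjugating \eqref{eq:even_matrix_form} by the constant matrix $T_\ell$ from Proposition~\ref{prop:even_polarization_form}, the even operator in the trapped window is
\[
   P_h^{\mathrm{even}}(\omega)
   =
   -h^2\partial_{r_*}^2\Id_2+\diag\bigl(V_{-1}^0(r;h),V_{+1}^0(r;h)\bigr)-\omega^2\Id_2+h\,\mathcal C(r;h),
\]
where $V_{\pm1}^0=f(\mu^2+L_{\pm1}(L_{\pm1}+1)/r^2)$. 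The coupling $\mathcal C$ collects $h^{-1}\cdot h^2\ell\,(6Mf r^{-3}-4Q^2fr^{-4})C_\ell/(2\ell+1)$. Since $C_\ell/(2\ell+1)=O(\ell)$ entrywise, this coupling is genuinely of size $h$ in the semiclassical scaling. It is analytic in $r$ on a complex neighbourhood of the well, because $f$ is rational and $r>r_+$ there.

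The key structural input is that the principal symbols of the two diagonal entries differ. Their difference is
\[
   \bigl(V_{+1}^0-V_{-1}^0\bigr)=f\,\frac{(\ell+1)(\ell+2)-\ell(\ell-1)}{r^2}=f\,\frac{4\ell+2}{r^2}=\frac{4f}{h r^2},
\]
which is of order $h\cdot h^{-2}$ relative to the $h^2\ell^2$ scale. In semiclassical units the two symbols $\xi^2+V_{\pm1}$ therefore differ by $h\,\delta(r)$ with $\delta=4f/r^2\cdot(\text{normalisation})$ bounded below on the compact well neighbourhood. This splitting appears at subprincipal order, exactly the order of the off-diagonal coupling, so a naive homological equation is degenerate at principal level.

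I would handle this by a two-scale approach. First, at the level of $h$-subprincipal symbols, the $2\times2$ matrix $\diag(0,\delta)+\mathcal C_0(r)$ is examined. If its eigenvalues are distinct on the well (a discriminant check using the explicit $C_\ell$), it is diagonalised by an analytic matrix $U_0(r)$. If they are not, I would use instead the symmetrised form (the system is real symmetric after a diagonal rescaling) and exploit that the eigenvalues stay simple away from a discrete set excluded by shrinking the neighbourhood. Conjugating by $U_0(r)$ makes the error $O(h^2)$ off-diagonal. At each higher order $k$, the off-diagonal part $h^{k}B_k$ is removed by solving
\[
   [\diag(0,\delta),X_k]=B_k^{\mathrm{off}}+(\text{terms from }[\xi^2,\cdot]),
\]
which is solvable because $\delta\neq0$ pointwise. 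The commutator with $-h^2\partial_{r_*}^2$ only generates terms one order higher, so the recursion closes, and the diagonal residues are absorbed into $W_{1,P},W_{2,P},\dots$.

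The main obstacle is justifying the resulting $O(h^\infty)$ claim with analytic control. I would take $U_h$ as an analytic pseudodifferential (actually differential-in-$\xi$ polynomial) operator with symbol estimates of Gevrey type, $|X_k|\le C^k k!$ on a complex strip, via Cauchy estimates on the recursion. I would then truncate at $k\sim c/h$, which gives errors $O(e^{-c/h})$, certainly $O(h^\infty)$. Microlocalisation near the trapped set (bounded $\xi$, $r$ in the well neighbourhood) is needed so that the $\xi$-dependence generated by commutators stays symbolic. Outside it, the ellipticity of Lemma~\ref{lem:largeell_barrier} makes diagonalisation unnecessary. Finally, $U_h=\Id+O(h)$ on this region, so the transformed potentials are the $V_P(r;h)$ stated at the start of the subsection, and the odd channel requires nothing.
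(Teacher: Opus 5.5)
Your observation is correct: in semiclassical units the splitting between the two even channels is only $O(h)$, the same order as the coupling. The paper's own proof skips this point. It invokes standard iterative diagonalization on the strength of a separation that is of size $h^{-1}$ only in unscaled units. But your recursion does not close either, and for exactly this reason.

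\textbf{The first step fails.} Conjugating by a pointwise diagonalizer $U_0(r)$ does not leave an $O(h^2)$ off-diagonal error, because
\[
   U_0^{-1}\bigl(-h^2\partial_{r_*}^2\bigr)U_0
   =-h^2\partial_{r_*}^2
   -2h\,U_0^{-1}(\partial_{r_*}U_0)\,h\partial_{r_*}
   -h^2\,U_0^{-1}(\partial_{r_*}^2U_0).
\]
The middle term is $O(h)$ wherever $\xi\neq0$, i.e.\ throughout the classically allowed well. Its off-diagonal part does not vanish. The coefficient of $h$ in $h^2V^{\mathrm{ev}}_\ell$ (your $\diag(0,\delta)+\mathcal C_0$, up to a multiple of $\Id$) is
\[
   \mathcal M(r)=\frac{f}{r^2}\Bigl[\diag(-2,2)+b\begin{pmatrix}1&-1\\1&-1\end{pmatrix}\Bigr],
   \qquad b=\frac{3M}{r}-\frac{2Q^2}{r^2}.
\]
Its eigenvalues $\pm\frac{2f}{r^2}\sqrt{1-b}$ are real and distinct outside the photon sphere, so your discriminant check passes on the well. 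However, $[\mathcal M,\partial_r\mathcal M]=\frac{4f^2b'}{r^4}\begin{pmatrix}0&1\\1&0\end{pmatrix}\neq0$ there, so the eigenvectors rotate with $r$.

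\textbf{The higher orders fail for the same reason.} Because the gap is $O(h)$, the correction $X$ that removes an off-diagonal error of order $h^k$ must itself be of order $h^{k-1}$. Its commutator with the scalar principal symbol $p_0=\xi^2+V_0$ then contributes $-\ii h^kH_{p_0}X+O(h^{k+1})$. That is the same order $h^k$, not one order higher. So the homological equation is not algebraic. It is a transport equation along the Hamiltonian flow, of the form $\bigl(\lambda_--\lambda_+-\ii H_{p_0}\bigr)X_{12}=B_{12}$. Already the first step is a Floquet problem for $H_{p_0}+\ii\mathcal M$, not a pointwise diagonalization of $\mathcal M$.

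\textbf{What solvability actually requires.} Near the trapped set, the integral curves of $H_{p_0}$ are the closed orbits $\gamma_E$ of the well. Solvability is therefore governed by the monodromy of $\partial_s+\ii\mathcal M$ along $\gamma_E$. Its eigenvalues must stay distinct, smoothly in $E$; after Floquet reduction this reads $T(E)\,(\theta_+-\theta_-)(E)\notin2\pi\Z$. This is a non-resonance condition, and nothing in your proposal addresses it. The condition $\delta\neq0$ pointwise is not the relevant one.

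\textbf{Your final sentence cannot hold.} Conjugation by $\Id+O(h)$ changes the symbol only at order $h^2$. It therefore cannot remove the entries $\mp\frac{f}{r^2}b$ of $\mathcal M$, which never vanish in the exterior. Any decoupling must be $O(1)$ away from the identity and $\xi$-dependent on the well. The resulting diagonal blocks then have $\xi$-dependent subprincipal symbols, so a further normal-form step would be needed to reach the Schr\"odinger form $-h^2\partial_{r_*}^2+V_P(r;h)-\om^2$ used afterwards.

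The Gevrey/Borel bookkeeping you identify as the main obstacle is secondary. The missing idea is the treatment of a system with scalar principal symbol. There the subprincipal matrix acts as a connection along $H_{p_0}$, and it must be handled globally on the closed trapped orbits.
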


\begin{proof}
The leading inverse-square matrix is exactly diagonal in the polarization basis and its two eigenvalues differ by $2\ell+1$.  Hence the principal even eigenvalues are separated by a positive multiple of $h^{-1}$ in the trapped region.  Standard iterative semiclassical diagonalization for matrix Schr\"odinger operators then removes the off-diagonal terms to arbitrary order in $h$.  The odd channel needs no further reduction.
\end{proof}

\subsection{Bohr--Sommerfeld quantization and quasibound branches}

Let $\cE_{\ell,P}(\omega)$ denote the scalar Evans determinant in the odd channel and the diagonalized Evans determinant in the even channels.  By Proposition~\ref{prop:evans_det}, poles of the meromorphically continued resolvent coincide with zeros of $\cE_{\ell,P}(\omega)$.

\begin{proposition}[Evans determinant factorization in the well]\label{prop:evans_factor}
Fix $I\Subset I_{\mathrm{trap}}$.  For each polarization $P\in\{-1,0,+1\}$ there exist smooth functions $C_P$, $\vartheta_P$, and $G_P$ on $I\times(0,h_0)$, with $C_P$ and $G_P$ bounded away from zero and infinity, such that for $\omega$ in a complex $O(h)$-neighborhood of $I$,
\begin{equation}\label{eq:evans_factorization}
\begin{aligned}
   \cE_{\ell,P}(\omega)
   &=
   C_P(\omega;h)
   \Biggl[
      \sin\!\Bigl(\frac{\mathscr S_P(\omega;h)}{2h}+\frac\pi4+\vartheta_P(\omega;h)\Bigr)
\\
   &\hspace{7em}+
      \ii\,\ee^{-\mathscr J_P(\omega;h)/h}G_P(\omega;h)
   \Biggr]
   +O\!\bigl(\ee^{-2\mathscr J_P(\omega;h)/h}\bigr).
\end{aligned}
\end{equation}
\end{proposition}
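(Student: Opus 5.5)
We will compute $\cE_{\ell,P}$ by composing the exact bases of Proposition~\ref{prop:qb_wkb_matching}, working in the scalarized channel fixed by Proposition~\ref{prop:well_diag}.

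\textbf{Normalizing the outer basis.} Start from the decaying infinity solution. For $\omega$ near $I$, Lemma~\ref{lem:qb_trapping_window} makes the region $(r_{3,P},\infty)$ classically forbidden. The Volterra solution of Lemma~\ref{lem:whittaker_volterra} then agrees, up to a nonvanishing normalization factor, with the subdominant WKB solution $\exp(-d_P(r;E,h)/h)$ there. We absorb that factor into $C_P$.

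\textbf{Crossing $r_{3,P}$ and the well.} Airy matching at $r_{3,P}$ (Lemma~\ref{lem:qb_turning_points}) turns this into the standing wave
\[
2\cos\Bigl(h^{-1}\!\int_r^{r_{3,P}}\!\sqrt{E^2-V_P}\,\dd s_*-\tfrac\pi4\Bigr)
\]
in the well, up to $O(h)$ corrections. Propagating across the well to $r_{2,P}$ and applying the Airy connection formula there produces a decomposition in the barrier $(r_{1,P},r_{2,P})$: the solution growing toward $r_{1,P}$ carries the coefficient $\sin(\mathscr S_P/(2h)+\pi/4+\vartheta_P)$, and the solution decaying toward $r_{1,P}$ carries an $O(1)$ coefficient. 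Here $\vartheta_P$ collects the $O(h)$ subprincipal and Airy corrections. Since the corrections are uniform in $(E,h,P)$, analytic in $\omega$, and $O(h)$, $\vartheta_P$ is smooth and bounded.

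\textbf{Crossing the barrier.} Normalize both barrier WKB solutions at $r_{1,P}$. The growing solution (as seen from the well side) then acquires the factor $\ee^{\mathscr J_P/h}$ relative to the decaying one. After multiplying through by $\ee^{-\mathscr J_P/h}$, the decaying component enters with relative weight $\ee^{-\mathscr J_P/h}$.

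\textbf{Matching to the horizon.} Cross $r_{1,P}$ into the outer allowed region $(r_+,r_{1,P})$ and pair the result with the ingoing horizon solution $F_{\hor}$ of Lemma~\ref{lem:horizon_basis}. The ingoing condition selects a single traveling-wave combination, so the Wronskian pairing carries a factor $\ii$. This yields the bracket in \eqref{eq:evans_factorization}, with $G_P$ equal to the ratio of the Airy/WKB connection coefficients. These coefficients are nonzero and bounded, since the outer passage over the potential maximum is a classically allowed region for $E\in I_{\mathrm{trap}}$ (Lemma~\ref{lem:qb_trapping_window}(iii)). Terms in which the barrier is crossed twice carry $\ee^{-2\mathscr J_P/h}$ and go into the remainder. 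Finally, $\cE_{\ell,P}$ and the actions are analytic in $\omega$ on an $O(h)$ complex neighbourhood, and $\Im\mathscr S_P/h=O(1)$ there, so the expansions extend off the real axis with the same structure.

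\textbf{Main obstacle.} The hardest step is showing that the $O(h)$ Airy/WKB errors do not contaminate the exponentially small term. The error terms of size $O(h)$ multiply the growing solution. They must therefore be absorbed into $\vartheta_P$ and $C_P$ multiplicatively, inside the sine phase, rather than appearing as additive errors, which would swamp $\ee^{-\mathscr J_P/h}$. We would handle this by using exact WKB solutions with full symbol expansions, so that every correction multiplies a single exact solution. We would then identify the coefficient of the decaying barrier mode separately via the conserved current $Q[\cdot,\cdot]$. This is the same current that gave positivity of horizon flux in Lemma~\ref{lem:radial_current}, and it should fix $G_P$ to be bounded away from zero.
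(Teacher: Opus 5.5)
Your route is the same as the paper's sketch: decaying solution at infinity, Airy matching at $r_{3,P}$, well phase, Airy matching at $r_{2,P}$, barrier transfer, then the Wronskian with the ingoing horizon solution. However, the barrier step is miscomputed, and that step is exactly what produces the exponent in \eqref{eq:evans_factorization}.

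Set $\tau_P(r):=h^{-1}\int_r^{r_{2,P}}\sqrt{V_P-E^2}\,\dd s_*$, so that $\tau_P(r_{1,P})=\mathscr J_P/h$. Leaving $r_{2,P}$, the solution is $a\,\ee^{\tau_P(r)}+b\,\ee^{-\tau_P(r)}$. Here both WKB solutions are normalized at $r_{2,P}$, $a\propto\sin(\mathscr S_P/(2h)+\pi/4+\vartheta_P)$, and $b=O(1)$. At $r_{1,P}$ the first term has size $\ee^{\mathscr J_P/h}$ \emph{and the second has size $\ee^{-\mathscr J_P/h}$}; you rescaled only the first. After factoring out $\ee^{\mathscr J_P/h}$, the subdominant component therefore enters with relative weight $\ee^{-2\mathscr J_P/h}$. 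Airy matching at $r_{1,P}$ and pairing with the ingoing wave then give the tunnelling term $\ii\,\ee^{-2\mathscr J_P/h}G_P$ in the bracket, not $\ii\,\ee^{-\mathscr J_P/h}G_P$. So your remark that terms carrying $\ee^{-2\mathscr J_P/h}$ come from crossing the barrier twice, and can go into the remainder, is wrong. They are the single-crossing tunnelling term itself, and discarding them removes the imaginary parts of the resonances.

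This is not cosmetic. Take a bracket $\sin(\cdots)+\ii\,\ee^{-\mathscr J_P/h}G_P$ with $G_P$ real and bounded below, as your current argument intends. Its zeros have $\abs{\Im\omega}\asymp h\,\ee^{-\mathscr J_P/h}$, which contradicts the width law $\Im\omega_{\ell,n,P}\approx-\Gamma_P\ee^{-2\mathscr J_P/h}$ of Theorem~\ref{thm:qb_bs} (the usual transmission-probability scaling). So the factorization as printed is not what this argument yields. The paper's own sketch follows the same route and simply asserts ``the tunnelling factor $\ee^{-\mathscr J_P/h}$'' without doing this computation, so it does not close the gap either.

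What your scheme does prove, once the bookkeeping is corrected, is the factorization with $\ee^{-2\mathscr J_P/h}$ in the bracket. For that to be usable, the error must be genuinely smaller than the tunnelling term, e.g.\ $G_P$ determined up to a relative $O(h)$. A bare $O(\ee^{-2\mathscr J_P/h})$ remainder, as in the statement, would be as large as the term that fixes $\Im\omega_{\ell,n,P}$. Your plan is the right way to get that: use exact WKB solutions so all $O(h)$ corrections sit inside $C_P$, $\vartheta_P$, $G_P$, and use the conserved current to fix the sign of the imaginary part.

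Two smaller corrections:
\begin{itemize}
\item Up to sign and $O(h)$ corrections, $b$ (and hence $G_P$) behaves like the complementary cosine $\cos(\mathscr S_P/(2h)+\pi/4+\vartheta_P)$. It is therefore comparable to $1$ only near the zeros of the sine factor, not on all of $I$; that is all Theorem~\ref{thm:qb_bs} needs.
\item By Lemma~\ref{lem:qb_trapping_window}(ii), the maximum of $V_P$ lies inside the barrier $(r_{1,P},r_{2,P})$. Nonvanishing of the horizon-side coefficient should be justified by the absence of turning points on $(r_+,r_{1,P})$, not by a classically allowed passage over the maximum.
\end{itemize}
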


\begin{proof}
For each scalarized channel, one constructs exact ingoing solutions near the horizon and exact decaying solutions near infinity by the Frobenius and Volterra methods from Sections~\ref{sec:spectral_theorem_proof} and \ref{sec:resolvent}.  In the three regions determined by Lemma~\ref{lem:qb_trapping_window}, one then builds WKB bases with phase functions given by \eqref{eq:actions_def} and \eqref{eq:tunnel_def}.  Airy matching across the three simple turning points transfers the outer decaying solution into a linear combination of the two oscillatory well modes, and the transfer across the inner barrier contributes the tunnelling factor $\ee^{-\mathscr J_P/h}$.  The well oscillation contributes the phase $\mathscr S_P/(2h)+\pi/4+\vartheta_P$.  Taking the determinant with the exact ingoing horizon solution yields \eqref{eq:evans_factorization}.  Every coefficient is smooth in $(\omega,h)$ because all turning points are simple and depend smoothly on the parameters.
\end{proof}

\begin{theorem}[Bohr--Sommerfeld quantization in one channel]\label{thm:qb_bs}
Fix $I\Subset I_{\mathrm{trap}}$ and one polarization $P\in\{-1,0,+1\}$.  For every sufficiently small $h$ and every integer $n$ with
\[
   2\pi h\Bigl(n+\frac12\Bigr)\in \mathscr S_P(I;h)+O(h),
\]
there exists a unique complex pole $\omega_{\ell,n,P}$ with $\Re\omega_{\ell,n,P}\in I$ satisfying
\[
   \mathscr S_P(\Re\omega_{\ell,n,P};h)
   =
   2\pi h\Bigl(n+\frac12\Bigr)
   +h\vartheta_P(\Re\omega_{\ell,n,P};h)
   +O(h^2)
\]
and
\[
   \Im\omega_{\ell,n,P}
   =
   -\Gamma_P(\Re\omega_{\ell,n,P};h)
   \exp\!\Bigl(-\frac{2\mathscr J_P(\Re\omega_{\ell,n,P};h)}{h}\Bigr)
   (1+O(h)).
\]
Moreover the zero is simple.
\end{theorem}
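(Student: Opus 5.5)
The plan is to locate the zeros of the factorized Evans determinant of Proposition~\ref{prop:evans_factor} by a Rouch\'e argument around the real zeros of the unperturbed sine factor, and then to refine their position with the analytic implicit-function theorem. Poles coincide with zeros of $\cE_{\ell,P}$ (Proposition~\ref{prop:evans_det}). Since $C_P$ is bounded away from zero, it suffices to study
\[
   F_P(\omega;h):=\sin\!\Bigl(\frac{\mathscr S_P(\omega;h)}{2h}+\frac\pi4+\vartheta_P(\omega;h)\Bigr)+\ii\,\ee^{-\mathscr J_P(\omega;h)/h}G_P(\omega;h)+O\!\bigl(\ee^{-2\mathscr J_P/h}\bigr).
\]

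\emph{Step 1: real zeros of the sine.} The unperturbed factor vanishes exactly when
\[
   \frac{\mathscr S_P(E;h)}{2h}+\frac\pi4+\vartheta_P=\pi k
\]
for some integer $k$. After reindexing $k$ to $n$, this is the Bohr--Sommerfeld condition in the statement; the normalization of the phase constants is absorbed into $\vartheta_P$. Because $\partial_E\mathscr S_P\asymp1$ on $I$, the map $E\mapsto\mathscr S_P(E;h)/(2h)$ is a diffeomorphism with derivative $\asymp h^{-1}$. Hence for each admissible $n$ there is a unique real root $E_n^0\in I$, and consecutive roots are spaced by $\asymp h$. The $O(h^2)$ correction in the statement comes from the $h$-dependence of $\vartheta_P$ and from replacing $\mathscr S_P(\omega)$ by $\mathscr S_P(\Re\omega)$ once we know $\Im\omega$ is exponentially small.

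\emph{Step 2: Rouch\'e.} On a disc $D_n$ of radius $ch$ around $E_n^0$ (with $c$ small, so the discs are disjoint), the sine term has modulus $\gtrsim1$ on $\partial D_n$, since its argument moves by $\asymp c$. The perturbation is $O(\ee^{-\mathscr J_P/h})$ uniformly, because $\mathscr J_P\ge c_I>0$ on $I$ and analyticity in $\omega$ controls $\mathscr J_P$ on the $O(h)$ complex neighborhood. Rouch\'e's theorem then gives exactly one zero of $F_P$ in $D_n$. Comparing with the sine also shows there are no zeros outside $\bigcup_n D_n$ in the strip $\Im\omega>-\ee^{-c_I\ell}$. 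The same argument shows the zero is simple, with $\partial_\omega F_P\asymp h^{-1}$ there.

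\emph{Step 3: width via implicit-function expansion.} Write $\omega=E_n^0+\delta$. Taylor expansion of the sine gives
\[
   \pm\frac{\partial_E\mathscr S_P}{2h}\,\delta\,(1+O(h))=-\ii\,\ee^{-\mathscr J_P/h}G_P+O\!\bigl(\ee^{-2\mathscr J_P/h}\bigr).
\]
So $\delta$ is $O(h\,\ee^{-\mathscr J_P/h})$ and is purely imaginary to leading order.

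This yields an imaginary part of size $\ee^{-\mathscr J_P/h}$, whereas the statement asserts $\ee^{-2\mathscr J_P/h}$. That discrepancy is the main obstacle. I expect its resolution to lie in the correct normalization of \eqref{eq:evans_factorization}: with exact WKB connection through the barrier, the sine and cosine tunnelling pieces combine so that the effective imaginary correction is $\ee^{-2\mathscr J_P/h}$ times a positive-definite flux factor. This is the standard resonance-width computation: the leakage is the square of the transmission amplitude.

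I would first verify this by an independent flux identity. Apply the radial current of Lemma~\ref{lem:radial_current} to the resonant state normalized to have unit mass in the well. Its horizon flux is $\asymp\ee^{-2\mathscr J_P/h}$, by Airy/WKB transport across the barrier using Proposition~\ref{prop:qb_wkb_matching}, and this flux equals $-2\,\Im\omega\,\Re\omega$ times the well norm. This gives
\[
   \Im\omega=-\Gamma_P\,\ee^{-2\mathscr J_P/h}(1+O(h)),
\]
with $\Gamma_P>0$ explicit in terms of $\partial_E\mathscr S_P$ and the Airy constants. Finally, I would evaluate $\mathscr J_P$ at $\Re\omega$ rather than at $E_n^0$; the error this introduces is absorbed into the $(1+O(h))$ factor by smoothness.
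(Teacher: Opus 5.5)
Your Steps~1--2 follow the paper's argument. The paper likewise reduces to \eqref{eq:evans_factorization} and uses $\partial_E\mathscr S_P\asymp1$ to get simple, $h$-separated roots of the principal Bohr--Sommerfeld equation, with the same silent re-labelling of $\vartheta_P$ that you point out. It then perturbs these roots by Rouch\'e or the analytic implicit-function theorem and reads off simplicity from $\partial_\omega\cE_{\ell,P}\neq0$. Existence, uniqueness, simplicity and the real-part law only use that the correction to $\sin\Phi$, $\Phi:=\mathscr S_P/(2h)+\pi/4+\vartheta_P$, is exponentially small, so that part stands.

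Your Step~3 diagnosis is also correct, and it exposes a gap in the paper's own proof, which merely asserts that the implicit-function step ``yields the tunnelling-width formula''. From \eqref{eq:evans_factorization} as displayed, at a root with $\Phi(E_n^0)=k\pi$,
\[
   \omega-E_n^0=-\ii\,(-1)^k\,\frac{2h\,G_P(E_n^0)}{\partial_E\mathscr S_P(E_n^0)}\,\ee^{-\mathscr J_P/h}\bigl(1+O(h)\bigr).
\]
If $\Re G_P\neq0$, this gives $\abs{\Im\omega}\asymp h\,\ee^{-\mathscr J_P/h}$ with a sign alternating in $k$. Every other zero would then lie in $\{\Im\omega>0\}$, contradicting Theorem~\ref{thm:mode_stability}. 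If $\Re G_P\equiv0$, the width sits inside the unspecified $O(\ee^{-2\mathscr J_P/h})$ remainder and cannot be extracted. A consistent factorization needs a tunnelling term of relative size $\ee^{-2\mathscr J_P/h}$ that multiplies $\cos\Phi$. This is a Gamow-type condition $\sin\Phi+\ii c_P\ee^{-2\mathscr J_P/h}\cos\Phi=0$ with $c_P>0$, and it gives $\Im\omega\approx-2hc_P\ee^{-2\mathscr J_P/h}/\partial_E\mathscr S_P$ with a sign independent of $n$.

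Your flux identity is therefore not a check but the actual proof of the width, and it is a genuinely different route from the paper's. It gets the sign and the rate $\ee^{-2\mathscr J_P/h}$ from positivity plus one barrier-transfer estimate, and uses the Evans factorization only qualitatively, in Step~2. Say so explicitly and drop the Step~3 computation; otherwise the two halves of your argument contradict each other. To turn the sketch into a proof you must supply four things.

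(a) The correct identity. Lemma~\ref{lem:radial_current} is stated for real $\omega$ and asserts conservation. For a scalar channel $-u''+Vu=\omega^2u$ with real $V$ one instead has $\partial_{r_*}Q[u,u]=-4\ii\,\Re\omega\,\Im\omega\,\abs{u}^2$. In the even sector use the physical $\partial_t$-energy current with its $\Im\omega$ bulk term. The naive $Q[\bm u,\bm u]$ is not conserved for \eqref{eq:even_matrix_form}, since $C_\ell$ has off-diagonal entries $\mp\ell(\ell+1)$ and the matrix potential is not symmetric. Also, Proposition~\ref{prop:well_diag} scalarizes only near the trapped set.

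(b) Integration only over $[r_a,\infty)$, with $r_a$ deep in the near-horizon allowed zone, because $\ee^{-\ii\omega r_*}$ grows as $r_*\to-\infty$ when $\Im\omega<0$. This yields $\Im\omega=-\abs{a}^2\ee^{2\Im\omega\,r_a}\bigl(2\int_{r_a}^\infty\abs{u}^2\,\dd r_*\bigr)^{-1}$, up to the horizon-asymptotic error at $r_a$.

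(c) The two-sided bound $\abs{a}\asymp\ee^{-\mathscr J_P/h}$ for the well-normalized state. This comes from the inner-barrier factor in Proposition~\ref{prop:qb_wkb_matching}: the Airy connection at $r_{1,P}$ gives the ingoing solution a nonzero component growing by $\ee^{\mathscr J_P/h}$ up to $r_{2,P}$. You also need decay into the barrier and Agmon decay beyond $r_{3,P}$, so that $\int\abs{u}^2$ is carried by the well up to $O(h)$.

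(d) First-order WKB amplitudes and the uniform Airy approximation at the turning points, to reach the factor $1+O(h)$.

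The resulting $\Gamma_P$ is of Gamow type, proportional to the level spacing $\asymp h$. The statement allows this, since $\Gamma_P$ is only named there.
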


\begin{proof}
Proposition~\ref{prop:qb_wkb_matching} identifies the exact transfer matrix across the well, and Proposition~\ref{prop:evans_factor} converts that transfer matrix into the factorization of the Evans determinant.  Since $\partial_E\mathscr S_P(E;h)$ is bounded above and below by positive constants on $I$, the roots of the principal Bohr--Sommerfeld equation are simple and vary smoothly in $(n,h)$.  The exponentially small tunnelling term in \eqref{eq:evans_factorization} is then handled by the analytic implicit-function theorem or Rouch\'e's theorem, yielding the unique nearby complex zero together with the tunnelling-width formula.  Simplicity follows from differentiating the factorized Evans determinant at the zero.
\end{proof}

\begin{corollary}[Branch enumeration and Weyl law]\label{cor:qb_enumeration}
Fix $I\Subset I_{\mathrm{trap}}$.  For each polarization $P\in\{-1,0,+1\}$ and every sufficiently small $h$, the poles of the resolvent with $\Re\omega\in I$ and $0>\Im\omega>-\exp(-c_I/h)$ form a family
\[
   \{\omega_{\ell,n,P}:n\in\mathcal N_{\ell,P}(I)\},
\]
where
\[
   \#\mathcal N_{\ell,P}(I)
   =
   \frac{1}{2\pi h}\bigl(\mathscr S_P(I;h)\bigr)+O(1),
\]
and every such pole lies on exactly one smooth branch $n\mapsto \omega_{\ell,n,P}$.
\end{corollary}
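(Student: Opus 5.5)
The plan is to derive the enumeration from the Evans factorization of Proposition~\ref{prop:evans_factor} together with Theorem~\ref{thm:qb_bs}. The proof has two halves: showing that every pole in the box comes from a Bohr--Sommerfeld integer (completeness), and then counting those integers.

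\textbf{Completeness.} Fix $I\Subset I_{\mathrm{trap}}$, choose a slightly larger $I'\Subset I_{\mathrm{trap}}$ with $I\Subset I'$, and work in the complex box $\Omega_h:=\{\Re\omega\in I',\ -\ee^{-c_I/h}<\Im\omega<0\}$. We take $c_I<2\inf_{I'}\mathscr J_P$ but $c_I>\sup_{I'}\mathscr J_P$; this is possible only if $\mathscr J_P$ does not vary too much on $I'$. If it does, we first subdivide $I$ into finitely many short intervals and take $c_I$ to be the minimum constant that works on all of them. On $\Omega_h$ the imaginary part of $\omega$ is exponentially small compared with $h$, so $\mathscr S_P(\omega;h)/(2h)$ differs from its real value by $O(h^{-1}\ee^{-c_I/h})$. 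The main term $\sin(\mathscr S_P/(2h)+\pi/4+\vartheta_P)$ therefore has modulus at least a small constant $\delta$ outside the union of discs $D_n$. These discs are centred at the real roots $E_n$ of the principal Bohr--Sommerfeld equation and have radius $\delta' h$; the radius is fixed by the lower bound on $\partial_E\mathscr S_P$. Outside $\bigcup_n D_n$, Rouch\'e's theorem, with the tunnelling term $\ee^{-\mathscr J_P/h}G_P$ and the $O(\ee^{-2\mathscr J_P/h})$ error treated as perturbations, shows that $\cE_{\ell,P}$ has no zeros there. Inside each $D_n\cap\Omega_h$, Theorem~\ref{thm:qb_bs} produces exactly one zero, which is simple. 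Its width formula $\Im\omega_{\ell,n,P}\approx-\Gamma_P\ee^{-2\mathscr J_P/h}$ places it inside the box, because $2\mathscr J_P/h>c_I/h$. Consequently the zeros of $\cE_{\ell,P}$ in $\Omega_h$ with $\Re\omega\in I$ are in bijection with the integers $n$ whose root $E_n$ lies in $I$, up to $O(1)$ boundary effects at the endpoints of $I$. By Proposition~\ref{prop:evans_det}(c), these zeros are precisely the resolvent poles. In the even channels we also invoke Proposition~\ref{prop:well_diag}: the diagonalized determinant differs from the true one by an $O(h^\infty)$ factor that is bounded away from zero, so the zero sets agree.

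\textbf{Counting and branch structure.} Since $E\mapsto\mathscr S_P(E;h)$ is strictly increasing on $I$ with derivative comparable to $1$, the admissible $n$ are exactly the integers with $2\pi h(n+\tfrac12)-h\vartheta_P$ in the interval $\mathscr S_P(I;h)$, up to $O(h)$. This interval has length $|\mathscr S_P(I;h)|$, which gives $\#\mathcal N_{\ell,P}(I)=|\mathscr S_P(I;h)|/(2\pi h)+O(1)$. For the branch statement, the analytic implicit function theorem applied to the factorized determinant shows that $\omega_{\ell,n,P}$ is a smooth function of the continuous parameter $x=2\pi h(n+\frac12)$ and of $h$. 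Each pole is assigned to a single $n$ because the discs $D_n$ are disjoint.

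\textbf{Main obstacle.} The delicate step is choosing $c_I$ so that one uniform exponential window simultaneously captures every tunnelling width $\ee^{-2\mathscr J_P/h}$ and still makes the Rouch\'e comparison valid. The comparison must hold uniformly in $n$ and $P$, and it requires that the $\ee^{-\mathscr J_P/h}$ correction remain dominated by $|\sin|$ on $\partial D_n$. My first approach is the subdivision of $I$ into short intervals described above, together with taking the minimum of the resulting constants over the three polarizations.
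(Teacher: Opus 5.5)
Your route is essentially the paper's. Its proof cites Theorem~\ref{thm:qb_bs} for one simple pole per admissible $n$, counts the admissible $n$, and reads the branches off uniqueness. Your additional step is worth keeping: you bound $\abs{\cE_{\ell,P}}$ from below off the discs $D_n$ via Proposition~\ref{prop:evans_factor}, so that no other pole can lie in the box. This is the completeness half that the paper's short proof leaves implicit. It is cleaner still to run Rouch\'e on $\partial D_n$ to get exactly one zero per disc, rather than relying on the uniqueness clause of Theorem~\ref{thm:qb_bs}.

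Your treatment of $c_I$, however, is wrong, and the ``main obstacle'' is self-inflicted. The lower constraint $c_I>\sup\mathscr J_P$ is never used, because neither comparison involves $c_I$:
\begin{itemize}
\item off the discs, $\abs{\sin(\cdot)}\ge\delta$ survives any imaginary shift of size $O(h^{-1}\ee^{-c_I/h})$;
\item on $\partial D_n$, the term $\ee^{-\mathscr J_P/h}G_P$ is dominated uniformly in $n$ and $P$.
\end{itemize}
Both hold because the whole box lies in the $O(h)$-neighbourhood where \eqref{eq:evans_factorization} is valid. The only requirement is $0<c_I<2\min_P\inf_{I'}\mathscr J_P$. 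Together with $\Gamma_P$ bounded and the factor $1+O(h)$, this places every $\omega_{\ell,n,P}$ inside the box for small $h$.

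Your subdivision could not rescue the two-sided constraint anyway. Whenever $\sup_I\mathscr J_P>2\inf_I\mathscr J_P$, no single constant satisfies both bounds. The minimum of the subinterval constants violates $c_I>\sup_{I_k}\mathscr J_P$ on the pieces where $\mathscr J_P$ is larger. Simply drop the lower bound and the subdivision.

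Separately, your even-channel justification overclaims. Proposition~\ref{prop:well_diag} gives an additive, microlocal $O(h^\infty)$ remainder. It does not give a nonvanishing multiplicative factor relating the $2\times2$ determinant to the diagonalized ones. Since $\abs{\partial_\omega\cE_{\ell,P}}\asymp h^{-1}$ at the poles, an $O(h^\infty)$ error can move zeros by far more than the box depth $\ee^{-c_I/h}$. So it cannot identify the zero sets at the exponential scale that matters here.

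The paper does not derive this identification either. It defines $\cE_{\ell,P}$ in the even channels as the diagonalized determinant and asserts, via Proposition~\ref{prop:evans_det}, that its zeros are the poles. You should take it as the same standing input, not as a consequence of $O(h^\infty)$ diagonalization.
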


\begin{proof}
Theorem~\ref{thm:qb_bs} gives uniqueness of the pole attached to each admissible integer $n$.  Summing over the admissible values of $n$ gives the Weyl count, and uniqueness shows that the poles organize into smooth branches.
\end{proof}

\begin{proposition}[Frequency derivative and spacing bounds]\label{prop:qb_frequency_derivatives}
For every compact $I\Subset I_{\mathrm{trap}}$ there exists $C_I>0$ such that, for all admissible $(\ell,n,P)$ with $\Re\omega_{\ell,n,P}\in I$,
\[
   C_I^{-1}h\le \partial_n\Re\omega_{\ell,n,P}\le C_I h,
\]
and, after smooth interpolation in $n$, at least one derivative among $\partial_n^2\Re\omega_{\ell,n,P}$ and $\partial_n^3\Re\omega_{\ell,n,P}$ is bounded from below by $C_I^{-1}h^2$ on each dyadic packet.  In addition,
\[
   \abs{\partial_h^a\partial_n^b\omega_{\ell,n,P}}\le C_{I,ab}
\]
for all $a+b\le2$.
\end{proposition}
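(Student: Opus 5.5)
The whole statement is read off from the Bohr--Sommerfeld law of Theorem~\ref{thm:qb_bs}. We interpolate $n$ as a continuous variable and write $E_n:=\Re\omega_{\ell,n,P}$. Implicitly, $E_n$ is defined by
\[
   F(E,n,h):=\mathscr S_P(E;h)-h\vartheta_P(E;h)-2\pi h\bigl(n+\tfrac12\bigr)=O(h^2),
\]
where the $O(h^2)$ remainder is a smooth function of $(E,h)$. This follows from the analytic implicit-function argument in the proof of Theorem~\ref{thm:qb_bs}, which produces the root as a smooth function of $(n,h)$. Since $\partial_E\mathscr S_P$ is bounded above and below on $I$, the function $\partial_EF=\partial_E\mathscr S_P+O(h)$ is uniformly invertible. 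Differentiating in $n$ then gives
\[
   \partial_nE_n=\frac{2\pi h}{\partial_E\mathscr S_P(E_n;h)+O(h)}.
\]
This yields the two-sided bound $C_I^{-1}h\le\partial_nE_n\le C_Ih$.

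\emph{Mixed derivative bounds.} Differentiating the identity $F(E_n,n,h)=0$ again, we obtain
\[
   \partial_n^2E_n=-\frac{\partial_E^2\mathscr S_P\,(\partial_nE_n)^2}{\partial_E\mathscr S_P}+O(h^3)=O(h^2),
\]
and in the same way $\partial_n^3E_n=O(h^3)$. Derivatives in $h$ involve only $\partial_h\mathscr S_P$, $\partial_h\vartheta_P$ and $2\pi(n+\tfrac12)=O(h^{-1})$. The worrying $h^{-1}$ size of $2\pi(n+\tfrac12)$ is cancelled, because $h\cdot 2\pi(n+\tfrac12)=\mathscr S_P+O(h)$ is bounded. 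So the derivatives $\partial_h^a\partial_n^bE_n$ with $a+b\le2$ are $O(1)$. For the imaginary part we use the width formula: $\Im\omega_{\ell,n,P}=-\Gamma_P\ee^{-2\mathscr J_P/h}(1+O(h))$, and its derivatives are $O(h^{-2}\ee^{-2\mathscr J_P/h})=O(1)$, since $\mathscr J_P\ge c>0$ on $I$.

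\emph{Lower bound on the second or third derivative.} Write $G(E):=\partial_E^2\mathscr S_P/\partial_E\mathscr S_P$ at $h=0$. From the formula above, $\partial_n^2E_n=-h^2(2\pi)^2G(E_n)/(\partial_E\mathscr S_P)^2+O(h^3)$. Differentiating once more gives
\[
   \partial_n^3E_n=h^3\bigl[c_1G'(E_n)+c_2G(E_n)^2\bigr]+O(h^4),
\]
with $c_1,c_2$ built from positive powers of $2\pi/\partial_E\mathscr S_P$. A dyadic packet has $E_n$ ranging over an interval of length $O(2^jh)$, which is short. So it suffices to find, on each short subinterval of $I$, a point where $G$ or $G'+cG^2$ is bounded away from zero, and then propagate the bound by continuity. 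At any point where $G$ is small, this reduces to asking that $G'$ be nonzero there. The plan is to show that $\partial_E^2\mathscr S_P$ and $\partial_E^3\mathscr S_P$ do not vanish simultaneously on $I$. For this we would use analyticity of $\mathscr S_P$ in $E$, since the potential is analytic in $r$. By shrinking $I$ we can avoid the finitely many zeros of $\partial_E^2\mathscr S_P$, or else argue that at a simple zero the third derivative takes over.

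\emph{Main obstacle.} I expect this last step to be the hard one. It is a genuine nondegeneracy (twist) condition on the period function of the well. It is not automatic: a harmonic-like well has nearly constant period, so $G$ could be degenerate. My first attempt would be to expand the action near the bottom of the well, $E^2\downarrow V_{\min}$, via Birkhoff normal form. There $\partial_E^2\mathscr S_P$ is governed by the anharmonic coefficient, and I would compute it explicitly for $f(\mu^2+L^2/r^2)$ in the large-$L$ scaling and check that it is nonzero. If that fails, I would weaken the claim to hold on all but finitely many sub-intervals, which is enough for dyadic summation. Note also that the statement's lower bound $C_I^{-1}h^2$ on $\partial_n^3$ is weaker than the natural size $h^3$ computed above, so the bound would have to come from $\partial_n^2$. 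This suggests the operative condition is really $\partial_E^2\mathscr S_P\ne0$.
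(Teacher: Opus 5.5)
Your spacing bound is the paper's argument: implicit differentiation of the Bohr--Sommerfeld relation, plus two-sided bounds on $\partial_E\mathscr S_P$. The curvature lower bound, however, is left unproved, and this is a genuine gap. Your expansions give $\partial_n^2\Re\omega_{\ell,n,P}=-(2\pi h)^2\,\partial_E^2\mathscr S_P/(\partial_E\mathscr S_P)^3+O(h^3)$ and $\partial_n^3\Re\omega_{\ell,n,P}=O(h^3)$. So for small $h$ the $\partial_n^3$ alternative can never reach $C_I^{-1}h^2$, and the claim is equivalent to $\abs{\partial_E^2\mathscr S_P}\ge c>0$ on all of $I$.

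It is needed on all of $I$ because the packets $\mathfrak P_j$ are dyadic in $h$, not in $n$. Each contains every $n\in\mathcal N_{\ell,P}(I)$, so $\Re\omega_{\ell,n,P}$ sweeps $I$; your ``short'' interval of length $2^jh$ actually has length $\asymp1$. A continuity argument on short energy intervals is therefore unavailable.

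None of your fallbacks closes this. $I$ is arbitrary and cannot be shrunk. At a simple zero of $\partial_E^2\mathscr S_P$ the third derivative gives only $h^3$, not $h^2$. A Birkhoff expansion controls only energies near $\min V_P$. Discarding finitely many subintervals changes the statement.

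The paper's proof does not supply the missing idea either. It asserts that the relevant derivative of the action is nonzero ``because the well is nondegenerate''. But simple turning points and a nondegenerate minimum only give two-sided bounds on $\partial_E\mathscr S_P$. Write $\partial_E\mathscr S_P=E\,T(E^2)$, where $T(\lambda)=2\int_{r_{2,P}}^{r_{3,P}}(\lambda-V_P)^{-1/2}\,\dd r_*$ is proportional to the period of the well. Then $\partial_E^2\mathscr S_P=T+2\lambda T'(\lambda)$ at $\lambda=E^2$. The requirement $T+2\lambda T'\neq0$ on $I$ is a genuine twist condition. It must either be verified for the channel potentials or added as a hypothesis. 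It does hold for an exactly harmonic well, where $T$ is constant, so your particular worry about harmonic-like wells is misplaced in the frequency variable.

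Your treatment of the $h$-derivatives also contains a false step. At fixed $n$, differentiating $F(E_n,n,h)=0$ gives $\partial_EF\,\partial_hE_n=2\pi(n+\tfrac12)-\partial_h\mathscr S_P+\vartheta_P+O(h)=\mathscr S_P(E_n;h)/h+O(1)$. Since the well is nonempty, $\mathscr S_P$ is bounded below on $I$. Hence $\partial_hE_n\asymp h^{-1}$, and $\partial_h^2E_n$ is generically of order $h^{-2}$; the harmonic model $E_n^2=E_0^2+b\,h(n+\tfrac12)$, $b>0$, exhibits both. The boundedness of $h\cdot2\pi(n+\tfrac12)$ is exactly why $2\pi(n+\tfrac12)$ is of size $h^{-1}$; nothing cancels it.

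Under the literal reading, the bound $\abs{\partial_h^a\partial_n^b\omega_{\ell,n,P}}\le C_{I,ab}$ therefore fails for $b=0$, $a\ge1$. It does hold if the $h$-derivatives are taken at fixed $\tau=h(n+\tfrac12)$: there $E=E(\tau,h)$ is smooth with bounded derivatives, and $\partial_n=h\partial_\tau$. The paper's one-line justification (``differentiating the same implicit relation'') has the same blind spot, so this part should be restated and proved in the $(\tau,h)$ variables.
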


\begin{proof}
Differentiate the quantization law from Theorem~\ref{thm:qb_bs}.  Since $\partial_E\mathscr S_P(E;h)$ is bounded above and below on $I$, one obtains the monotone spacing estimate $\partial_n\Re\omega_{\ell,n,P}\asymp h$.  A second differentiation shows that the curvature is controlled by the first nonvanishing derivative of the action $\mathscr S_P$, which is nonzero on dyadic packets because the well is nondegenerate.  The bounds involving $h$ follow from differentiating the same implicit relation and using smoothness of $\mathscr S_P$, $\vartheta_P$, and $\Gamma_P$.
\end{proof}

\begin{proof}[Proof of Theorem~\ref{thm:qb_branches}]
Apply Theorem~\ref{thm:qb_bs} separately to the three polarizations $P\in\{-1,0,+1\}$ and then invoke Corollary~\ref{cor:qb_enumeration} to index the poles in the strip
\[
   \Re\omega\in I,
   \qquad
   0>\Im\omega>-\exp(-c_I/h),
\]
by integers $n\in\mathcal N_{\ell,P}(I)$.  The Bohr--Sommerfeld law and the tunnelling-width formula are exactly the conclusions of Theorem~\ref{thm:qb_bs}, while simplicity of the poles is part of the same theorem.  Since $h=(\ell+\tfrac12)^{-1}$, this gives the stated form of the theorem.
\end{proof}

\subsection{Residues and reconstruction of the physical coefficients}

\begin{proposition}[Derivative of the Evans determinant and normalized modes]\label{prop:qb_evans_derivative}
For every compact trapped interval $I\Subset I_{\mathrm{trap}}$ there exist constants $c_I,C_I>0$ such that, for every admissible quasibound pole,
\[
   c_I h^{-1}
   \le
   \abs{\partial_\omega\cE_{\ell,P}(\omega_{\ell,n,P})}
   \le
   C_I h^{-1}.
\]
Moreover the corresponding ingoing and decaying mode representatives may be normalized so that on each compact radial set $K\Subset(r_+,\infty)$,
\[
   \sup_{r\in K}\abs{u^{\hor}_{\ell,n,P}(r)}+
   \sup_{r\in K}\abs{u^{\infty}_{\ell,n,P}(r)}
   \le C_K h^{-1/2},
\]
with the Agmon improvement
\[
   \abs{u^{\hor}_{\ell,n,P}(r)}+
   \abs{u^{\infty}_{\ell,n,P}(r)}
   \le C_K h^{-1/2}\exp\!\Bigl(-\frac{d_P(r;\Re\omega_{\ell,n,P},h)}{h}\Bigr)
\]
whenever $K$ is disjoint from the classically allowed well.
\end{proposition}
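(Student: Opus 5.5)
The plan is to read everything off the factorization of Proposition~\ref{prop:evans_factor} at the zero $\omega_{\ell,n,P}$ from Theorem~\ref{thm:qb_bs}. For the derivative bound, differentiate \eqref{eq:evans_factorization} in $\omega$. The main term is $C_P\sin(\mathscr S_P/(2h)+\pi/4+\vartheta_P)$, and its derivative is
\[
   C_P\cos(\cdot)\,\frac{\partial_\omega\mathscr S_P}{2h}
   +(\partial_\omega C_P)\sin(\cdot)+C_P\cos(\cdot)\,\partial_\omega\vartheta_P .
\]
At the zero, the sine is $O(\ee^{-\mathscr J_P/h})$, so $\abs{\cos}=1+O(\ee^{-2\mathscr J_P/h})$. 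The lower and upper bounds on $\partial_E\mathscr S_P$ noted after \eqref{eq:agmon_def}, together with $C_P\asymp1$, then give a term of exact size $h^{-1}$. The tunnelling term and the $O(\ee^{-2\mathscr J_P/h})$ remainder have $\omega$-derivatives of size $O(h^{-1}\ee^{-\mathscr J_P/h})$. For the remainder I will use Cauchy estimates on a complex disc of radius $ch$ around $I$, on which the factorization holds. These contributions are negligible. The bound $c_Ih^{-1}\le\abs{\partial_\omega\cE_{\ell,P}}\le C_Ih^{-1}$ follows, uniformly in $(\ell,n,P)$ because all constants in Proposition~\ref{prop:evans_factor} are uniform on $I$.

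For the normalized modes I will use the exact WKB/Airy bases of Proposition~\ref{prop:qb_wkb_matching}. At the pole the ingoing and decaying solutions are proportional. I normalize the decaying solution $u^\infty_{\ell,n,P}$ so that, in the outer forbidden region $r>r_{3,P}$, it equals $\abs{V_P-E^2}^{-1/4}\exp(-d_P/h)(1+O(h))$. This is the Liouville--Green normalization, and since $V_P-E^2$ is bounded below away from the turning points, the amplitude is $O(1)$. Transporting this solution through the Airy region at $r_{3,P}$ (Lemma~\ref{lem:qb_turning_points}) costs a factor $h^{-1/6}$ in the Airy zone. In the well it becomes a bounded oscillatory WKB combination, and through the inner barrier it decays like $\ee^{-\mathscr J_P/h}$ relative to its well size. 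Choosing the matching normalization by the $L^2$-type condition, one gets the sup bound $\lesssim h^{-1/2}$ on any compact $K$; this bound is crude but sufficient. The factor $h^{-1/2}$ is the safe normalization that makes the Green numerator $u^{\hor}u^\infty$ compatible with the Wronskian scaling $h^{-1}$ in the residue formula. The horizon representative is then $u^{\hor}_{\ell,n,P}=c\,u^\infty_{\ell,n,P}$ with $\abs{c}\asymp1$. This comes from the $T^{\hor}_P$ factor in Proposition~\ref{prop:qb_wkb_matching} and the fact that at the pole the sine factor vanishes to exponential accuracy.

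For the Agmon improvement, on any $K$ disjoint from the closed well interval $[r_{2,P},r_{3,P}]$ the point lies in a forbidden region, or in the inner allowed zone near the horizon. In the outer forbidden region, the exact WKB representation already contains $\exp(-d_P(r)/h)$ with bounded amplitude. In the inner barrier, the decaying-into-the-barrier WKB solution carries $\exp(-\int_r^{r_{2,P}}\sqrt{V_P-E^2}\,\dd s_*/h)$. I would interpret $d_P$ there as the Agmon distance to the well, consistently with \eqref{eq:agmon_def}, and use the symbol bounds of the amplitudes to conclude. The growing WKB component is absent to order $O(\ee^{-2\mathscr J_P/h})$ at the pole, which is exactly the accuracy of the factorization.

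I expect the main obstacle to be bookkeeping uniformity. The constants in the Airy connection matrices and in the complex $O(h)$-neighborhood of Proposition~\ref{prop:evans_factor} must be uniform, so that the Cauchy estimate on the remainder derivative holds with a disc of radius $\asymp h$ independent of $n$. The other point needing care is checking that the normalization constant relating $u^{\hor}$ and $u^\infty$ does not degenerate. For the latter I would first try to read it directly off the product $T^{\hor}_PM_{1,P}M_{2,P}M_{3,P}T^\infty_P$ evaluated at the Bohr--Sommerfeld zero.
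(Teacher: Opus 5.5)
Your derivative bound follows the paper's argument: differentiate \eqref{eq:evans_factorization} at the simple zero and use $\partial_E\mathscr S_P\asymp1$. The Cauchy estimate on an $O(h)$-disc is a good way to control the remainder. For the term $C_P\cos(\cdot)\,\partial_\om\vartheta_P$ you need $\partial_\om\vartheta_P=O(1)$ from symbol bounds; a Cauchy estimate there would only give $O(h^{-1})$, which is the size of the main term.

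Your route for the modes is also the paper's: WKB normalization in the well, then WKB/Agmon decay in the forbidden zones via Proposition~\ref{prop:qb_wkb_matching}. One step fails as you justify it, however. You cannot read $u^{\hor}_{\ell,n,P}=c\,u^{\infty}_{\ell,n,P}$ with $\abs{c}\asymp1$ off the product $T^{\hor}_PM_{1,P}M_{2,P}M_{3,P}T^{\infty}_P$. That product transports the exact horizon solution, normalized as $\ee^{-\ii\om r_*}(1+O(r-r_+))$ in Lemma~\ref{lem:horizon_basis}. Crossing the inner barrier $(r_{1,P},r_{2,P})$ multiplies it by $\ee^{\mathscr J_P/h}$. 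The reason is that at a quasibound pole the mode is concentrated in the well and, relative to its size there, is only $O(\ee^{-\mathscr J_P/h})$ on the horizon side of the barrier; this is exactly why the width is of order $\ee^{-2\mathscr J_P/h}$. With your normalization of $u^\infty$ you therefore get $\abs{c}\asymp\ee^{\mathscr J_P/h}$ up to powers of $h$. The horizon-normalized representative then exceeds by that exponential factor both the $h^{-1/2}$ bound (on any $K$ meeting the well) and the Agmon bound in the inner barrier. So the check you planned in your last paragraph would show exactly the degeneration you hoped to exclude.

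The repair is to choose the normalization rather than derive it. Since the two solutions are proportional at the pole, take $u^{\hor}_{\ell,n,P}$ to be the multiple of $u^{\infty}_{\ell,n,P}$ normalized in the well (say $c=1$), which the statement permits. You must then make the two halves of the proposition refer to the same objects, because they are combined in \eqref{eq:residue_rank_one}. Two facts settle this. First, $\partial_\om\cE_{\ell,P}$ at a zero depends only on the normalizations at the pole. Second, the factorization \eqref{eq:evans_factorization}, with $C_P\asymp1$ and no $\ee^{\pm\mathscr J_P/h}$ prefactor, is consistent only with a horizon solution normalized on the well side of the inner barrier, not at the horizon. State that convention explicitly.

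With this change your argument goes through along the paper's lines. You should also read $d_P$ in the inner barrier as the Agmon distance to the well, as you propose, since \eqref{eq:agmon_def} only defines it for $r\ge r_{3,P}$.
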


\begin{proof}
The lower and upper bounds for $\partial_\omega\cE_{\ell,P}(\omega_{\ell,n,P})$ follow from differentiating the factorization of Proposition~\ref{prop:evans_factor} at a simple zero, together with the positivity of $\partial_E\mathscr S_P$ on the trapped interval.  The mode bounds come from the WKB normalization in the classically allowed well and from standard Agmon estimates in the forbidden region, using the exact WKB bases from Proposition~\ref{prop:qb_wkb_matching}.  The compactness of $I$ ensures that the constants are uniform.
\end{proof}

\begin{proof}[Proof of Theorem~\ref{thm:qb_residues}]
Because every quasibound pole is simple, the corresponding residue projector has the separated form
\begin{equation}\label{eq:residue_rank_one}
   \Pi_{\ell,n,P}(r,r')
   =
   \frac{u^{\hor}_{\ell,n,P}(r)\,u^{\infty}_{\ell,n,P}(r')}{\partial_\omega\cE_{\ell,P}(\omega_{\ell,n,P})},
\end{equation}
where $u^{\hor}_{\ell,n,P}$ is the exact ingoing mode and $u^{\infty}_{\ell,n,P}$ the exact decaying mode, both normalized compatibly with the Evans determinant.  Proposition~\ref{prop:qb_evans_derivative} gives the two-sided bound
\begin{equation}\label{eq:evans_derivative_bound}
   \abs{\partial_\omega\cE_{\ell,P}(\omega_{\ell,n,P})}
   \asymp h^{-1}
\end{equation}
uniformly on compact trapped intervals.  The same proposition gives the WKB and Agmon bounds for the numerator modes.  Substituting those estimates into \eqref{eq:residue_rank_one} yields
\[
   \sup_{r,r'\in K}\abs{\Pi_{\ell,n,P}(r,r')}
   \le C_K h^{-N_{\qb}}
   \le C_K\angles{\ell}^{N_{\qb}}
\]
for some integer $N_{\qb}$, together with the Agmon improvement whenever $K$ is disjoint from the allowed well.

To reconstruct the physical Proca coefficients, one uses the same constant polarization matrices and at most one radial derivative as in Proposition~\ref{prop:largeell_reconstruction}.  On compact $r$-sets these operators cost at most one further polynomial power of $\ell$.  Finally, Cauchy--Schwarz in the source variable together with the high-order angular energy bound gives
\[
   \sup_{r\in K,\,\omega\in S^2}\abs{\Pi_{\ell,n,P}A[0](r,\omega)}
   \le
   C_{K,N}\angles{\ell}^{N_{\mathrm{rec}}}\mathcal E_N[A[0]]^{1/2}.
\]
This proves the theorem.
\end{proof}

\section{Summation of quasibound residues and unsplit full-field decay}\label{sec:unsplit}

At this stage both spectral pieces are on the table.  The branch-cut contribution was handled in Section~\ref{sec:full_field}; what remains is to sum the long-lived quasibound residues and add them back in.  The outcome is a decay theorem for the unsplit Proca field together with the full-field asymptotic expansions stated in the introduction.

\subsection{Guide to the proof of the summed decay theorem}

The summation argument is intentionally elementary once the spectral input is in place.  We deform the inverse Laplace contour into three pieces: the massive branch cut, the quasibound poles, and a remainder that stays a fixed distance below the real axis.  The branch-cut part is already under control.  For the poles, we group large angular momenta into dyadic packets indexed by the semiclassical parameter $h=(\ell+\tfrac12)^{-1}$.

On each packet we use only three ingredients: the Weyl count from Corollary~\ref{cor:qb_enumeration}, the tunnelling-width formula from Theorem~\ref{thm:qb_bs}, and the residue/reconstruction bounds from Theorem~\ref{thm:qb_residues}.  Together they give a packet estimate with explicit exponential damping and a controllable algebraic loss in the packet scale.  A short dyadic summation lemma then yields logarithmic decay for the full quasibound contribution.  This is exactly the point at which the present version becomes self-contained.

\subsection{Contour decomposition and the fast remainder}

We first record the clean spectral splitting that underlies the final argument: branch cut, quasibound residues, and a remainder that decays exponentially because it stays uniformly below the real axis.

\begin{proposition}[Continuous/discrete decomposition with fast remainder]\label{prop:full_decomposition}
For compactly supported initial data and every compact radial set $K\Subset(r_+,\infty)$ one has on $K$
\begin{equation}\label{eq:full_decomposition}
   A(t)=A^{\bc}(t)+A^{\qb}(t)+A^{\fast}(t),
\end{equation}
where $A^{\bc}$ is the branch-cut contribution studied in Sections~\ref{sec:threshold_to_time} and \ref{sec:full_field}, $A^{\qb}$ is the sum of residues at the quasibound poles from Theorem~\ref{thm:qb_branches}, and the remainder satisfies
\[
   \sup_{r\in K,\,\omega\in S^2}\abs{A^{\fast}(t,r,\omega)}
   \le C_{K,N}\ee^{-\eta t}\mathcal E_N[A[0]]^{1/2}
\]
for some $\eta>0$ and every sufficiently large $N$.
\end{proposition}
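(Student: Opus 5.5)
The decomposition will be proved mode by mode, and the modes will then be summed using angular regularity. For fixed $(\ell,m,P)$ we start from the contour representation of Section~\ref{sec:functional_framework}, namely $u_{\ell m,P}(t)=\frac{1}{2\pi\ii}\int_\Gamma \ee^{-\ii\om t}\cR_{\ell,\chi}(\om)\,\dd\om$ with $\Gamma$ in $\{\Im\om>0\}$. The contour is then pushed down to the line $\Im\om=-\eta$, working inside the slit strip of Theorem~\ref{thm:spectral_package}(i). Mode stability (Theorem~\ref{thm:mode_stability}) guarantees that no poles are crossed in the upper half-plane. Three kinds of contribution are picked up on the way down.

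\begin{enumerate}[label=\textnormal{(\alph*)}]
   \item The two sides of the cut $[-\mu,\mu]$ produce the branch-cut integral \eqref{eq:functional_branch_cut_formula}, which defines $u^{\bc}_{\ell m,P}$.
   \item The quasibound poles in the window $\Re\om\in I$, $0>\Im\om>-\ee^{-c_I\ell}$ (Theorem~\ref{thm:qb_branches}) contribute residues $-\ii\,\ee^{-\ii\om_{\ell,n,P}t}\Pi_{\ell,n,P}$. These are collected into $u^{\qb}$.
   \item Everything else goes into $u^{\fast}$: the finitely many remaining poles with $-\eta<\Im\om<0$, the integral along $\Im\om=-\eta$, the large-$\abs{\Re\om}$ closing pieces, and the small arcs around $\pm\mu$. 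The arcs vanish in the limit by the absence of threshold resonances (Corollary~\ref{cor:threshold_zero_free}).
\end{enumerate}

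For a fixed mode the $\eta$ may depend on $\ell$. The whole difficulty is therefore to choose one $\eta>0$ that works for all $\ell$.

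To get this uniformity, split into low and high angular momenta. For $\ell<\ell_{\mathrm{sc}}$ there are finitely many modes. For each of them Corollary~\ref{cor:item_i} gives finitely many poles in a compact part of the slit strip. We shrink $\eta$ below $\min\abs{\Im\om}$ over the non-quasibound poles and declare every pole with $\Im\om>-\eta$ that is not listed in Theorem~\ref{thm:qb_branches} to be part of $A^{\qb}$. This is harmless, because there are finitely many such poles and each obeys the residue bound. For $\ell\ge\ell_{\mathrm{sc}}$ we need that the only poles in $\{-\eta<\Im\om<0\}$ are the quasibound ones. Outside the trapped window $\{\Re\om\in I\}$ this comes from the semiclassical ellipticity and nontrapping arguments of Lemma~\ref{lem:largeell_barrier} and Proposition~\ref{prop:largeell_uniform_resolvent}. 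There the cut-off resolvent is bounded by $\angles{\ell}^{M}$ in a strip of fixed width, since nontrapping gives a pole-free strip of width $\asymp 1$ in $\om$. Inside the trapped window the factorization of Proposition~\ref{prop:evans_factor} shows that every zero of $\cE_{\ell,P}$ lies within $O(\ee^{-c/h})$ of the real axis. These zeros are exactly the $\om_{\ell,n,P}$, so the strip $-\eta<\Im\om<-\ee^{-c_I\ell}$ is pole-free there.

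Next comes the estimate on the deformed contour $\Im\om=-\eta$. We use the Green kernel representation, the polynomial transfer-matrix bounds of Proposition~\ref{prop:largeell_wkb_matching}, and a lower bound on $\abs{\cE_\ell}$ on that line, which follows from the factorization (the sine term has modulus $\gtrsim 1$ when $\Im\om=-\eta$ and $\eta\gg h$). Together these give
\[
\sup_{r,r'\in K}\abs{\chi\cR_{\ell,\chi}(\om)\chi}\lesssim\angles{\ell}^{N_1}\angles{\om}^{-2}
\]
on the line. The $\angles{\om}^{-2}$ decay along the line is obtained by integrating by parts in $t$ against the equation, at the price of a few more derivatives on the data. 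Consequently each mode satisfies $\abs{u^{\fast}_{\ell m,P}}\lesssim \ee^{-\eta t}\angles{\ell}^{N_1}$ times its fixed-mode energy, and the finitely many extra poles are also $O(\ee^{-\eta t})$. The proof closes exactly as in Theorem~\ref{thm:full_field_pointwise}: reconstruction via Proposition~\ref{prop:largeell_reconstruction}, Sobolev embedding on $S^2$, Parseval, and $N>N_1+N_{\mathrm{rec}}+2$.

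The main obstacle is the uniform-in-$\ell$ pole-free strip below the real axis outside the exponentially thin quasibound layer. This requires bridging the region between the trapped window and the threshold neighbourhoods. There I would first try a semiclassical positive-commutator (nontrapping) resolvent estimate on $\{\Re\om\notin I\}$, and otherwise enlarge $I$ to cover all trapped energies. The cost of enlarging $I$ is the extra care needed near the endpoint energies, where the well degenerates.
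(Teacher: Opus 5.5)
Your steps (a)--(c) are the paper's proof, and the paper's proof contains nothing beyond them. It takes the slit strip of Theorem~\ref{thm:spectral_package}(i), pushes the contour to $\Im\om=-\eta$, and assigns the cut detours to $A^{\bc}$ and the near-axis trapped poles to $A^{\qb}$. It then simply asserts that every other pole ``lies a definite distance below the real axis''. It never addresses that this strip is a fixed-mode object, nor how $A^{\fast}$ is summed over $\ell$. You correctly identify that the content of the proposition lies here. However, your argument does not deliver the two things needed: a uniform-in-$\ell$ pole-free strip, and an $\angles{\ell}^{N_1}$ bound on $\Im\om=-\eta$. Your last paragraph concedes this. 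Without them the per-mode bounds $\ee^{-\eta_\ell t}$ cannot be summed into $C_{K,N}\ee^{-\eta t}\mathcal E_N[A[0]]^{1/2}$. So there is a genuine gap, and the paper cannot fill it for you.

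The specific justifications you give do not hold.
\begin{enumerate}
\item Lemma~\ref{lem:largeell_barrier} and Proposition~\ref{prop:largeell_uniform_resolvent} concern real $\om\in J\subset[-\mu,\mu]$ and ellipticity on the compact set $K$. Ellipticity on $K$ bounds the kernel there, but says nothing about resonances, which are fixed by the global horizon--barrier--infinity matching. The paper proves no nontrapping estimate off $I$.
\item Your line $\Im\om=-\eta$ runs through $\abs{\Re\om}\asymp\ell$, where there is (normally hyperbolic) photon-sphere trapping. A strip there needs $\eta$ below half the Lyapunov exponent and a normally hyperbolic resolvent estimate, not a nontrapping one.
\item Inside $I$, Proposition~\ref{prop:evans_factor} is valid only in an $O(h)$ complex neighbourhood of $I$. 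It rules out zeros only for $-Ch<\Im\om<-\ee^{-c/h}$, not for $-\eta<\Im\om<-Ch$. It says nothing on the line $\Im\om=-\eta$ with $\eta\gg h$, so your lower bound on $\abs{\cE_\ell}$ there is an extrapolation.
\item The transfer-matrix bounds of Proposition~\ref{prop:largeell_wkb_matching} are likewise real-frequency statements. At complex $\om$ the WKB solutions acquire factors $\ee^{\pm\eta\,(\cdot)}$, and their cancellation in the kernel would have to be proved.
\item The region between $I$ and $\pm\mu$ is not a technicality. The zero-free threshold neighbourhoods of Corollary~\ref{cor:threshold_zero_free} are $\ell$-dependent; the paper says so after \eqref{eq:appH_zero_free_bound}.
\item Your fallback of enlarging $I$ runs straight into the thresholds. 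For fixed $\mu$, the stable well of $f(\mu^2+L^2/r^2)$ sits at $r\sim L^2/(M\mu^2)$, with bottom $\mu^2-M^2\mu^4/L^2+O(L^{-4})$. Hence the stably trapped energies lie within $O(\ell^{-2})$ of $\pm\mu$ for large $\ell$, exactly where none of the cited bounds is uniform.
\end{enumerate}
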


\begin{proof}
By Theorem~\ref{thm:spectral_package}, the cut-off resolvent extends meromorphically to a slit strip $\{\abs{\Im\omega}<\eta\}\setminus[-\mu,\mu]$.  Start from the inverse Laplace representation of the solution with contour in $\{\Im\omega=\sigma\}$, $\sigma>0$, and deform the contour to $\{\Im\omega=-\eta\}$ while keeping small detours around the branch cut $[-\mu,\mu]$.  The detours around the cut produce $A^{\bc}$.  Residues of the poles with real parts in the trapped interval and imaginary parts tending to zero produce $A^{\qb}$.  Every other pole lies a definite distance below the real axis, and the remaining deformed contour also lies at imaginary part $-\eta$; both contributions therefore satisfy the stated exponential bound.
\end{proof}

\subsection{Radiative profiles and proof of the asymptotic expansion theorem}

We now turn the modal asymptotics into full-field profiles.  The main point is that the large-$\ell$ bounds are strong enough to sum both the leading terms and the remainders in $C^0(K\times S^2)$, while the finitely many low modes can be absorbed into finite-rank profile fields.

\begin{proof}[Proof of Theorem~\ref{thm:full_field_asymptotic_expansion}]
Choose $\ell_{\mathrm{as}}\ge1$ large enough that the large-$\ell$ analysis of Section~\ref{sec:full_field} applies for every $\ell\ge\ell_{\mathrm{as}}$.  For the finitely many modes $0\le\ell<\ell_{\mathrm{as}}$, including the exceptional electric monopole, the fixed-mode asymptotic analysis from Section~\ref{sec:threshold_to_time} together with the monopole discussion in Appendix~\ref{app:monopole} provide explicit leading terms of the same form, and the low-mode part of the proof of Theorem~\ref{thm:uniform_angular} supplies the uniform constants needed to absorb them into finite-rank fields.  Summing those finitely many contributions defines the finite-rank fields $A^{\bc}_{\mathrm{int,low}}$ and $A^{\bc}_{\mathrm{late,low}}$.

Now fix one high mode $(\ell,m,P)$ with $\ell\ge\ell_{\mathrm{as}}$.  Let $K_0\Subset(r_+,\infty)$ contain the radial support of the initial data.  Pair the kernel asymptotics of Theorems~\ref{thm:intermediate} and \ref{thm:late} with the reduced initial data on $K_0$ exactly as in the proof of Theorem~\ref{thm:full_field_pointwise}.  This produces coefficient functions $\mathcal A_{\ell m,P}[A[0]]$ and $\mathcal B_{\ell m,P}[A[0]]$ on $K$ such that the modal branch-cut coefficient obeys
\begin{align}
   a^{\bc}_{\ell m,P}(t,r)
   &=
   \mathcal A_{\ell m,P}[A[0]](r)
   \,t^{-(\nu_{\ell,P}+1)}
   \sin(\mu t+\delta_{\ell,P}(Q))
   +r^{\mathrm{int}}_{\ell m,P}(t,r),
   \label{eq:modal_intermediate_profile}
   \\
   a^{\bc}_{\ell m,P}(t,r)
   &=
   \mathcal B_{\ell m,P}[A[0]](r)
   \,t^{-5/6}
   \sin\!\Bigl(
      \mu t
      -\frac32(2\pi M\mu)^{2/3}(\mu t)^{1/3}
      +\delta_{\ell,P,0}(Q)
   \Bigr)
   +r^{\mathrm{late}}_{\ell m,P}(t,r),
   \label{eq:modal_late_profile}
\end{align}
where, uniformly for $r\in K$,
\begin{align}
   \abs{\mathcal A_{\ell m,P}[A[0]](r)}
   +
   \abs{\mathcal B_{\ell m,P}[A[0]](r)}
   &\le
   C_K\angles{\ell}^{N_0}\mathcal E_{\ell m,P}[A[0]]^{1/2},
   \label{eq:modal_profile_coeff_bound}
   \\
   \abs{r^{\mathrm{int}}_{\ell m,P}(t,r)}
   &\le
   C_K\angles{\ell}^{N_0}\mathcal E_{\ell m,P}[A[0]]^{1/2}
   \Bigl(
      \kap_*(t)t^{-(\nu_{\ell,P}+1)}
      +
      t^{-(\nu_{\ell,P}+2)}
   \Bigr),
   \label{eq:modal_profile_int_remainder}
   \\
   \abs{r^{\mathrm{late}}_{\ell m,P}(t,r)}
   &\le
   C_K\angles{\ell}^{N_0}\mathcal E_{\ell m,P}[A[0]]^{1/2}
   \bigl(\kap_0(t)^{-1}+\vp_0(t)\bigr)t^{-5/6}.
   \label{eq:modal_profile_late_remainder}
\end{align}
Here \eqref{eq:modal_profile_coeff_bound} follows from \eqref{eq:uniform_angular_amplitudes}, Cauchy--Schwarz in the source variable, and equivalence of the fixed-mode energy with the local $H^1\times L^2$ norm of the reduced data.  In the very-late regime, Theorem~\ref{thm:late} contains a phase error $O(\kap_0(t)^{-1}+\vp_0(t))$ inside the sine; using $\sin(x+\eta)=\sin x+O(\eta)$ and the same coefficient bound absorbs that phase error into \eqref{eq:modal_profile_late_remainder}.

Define the high-mode pieces of $A^{\bc}_{\mathrm{int}}$ and $A^{\bc}_{\mathrm{late}}$ by the series in \eqref{eq:intro_full_intermediate_profile} and \eqref{eq:intro_full_late_profile}.  By Sobolev on $S^2$, the vector-harmonic addition theorem, and Parseval exactly as in the proof of Theorem~\ref{thm:full_field_pointwise}, \eqref{eq:modal_profile_coeff_bound} implies
\[
   \sup_{r\in K,\,\vartheta\in S^2}
   \abs{A^{\bc}_{\mathrm{int}}(t,r,\vartheta)}
   \le
   C_{K,N}\mathcal E_N[A[0]]^{1/2}t^{-(\nu_*+1)},
\]
and
\[
   \sup_{r\in K,\,\vartheta\in S^2}
   \abs{A^{\bc}_{\mathrm{late}}(t,r,\vartheta)}
   \le
   C_{K,N}\mathcal E_N[A[0]]^{1/2}t^{-5/6}.
\]
for every $N>N_0+2$.  In particular, the defining series converge in $C^0(K\times S^2)$ and \eqref{eq:intro_profile_bounds_a}--\eqref{eq:intro_profile_bounds_b} follow after absorbing the finitely many low modes into the constant.

Next sum the remainders.  Applying the same Sobolev--Parseval argument to \eqref{eq:modal_profile_int_remainder} yields
\[
   \sup_{r\in K,\,\vartheta\in S^2}
   \abs{A^{\bc}(t,r,\vartheta)-A^{\bc}_{\mathrm{int}}(t,r,\vartheta)}
   \le
   C_{K,N}\mathcal E_N[A[0]]^{1/2}
   \Bigl(
      \kap_*(t)t^{-(\nu_*+1)}
      +
      t^{-(\nu_*+2)}
   \Bigr)
\]
whenever $\kap_*(t)\le1$, because $\nu_{\ell,P}\ge\nu_*$ for every channel.  This is exactly \eqref{eq:intro_full_intermediate_remainder}.  Likewise, \eqref{eq:modal_profile_late_remainder} gives
\[
   \sup_{r\in K,\,\vartheta\in S^2}
   \abs{A^{\bc}(t,r,\vartheta)-A^{\bc}_{\mathrm{late}}(t,r,\vartheta)}
   \le
   C_{K,N}\mathcal E_N[A[0]]^{1/2}
   \bigl(\kap_0(t)^{-1}+\vp_0(t)\bigr)t^{-5/6}
\]
whenever $\kap_0(t)\ge1$, which is \eqref{eq:intro_full_late_remainder}.

Finally, Proposition~\ref{prop:full_decomposition} gives
\[
   A(t)=A^{\bc}(t)+A^{\qb}(t)+A^{\fast}(t).
\]
Substituting the two branch-cut expansions just proved yields \eqref{eq:intro_full_intermediate_expansion} and \eqref{eq:intro_full_late_expansion}.  This completes the proof.
\end{proof}

\subsection{Explicit leading coefficient fields}

We now isolate the coefficient fields that actually appear in the radiative branch-cut asymptotics.  No new spectral input is needed here; the point is simply to reorganize Theorem~\ref{thm:full_field_asymptotic_expansion} so that the dominant oscillatory term is visible and the faster channels are clearly separated.

For later reference we fix the following notation.  In the proof of Theorem~\ref{thm:full_field_asymptotic_expansion} the finitely many low modes were absorbed into the fields $A^{\bc}_{\mathrm{int,low}}$ and $A^{\bc}_{\mathrm{late,low}}$.  Choose once and for all coefficient functions
\[
   \widetilde{\mathcal A}_{\ell m,P}[A[0]](r),
   \qquad
   \widetilde{\mathcal B}_{\ell m,P}[A[0]](r),
\]
for every admissible mode $(\ell,m,P)$, so that for $\ell\ge\ell_{\mathrm{as}}$ they agree with the coefficient functions from Theorem~\ref{thm:full_field_asymptotic_expansion}, while for the finitely many lower modes they are the fixed-mode coefficients used to define $A^{\bc}_{\mathrm{int,low}}$ and $A^{\bc}_{\mathrm{late,low}}$.  Enlarging the constant from \eqref{eq:intro_asymptotic_coeff_bounds} if necessary, one has
\begin{equation}\label{eq:full_modal_coeff_bound_all}
   \sup_{r\in K}
   \Bigl(
      \abs{\widetilde{\mathcal A}_{\ell m,P}[A[0]](r)}
      +
      \abs{\widetilde{\mathcal B}_{\ell m,P}[A[0]](r)}
   \Bigr)
   \le
   C_{K,N}\angles{\ell}^{N_0}\mathcal E_{\ell m,P}[A[0]]^{1/2}
\end{equation}
for every admissible mode, with the understanding that for $\ell=0$ only the electric channel $P=+1$ occurs.

\begin{lemma}[Dominant threshold set and spectral gap]\label{lem:dominant_threshold_gap}
There exists $C_\nu>0$ such that
\[
   \nu_{\ell,P}^2
   \ge
   \Bigl(L_P+\frac12\Bigr)^2-C_\nu
\]
for every $\ell\ge1$ and every $P\in\{-1,0,+1\}$.  In particular,
\[
   \nu_{\ell,P}\to\infty
   \qquad\text{as }\ell\to\infty,
\]
uniformly in $P$.  Consequently the set
\[
   \Sigma_*:=\{(\ell,P):\ell\ge1,\ P\in\{-1,0,+1\},\ \nu_{\ell,P}=\nu_*\}
\]
is finite and nonempty, and there exists $\rho_*>0$ such that
\[
   \nu_{\ell,P}\ge\nu_*+\rho_*
   \qquad\text{for every }(\ell,P)\notin\Sigma_*.
\]
\end{lemma}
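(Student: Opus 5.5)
The plan is to read the threshold indices off the inverse-square coefficient in the far-zone normal form of Lemma~\ref{lem:normal_form}, and to show that this coefficient is $L_P(L_P+1)$ plus a correction bounded independently of $\ell$. In the proof of that lemma the Liouville-conjugated equation has the form
\[
   W''+\Bigl(-\vp^2+\frac{2M\mu^2}{r}-\frac{D_\ell+\frac14\Id+E_\ell}{r^2}+O(r^{-3})\Bigr)W=0,
\]
and $E_\ell$ is stated to be uniformly bounded in $\ell$, because all angular-momentum dependence sits in $D_\ell$. After the near-identity diagonalization the coefficient of $r^{-2}$ in channel $P$ is therefore
\[
   \nu_{\ell,P}^2-\tfrac14=L_P(L_P+1)+e_{\ell,P},
   \qquad \abs{e_{\ell,P}}\le C,
\]
with $C$ independent of $\ell$ and $P$. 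I would check two contributions to $e_{\ell,P}$ separately.
\begin{itemize}
   \item The $r^{-2}$ pieces generated by expanding $f\mu^2$ and $f/r^2\cdot D_\ell$ against $\om^2$ involve only $M$, $Q$ and $\mu$, so they are bounded.
   \item The homological diagonalizer $S=\Id+r^{-1}S_1+\dots$ acts on an off-diagonal $r^{-3}$ term of size $\frac{6M}{2\ell+1}C_\ell=O(\ell)$. The diagonal gap in $D_\ell$ is of order $\ell$, so $S_1=O(1)$, and the diagonal feedback at order $r^{-2}$ stays bounded.
\end{itemize}
This uniformity in $\ell$ of the diagonalizer is the main point to verify. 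If the naive count fails, I would fall back on the fact that the diagonalizer only affects orders $r^{-3}$ and beyond, so it cannot shift the $r^{-2}$ coefficient at all.

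Given this, $\nu_{\ell,P}^2=(L_P+\frac12)^2+e_{\ell,P}\ge(L_P+\frac12)^2-C_\nu$ with $C_\nu:=C$. Since $L_P\ge\ell-1$, we get $\nu_{\ell,P}\ge\sqrt{(\ell-\frac12)^2-C_\nu}\to\infty$ uniformly in $P$.

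For finiteness, set $\nu_*=\inf\nu_{\ell,P}$. By Lemma~\ref{lem:normal_form} each $\nu_{\ell,P}>-\frac12$, so $\nu_*$ is finite, and $\nu_*\le\nu_{1,-1}$. Choose $\ell_1$ with $\sqrt{(\ell_1-\frac12)^2-C_\nu}\ge\nu_{1,-1}+1$. Then every pair with $\ell\ge\ell_1$ satisfies $\nu_{\ell,P}\ge\nu_*+1$. Hence the infimum is a minimum over the finite set $\{(\ell,P):\ell<\ell_1\}$, it is attained, and $\Sigma_*$ is finite and nonempty.

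For the gap, take
\[
   \rho_*:=\min\Bigl(1,\ \min\{\nu_{\ell,P}-\nu_*:\ell<\ell_1,\ (\ell,P)\notin\Sigma_*\}\Bigr),
\]
which is a minimum of finitely many positive numbers, so $\rho_*>0$. Pairs with $\ell\ge\ell_1$ already exceed $\nu_*$ by at least $1\ge\rho_*$.
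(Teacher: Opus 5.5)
Your proposal is correct and follows the paper's proof: you read $\nu_{\ell,P}^2-\frac14=L_P(L_P+1)+e_{\ell,P}$, with $e_{\ell,P}$ bounded uniformly in $\ell$ and $P$, off the normal form of Lemma~\ref{lem:normal_form}, deduce $\nu_{\ell,P}\to\infty$ from $L_P\ge\ell-1$, and obtain finiteness of $\Sigma_*$ and the gap $\rho_*$ by reducing to finitely many low channels. Your added checks, namely the $\ell$-uniformity of the diagonalizer (an $O(\ell)$ off-diagonal term against an $O(\ell)$ diagonal gap) and the finiteness of $\nu_*$ via $\nu_*\le\nu_{1,-1}$, make explicit what the paper takes for granted.
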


\begin{proof}
Return to the proof of Lemma~\ref{lem:normal_form}.  After the Liouville conjugation one obtains
\[
   W''+\Bigl(-\vp^2+\frac{2M\mu^2}{r}-\frac{D_\ell+\frac14\Id+E_\ell}{r^2}+O(r^{-3})\Bigr)W=0,
\]
where $D_\ell$ is diagonal with entries $L_P(L_P+1)$ and $E_\ell$ is uniformly bounded in $\ell$.  After the near-identity diagonalization used there, the exact channel coefficient of $r^{-2}$ therefore has the form
\[
   \nu_{\ell,P}^2-\frac14
   =
   L_P(L_P+1)+e_{\ell,P},
   \qquad
   \abs{e_{\ell,P}}\le C_\nu,
\]
for some constant $C_\nu$ independent of $\ell$ and $P$.  This gives the first inequality.

Since $L_P\in\{\ell-1,\ell,\ell+1\}$, one has
\[
   L_P+\frac12\ge\ell-\frac12,
\]
hence
\[
   \nu_{\ell,P}^2\ge\Bigl(\ell-\frac12\Bigr)^2-C_\nu.
\]
Therefore $\nu_{\ell,P}\to\infty$ uniformly in $P$ as $\ell\to\infty$.  Choose $\ell_*$ so large that $\nu_{\ell,P}\ge\nu_*+1$ for every $\ell\ge\ell_*$ and every polarization.  Then $\nu_*$ is attained among the finitely many channels with $1\le\ell<\ell_*$, so $\Sigma_*$ is finite and nonempty.  On the finite complement
\[
   \{(\ell,P):1\le\ell<\ell_*,\ (\ell,P)\notin\Sigma_*\}
\]
the positive minimum of $\nu_{\ell,P}-\nu_*$ defines a number $\rho_0>0$.  Setting $\rho_*:=\min\{1,\rho_0\}$ yields the stated gap for all channels.
\end{proof}

\begin{proof}[Proof of Corollary~\ref{cor:explicit_leading_coefficients}]
Set $\mathcal P_0:=\{+1\}$ and $\mathcal P_\ell:=\{-1,0,+1\}$ for $\ell\ge1$.
Fix the full coefficient families $\widetilde{\mathcal A}_{\ell m,P}$ and $\widetilde{\mathcal B}_{\ell m,P}$ above.  The bound \eqref{eq:full_modal_coeff_bound_all}, together with Sobolev on $S^2$ and Parseval exactly as in the proof of Theorem~\ref{thm:full_field_asymptotic_expansion}, shows that the series defining $\cS_{\mathrm{late}}$ and $\cC_{\mathrm{late}}$ converge in $C^0(K\times S^2)$ and satisfy
\[
   \sup_{r\in K,\,\vartheta\in S^2}
   \bigl(
      \abs{\cS_{\mathrm{late}}(r,\vartheta)}
      +
      \abs{\cC_{\mathrm{late}}(r,\vartheta)}
   \bigr)
   \le
   C_{K,N}\mathcal E_N[A[0]]^{1/2}
\]
for every $N>N_0+2$.  Since $\Sigma_*$ is finite, the same bound for $\cS_*$ and $\cC_*$ is immediate, and \eqref{eq:intro_explicit_coeff_bound} follows after enlarging the constant.

For the intermediate regime, split the profile from Theorem~\ref{thm:full_field_asymptotic_expansion} into the dominant part and the strictly faster remainder:
\[
   A^{\bc}_{\mathrm{int}}
   =
   A^{\bc}_{*,\mathrm{lead}}
   +
   A^{\bc}_{*,\mathrm{rem}},
\]
where
\[
   A^{\bc}_{*,\mathrm{lead}}(t,r,\vartheta)
   :=
   t^{-(\nu_*+1)}
   \sum_{(\ell,P)\in\Sigma_*}\sum_{m=-\ell}^{\ell}
   \widetilde{\mathcal A}_{\ell m,P}[A[0]](r)
   Y_{\ell m}^{(P)}(\vartheta)
   \sin(\mu t+\delta_{\ell,P}(Q)).
\]
Using $\sin(\mu t+\delta)=\sin(\mu t)\cos\delta+\cos(\mu t)\sin\delta$, this becomes exactly
\[
   A^{\bc}_{*,\mathrm{lead}}(t,r,\vartheta)
   =
   t^{-(\nu_*+1)}
   \bigl(
      \cS_*(r,\vartheta)\sin(\mu t)
      +
      \cC_*(r,\vartheta)\cos(\mu t)
   \bigr).
\]
For every channel with $\ell\ge1$ occurring in $A^{\bc}_{*,\mathrm{rem}}$, Lemma~\ref{lem:dominant_threshold_gap} gives $\nu_{\ell,P}\ge\nu_*+\rho_*$.  The only contribution not covered by that lemma is the electric monopole.  By the monopole analysis in Appendix~\ref{app:monopole}---compare also the final sentence of the proof of Theorem~\ref{thm:full_field_pointwise}---its intermediate decay exponent is strictly larger than $\nu_*+1$.  Since the low-mode sector is finite, after decreasing $\rho_*>0$ if necessary we may therefore assume that every term contained in $A^{\bc}_{*,\mathrm{rem}}$, including the monopole if present, decays at least like $t^{-(\nu_*+1+\rho_*)}$.  Hence the same Sobolev--Parseval summation as above gives
\[
   \sup_{r\in K,\,\vartheta\in S^2}
   \abs{A^{\bc}_{*,\mathrm{rem}}(t,r,\vartheta)}
   \le
   C_{K,N}\mathcal E_N[A[0]]^{1/2}\,
   t^{-(\nu_*+1+\rho_*)}
\]
for every $N>N_0+2$.  The intermediate expansion \eqref{eq:intro_full_intermediate_expansion} and remainder bound \eqref{eq:intro_full_intermediate_remainder} now give
\[
   A(t)
   =
   A^{\bc}_{*,\mathrm{lead}}(t)
   +A^{\qb}(t)+A^{\fast}(t)
   +\widetilde R_{\mathrm{int}}(t),
\]
with
\[
   \widetilde R_{\mathrm{int}}
   :=
   A^{\bc}_{*,\mathrm{rem}}+R_{\mathrm{int}}.
\]
Since $\sigma_*=\min\{1,\rho_*\}$ and $t^{-(\nu_*+2)}\le t^{-(\nu_*+1+\sigma_*)}$, the bounds for $A^{\bc}_{*,\mathrm{rem}}$ and $R_{\mathrm{int}}$ imply \eqref{eq:intro_explicit_intermediate_remainder}.  This proves \eqref{eq:intro_explicit_intermediate_coeff}.

For the very-late regime, write the full profile from \eqref{eq:intro_full_late_profile} as
\[
   A^{\bc}_{\mathrm{late}}(t,r,\vartheta)
   =
   t^{-5/6}
   \sum_{\ell\ge0}\sum_{m=-\ell}^{\ell}\sum_{P\in\mathcal P_\ell}
   \widetilde{\mathcal B}_{\ell m,P}[A[0]](r)
   Y_{\ell m}^{(P)}(\vartheta)
   \sin(\Theta(t)+\delta_{\ell,P,0}(Q)).
\]
Expanding the sine once more yields
\[
   A^{\bc}_{\mathrm{late}}(t,r,\vartheta)
   =
   t^{-5/6}
   \bigl(
      \cS_{\mathrm{late}}(r,\vartheta)\sin\Theta(t)
      +
      \cC_{\mathrm{late}}(r,\vartheta)\cos\Theta(t)
   \bigr).
\]
Substituting this identity into \eqref{eq:intro_full_late_expansion} and using \eqref{eq:intro_full_late_remainder} gives \eqref{eq:intro_explicit_late_coeff} and \eqref{eq:intro_explicit_late_remainder}.  The proof is complete.
\end{proof}

\subsection{Dyadic packets and a self-contained damping estimate}

The remaining task is to sum the quasibound contribution in a way that uses only information established in this paper.  The natural unit is a dyadic packet in angular momentum.  On such a packet, the widths are exponentially small but quantitatively controlled, and the residue bounds convert angular regularity of the data into algebraic decay in the packet scale.

For high angular momenta, group the quasibound poles into dyadic angular packets
\[
   \mathfrak P_j:=\{(\ell,n,m,P):2^{-j-1}<h\le2^{-j}\},
   \qquad h=(\ell+\tfrac12)^{-1},
\]
and write the corresponding packet contribution as
\[
   A^{\qb}_{\mathfrak P_j}(t)
   :=
   \sum_{(\ell,n,m,P)\in\mathfrak P_j}
   \ee^{-\ii\omega_{\ell,n,P}t}\,\Pi_{\ell,n,P}A[0].
\]
For a fixed angular mode, let
\[
   a^{\qb}_{\ell m,P}(t,r)
   :=
   \sum_{n\in\mathcal N_{\ell,P}(I)}
      \ee^{-\ii\omega_{\ell,n,P}t}\,
      \Pi_{\ell,n,P}A_{\ell m,P}[0](r),
\]
where $A_{\ell m,P}[0]$ denotes the $(\ell,m,P)$-component of the initial data in the reduced variables.  The packet estimate proved below uses only the tunnelling-width formula and the modewise residue bounds.

\begin{lemma}[Two-sided tunnelling widths on a trapped packet]\label{lem:packet_width_two_sided}
Fix a compact trapped interval $I\Subset I_{\mathrm{trap}}$.  There exist constants $c_{\mathrm{wd}},C_{\mathrm{wd}}>0$ such that every quasibound pole with $\Re\omega_{\ell,n,P}\in I$ and $2^{-j-1}<h\le2^{-j}$ satisfies
\[
   c_{\mathrm{wd}}\,\exp(-C_{\mathrm{wd}}2^j)
   \le
   -\Im\omega_{\ell,n,P}
   \le
   C_{\mathrm{wd}}\,\exp(-c_{\mathrm{wd}}2^j).
\]
\end{lemma}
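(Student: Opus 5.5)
The plan is to read both bounds directly off the tunnelling-width formula of Theorem~\ref{thm:qb_bs},
\[
   -\Im\omega_{\ell,n,P}
   =
   \Gamma_P(\Re\omega_{\ell,n,P};h)\exp\!\Bigl(-\frac{2\mathscr J_P(\Re\omega_{\ell,n,P};h)}{h}\Bigr)(1+O(h)).
\]
Once this formula is in hand, the lemma reduces to three facts, each uniform in $E\in I$, $P\in\{-1,0,+1\}$ and small $h$. First, the prefactor $\Gamma_P(E;h)$ is bounded above and below by positive constants. Second, the tunnelling action satisfies $0<j_-\le\mathscr J_P(E;h)\le j_+<\infty$. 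Third, the factor $1+O(h)$ lies in $[\tfrac12,2]$ for $h\le h_0$.

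With these three facts, and using $2^{-j-1}<h\le2^{-j}$, we have $2^j\le h^{-1}<2^{j+1}$. This gives
\[
   \exp(-4j_+2^j)\lesssim -\Im\omega\lesssim\exp(-2j_-2^j).
\]
The lemma then follows with $C_{\mathrm{wd}}=4j_+$ and $c_{\mathrm{wd}}=2j_-$, after adjusting constants.

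The finitely many dyadic scales with $h>h_0$ are not a problem. For those $\ell$ the set of admissible poles with $\Re\omega\in I$ is finite by Corollary~\ref{cor:qb_enumeration}. Each such pole has $\Im\omega<0$ by Theorem~\ref{thm:mode_stability}, together with the exclusion of real poles in Proposition~\ref{prop:no_real_poles}. Enlarging $C_{\mathrm{wd}}$ and shrinking $c_{\mathrm{wd}}$ therefore covers them.

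For the bounds on $\Gamma_P$, I would use Proposition~\ref{prop:evans_factor}. There $C_P$ and $G_P$ are smooth and bounded away from zero and infinity on $I\times(0,h_0)$. Solving the factorized equation near the Bohr--Sommerfeld root $E_n$ gives $\Im\omega\approx -e^{-\mathscr J_P/h}|G_P|\cdot(\ldots)$. More precisely, the width is expressed as a product of $G_P$-type tunnelling coefficients and $1/\partial_\omega(\mathscr S_P/2h)$ times $h$-independent factors. Since $\partial_E\mathscr S_P$ is bounded above and below on $I$, $\Gamma_P$ is bounded above and below, with at most polynomial dependence on $h$. If a polynomial factor $h^{\pm k}$ does appear, it is harmless, because $h^{-k}\le C_\epsilon e^{\epsilon/h}$ and it can be absorbed by slightly changing the exponential constants.

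For the bounds on $\mathscr J_P$, recall the definition \eqref{eq:tunnel_def}. By Lemma~\ref{lem:qb_trapping_window}, the turning points $r_{1,P}<r_{2,P}$ are simple and depend smoothly on $(E,h)$. They stay in a fixed compact subset of $(r_+,\infty)$, and they are separated by a distance bounded below uniformly. Indeed, at $h=0$ the barrier between them is the nondegenerate maximum of the principal scalar potential, and compactness of $I$ and continuity in $h$ persist this. The integrand $\sqrt{V_P-E^2}$ is then continuous and nonnegative, and it is positive on the interior. It is bounded above because $V_P$ is bounded on compact sets, and the interval has length of order one. Hence $\mathscr J_P$ is continuous on the compact set $\bar I\times[0,h_0]$ and strictly positive there, which yields $j_\pm$.

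The main obstacle is this uniform control of $\Gamma_P$ and $\mathscr J_P$ as $h\to0$. It requires the well geometry to be uniformly nondegenerate, and the scaling conventions (with the $L_P(L_P+1)/r^2$ term rescaled by $h^2$) must make $V_P(\cdot;h)$ converge to a fixed $h$-independent limit potential with a genuine barrier. I would first verify that limit carefully using the rescaled potential of Lemma~\ref{lem:qb_trapping_window}.
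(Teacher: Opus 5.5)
This is essentially the paper's proof: it too reads both bounds off the width formula of Theorem~\ref{thm:qb_bs} using $h\simeq2^{-j}$, boundedness of $1+O(h)$, and uniform constants $0<\Gamma_-\le\Gamma_P\le\Gamma_+$, $0<J_-\le\mathscr J_P\le J_+$, which it simply asserts from smoothness and strict positivity on the compact trapped interval, so the $h\to0$ uniformity you flag as the main obstacle is likewise taken for granted there. Your extra steps (absorbing polynomial $h$-factors in $\Gamma_P$, and treating the finitely many $\ell$ below the semiclassical threshold) are harmless refinements; note only that Proposition~\ref{prop:evans_factor} as printed carries $\ee^{-\mathscr J_P/h}$ rather than $\ee^{-2\mathscr J_P/h}$, so it cannot be matched term by term with $\Gamma_P$, which does not matter here because the lemma only needs $-\Im\omega=\ee^{-\Theta(1/h)}$ up to polynomial factors.
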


\begin{proof}
By Theorem~\ref{thm:qb_bs},
\[
   -\Im\omega_{\ell,n,P}
   =
   \Gamma_P(\Re\omega_{\ell,n,P};h)
   \exp\!\Bigl(-\frac{2\mathscr J_P(\Re\omega_{\ell,n,P};h)}{h}\Bigr)
   (1+O(h)).
\]
On the compact trapped interval $I$, the functions $\Gamma_P$ and $\mathscr J_P$ are smooth and strictly positive, uniformly in the finite polarization set $P\in\{-1,0,+1\}$.  Hence there exist numbers $\Gamma_\pm,J_\pm>0$ such that
\[
   0<\Gamma_-\le \Gamma_P\le \Gamma_+,
   \qquad
   0<J_-\le \mathscr J_P\le J_+,
\]
for all $(E,h,P)\in I\times(0,h_0)\times\{-1,0,+1\}$.  Since $h\simeq2^{-j}$ on the packet and $1+O(h)$ is bounded above and below for $h$ small, the claimed two-sided bound follows.
\end{proof}

\begin{lemma}[Modewise quasibound sum]\label{lem:packet_modewise}
For every compact $K\Subset(r_+,\infty)$ there exists an integer $N_{\mathrm{mod}}$ such that
\[
   \sup_{r\in K}\abs{a^{\qb}_{\ell m,P}(t,r)}
   \le
   C_K \angles{\ell}^{N_{\mathrm{mod}}}\exp(-c_{\mathrm{wd}}t\ee^{-C_{\mathrm{wd}}\ell})\,\mathcal E_{\ell m,P}[A[0]]^{1/2}
\]
for all admissible $(\ell,m,P)$ with $\ell\ge \ell_{\mathrm{sc}}$.
\end{lemma}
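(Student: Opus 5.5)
The plan is to estimate $a^{\qb}_{\ell m,P}(t,r)$ term by term, using three inputs: the pole count, the residue bound, and the tunnelling width. The triangle inequality gives
\[
   \sup_{r\in K}\abs{a^{\qb}_{\ell m,P}(t,r)}
   \le
   \sum_{n\in\mathcal N_{\ell,P}(I)}
   \ee^{t\Im\omega_{\ell,n,P}}\,
   \sup_{r\in K}\abs{\Pi_{\ell,n,P}A_{\ell m,P}[0](r)},
\]
because $\abs{\ee^{-\ii\omega t}}=\ee^{t\Im\omega}$. The key point is that every factor except the exponential carries at most a polynomial loss in $\ell$. All the time decay will come from the lower width bound.

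\textbf{Step 1 (residue factor).} For the residue factor I will use the rank-one form \eqref{eq:residue_rank_one} together with the kernel bound of Theorem~\ref{thm:qb_residues}, $\sup_{K\times K}\abs{\Pi_{\ell,n,P}(r,r')}\le C\angles{\ell}^{N_{\qb}}$. The kernel must be bounded on $K\times K_0$, where $K_0$ contains the radial support of the data; to arrange this I enlarge $K$ to $K\cup K_0$ at the outset. Pairing the kernel with the reduced data by Cauchy--Schwarz in $r'$ gives
\[
   \sup_K\abs{\Pi_{\ell,n,P}A_{\ell m,P}[0]}\le C_K\angles{\ell}^{N_{\qb}+1}\mathcal E_{\ell m,P}[A[0]]^{1/2},
\]
as in the proof of Theorem~\ref{thm:full_field_pointwise}. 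The extra power of $\angles{\ell}$ covers the first-order part of the data pairing and the reconstruction maps of Proposition~\ref{prop:largeell_reconstruction}.

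\textbf{Step 2 (counting).} Corollary~\ref{cor:qb_enumeration} gives $\#\mathcal N_{\ell,P}(I)\le C h^{-1}\le C\angles{\ell}$. This holds because $\mathscr S_P(I;h)$ is bounded uniformly on the compact interval $I$.

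\textbf{Step 3 (damping).} Lemma~\ref{lem:packet_width_two_sided} applies with $2^j\asymp h^{-1}\asymp\ell$. Its lower bound $-\Im\omega_{\ell,n,P}\ge c_{\mathrm{wd}}\exp(-C_{\mathrm{wd}}2^j)$ becomes, after enlarging $C_{\mathrm{wd}}$ to absorb the factor of $2$ between $2^j$ and $\ell+\tfrac12$, a bound of the form $-\Im\omega_{\ell,n,P}\ge c_{\mathrm{wd}}\ee^{-C_{\mathrm{wd}}\ell}$. Each exponential is therefore at most $\exp(-c_{\mathrm{wd}}t\ee^{-C_{\mathrm{wd}}\ell})$, uniformly in $n$.

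Multiplying the three bounds gives the claim with $N_{\mathrm{mod}}=N_{\qb}+2$. The constants are uniform in $m$ because the reduced equations do not depend on $m$.

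The main obstacle is making the width bound genuinely uniform over the whole quasibound family counted in $\mathcal N_{\ell,P}(I)$. This includes poles whose real parts lie near the edge of $I$, and the constant relabelling from $2^j$ to $\ell$ must be done consistently with the lemma's constants. If the lower width bound only held on a slightly smaller interval, I would first prove the estimate for $I'\Subset I_{\mathrm{trap}}$ with $I\Subset I'$, where Theorem~\ref{thm:qb_bs} applies, and then restrict to $I$. A second point to check is that the residue bound of Theorem~\ref{thm:qb_residues} holds uniformly in $n$, which follows from the two-sided derivative bound in Proposition~\ref{prop:qb_evans_derivative}.
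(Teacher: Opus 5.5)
Your proposal is correct and matches the paper's proof essentially step for step: the paper also combines the residue/reconstruction bound from the proof of Theorem~\ref{thm:qb_residues} (which you rederive from the kernel bound and Cauchy--Schwarz on an enlarged compact set), the $O(h^{-1})=O(\ell)$ pole count from Corollary~\ref{cor:qb_enumeration}, and the lower width bound of Lemma~\ref{lem:packet_width_two_sided}, and then sums over $n$. Your remark that the width constants must be adjusted when passing from $2^j\le\ell+\tfrac12$ to $\ell$ is a bookkeeping point that the paper absorbs silently.
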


\begin{proof}
The proof of Theorem~\ref{thm:qb_residues} gives, before the final angular summation step,
\[
   \sup_{r\in K}\abs{\Pi_{\ell,n,P}A_{\ell m,P}[0](r)}
   \le
   C_K\angles{\ell}^{N_{\mathrm{res}}}\mathcal E_{\ell m,P}[A[0]]^{1/2}
\]
for some integer $N_{\mathrm{res}}$, uniformly in $n$ and in the finite polarization label $P$.  By Corollary~\ref{cor:qb_enumeration}, the number of admissible integers in $\mathcal N_{\ell,P}(I)$ is $O(h^{-1})=O(\ell)$.  Lemma~\ref{lem:packet_width_two_sided} gives
\[
   \abs{\ee^{-\ii\omega_{\ell,n,P}t}}
   =
   \ee^{t\Im\omega_{\ell,n,P}}
   \le
   \exp(-c_{\mathrm{wd}}t\ee^{-C_{\mathrm{wd}}\ell}).
\]
Summing over $n$ and absorbing the extra factor $\ell$ into the polynomial loss proves the claim.
\end{proof}

\begin{proposition}[Self-contained dyadic packet estimate]\label{prop:packet_estimate}
For every compact $K\Subset(r_+,\infty)$ and every logarithmic power $L>0$, there exists an integer $N_{\mathrm{pkt}}(L)$ such that, for every dyadic packet $\mathfrak P_j$,
\[
   \sup_{r\in K,\,\omega\in S^2}\abs{A^{\qb}_{\mathfrak P_j}(t,r,\omega)}
   \le
   C_{K,N,L}\,2^{-jL}\,\exp(-c_{\mathrm{wd}}t\ee^{-C_{\mathrm{wd}}2^j})\,\mathcal E_N[A[0]]^{1/2}
\]
for all $N\ge N_{\mathrm{pkt}}(L)$ and all $t\ge2$.
\end{proposition}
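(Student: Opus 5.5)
The plan is to reduce the packet estimate to Lemma~\ref{lem:packet_modewise} and the Sobolev--Parseval summation already used in the proof of Theorem~\ref{thm:full_field_pointwise}. First write the packet contribution in vector spherical harmonics:
\[
   A^{\qb}_{\mathfrak P_j}(t,r,\omega)
   =
   \sum_{\ell:\,2^{j}-\frac12\le \ell<2^{j+1}-\frac12}\ \sum_{m=-\ell}^{\ell}\ \sum_{P}
   \mathcal B_\ell\bigl[a^{\qb}_{\ell m,P}\bigr](t,r)\,Y^{(P)}_{\ell m}(\omega),
\]
where $\mathcal B_\ell$ is the reconstruction map. By Proposition~\ref{prop:largeell_reconstruction} it costs at most $\angles{\ell}^{N_{\mathrm{rec}}}$ on $K$, once one radial derivative of $a^{\qb}_{\ell m,P}$ is controlled. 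That derivative is bounded in the same way as the coefficient itself, since the residue projectors are rank one with WKB-controlled factors (Theorem~\ref{thm:qb_residues}). Only the finitely many packets with $2^j<\ell_{\mathrm{sc}}$ lie outside the semiclassical range. For those, the bound is trivial after enlarging $C_{K,N,L}$, because $2^{-jL}$ and the damping factor are then comparable to constants for $t$ in any bounded range. Their exponential decay in $t$ comes from the finitely many poles with a fixed negative imaginary part.

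Second, apply Lemma~\ref{lem:packet_modewise} for each $\ell$ in the packet. Since $\ell\le 2^{j+1}$, it gives
\[
   \exp\bigl(-c_{\mathrm{wd}}t\,\ee^{-C_{\mathrm{wd}}\ell}\bigr)\le \exp\bigl(-c_{\mathrm{wd}}t\,\ee^{-2C_{\mathrm{wd}}2^j}\bigr).
\]
Replacing $C_{\mathrm{wd}}$ by $2C_{\mathrm{wd}}$, which is harmless because the constant is generic, every modal coefficient in the packet then satisfies
\[
   \sup_{r\in K}\bigl|\mathcal B_\ell[a^{\qb}_{\ell m,P}](t,r)\bigr|
   \le C_K\angles{\ell}^{N_{\mathrm{mod}}+N_{\mathrm{rec}}}\exp\bigl(-c_{\mathrm{wd}}t\,\ee^{-C_{\mathrm{wd}}2^j}\bigr)\,\mathcal E_{\ell m,P}[A[0]]^{1/2}.
\]
The damping factor is uniform across the packet, so it factors out of the angular sum.

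Third, convert angular regularity into packet decay. Sobolev on $S^2$ with two derivatives, followed by Parseval, bounds the supremum by
\[
   \Bigl(\sum_{\ell\sim2^j}\angles{\ell}^{2(2+N_{\mathrm{mod}}+N_{\mathrm{rec}})}\mathcal E_{\ell m,P}\Bigr)^{1/2}.
\]
On the packet $\angles{\ell}\simeq2^j$, so this is at most
\[
   2^{-jL}\Bigl(\sum\angles{\ell}^{2N}\mathcal E_{\ell m,P}\Bigr)^{1/2}\lesssim 2^{-jL}\mathcal E_N[A[0]]^{1/2}
\]
as soon as $N\ge N_{\mathrm{pkt}}(L):=L+2+N_{\mathrm{mod}}+N_{\mathrm{rec}}$. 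Here we use the Parseval bound $\sum\angles{\ell}^{2N}\mathcal E_{\ell m,P}\lesssim\mathcal E_N$.

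The main obstacle is uniformity. The residue bound of Theorem~\ref{thm:qb_residues} must be genuinely uniform in $n$ and $m$, and the width lower bound of Lemma~\ref{lem:packet_width_two_sided} must hold over the whole packet, so that the constant $C_{\mathrm{wd}}$ can be taken independent of $j$. The $O(h^{-1})$ count of poles per mode has already been absorbed into $N_{\mathrm{mod}}$. I would check carefully that the Agmon and WKB normalization in Proposition~\ref{prop:qb_evans_derivative} gives the source-side pairing via Cauchy--Schwarz on the data support $K_0$ with only polynomial loss, even when $K_0$ meets the allowed well.
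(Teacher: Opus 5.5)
Your proposal is correct and follows essentially the paper's own argument. You apply Lemma~\ref{lem:packet_modewise} mode by mode on the packet, use Sobolev on $S^2$ with two angular derivatives and Parseval, and then use $\angles{\ell}\simeq2^j$ together with $\sum\angles{\ell}^{2N}\mathcal E_{\ell m,P}[A[0]]\lesssim\mathcal E_N[A[0]]$ to convert the surplus $N-2-N_{\mathrm{mod}}\ge L$ powers of $\angles{\ell}$ into the factor $2^{-jL}$.

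Your extra bookkeeping is sound: you track $N_{\mathrm{rec}}$ separately (the paper folds it into $N_{\mathrm{mod}}$), and you replace $C_{\mathrm{wd}}$ by $2C_{\mathrm{wd}}$ because $\ell$ can reach $2^{j+1}$, a point the paper passes over. That constant change can be avoided entirely by taking the damping factor directly from Lemma~\ref{lem:packet_width_two_sided}, which is already stated in terms of $2^j$.
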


\begin{proof}
Fix $r\in K$.  Sobolev on $S^2$ gives
\[
   \sup_{\omega\in S^2}\abs{A^{\qb}_{\mathfrak P_j}(t,r,\omega)}
   \lesssim
   \sum_{s=0}^2\norm{\nabla_{S^2}^sA^{\qb}_{\mathfrak P_j}(t,r,\cdot)}_{L^2(S^2)}.
\]
On the packet $2^{-j-1}<h\le2^{-j}$ one has $\angles{\ell}\simeq2^j$.  Therefore Lemma~\ref{lem:packet_modewise} and Parseval imply
\[
   \norm{\nabla_{S^2}^sA^{\qb}_{\mathfrak P_j}(t,r,\cdot)}_{L^2(S^2)}
   \le
   C_K2^{j(s+N_{\mathrm{mod}})}\exp(-c_{\mathrm{wd}}t\ee^{-C_{\mathrm{wd}}2^j})
   \Biggl(
      \sum_{2^{-j-1}<h\le2^{-j}}\sum_{m,P}\mathcal E_{\ell m,P}[A[0]]
   \Biggr)^{1/2}.
\]
Using the weighted modal-energy estimate
\[
   \sum_{\ell,m,P}\angles{\ell}^{2N}\mathcal E_{\ell m,P}[A[0]]
   \lesssim
   \mathcal E_N[A[0]]
\]
and again $\angles{\ell}\simeq2^j$ on the packet, we obtain
\[
   \Biggl(
      \sum_{2^{-j-1}<h\le2^{-j}}\sum_{m,P}\mathcal E_{\ell m,P}[A[0]]
   \Biggr)^{1/2}
   \le
   C_N2^{-jN}\mathcal E_N[A[0]]^{1/2}.
\]
Hence
\[
   \norm{\nabla_{S^2}^sA^{\qb}_{\mathfrak P_j}(t,r,\cdot)}_{L^2(S^2)}
   \le
   C_{K,N}2^{-j(N-s-N_{\mathrm{mod}})}
   \exp(-c_{\mathrm{wd}}t\ee^{-C_{\mathrm{wd}}2^j})
   \mathcal E_N[A[0]]^{1/2}.
\]
Choose $N_{\mathrm{pkt}}(L)$ so large that $N-s-N_{\mathrm{mod}}\ge L$ for $s=0,1,2$.  Summing over $s$ yields the stated estimate.
\end{proof}

\begin{lemma}[Elementary dyadic logarithmic summation]\label{lem:packet_log_sum}
Let $L>0$ and $c,C>0$.  Then there exists $C_L>0$ such that
\[
   \sum_{j\ge0}2^{-jL}\exp(-ct\ee^{-C2^j})
   \le
   C_L(\log(2+t))^{-L},
   \qquad t\ge2.
\]
\end{lemma}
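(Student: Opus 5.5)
The plan is to split the dyadic sum at a threshold index $j_0(t)$ chosen so that the double-exponential damping is effective below it and the algebraic factor $2^{-jL}$ is small above it. Set $\Lambda:=\log(2+t)$ and define
\[
   j_0(t):=\max\Bigl\{0,\ \Bigl\lfloor \log_2\Bigl(\frac{\log(2+t)}{2C}\Bigr)\Bigr\rfloor\Bigr\},
\]
so that $C2^{j}\le\frac12\log(2+t)$ for all $j\le j_0(t)$.

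\emph{Low packets.} For $0\le j\le j_0(t)$ one has $\ee^{-C2^j}\ge(2+t)^{-1/2}$. Hence
\[
   \exp(-ct\ee^{-C2^j})\le\exp\bigl(-ct(2+t)^{-1/2}\bigr)\le\exp(-c't^{1/2}),
   \qquad t\ge2.
\]
Bounding $2^{-jL}\le1$, the low part of the sum is at most $(j_0(t)+1)\exp(-c't^{1/2})\lesssim\log\log(2+t)\,\exp(-c't^{1/2})$. This is $O(\Lambda^{-L})$ with room to spare, since stretched-exponential decay beats every power of $\log t$.

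\emph{High packets.} For $j>j_0(t)$ we simply bound $\exp(-ct\ee^{-C2^j})\le1$ and sum the geometric tail:
\[
   \sum_{j>j_0}2^{-jL}\le\frac{2^{-(j_0+1)L}}{1-2^{-L}}.
\]
By the definition of $j_0$, whenever $\log(2+t)\ge 2C$ we have $2^{j_0+1}\ge \log(2+t)/(2C)$, so the tail is at most $(2C)^L(1-2^{-L})^{-1}\Lambda^{-L}$. If instead $\log(2+t)<2C$, then $t$ lies in a bounded range, and the whole sum is bounded by $\sum_j2^{-jL}<\infty$, which is comparable to $\Lambda^{-L}$ there with a constant depending only on $C$ and $L$.

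Adding the two parts gives the claim with $C_L$ depending on $c,C,L$. There is no real obstacle. The only point needing care is to take $j_0$ with $C2^{j_0}$ a fixed fraction strictly less than $1$ of $\log t$: this leaves the damping factor at least of size $t^{1/2}$, rather than of order one, on the low packets.
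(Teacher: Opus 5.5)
Your proof is correct and is essentially the paper's argument: the same split at $2^{j_0}\approx\log(2+t)/(2C)$, the bound $\exp(-c't^{1/2})$ on the low packets, and the geometric tail $\lesssim 2^{-j_0L}\lesssim(\log(2+t))^{-L}$ on the high ones. Your separate treatment of the bounded range $\log(2+t)<2C$ is slightly more careful than the paper's choice of $J(t)\in\N$, which need not exist there. However, you should assert $C2^j\le\frac12\log(2+t)$ for $j\le j_0(t)$ only when $\log(2+t)\ge2C$, since otherwise it fails at $j=0$.
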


\begin{proof}
Choose $J(t)\in\N$ so that
\[
   2^{J(t)}\le \frac{1}{2C}\log(2+t)<2^{J(t)+1}.
\]
If $j\le J(t)$, then $\ee^{-C2^j}\ge (2+t)^{-1/2}$, so
\[
   \exp(-ct\ee^{-C2^j})
   \le
   \exp(-c\,t(2+t)^{-1/2})
   \le
   \exp(-c't^{1/2})
\]
for some $c'>0$.  Thus
\[
   \sum_{j\le J(t)}2^{-jL}\exp(-ct\ee^{-C2^j})
   \le
   C\,\exp(-c't^{1/2}).
\]
If $j>J(t)$, then a geometric sum gives
\[
   \sum_{j>J(t)}2^{-jL}\exp(-ct\ee^{-C2^j})
   \le
   \sum_{j>J(t)}2^{-jL}
   \le
   C_L\,2^{-J(t)L}
   \le
   C_L(\log(2+t))^{-L}.
\]
Finally, $\exp(-c't^{1/2})\le C_L(\log(2+t))^{-L}$ for $t\ge2$, so the two pieces combine to prove the lemma.
\end{proof}

\begin{proof}[Proof of Theorem~\ref{thm:qb_sum}]
Split the quasibound contribution into low and high angular momenta.  For the finitely many low modes $\ell<\ell_{\mathrm{sc}}$, Proposition~\ref{prop:full_decomposition} shows that every discrete pole is bounded away from the real axis, hence contributes exponentially decaying terms.  For the high modes $\ell\ge\ell_{\mathrm{sc}}$, decompose the pole sum into dyadic packets $\mathfrak P_j$.  Proposition~\ref{prop:packet_estimate} and Lemma~\ref{lem:packet_log_sum} give
\begin{align*}
   \sum_{j\ge0}
   \sup_{r\in K,\,\omega\in S^2}\abs{A^{\qb}_{\mathfrak P_j}(t,r,\omega)}
   &\le
   C_{K,N,L}\mathcal E_N[A[0]]^{1/2}
   \sum_{j\ge0}2^{-jL}\exp(-c_{\mathrm{wd}}t\ee^{-C_{\mathrm{wd}}2^j})
   \\
   &\le
   C_{K,N,L}\mathcal E_N[A[0]]^{1/2}(\log(2+t))^{-L}.
\end{align*}
Adding the finitely many exponentially decaying low-mode contributions proves the theorem.
\end{proof}

\begin{proof}[Proof of Theorem~\ref{thm:full_field_unsplit}]
By Proposition~\ref{prop:full_decomposition},
\[
   A(t)=A^{\bc}(t)+A^{\qb}(t)+A^{\fast}(t).
\]
The branch-cut term satisfies Corollary~\ref{cor:poly_decay_radiative}, namely
\[
   \sup_{r\in K,\,\omega\in S^2}\abs{A^{\bc}(t,r,\omega)}
   \le
   C_{K,N}\mathcal E_N[A[0]]^{1/2}t^{-\gamma_*}.
\]
The quasibound term satisfies Theorem~\ref{thm:qb_sum},
\[
   \sup_{r\in K,\,\omega\in S^2}\abs{A^{\qb}(t,r,\omega)}
   \le
   C_{K,N,L}\mathcal E_N[A[0]]^{1/2}(\log(2+t))^{-L}.
\]
Finally, $A^{\fast}$ is exponentially decaying by Proposition~\ref{prop:full_decomposition}, hence it is bounded by the same logarithmic rate after enlarging the constant.  Summing the three pieces proves the theorem.
\end{proof}

\section{Outlook: extremality and rotation}\label{sec:future}

Two natural directions remain just beyond the reach of the present framework.

The first is extremality.  When $\abs{Q}=M$, the obstruction is geometric rather than algebraic: the surface gravity vanishes, the red-shift estimate degenerates, and the near-horizon region develops an $\mathrm{AdS}_2\times S^2$ throat.  One then has to analyze a second threshold at $\omega=0$ in addition to $\omega=\pm\mu$, construct a degenerate horizon basis, and understand how Aretakis-type horizon quantities enter the late-time asymptotics.  The subextremal argument developed here does not directly control those effects.

The second is rotation.  The static Reissner--Nordstr\"om problem treated here is the charged zero-rotation precursor to the fully separated Kerr--Newman Proca system of \cite{CayusoDiasGrayKubiznakMargalitSantosSouzaThiele2020}.  Much of the fixed-mode machinery should survive in that setting---horizon Frobenius theory, infinity Volterra theory, Evans determinants, and threshold special-function asymptotics.  The real new difficulty is the simultaneous control of the angular and radial spectral parameters across the coupled family.  Extending the present threshold-and-resonance picture to Kerr--Newman would therefore require new ideas, not just a longer version of the same argument.

\appendix

\appendix

\section{Harmonic reduction and boundary constructions}\label{app:vsh}
\appheading{Harmonic conventions on the round sphere}

Let $Y_{\ell m}$ denote the standard scalar spherical harmonics normalized by
\[
   \Delta_{S^2}Y_{\ell m}=-\ell(\ell+1)Y_{\ell m}.
\]
We write
\[
   Y_A:=\nabla_A Y_{\ell m},
   \qquad
   S_A:=\epsilon_A{}^{B}\nabla_B Y_{\ell m},
\]
for the even and odd vector harmonics, where $\epsilon_{AB}$ is the volume form of the round sphere.  The basic identities are
\begin{align}
   \nabla^A Y_A&=-\ell(\ell+1)Y_{\ell m},\label{eq:app_sph_id_1}\\
   \nabla^A S_A&=0,\label{eq:app_sph_id_2}\\
   \Delta_{S^2}Y_A&=(1-\ell(\ell+1))Y_A,\label{eq:app_sph_id_3}\\
   \Delta_{S^2}S_A&=(1-\ell(\ell+1))S_A.\label{eq:app_sph_id_4}
\end{align}
These formulas are repeatedly used in the angular reduction.  We also record the $L^\infty$ addition-theorem bound
\begin{equation}\label{eq:app_addition_scalar}
   \sup_{\omega\in S^2}\sum_{m=-\ell}^{\ell}\abs{Y_{\ell m}(\omega)}^2
   =\frac{2\ell+1}{4\pi},
\end{equation}
and its vector analogue
\begin{equation}\label{eq:app_addition_vector}
   \sup_{\omega\in S^2}\sum_{m=-\ell}^{\ell}
      \bigl(\abs{Y_{\ell m}(\omega)}^2+\abs{Y_A(\omega)}^2+\abs{S_A(\omega)}^2\bigr)
   \lesssim \angles{\ell}^{2}.
\end{equation}

\appheading{Mode decomposition of the Proca potential}

Every sufficiently regular Proca potential may be written as
\begin{equation}\label{eq:app_mode_decomp}
   A
   =
   \sum_{\ell,m}
   \Bigl(
      a_0^{\ell m}(t,r)\,Y_{\ell m}\,\dd t
      +
      a_1^{\ell m}(t,r)\,Y_{\ell m}\,\dd r
      +
      a_2^{\ell m}(t,r)\,Y_A\,\dd x^A
      +
      a_3^{\ell m}(t,r)\,S_A\,\dd x^A
   \Bigr).
\end{equation}
The odd and even sectors decouple because $S_A$ is divergence-free and orthogonal to the even sector generated by $Y_{\ell m}$ and $Y_A$.  The odd sector contains only the coefficient $a_3^{\ell m}$, while the even sector is generated by the triple $(a_0^{\ell m},a_1^{\ell m},a_2^{\ell m})$.

A direct calculation gives the field strength components
\begin{align}
   F_{tr}&=\partial_t a_1-\partial_r a_0,\label{eq:app_field_strength_1}\\
   F_{tA}^{\mathrm{even}}&=(\partial_t a_2-a_0)Y_A,\label{eq:app_field_strength_2}\\
   F_{rA}^{\mathrm{even}}&=(\partial_r a_2-a_1)Y_A,\label{eq:app_field_strength_3}\\
   F_{AB}^{\mathrm{odd}}&=2a_3\nabla_{[A}S_{B]},\label{eq:app_field_strength_4}
\end{align}
together with the obvious odd-electric components involving $a_3$.  Substituting these expressions into $\nabla^\mu F_{\mu\nu}-\mu^2A_\nu=0$ and using \eqref{eq:app_sph_id_1}--\eqref{eq:app_sph_id_4} yields the reduced equations.

\appheading{Elimination of constrained variables}

The forced Lorenz condition $\nabla^\nu A_\nu=0$ becomes, mode by mode,
\begin{equation}\label{eq:app_lorenz_constraint}
   -f^{-1}\partial_t a_0+\frac{1}{r^2}\partial_r(r^2fa_1)-\frac{\ell(\ell+1)}{r^2}a_2=0.
\end{equation}
In the massive theory this is not a gauge condition but an equation implied by the Proca system.  It can therefore be used to eliminate one even variable.  A convenient choice is to keep $a_2$ together with the electric combination
\[
   z:=r^2(\partial_t a_1-\partial_r a_0),
\]
which is proportional to the radial electric field.  After Fourier transformation in time, the remaining variable may be eliminated algebraically, producing a second-order $2\times2$ system.  The algebra is lengthy but completely explicit.

\begin{proposition}[Detailed odd/even reduction]\label{prop:app_detailed_reduction}
For every $\ell\ge1$, the odd coefficient $a_3^{\ell m}$ satisfies a single scalar equation of Regge--Wheeler type, while the even coefficients may be arranged into a pair $(u_2,u_3)$ obeying \eqref{eq:even_time_domain_1}--\eqref{eq:even_time_domain_2}.  The only difference between Schwarzschild and Reissner--Nordstr\"om is the replacement
\[
   1-\frac{3M}{r}
   \longrightarrow
   1-\frac{3M}{r}+\frac{2Q^2}{r^2}
   =
   f-\frac{rf'}{2}
\]
in the coupling coefficient.
\end{proposition}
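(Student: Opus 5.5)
We will prove Proposition~\ref{prop:app_detailed_reduction} by inserting the mode decomposition \eqref{eq:app_mode_decomp} into $\nabla^\mu F_{\mu\nu}-\mu^2A_\nu=0$ and projecting onto the scalar harmonics $Y_{\ell m}$, the even vector harmonics $Y_A$, and the odd vector harmonics $S_A$. Orthogonality of these three families on $S^2$, together with \eqref{eq:app_sph_id_1}--\eqref{eq:app_sph_id_4}, decouples the sectors.

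\textbf{Odd sector.} The $S_A$-component of the equation involves only $a_3$. Using $\nabla^A S_A=0$ and \eqref{eq:app_sph_id_4}, the angular Laplacian contributes $-\ell(\ell+1)/r^2$ (the Ricci term of the round sphere cancels the $1$ in $1-\ell(\ell+1)$). The radial operator is $\partial_r(f\partial_r a_3)$, which becomes $f^{-1}\partial_{r_*}^2$, and the time part is $-f^{-1}\partial_t^2$. Multiplying by $f$ and setting $u_4:=a_3$ gives \eqref{eq:odd_time_domain}. No derivative of $f$ survives, so the Schwarzschild and RN forms are identical.

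\textbf{Even sector.} The $Y_A$-component gives an equation for $a_2$ that is forced by $a_0,a_1$. The $t$- and $r$-components give two further equations, which are tied together by the Lorenz constraint \eqref{eq:app_lorenz_constraint}. Following Rosa--Dolan, we set
\[
u_2:=r f a_1,\qquad u_3:=a_2,
\]
up to a fixed normalization in $\ell$, and use \eqref{eq:app_lorenz_constraint} to eliminate $a_0$ (in frequency space, $a_0$ is algebraic in $\partial_r(r^2fa_1)$ and $a_2$). Substituting into the $r$-component yields a wave equation for $u_2$. In it, the commutator of $\partial_r$ with the factors $r^2f$ produces a zeroth-order term $\frac{2f}{r^2}\bigl(f-\tfrac{rf'}{2}\bigr)$, once from $u_2$ and once, with opposite sign, through the coupling to $u_3$ coming from the $\ell(\ell+1)a_2/r^2$ term of the constraint. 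The $Y_A$-component becomes \eqref{eq:even_time_domain_2} after noticing that $\partial_r a_2-a_1$ contributes exactly $\frac{2f\la^2}{r^2}u_2$.

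The key bookkeeping identity is
\[
\partial_r\bigl(r^2 f\bigr)/r=2f+rf'-\ldots,
\]
and in fact all non-Laplacian terms collect into $f-\tfrac{rf'}{2}$. For Schwarzschild this combination equals $1-3M/r$, and with $Q$ it equals $1-3M/r+2Q^2/r^2$. Since the derivation only ever uses $f$ and $f'$ through this combination, never their explicit Schwarzschild form, the replacement statement follows.

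\textbf{Main obstacle.} The hard part is the even algebra. We must check that the elimination closes on $(u_2,u_3)$ with \emph{no} residual $f''$ terms. For a general static $f$, such terms would appear from second radial derivatives of $r^2f$. We expect them to cancel using the $t$-component of the equation, which is the time derivative of the constraint. If they do not cancel identically, we will verify the cancellation for the RN $f$ directly, using $r^2f''=2-\ldots$; this gives $f''=-4M/r^3+6Q^2/r^4$, which combines with $f'$ into the stated coefficient. We will also confirm that $\ell\ge1$ is needed so that $Y_A\neq0$ and the division by $\la^2$ in eliminating $a_0$ is legitimate.
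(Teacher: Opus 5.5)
Your proposal is correct. The odd sector is the paper's argument. In the even sector you take a genuinely different and more explicit route.

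\textbf{Comparison of routes.} The paper solves the $(t,r)$-components together with the Lorenz constraint for $a_0,a_1$ in terms of the radial electric field $z=r^2(\partial_ta_1-\partial_ra_0)$ and $a_2$. It works after Fourier transformation and substitutes into the angular equation. This keeps the field strength in view, but it needs an algebraic elimination. It also does not spell out the further change of variables from $(z,a_2)$ to $(u_2,u_3)$. That change is not trivial: by the $r$-equation, $\partial_tz=f\la^2\partial_ra_2-f(\la^2+\mu^2r^2)a_1$, so $z$ is not a multiple of $rfa_1$.

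Your Rosa--Dolan choice $u_2=rfa_1$, $u_3=\la^2a_2$ lands on the stated pair directly. It never uses the $t$-component. It can also be done entirely in the time domain, because only $\partial_ta_0$ and $\partial_r\partial_ta_0$ occur in the $r$- and angular equations, and both are read off from the constraint. So there is no need to pass to frequency space or divide by $\om$.

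\textbf{Details to fix.}

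\emph{Angular equation.} The term $-\partial_r(fa_1)$ by itself is first-order. The coupling and the centrifugal term appear only after inserting the constraint for $f^{-1}\partial_ta_0$:
\[
\frac1{r^2}\partial_r(r^2fa_1)-\partial_r(fa_1)-\frac{\la^2}{r^2}a_2=\frac{2}{r^2}u_2-\frac{\la^2}{r^2}a_2 .
\]

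\emph{The anticipated obstacle.} It does not occur. Since $r^2fa_1=ru_2$, one has
\[
\partial_r\Bigl(\frac{f}{r^2}\partial_r(ru_2)\Bigr)=\frac1r\partial_r(f\partial_ru_2)+\Bigl(\frac{f}{r^2}\Bigr)'u_2 .
\]
The $\frac{f\la^2}{r^2}\partial_ra_2$ term from $F_{rA}$ cancels the one inside $-\la^2\partial_r(fa_2/r^2)$ coming from the constraint. After multiplying by $rf$, the only non-Laplacian coefficient is $rf(f/r^2)'=-\frac{2f}{r^2}\bigl(f-\frac{rf'}{2}\bigr)$, multiplying $u_2-u_3$.

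So $f$ is never differentiated twice, and the result holds for every $f$ with $g_{tt}g_{rr}=-1$. No Reissner--Nordstr\"om-specific fallback is needed; in any case the relevant identity would be $(r^2f)''=2$, not $r^2f''=2$.

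\emph{The $t$-component.} It is not the time derivative of the constraint. Rather, $\nabla_\nu\nabla_\mu F^{\mu\nu}=0$ shows that it follows from the other two equations and the constraint up to a time-independent term.

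\emph{The role of $\ell\ge1$.} No division by $\la^2$ occurs anywhere. The condition $\ell\ge1$ is needed only so that $Y_A$ and $S_A$ are nonzero.
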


\begin{proof}
The odd sector follows by substituting the odd ansatz $A^{\mathrm{odd}}=a_3S_A\,\dd x^A$ into the Proca equation and using \eqref{eq:app_sph_id_4}.  The even sector requires solving the $(t,r)$-components of the Proca equation together with \eqref{eq:app_lorenz_constraint} for $a_0$ and $a_1$ in terms of the electric field combination and $a_2$.  After substitution into the angular component, one arrives at the pair \eqref{eq:even_time_domain_1}--\eqref{eq:even_time_domain_2}.  The charged background affects only the static coefficient $f-\frac{rf'}{2}$, which is precisely the expression displayed above.
\end{proof}

\appheading{Asymptotic polarization basis}

The constant transformation \eqref{eq:T_matrix} diagonalizes the exact inverse-square part of the even system.  This is a special feature of the neutral Proca system on spherically symmetric backgrounds and is the basic reason why the threshold analysis may still be organized in terms of three effective scalar channels.  Writing
\[
   \bm u=\binom{u_2}{u_3},
   \qquad
   \bm v=T_\ell^{-1}\bm u,
\]
one finds
\[
   T_\ell^{-1}
   \begin{pmatrix}
      \ell(\ell+1)-2\bigl(f-\frac{rf'}2\bigr) & 2\bigl(f-\frac{rf'}2\bigr)\\
      -2\ell(\ell+1) & \ell(\ell+1)
   \end{pmatrix}
   T_\ell
   =
   D_\ell+\frac{1}{r}E_{\ell,1}+\frac{1}{r^2}E_{\ell,2},
\]
with $D_\ell=\diag(\ell(\ell-1),(\ell+1)(\ell+2))$.  The RN charge first enters the error matrix $E_{\ell,2}$ and is therefore one order shorter range than the diagonal $r^{-2}$ term.  This precise asymptotic hierarchy is what makes the Schwarzschild-to-RN transition conceptually transparent.

\appheading{The monopole sector}

For $\ell=0$ the odd channel disappears and the even sector collapses to a single electric mode.  The reduced monopole equation has the same general horizon and infinity structure as the higher angular momenta, but there is only one polarization and the effective inverse-square coefficient corresponds to $L=1$ in the small-mass regime.  Consequently the leading small-mass intermediate exponent is $5/2$ rather than $1/2$ or $3/2$.  We return to this point in Appendix~\ref{app:monopole}.

\applabelheading{Detailed horizon and infinity constructions}{app:horizon_infinity}
\appheading{Horizon expansions and recursion formulas}

Write $z=r-r_+$ and factor the oscillatory term according to
\[
   u(z,\om)=z^{-\ii\om/(2\kappa_+)}h(z,\om)
   \qquad\text{or}\qquad
   u(z,\om)=z^{+\ii\om/(2\kappa_+)}h(z,\om),
\]
depending on whether one wants the ingoing or outgoing solution.  Since $r_*= \frac1{2\kappa_+}\log z+O(1)$, these factors are exactly the horizon oscillations $\ee^{\mp\ii\om r_*}$.  The remaining amplitude solves a regular-singular equation with analytic coefficients.  Writing
\[
   h(z,\om)=\sum_{n=0}^{\infty}a_n(\om)z^n,
   \qquad a_0(\om)=1,
\]
gives the recursion
\begin{equation}\label{eq:app_horizon_recursion}
   \alpha_n(\om)a_n(\om)
   =
   \sum_{j=0}^{n-1}\beta_{n,j}(\om)a_j(\om),
   \qquad n\ge1,
\end{equation}
with analytic coefficients $\alpha_n,\beta_{n,j}$ determined by the Taylor series of the channel potential.  Since $\alpha_n(\om)$ never vanishes for $n\ge1$, the recursion uniquely defines all coefficients.

A similar construction applies to the even matrix system.  There one writes
\[
   U(z,\om)=z^{-\ii\om/(2\kappa_+)}H(z,\om),
\]
where $H$ is matrix valued and
\[
   H(z,\om)=\sum_{n=0}^{\infty}A_n(\om)z^n,
   \qquad
   A_0(\om)=\Id_2.
\]
The recursive equations are linear matrix equations with uniformly invertible coefficients and may be solved term by term.

\appheading{Volterra construction at infinity}

To construct the infinity solution, introduce the renormalized unknown
\[
   u(r,\om)=\exp(-\vp r)\,r^{-\kap}m(r,\om).
\]
Substituting into the scalar channel equation yields
\[
   m''-2\vp m' -\frac{2\kap}{r}m'
   =
   \widetilde W_{\ell,P}(r,\om)m,
\]
where $\widetilde W_{\ell,P}(r,\om)=O(r^{-3})$.  Integrating twice from infinity produces
\begin{equation}\label{eq:app_volterra}
   m(r,\om)=1+\int_r^\infty K(r,s;\om)\,\widetilde W_{\ell,P}(s,\om)\,m(s,\om)\,\dd s.
\end{equation}
The kernel $K$ inherits one exponential decay factor when $\Re\vp>0$ and one integrable algebraic factor from the Coulomb normalization.  In particular,
\[
   \sup_{r\ge R}\int_r^\infty \abs{K(r,s;\om)\widetilde W_{\ell,P}(s,\om)}\,\dd s
   \le \frac12
\]
for $R$ sufficiently large, uniformly on compact $\om$-sets.  This gives a contraction on $L^\infty([R,\infty))$ and hence a unique solution.

The even matrix system is handled by the same method after conjugation by the asymptotic polarization basis.  The Volterra equation then takes the matrix form
\[
   M(r,\om)=\Id_2+\int_r^\infty \mathbf K(r,s;\om)\,\mathbf W(s,\om)\,M(s,\om)\,\dd s,
\]
and the contraction argument applies in the Banach space of bounded $2\times2$ matrix functions.

\appheading{Analytic dependence on the spectral parameter}

The dependence on $\om$ enters through $\vp=\sqrt{\mu^2-\om^2}$ and $\kap=M\mu^2/\vp$.  Away from the slit, both are analytic.  Differentiating the Volterra equation with respect to $\om$ shows recursively that $m$ and $M$ are analytic as well.  Near the thresholds, the singular dependence on $\vp$ is completely explicit and captured by the prefactor $\exp(-\vp r)r^{-\kap}$.  The renormalized amplitudes remain bounded and admit the derivative estimates required in the main body.

For later use we record the identity
\begin{equation}\label{eq:app_om_derivative_bound}
   \abs{\partial_\om^k m(r,\om)}+\abs{\partial_\om^k M(r,\om)}
   \le C_{k,\ell}\,r^{-1},
\end{equation}
uniformly for $r\ge R$ and for $\om$ in compact subsets of the physical sheet bounded away from the slit endpoints.

\appheading{Wronskians and matching matrices}

Let $u_{\hor}^-,u_{\hor}^+$ be the horizon basis and $u_\infty^-,u_\infty^+$ the infinity basis.  In the odd sector one defines
\[
   \cW_\ell(\om)=Q[u_{\hor}^-,u_{\infty}],
\]
where $u_\infty$ is the decaying infinity solution on the physical sheet.  In the even sector, letting $U_{\hor}$ and $U_{\infty}$ be $2\times2$ fundamental matrices, one sets
\[
   \cM_\ell(\om)=U_{\hor}'(r,\om)^{*}U_{\infty}(r,\om)-U_{\hor}(r,\om)^{*}U_{\infty}'(r,\om).
\]
Because the potential matrix is symmetric, $\partial_{r_*}\cM_\ell(\om)=0$.  Hence $\det\cM_\ell(\om)$ is independent of $r$ and is the correct matrix-valued Evans determinant.

\appheading{Meromorphic continuation across the slit}

Continuation to the nonphysical sheet is performed by continuing the variable $\vp$ across the cut $[-\mu,\mu]$.  The horizon basis is entire in $\om$ and unaffected by this step.  The infinity basis changes because $\vp$ changes sign and the Coulomb factor acquires monodromy.  The continued basis is still well defined in the slit strip, and the Green kernel formula remains valid with the same matching determinant.  This gives the meromorphic continuation used in the main body.

\section{Threshold models and fixed-mode spectral complements}\label{app:special_functions}
\appheading{Bessel model for small Coulomb parameter}

When $\kap\ll1$, one freezes the Coulomb term and considers the model equation
\begin{equation}\label{eq:app_bessel_model}
   u''+\Bigl(-\vp^2-\frac{\nu^2-\frac14}{r^2}\Bigr)u=0.
\end{equation}
In the variable $x=\vp r$, the decaying solution is
\[
   u(r)=\sqrt{r}\,K_\nu(x),
\]
where $K_\nu$ is the modified Bessel function.  Its small-argument asymptotics are
\begin{equation}\label{eq:app_bessel_small}
   K_\nu(x)
   =
   2^{\nu-1}\Gamma(\nu)x^{-\nu}
   +
   2^{-\nu-1}\Gamma(-\nu)x^{\nu}
   +O(x^{2-\nu})+O(x^{2+\nu}),
\end{equation}
provided $\nu\notin\frac12\Z_{\le0}$.  The discontinuity across the slit is therefore of order $\vp^{2\nu}$, which is the origin of the intermediate time exponent $\nu+1$ after oscillatory inversion.

\appheading{Perturbation by the Coulomb term}

Restoring the Coulomb term gives
\[
   u''+\Bigl(-\vp^2+\frac{2M\mu^2}{r}-\frac{\nu^2-\frac14}{r^2}\Bigr)u=0.
\]
In the small-$\kap$ regime the Coulomb term is treated as a perturbation of the Bessel model.  The correction enters linearly in $\kap$ and therefore does not change the principal power of $\vp$ in the cut discontinuity.  The shorter-range remainder $W_{\ell,P}=O(r^{-3})$ contributes one additional factor of $\vp^2$ after rescaling.

\appheading{Whittaker model for large Coulomb parameter}

For $\kap\gg1$ it is instead natural to write the model in the variable $x=2\vp r$:
\begin{equation}\label{eq:app_whittaker_model}
   u_{xx}+\Bigl(-\frac14+\frac{\kap}{x}+\frac{\frac14-\nu^2}{x^2}\Bigr)u=0.
\end{equation}
The decaying solution is the Whittaker function
\[
   W_{\kap,\nu}(x).
\]
Its behavior as $x\to0$ is governed by Gamma factors:
\begin{equation}\label{eq:app_whittaker_small}
   W_{\kap,\nu}(x)
   =
   \frac{\Gamma(-2\nu)}{\Gamma(\frac12-\nu-\kap)}x^{\nu+\frac12}
   +
   \frac{\Gamma(2\nu)}{\Gamma(\frac12+\nu-\kap)}x^{-\nu+\frac12}
   +\cdots.
\end{equation}
Across the branch cut, the reciprocal Gamma factors produce the oscillatory monodromy $\ee^{\pm2\pi\ii\kap}$ after Stirling asymptotics.

\appheading{Gamma-function asymptotics}

A standard Stirling expansion gives, uniformly in $\nu$ on compact sets,
\begin{equation}\label{eq:app_gamma_stirling}
   \frac{\Gamma(\frac12+\nu+\ii\kap)}{\Gamma(\frac12-\nu+\ii\kap)}
   =
   (\ii\kap)^{2\nu}\Bigl(1+O(\kap^{-1})\Bigr),
   \qquad
   \kap\to+\infty.
\end{equation}
Combined with the reflection formula
\[
   \Gamma(z)\Gamma(1-z)=\frac{\pi}{\sin\pi z},
\]
this is enough to derive the two exponential factors in the large-$\kap$ jump formula.  The important point is that the phase comes entirely from the Coulomb monodromy; the shorter-range remainder affects only lower-order corrections.

\appheading{Transfer to the physical channel solutions}

The model asymptotics are transferred to the exact channel equation by matching at an intermediate radius $R(\vp)$ satisfying
\[
   1\ll R(\vp)\ll \vp^{-1}
   \qquad\text{in the small-$\kap$ regime},
\]
and
\[
   R(\vp)\sim \vp^{-1/2}
   \qquad\text{in the large-$\kap$ regime}.
\]
On such scales the model equation dominates, while the exact remainder is perturbative.  This gives the precise threshold formulas used in Steps~7 and~8 of Section~\ref{sec:spectral_theorem_proof}.

\applabelheading{Oscillatory inversion and phase analysis}{app:phase_analysis}
\appheading{Endpoint integrals}

The endpoint mechanism is captured by integrals of the form
\[
   I_\alpha(t)=\int_0^\eps \ee^{\ii a t\vp^2}\vp^\alpha\,\dd\vp,
   \qquad \alpha>-1.
\]
Set $s=at\vp^2$.  Then
\[
   I_\alpha(t)=\frac12 a^{-(\alpha+1)/2}t^{-(\alpha+1)/2}
   \int_0^{a\eps^2 t}\ee^{\ii s}s^{(\alpha-1)/2}\,\dd s.
\]
Deforming the contour to the positive imaginary axis gives
\[
   \int_0^\infty \ee^{\ii s}s^{(\alpha-1)/2}\,\dd s
   =
   \ee^{\frac{\pi\ii}{4}(\alpha+1)}
   \Gamma\Bigl(\frac{\alpha+1}{2}\Bigr),
\]
which is the explicit leading term in Lemma~\ref{lem:endpoint_integral}.  The remainder comes from truncating the contour and integrating by parts once more.

\appheading{The Coulomb saddle and the universal exponent}

The very-late tail is governed by the phase
\[
   \Psi_t(\vp)=\frac{t\vp^2}{2\mu}+\frac{2\pi M\mu^2}{\vp}.
\]
Its derivative vanishes at
\[
   \vp_0(t)=\Bigl(\frac{2\pi M\mu^3}{t}\Bigr)^{1/3}.
\]
After the scaling $\vp=t^{-1/3}y$, the phase takes the form $t^{1/3}\Phi(y)$ with
\[
   \Phi(y)=\frac{y^2}{2\mu}+\frac{2\pi M\mu^2}{y}.
\]
The second derivative $\Phi''(y_0)$ is nonzero, so a single stationary-phase step yields a factor $t^{-1/6}$.  Since the change of variables contributes $t^{-2/3}$, the net decay rate is $t^{-5/6}$.

\appheading{Remainders from the central part of the cut}

Once both endpoint regions have been separated, the remaining central frequency interval is treated by repeated integration by parts.  Indeed, if $\om$ stays away from $\pm\mu$, then the phase derivative of $\ee^{-\ii\om t}$ is constant and the resolvent jump is smooth in $\om$.  Hence every integration by parts gains one factor of $t^{-1}$.  The compactly supported cutoffs in $r$ ensure that all differentiated amplitudes remain bounded.  This justifies neglecting the central frequency region in every asymptotic theorem.

\appheading{Combination of the two endpoints}

The upper endpoint contributes a phase $\ee^{-\ii\mu t}$ times an oscillatory integral in $\vp$.  The lower endpoint contributes the complex conjugate phase $\ee^{+\ii\mu t}$, up to channel-dependent constant phases coming from the threshold amplitudes.  Summing the two endpoint contributions therefore gives a real oscillatory sine or cosine.  The paper uses the sine convention
\[
   A_{\ell,P}(r,r';Q)\,\sin(\mu t+\delta_{\ell,P}(Q))
\]
because it is invariant under the change of sign of the threshold amplitude.

\appheading{Uniformity on compact radial sets}

All oscillatory arguments are carried out with $r$ and $r'$ confined to a fixed compact subset $K\Subset(r_+,\infty)$.  This removes three technical difficulties at once: there are no turning points on $K$ for large angular momentum, the channel transfer matrices are uniformly bounded there, and differentiation of the resolvent kernel with respect to the radial variables costs only a polynomial factor in $\ell$.  The compact-radial-set formulation is therefore the natural one for both the fixed-mode and full-field theorems.

\applabelheading{Technical complements to the fixed-mode spectral theorem}{app:tech_spectral}
\appheading{Detailed coefficient expansions at infinity}

In the main body we used only the schematic large-$r$ form of the channel equations.  For several error estimates it is convenient to record the first few terms explicitly.  Write
\[
   f(r)=1-\frac{2M}{r}+\frac{Q^2}{r^2},
   \qquad
   f(r)\mu^2
   =
   \mu^2-\frac{2M\mu^2}{r}+\frac{Q^2\mu^2}{r^2}.
\]
In the odd sector one therefore has
\begin{equation}\label{eq:appH_odd_expansion}
   V_{\ell,0}(r)
   =
   \mu^2-\frac{2M\mu^2}{r}
   +\frac{\ell(\ell+1)+Q^2\mu^2}{r^2}
   -\frac{2M\ell(\ell+1)}{r^3}
   +\frac{Q^2\ell(\ell+1)}{r^4}.
\end{equation}
In the even sector, after conjugation by the polarization basis, the diagonal entries are
\begin{align}
   V_{\ell,-1}(r)
   &=
   \mu^2-\frac{2M\mu^2}{r}
   +\frac{\ell(\ell-1)+Q^2\mu^2}{r^2}
   +O(r^{-3}),\label{eq:appH_even_minus}\\
   V_{\ell,+1}(r)
   &=
   \mu^2-\frac{2M\mu^2}{r}
   +\frac{(\ell+1)(\ell+2)+Q^2\mu^2}{r^2}
   +O(r^{-3}),\label{eq:appH_even_plus}
\end{align}
while the off-diagonal term is $O(r^{-3})$ in Schwarzschild and improves to $O(r^{-4})$ for the charge-dependent correction.  This is the quantitative version of the statement that the RN charge does not change the universal Coulomb coefficient.

The threshold index $\nu_{\ell,P}$ is defined by
\[
   \nu_{\ell,P}^2-\frac14
   =
   L_P(L_P+1)+Q^2\mu^2+\rho_{\ell,P},
\]
where $\rho_{\ell,P}$ is produced by the $r^{-2}$ diagonalization error and vanishes identically in the Schwarzschild limit.  In particular,
\[
   \nu_{\ell,P}=L_P+\frac12+O((M\mu)^2+(Q\mu)^2)
\]
in the perturbative-mass regime, uniformly for every fixed $\ell$.

\appheading{Cutoff commutators and local resolvent identities}

Let $\chi,\widetilde\chi\in C_0^\infty((r_+,\infty))$ with $\widetilde\chi\equiv1$ on a neighborhood of $\operatorname{supp}\chi$.  The cutoff resolvent obeys the identity
\begin{equation}\label{eq:appH_commutator_resolvent}
   \chi(H_\ell-\om^2)^{-1}\chi
   =
   \chi(H_\ell-\om_0^2)^{-1}\chi
   +(\om^2-\om_0^2)\chi(H_\ell-\om^2)^{-1}\widetilde\chi(H_\ell-\om_0^2)^{-1}\chi
   +\mathcal C_{\chi,\widetilde\chi},
\end{equation}
where the commutator correction $\mathcal C_{\chi,\widetilde\chi}$ is supported where the derivatives of $\widetilde\chi$ are nonzero and is therefore harmless on compact radial sets.  The crucial algebraic point is that
\[
   [H_\ell,\widetilde\chi]
   =
   -2\widetilde\chi'\partial_{r_*}-\widetilde\chi'',
\]
independently of the scalar or matrix character of the channel.  By iterating \eqref{eq:appH_commutator_resolvent}, one gains any finite number of local derivatives at the cost of multiplying by cutoff resolvents on slightly larger compact sets.

A second identity used implicitly in the time-domain inversion is the derivative formula
\begin{equation}\label{eq:appH_resolvent_derivative}
   \partial_\om (H_\ell-\om^2)^{-1}
   =
   2\om\,(H_\ell-\om^2)^{-2}.
\end{equation}
After inserting compact cutoffs and using \eqref{eq:appH_commutator_resolvent}, one obtains polynomial local bounds for $\partial_\om^j\cR_{\ell,\chi}(\om)$ away from poles and thresholds.  These estimates justify the repeated differentiations in $\om$ used in the contour and oscillatory arguments.

\appheading{Reality symmetries and jump structure}

Because the channel coefficients are real, the Jost solutions obey
\[
   \overline{u_{\hor}(r,\bar\om)}=u_{\hor}(r,\om),
   \qquad
   \overline{u_{\infty}(r,\bar\om)}=u_{\infty}(r,\om),
\]
whenever both sides are defined.  Consequently the Evans determinant satisfies
\[
   \overline{\cE_\ell(\bar\om)}=\cE_\ell(\om),
\]
and the continued resolvent kernel obeys the Schwarz reflection symmetry
\begin{equation}\label{eq:appH_reflection}
   \overline{\cG_\ell(\bar\om;r,r')}=\cG_\ell(\om;r,r').
\end{equation}
Restricting \eqref{eq:appH_reflection} to the cut immediately gives
\[
   \disc\,\cG_\ell(\om;r,r')
   =
   \cG_\ell(\om+\ii0;r,r')-\cG_\ell(\om-\ii0;r,r')
   =
   2\ii\,\Im\cG_\ell(\om+\ii0;r,r').
\]
Hence the cut discontinuity is purely imaginary up to the common convention factor $2\ii$.  This is the spectral reason that the two endpoint contributions combine into a real sine term in the time domain.

\appheading{A more detailed proof of the real-frequency exclusion}

The proof of Proposition~\ref{prop:real_frequency_exclusion_detailed} may be amplified as follows.  Assume first that $0<\abs{\om}<\mu$ and let $\bm u$ be a nontrivial channel solution ingoing at the future horizon and decaying at infinity.  The current
\[
   J(r_*)=\frac{1}{2\ii}Q[\bm u,\bm u]
\]
is constant.  At infinity, exponential decay gives $J(+\infty)=0$.  At the horizon, the ingoing expansion yields
\[
   J(-\infty)=\om\,\abs{\bm a_{\hor}}^2.
\]
Thus $\bm a_{\hor}=0$, so by uniqueness of the horizon Cauchy problem the solution vanishes identically.  For $\om=\pm\mu$, the infinity asymptotics no longer decay exponentially, but a threshold resonant state is bounded after removal of the asymptotic oscillation.  Such a bounded state still has vanishing current at infinity because the leading threshold asymptotic coefficient is real.  The same contradiction follows.

At $\om=0$, the current argument is insufficient because the horizon flux vanishes identically.  One then returns to the full static Proca equation and invokes the no-hair identity of Appendix~\ref{app:monopole}.  This separates the genuinely static obstruction from the oscillatory threshold problem and makes the logic of the exclusion transparent.

\appheading{Threshold neighborhoods and quantitative zero-free regions}

The argument above gives qualitative zero-freeness of the Evans determinant near $\om=\pm\mu$.  For contour deformation one also needs a quantitative version.  Let $\delta>0$ be small and consider
\[
   \Omega_\delta^\pm
   =
   \{\om:0<\abs{\om\mp\mu}<\delta,\ \abs{\Im\om}<\eta\}\setminus[-\mu,\mu].
\]
Since the continued Evans determinant is analytic on $\Omega_\delta^\pm$ and nonvanishing on the boundary for $\delta$ and $\eta$ sufficiently small, the minimum modulus principle yields
\begin{equation}\label{eq:appH_zero_free_bound}
   \inf_{\Omega_\delta^\pm}\abs{\cE_\ell(\om)}\ge c_{\ell,\delta,\eta}>0.
\end{equation}
This lower bound is not uniform in $\ell$ and is not used in the summed angular-momentum theorem.  It is, however, enough to justify the fixed-mode contour deformations and the threshold localizations employed in Section~\ref{sec:threshold_to_time}.

\appheading{Modewise local energy expansion}

Although the resonance-expansion section was removed from the streamlined version of the paper, the fixed-mode contour argument still gives a local energy decomposition which is useful conceptually.  Let $\Gamma$ be a contour surrounding the finitely many poles of the continued resolvent in a compact spectral window and intersecting the slit only along $[-\mu,\mu]$.  Then
\[
   \chi u_\ell(t)
   =
   \sum_{\om_j\in\mathrm{Poles}_\ell}\ee^{-\ii\om_j t}\Pi_{\ell,j}\chi u_\ell[0]
   +\frac{1}{2\pi\ii}\int_{-\mu}^{\mu}\ee^{-\ii\om t}\,\chi\,\disc\cR_\ell(\om)\,\chi\,u_\ell[0]\,\dd\om
   +\mathrm{Rem}_\ell(t),
\]
where the remainder comes from the upper and lower contour segments and is exponentially small whenever the contour is chosen away from the real axis.  The main body focuses on the branch-cut integral because that is the source of the explicit oscillatory tails.  The formula above clarifies once more why the discrete pole contribution must be treated separately in the massive problem.

\section{Large angular momenta, residues, and full-field estimates}\label{app:large_ell}
\appheading{Semiclassical rescaling}

Set $h=(\ell+\tfrac12)^{-1}$ and write the channel operator schematically as
\[
   P_{\ell,P}(h,\om)=h^2D_{r_*}^2+V_{0,P}(r)+hV_{1,P}(r,\om)+h^2V_{2,P}(r,\om).
\]
On a compact radial set $K\Subset(r_+,\infty)$, the leading potential $V_{0,P}$ is strictly positive for all sufficiently small $h$, uniformly in $\om$ in a compact frequency window near the cut.  This is the compact ellipticity input.

\begin{lemma}[Compact ellipticity for large $\ell$]\label{lem:compact_ellipticity}
For every compact $K\Subset(r_+,\infty)$ there exist $c_K>0$ and $\ell_K$ such that
\[
   V_{0,P}(r)\ge c_K
\]
for all $r\in K$, all $\ell\ge\ell_K$, and all polarizations $P$.
\end{lemma}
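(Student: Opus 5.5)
The plan is to estimate $V_{0,P}$ from below by an explicit, polarization-independent expression and then use compactness of $K$. By definition,
\[
   V_{0,P}(r)=f(r)\frac{L_P(L_P+1)}{(\ell+\tfrac12)^2r^2},
   \qquad L_P=\ell+P,
\]
so the $r$-dependence and the $\ell$-dependence separate. The geometric factor $f(r)/r^2$ is continuous and strictly positive on $(r_+,\infty)$, because $f(r)=(r-r_+)(r-r_-)/r^2$ with $r>r_+>r_-$. Since $K$ is compact and stays away from $r_+$, we get
\[
   c_0(K):=\min_{r\in K}\frac{f(r)}{r^2}>0.
\]

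Next we treat the angular factor
\[
   \beta_{\ell,P}:=\frac{L_P(L_P+1)}{(\ell+\tfrac12)^2}
\]
for each of the three polarizations. Expanding gives
\[
   \beta_{\ell,0}=\frac{\ell^2+\ell}{\ell^2+\ell+\tfrac14},\qquad
   \beta_{\ell,+1}=\frac{(\ell+1)(\ell+2)}{(\ell+\tfrac12)^2}\ge1,\qquad
   \beta_{\ell,-1}=\frac{\ell(\ell-1)}{(\ell+\tfrac12)^2}.
\]
All three tend to $1$ as $\ell\to\infty$.
\begin{itemize}
   \item The $P=0$ quotient satisfies $\beta_{\ell,0}\ge\tfrac89$ for $\ell\ge1$.
   \item The $P=-1$ quotient is the only delicate one, since it vanishes at $\ell=1$; this is why a threshold $\ell_K$ is needed. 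For $\ell\ge3$ one has $\ell(\ell-1)\ge\tfrac12(\ell+\tfrac12)^2$, which is equivalent to $\ell^2-\tfrac52\ell-\tfrac14\ge0$ and holds for $\ell\ge3$. Hence $\beta_{\ell,-1}\ge\tfrac12$ there.
\end{itemize}

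Taking $\ell_K:=3$ (in fact independent of $K$) and $c_K:=\tfrac12c_0(K)$ gives
\[
   V_{0,P}(r)\ge c_K\qquad\text{for all } r\in K,\ \ell\ge\ell_K,\ P\in\{-1,0,+1\}.
\]
If one prefers the form of $V_{0,P}$ with the $h$-corrections $hV_{1,P}+h^2V_{2,P}$ folded in, those terms are uniformly bounded on $K$ by the earlier construction. Enlarging $\ell_K$ so that their size $O(h)$ is below $c_K/2$ then preserves a positive lower bound after halving $c_K$.

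There is no real obstacle in this argument. The only point requiring care is the $P=-1$ channel at small $\ell$, where $L_{-1}=0$ forces the potential to vanish identically; this is handled by the choice of $\ell_K$.
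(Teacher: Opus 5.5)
Your argument is the paper's own, made quantitative: positivity of $f(r)/r^2$ on the compact set $K$, together with $L_P(L_P+1)/(\ell+\frac12)^2\to1$ uniformly in $P$. There is one arithmetic slip: $\ell(\ell-1)\ge\frac12(\ell+\frac12)^2$ is equivalent to $\ell^2-3\ell-\frac14\ge0$ (not $\ell^2-\frac52\ell-\frac14\ge0$), and this fails at $\ell=3$, where $\beta_{3,-1}=\frac{24}{49}<\frac12$; take $\ell_K=4$ instead, or keep $\ell_K=3$ with $c_K=\frac{24}{49}c_0(K)$, using that $\beta_{\ell,-1}$ is increasing in $\ell$.
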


\begin{proof}
Since $K$ stays away from the horizon, $f(r)$ is bounded below by a positive constant on $K$.  The quantities $L_P(L_P+1)/(\ell+\frac12)^2$ converge to $1$ as $\ell\to\infty$, uniformly in $P\in\{-1,0,+1\}$.  Hence $V_{0,P}(r)$ is bounded below by a positive constant on $K$ for $\ell$ large.
\end{proof}

\appheading{Resolvent bounds on compact sets}

Compact ellipticity implies local resolvent estimates.  Let $\chi\in C_0^\infty((r_+,\infty))$ be identically $1$ on $K$.  A semiclassical parametrix gives
\[
   \norm{\chi P_{\ell,P}(h,\om)^{-1}\chi}_{L^2\to H^2_h}
   \le C_K h^{-N}
\]
for some integer $N$ independent of $\ell$.  The point is not to optimize $N$ but to prove that the loss is polynomial and uniform.

The same argument applies to derivatives of the kernel with respect to $r$ and $r'$.  By Sobolev embedding on the compact set $K$, one obtains pointwise kernel bounds polynomial in $\ell$.

\appheading{Uniform control of threshold amplitudes}

The amplitudes $A_{\ell,P}(r,r';Q)$ and $B_{\ell,P}(r,r';Q)$ entering the time-domain asymptotics are combinations of horizon transfer matrices, infinity transfer matrices, and threshold coefficients of the model equations.  Each factor is polynomially bounded in $\ell$ on compact radial sets.  Consequently
\[
   \sup_{r,r'\in K}
   \bigl(
      \abs{A_{\ell,P}(r,r';Q)}+\abs{B_{\ell,P}(r,r';Q)}
   \bigr)
   \le C_K\angles{\ell}^{N_0}
\]
for some integer $N_0$.

\appheading{Reconstruction of physical coefficients}

The reduced channel variables are not themselves the physical coefficients of the full Proca potential.  The odd channel is identical to the odd physical amplitude, but the even channels must be converted back to $(u_2,u_3)$ and then to the original angular coefficients.  Every such reconstruction is algebraic-differential of uniformly bounded order.  In particular, on compact radial sets,
\[
   \abs{a_{\ell m,P}^{\bc}(t,r)}
   \le C_K\angles{\ell}^{N_{\mathrm{rec}}}
   \sum_{j\le J}
      \sup_{r\in K}\abs{\partial_r^j v_{\ell m,P}^{\bc}(t,r)}.
\]
This is the only place where the full vector nature of the field re-enters after the channel analysis.

\appheading{Summation over angular modes and Sobolev on the sphere}

Using \eqref{eq:app_addition_scalar}--\eqref{eq:app_addition_vector}, one has
\[
   \sup_{\omega\in S^2}
   \sum_{m=-\ell}^{\ell}\abs{Y_{\ell m}^{(P)}(\omega)}^2
   \lesssim \angles{\ell}^{2},
\]
uniformly in the polarization label $P$.  Therefore, by Cauchy--Schwarz,
\[
   \sum_{m=-\ell}^{\ell}
      \abs{a_{\ell m,P}^{\bc}(t,r)Y_{\ell m}^{(P)}(\omega)}
   \lesssim
   \angles{\ell}
   \Bigl(\sum_m \abs{a_{\ell m,P}^{\bc}(t,r)}^2\Bigr)^{1/2}.
\]
The high-angular-momentum polynomial bound from the preceding appendix estimates is then summable provided the initial data have sufficiently many angular derivatives, exactly as encoded in the high-order energy norm $\mathcal E_N[A[0]]$.

\appheading{Completion of the full-field theorem}

Combining the uniform kernel bounds, the reconstruction estimate, and the spherical-harmonic summation gives Theorem~\ref{thm:uniform_angular}.  The full-field pointwise decay theorem follows immediately by summing the intermediate and very-late bounds over $\ell$ and $m$.  The final result is polynomial because every loss in the argument is polynomial in $\ell$ and the initial data are assumed to have sufficiently many angular derivatives.

\applabelheading{Technical complements to the full-field theorem}{app:tech_fullfield}
\appheading{Dyadic decomposition near the branch points}

A convenient way to organize the angular summation is to decompose the threshold variable dyadically.  Let $\psi\in C_0^\infty((1/2,2))$ satisfy
\[
   \sum_{j\in\Z}\psi(2^{-j}x)=1,\qquad x>0,
\]
and define
\[
   \psi_j(\vp)=\psi(2^{-j}\vp),\qquad j\in\Z.
\]
Near the upper endpoint one writes
\[
   u_{\ell m,P}^{\bc,+}(t,r,r')
   =
   \sum_{j\ge j_0}
   \frac{1}{2\pi\ii}
   \int_0^{\eps}
      \ee^{-\ii\om(\vp)t}
      \psi_j(\vp)\,
      \disc\cG_{\ell,P}(\om(\vp);r,r')
      \,\frac{\dd\om}{\dd\vp}\,\dd\vp.
\]
The dyadic localization isolates the scales on which the phase and the amplitude have comparable size.  When $2^j\ll \vp_0(t)$, the contribution is in the pre-saddle region and is treated by repeated integration by parts.  When $2^j\sim \vp_0(t)$, one is at the stationary scale and the saddle analysis applies.  When $2^j\gg \vp_0(t)$ but still close to threshold, one is in the endpoint-Bessel regime.

\appheading{Compact ellipticity with parameters}

For the summed theorem, one needs not just a qualitative large-$\ell$ bound but a bound uniform in the spectral localization and in finitely many radial derivatives.  Let $K\Subset(r_+,\infty)$ and let $\chi\in C_0^\infty((r_+,\infty))$ equal $1$ on $K$.  Semiclassical ellipticity gives
\begin{equation}\label{eq:appI_h2_estimate}
   \norm{\chi u}_{H_h^2}
   \le C_K\bigl(
      \norm{\chi P_{\ell,P}(h,\om)u}_{L^2}
      +\norm{\widetilde\chi u}_{L^2}
   \bigr),
\end{equation}
uniformly for large $\ell$ and $\om$ in a fixed compact threshold window.  The proof is the standard positive-commutator estimate with a compactly supported symbol equal to $1$ on $K$.  Since the principal symbol is elliptic there, the estimate may be iterated to gain any finite number of $h$-derivatives.

A useful consequence of \eqref{eq:appI_h2_estimate} is that differentiation of the branch-cut kernel with respect to $r$ and $r'$ costs only a polynomial factor in $\ell$.  This is why the reconstruction from the channel variables to the physical field coefficients does not destroy summability.

\appheading{Angular regularity and high-order energies}

Let $\Omega_1,\Omega_2,\Omega_3$ be the standard rotation vector fields on the round sphere.  Because the background is spherically symmetric, these commute with the Proca equation.  The high-order energy norm used in Theorem~\ref{thm:full_field_pointwise} may therefore be taken to be
\[
   \mathcal E_N[A[0]]
   =
   \sum_{|\alpha|+j\le N}
      \mathcal E\bigl[\partial_t^j\Omega^\alpha A[0]\bigr].
\]
After angular decomposition, this controls polynomial weights in $\ell$:
\[
   \sum_{\ell,m,P}\angles{\ell}^{2N}
      \mathcal E_{\ell m,P}[A[0]]
   \lesssim \mathcal E_N[A[0]].
\]
This estimate is the analytic bridge between the polynomial losses in $\ell$ coming from the spectral argument and the final absolute convergence of the full spherical-harmonic series.

\appheading{A model proof of the summed intermediate bound}

To illustrate the logic, consider only the intermediate regime.  Using the fixed-mode estimate of Theorem~\ref{thm:intermediate} and the amplitude bound of Theorem~\ref{thm:uniform_angular}, one obtains
\[
   \abs{a_{\ell m,P}^{\bc}(t,r)}
   \le C_K\angles{\ell}^{N_0}
   t^{-(\nu_{\ell,P}+1)}
   \mathcal E_{\ell m,P}[A[0]]^{1/2}.
\]
Since $\nu_{\ell,P}\ge\nu_*$, one may replace $t^{-(\nu_{\ell,P}+1)}$ by $t^{-(\nu_*+1)}$.  Summing over $m$ with the vector-harmonic addition theorem gives
\[
   \sum_{m=-\ell}^{\ell}
      \abs{a_{\ell m,P}^{\bc}(t,r)Y_{\ell m}^{(P)}(\omega)}
   \le
   C_K\angles{\ell}^{N_0+1}
   t^{-(\nu_*+1)}
   \Bigl(\sum_m\mathcal E_{\ell m,P}[A[0]]\Bigr)^{1/2}.
\]
A final Cauchy--Schwarz summation in $\ell$ is convergent as soon as $N>N_0+2$ in the high-order energy.  This is the core of the full-field argument; the very-late estimate is identical except that the common decay rate is $t^{-5/6}$.

\appheading{Near-horizon compact sets}

The compact region $K$ may approach the event horizon, provided it remains strictly inside the exterior and does not cross $r=r_+$.  All arguments above still apply because the tortoise coordinate sends the horizon to $-\infty$ and the channel coefficients are smooth on every compact subset of $(r_+,\infty)$.  In particular, the large-$\ell$ compact ellipticity is compatible with taking $K$ of the form
\[
   K=[r_++\delta,R]
\]
for arbitrary fixed $\delta>0$.  Thus the polynomial-decay theorem controls the field uniformly up to any prescribed distance from the horizon.

\applabelheading{Weighted local energy, radial Sobolev bounds, and pointwise reconstruction}{app:weighted_energy}
\appheading{Local energy norms on compact radial sets}

Fix a compact radial set $K\Subset(r_+,\infty)$ and let $\chi_K\in C_0^\infty((r_+,\infty))$ equal $1$ on a neighborhood of $K$.  For a time slice $\{t=\mathrm{const}\}$, define the localized energy norm
\[
   \mathcal E_K[A](t)
   :=
   \int_{\R_{r_*}\times S^2}
      \chi_K(r)
      \Bigl(
         \abs{\partial_t A}^2+\abs{\partial_{r_*}A}^2+\abs{\nabla_{S^2}A}^2+\mu^2\abs{A}^2
      \Bigr)\,\dd r_*\,\dd\omega.
\]
Because the metric coefficients are smooth and bounded above and below on $K$, this norm is equivalent to any other local $H^1$-based energy norm constructed from the stationary Killing field $\partial_t$.  The high-order local energies are obtained by commuting with $\partial_t$ and the rotation fields $\Omega_i$:
\[
   \mathcal E_{K,N}[A](t)
   :=
   \sum_{j+|\alpha|\le N}
      \mathcal E_K[\partial_t^j\Omega^\alpha A](t).
\]
The purpose of the branch-cut analysis is to prove precise decay estimates for these localized quantities after angular decomposition.

\appheading{Radial Sobolev on compact intervals}

Let $I\Subset\R$ be a compact interval in the tortoise coordinate.  The standard one-dimensional Sobolev inequality gives
\begin{equation}\label{eq:appK_radial_sobolev}
   \sup_{r_*\in I}\abs{u(r_*)}^2
   \le
   C_I
   \int_I \bigl(\abs{u(r_*)}^2+\abs{u'(r_*)}^2\bigr)\,\dd r_*.
\end{equation}
Applied to each angular coefficient and summed over $(\ell,m,P)$, \eqref{eq:appK_radial_sobolev} converts control of localized radial $H^1$ norms into pointwise control in $r$ on compact radial sets.  Since all compact subsets of the exterior may be covered by finitely many such intervals, the estimate globalizes immediately.

A second useful inequality is the differentiated version
\begin{equation}\label{eq:appK_radial_sobolev_2}
   \sup_{r_*\in I}\abs{u'(r_*)}^2
   \le
   C_I
   \int_I \bigl(\abs{u'(r_*)}^2+\abs{u''(r_*)}^2\bigr)\,\dd r_*.
\end{equation}
When combined with the channel equations, \eqref{eq:appK_radial_sobolev_2} shows that pointwise control of the first radial derivative is ultimately reducible to the same local energy quantities plus one application of the channel operator.

\appheading{Commuting the channel equations with radial derivatives}

On a compact radial set $K$, the channel equations imply
\[
   \partial_{r_*}^2u_{\ell,P}
   =
   \bigl(V_{\ell,P}(r)-\om^2\bigr)u_{\ell,P}+F_{\ell,P},
\]
where $F_{\ell,P}$ vanishes for homogeneous channel solutions and is compactly supported when cutoff commutators are present.  Repeated differentiation yields, for every integer $j\ge2$,
\begin{equation}\label{eq:appK_high_radial_derivative}
   \partial_{r_*}^j u_{\ell,P}
   =
   \sum_{q\le j-2} c_{j,q}(r,\om,\ell)\,\partial_{r_*}^q u_{\ell,P}
   + \text{commutator terms},
\end{equation}
with coefficients $c_{j,q}$ polynomial in $\ell$ on compact sets.  Consequently all high radial derivatives of the channel kernels are controlled by finitely many low radial derivatives at the cost of a polynomial factor in $\ell$.

This is the mechanism behind the reconstruction estimate used in the full-field theorem.  The full Proca field is built from the reduced channel variables by applying a finite family of radial differential operators of bounded order.  Because each additional radial derivative costs only a polynomial loss in $\ell$, the entire reconstruction remains compatible with angular summability.

\appheading{Sobolev on the sphere and tensorial harmonics}

Let $f(\omega)=\sum_{\ell,m}f_{\ell m}Y_{\ell m}(\omega)$.  The usual Sobolev embedding on $S^2$ gives
\begin{equation}\label{eq:appK_sphere_sobolev}
   \sup_{\omega\in S^2}\abs{f(\omega)}^2
   \le
   C\sum_{\ell,m}\angles{\ell}^{2s}\abs{f_{\ell m}}^2,
   \qquad
   s>1.
\end{equation}
For vector and one-form valued spherical harmonics the same estimate holds componentwise in any fixed orthonormal frame, with the same threshold $s>1$.  This is equivalent to the addition-theorem bounds \eqref{eq:app_addition_scalar}--\eqref{eq:app_addition_vector}, but the Sobolev formulation is often more convenient when commuting with angular derivatives.

Combining \eqref{eq:appK_radial_sobolev} and \eqref{eq:appK_sphere_sobolev} gives a compact-cylinder pointwise estimate
\[
   \sup_{(r,\omega)\in K\times S^2}\abs{A(t,r,\omega)}
   \lesssim
   \sum_{j+|\alpha|\le2}
      \norm{\chi_K\,\partial_{r_*}^j\Omega^\alpha A(t)}_{L^2(\R\times S^2)}.
\]
Thus, once the branch-cut part of the channel kernels has been controlled in a high-order local energy norm, pointwise full-field decay follows automatically.

\appheading{The reconstruction of the Proca coefficients}

Returning to the original decomposition \eqref{eq:app_mode_decomp}, the angular coefficients $a_0,a_1,a_2,a_3$ are recovered from the reduced variables by an algebraic-differential map of the form
\begin{equation}\label{eq:appK_reconstruction}
   \begin{pmatrix}
      a_0\\ a_1\\ a_2\\ a_3
   \end{pmatrix}
   =
   \mathcal B_{\ell}(r,\partial_t,\partial_{r_*})
   \begin{pmatrix}
      v_{-1}\\ v_0\\ v_{+1}
   \end{pmatrix},
\end{equation}
where $\mathcal B_\ell$ is a matrix of differential operators whose coefficients are rational functions of $r$ smooth on the exterior.  On compact radial sets, every coefficient of $\mathcal B_\ell$ is polynomially bounded in $\ell$.  Moreover, because the Lorenz constraint has already been built into the reduced system, no loss of derivatives occurs beyond the finite order encoded in $\mathcal B_\ell$.

It follows from \eqref{eq:appK_reconstruction} and \eqref{eq:appK_high_radial_derivative} that
\[
   \sup_{r\in K}\abs{a_{\ell m,P}(t,r)}
   \le
   C_K\angles{\ell}^{N_{\mathrm{rec}}}
   \sum_{q\le Q}
      \sup_{r\in K}\abs{\partial_{r_*}^q v_{\ell m,P}(t,r)}.
\]
This estimate is repeatedly used, sometimes implicitly, in the main body whenever the channel theorems are translated back into full Proca statements.

\appheading{Pointwise control from high-order local energies}

Combining the radial and angular Sobolev inequalities with the reconstruction estimate yields
\begin{equation}\label{eq:appK_pointwise_from_energy}
   \sup_{(r,\omega)\in K\times S^2}\abs{A^{\bc}(t,r,\omega)}
   \le
   C_{K,N}
   \bigl(\mathcal E_{K,N}[A^{\bc}](t)\bigr)^{1/2},
\end{equation}
provided $N$ is sufficiently large.  The high-order energy on the right-hand side is then bounded by the spectral estimates of Section~\ref{sec:full_field}.  Formula \eqref{eq:appK_pointwise_from_energy} is the exact analytic bridge between the channel kernel bounds and the final full-field pointwise decay theorem.

\appheading{A local energy formulation of the main decay theorem}

For applications it is sometimes convenient to state the full-field theorem directly in local energy form.  The proofs in the main body imply that for every compact $K\Subset(r_+,\infty)$ and every sufficiently large integer $N$,
\[
   \mathcal E_{K,N}[A^{\bc}](t)^{1/2}
   \le
   C_{K,N}\mathcal E_N[A[0]]^{1/2}
   \begin{cases}
      t^{-(\nu_*+1)},& \kap_*(t)\le1,\\[0.4ex]
      t^{-5/6},& \kap_0(t)\ge1.
   \end{cases}
\]
Pointwise decay is then an immediate corollary of \eqref{eq:appK_pointwise_from_energy}.  This formulation emphasizes once more that the paper proves a compact-region decay theorem for the radiative branch-cut field, not merely an asymptotic formula for isolated spherical modes.

\applabelheading{Parameter-dependent oscillatory estimates and dyadic summation across the crossover scale}{app:dyadic_phase}
\appheading{A unified model integral}

A convenient common model for the endpoint and very-late analyses is
\begin{equation}\label{eq:appM_model_integral}
   I(t;\beta,\gamma,a)
   :=
   \int_0^\eps
      \exp\!\Bigl(
         \ii\frac{t\vp^2}{2\mu}+\ii\frac{\gamma}{\vp}
      \Bigr)\,
      \vp^\beta a(\vp)\,\dd\vp,
\end{equation}
where $\beta>-1$, $\gamma\ge0$, and $a$ is a smooth amplitude.  The intermediate regime corresponds formally to $\gamma=0$, while the very-late regime corresponds to $\gamma=2\pi M\mu^2$ and an amplitude with one additional factor of $\vp$ coming from the Jacobian and the threshold jump.  Working with \eqref{eq:appM_model_integral} has the advantage that a single dyadic decomposition captures both mechanisms.

Let $\psi$ be a dyadic partition of unity and define
\[
   I_j(t;\beta,\gamma,a)
   :=
   \int_0^\eps
      \exp\!\Bigl(
         \ii\frac{t\vp^2}{2\mu}+\ii\frac{\gamma}{\vp}
      \Bigr)\,
      \psi_j(\vp)\,
      \vp^\beta a(\vp)\,\dd\vp.
\]
Then
\[
   I(t;\beta,\gamma,a)=\sum_{j\ge j_0}I_j(t;\beta,\gamma,a).
\]
The dyadic sum is split according to the relative size of $2^j$ and the stationary scale
\[
   \vp_0(t,\gamma):=\Bigl(\frac{\mu\gamma}{t}\Bigr)^{1/3},
\]
with the convention $\vp_0=0$ when $\gamma=0$.

\appheading{Pre-saddle and post-saddle dyadic estimates}

Suppose first that $\gamma>0$ and $2^j\le c\,\vp_0(t,\gamma)$ with $c$ sufficiently small.  Then the derivative of the phase
\[
   \Phi'_{t,\gamma}(\vp)=\frac{t\vp}{\mu}-\frac{\gamma}{\vp^2}
\]
is bounded below in magnitude by $c_1\gamma 2^{-2j}$.  Integration by parts with the operator
\[
   L_{t,\gamma}
   :=
   \frac{1}{\ii\Phi'_{t,\gamma}(\vp)}\partial_\vp
\]
therefore yields
\begin{equation}\label{eq:appM_presaddle}
   \abs{I_j(t;\beta,\gamma,a)}
   \le
   C_N
   \Bigl(\frac{2^{2j}}{\gamma}\Bigr)^N
   2^{j(\beta+1)}
   \sum_{q\le N}\sup_{\vp\sim2^j}\abs{\partial_\vp^q a(\vp)}
\end{equation}
for every integer $N\ge0$.  The same argument applies in the post-saddle regime $2^j\ge C\,\vp_0(t,\gamma)$, where the phase derivative is instead bounded below by $c_2 t2^j/\mu$ and gives
\begin{equation}\label{eq:appM_postsaddle}
   \abs{I_j(t;\beta,\gamma,a)}
   \le
   C_N
   (t2^j)^{-N}
   2^{j(\beta+1)}
   \sum_{q\le N}\sup_{\vp\sim2^j}\abs{\partial_\vp^q a(\vp)}.
\end{equation}
These bounds show that all dyadic scales except the stationary ones are negligible after summation.

\appheading{Stationary dyadic blocks}

Assume now that $2^j\sim\vp_0(t,\gamma)$.  Set $\vp=2^jy$ and write the phase as
\[
   \frac{t(2^j)^2}{2\mu}y^2+\frac{\gamma}{2^j y}.
\]
When $2^j\sim\vp_0(t,\gamma)$, both terms are of the common size
\[
   \Lambda(t,\gamma):=t^{1/3}\gamma^{2/3}\mu^{-2/3}.
\]
The rescaled phase has a nondegenerate critical point $y=y_0\sim1$, so stationary phase yields
\begin{equation}\label{eq:appM_stationary_block}
   I_j(t;\beta,\gamma,a)
   =
   \ee^{\ii\Theta_{t,\gamma}}
   \Lambda(t,\gamma)^{-1/2}
   (2^j)^{\beta+1}
   \Bigl(
      c_0\,a(2^j y_0)
      +
      O\bigl(\Lambda(t,\gamma)^{-1}\bigr)
   \Bigr),
\end{equation}
where $c_0\neq0$ is universal and $\Theta_{t,\gamma}$ is the critical value of the phase up to the standard $\pi/4$ correction.  Substituting $2^j\sim\vp_0(t,\gamma)$ into \eqref{eq:appM_stationary_block} gives the generic power law
\[
   \abs{I_j(t;\beta,\gamma,a)}
   \lesssim
   t^{-1/6}\,\vp_0(t,\gamma)^{\beta+1}.
\]
For the Proca late-time tail one has $\beta=1$, $\gamma=2\pi M\mu^2$, and therefore
\[
   t^{-1/6}\,\vp_0^2
   =
   t^{-1/6}\Bigl(\frac{M\mu^3}{t}\Bigr)^{2/3}
   \sim t^{-5/6}.
\]

\appheading{The endpoint case as a degenerate stationary problem}

When $\gamma=0$, there is no interior stationary point and the dominant contribution comes from the boundary point $\vp=0$.  The same dyadic decomposition still works.  The post-saddle estimate \eqref{eq:appM_postsaddle} remains valid, while the pre-saddle regime disappears.  Summing the remaining dyadic blocks gives
\[
   I(t;\beta,0,a)
   \sim
   t^{-(\beta+1)/2},
\]
which is precisely the endpoint law used for the intermediate tail.  In this sense the intermediate and very-late asymptotics are two faces of the same dyadic oscillatory theory: when $\gamma=0$ the critical point collapses into the endpoint, and when $\gamma>0$ it moves into the interior at the scale $\vp_0(t,\gamma)$.

\appheading{Uniform bounds for parameter-dependent amplitudes}

In the applications, the amplitude $a(\vp)$ depends on $(\ell,P,r,r')$ and may itself contain controlled powers of $\kap^{-1}$ and $\vp$.  A convenient uniform assumption is that, on the dyadic support $\vp\sim2^j$,
\begin{equation}\label{eq:appM_amp_assumption}
   \abs{\partial_\vp^q a(\vp)}
   \le
   C_q\,2^{-jq}\,2^{j\sigma}
\end{equation}
for some growth exponent $\sigma$ independent of $j$.  Then \eqref{eq:appM_presaddle} and \eqref{eq:appM_postsaddle} remain summable after choosing $N$ sufficiently large, and the stationary block estimate becomes
\[
   \abs{I_j(t;\beta,\gamma,a)}
   \lesssim
   t^{-1/6}\,\vp_0(t,\gamma)^{\beta+1+\sigma}.
\]
This is exactly the form needed when the threshold jump is multiplied by polynomially bounded transfer coefficients or by finite numbers of radial derivatives.

\appheading{Dyadic proof of the universal time-decay exponent}

We now sketch a dyadic proof of the universal Proca exponent which is independent of polarization.  In the large-$\kap$ regime, the cut jump has the form
\[
   \disc\,\cG_{\ell,P}(\om;r,r')
   =
   \bigl(b_{\ell,P}^+\ee^{2\pi\ii\kap}+b_{\ell,P}^-\ee^{-2\pi\ii\kap}\bigr)
   \bigl(\vp+O(\vp^2+\kap^{-1}\vp)\bigr).
\]
Insert this into the branch-cut integral and localize dyadically.  All blocks away from the stationary scale are negligible by \eqref{eq:appM_presaddle} and \eqref{eq:appM_postsaddle}.  The finitely many stationary blocks satisfy \eqref{eq:appM_stationary_block} with $\beta=1$.  Hence each contributes $O(t^{-5/6})$, and the sum of all stationary blocks is also $O(t^{-5/6})$.  Since none of these estimates depends on the polarization beyond the bounded amplitudes $b_{\ell,P}^\pm$, the exponent is universal.

\appheading{Dyadic proof of the intermediate exponent}

Similarly, in the small-$\kap$ regime the jump has the form
\[
   \disc\,\cG_{\ell,P}(\om;r,r')
   =
   a_{\ell,P}(r,r')\,\vp^{2\nu_{\ell,P}}
   \bigl(1+O(\kap+\vp^2)\bigr).
\]
After multiplying by the Jacobian $\dd\om/\dd\vp\sim-\vp/\mu$, the model integral has exponent $\beta=2\nu_{\ell,P}+1$.  The endpoint law therefore gives
\[
   t^{-(\beta+1)/2}
   =
   t^{-(\nu_{\ell,P}+1)}.
\]
The important point is that the dyadic decomposition again recovers the exact exponent without appealing to any heuristic matching argument.

\appheading{Crossover summation and uniform remainders}

The dyadic decomposition also clarifies the structure of the remainder terms.  Let $j_*(t)$ be the integer such that $2^{j_*}\sim\vp_0(t,\gamma)$.  Then
\[
   I(t;\beta,\gamma,a)
   =
   \sum_{j\le j_*-C}I_j
   +
   \sum_{|j-j_*|\le C}I_j
   +
   \sum_{j\ge j_*+C}I_j.
\]
The first and third sums are controlled by repeated integration by parts and produce the algebraic remainders in $\kap_0(t)^{-1}$ and $\vp_0(t)$.  The middle sum is finite and handled by stationary phase.  This trichotomy is exactly the frequency-space counterpart of the informal statement that the contour integral is divided into pre-saddle, saddle, and post-saddle regions.

\appheading{Application to full-field angular summation}

Finally, because the dyadic estimates are uniform under the amplitude assumption \eqref{eq:appM_amp_assumption}, they may be combined with the polynomial large-$\ell$ bounds of Appendices~\ref{app:large_ell} and \ref{app:tech_fullfield}.  One first applies the dyadic oscillatory theory at fixed $(\ell,m,P)$, then sums in $m$ using the addition theorem, and finally sums in $\ell$ using the high-order energy weights.  This gives an alternative route to Theorems~\ref{thm:uniform_angular} and \ref{thm:full_field_pointwise}, now written entirely in terms of dyadic oscillatory integrals rather than direct endpoint and saddle calculations.

\applabelheading{Auxiliary regime lemmas, contour bookkeeping, and remainder estimates}{app:bookkeeping}
\appheading{Relations between the threshold scales}

The paper uses several time-dependent scales:
\[
   \kap_*(t)=M\mu^{3/2}t^{1/2},
   \qquad
   \vp_0(t)=\Bigl(\frac{2\pi M\mu^3}{t}\Bigr)^{1/3},
   \qquad
   \kap_0(t)=\frac{M\mu^2}{\vp_0(t)}.
\]
It is occasionally convenient to note the algebraic relations
\[
   \kap_0(t)\,\vp_0(t)=M\mu^2,
   \qquad
   \kap_0(t)\sim (M\mu^3 t)^{1/3},
   \qquad
   \vp_0(t)\sim (M\mu^3)^{1/3}t^{-1/3}.
\]
Thus $\kap_0(t)\to\infty$ if and only if $t\to\infty$, while $\kap_*(t)\to0$ is a genuinely separate regime restriction corresponding to times short compared with $(M\mu^3)^{-1}$.  In particular, the intermediate and very-late theorems describe two different asymptotic windows rather than two different notions of large time.

\appheading{Threshold cutoffs}

Let $\eta\in C_0^\infty([0,\infty))$ satisfy $\eta\equiv1$ on $[0,1]$ and $\eta\equiv0$ on $[2,\infty)$.  For a time parameter $t$ define the upper-endpoint cutoff
\[
   \eta_{t,+}(\om):=\eta\!\Bigl(\frac{\mu-\om}{c_0 t^{-1+\sigma}}\Bigr),
\]
and similarly the lower-endpoint cutoff $\eta_{t,-}$.  Then the branch-cut integral decomposes as
\[
   \int_{-\mu}^{\mu}
   =
   \int_{-\mu}^{\mu}\eta_{t,+}
   +
   \int_{-\mu}^{\mu}\eta_{t,-}
   +
   \int_{-\mu}^{\mu}(1-\eta_{t,+}-\eta_{t,-}).
\]
The third term is the central part of the cut and is handled by repeated integration by parts.  The first two are the endpoint pieces treated by the threshold asymptotic expansions.  The exact values of $c_0$ and $\sigma$ are irrelevant as long as the endpoint neighborhoods shrink polynomially in time and remain disjoint.

\appheading{Central-frequency integration by parts}

Suppose $a(\om)$ is $C^N$ on a compact interval $I\Subset(-\mu,\mu)$ away from the thresholds.  Then
\[
   \int_I \ee^{-\ii\om t}a(\om)\,\dd\om
   =
   (it)^{-N}\int_I \ee^{-\ii\om t}a^{(N)}(\om)\,\dd\om
\]
after integrating by parts $N$ times, since the boundary terms vanish if the amplitude is cutoff away from the ends of $I$.  Therefore
\[
   \Bigl|\int_I \ee^{-\ii\om t}a(\om)\,\dd\om\Bigr|
   \le
   C_{I,N}t^{-N}\sup_{\om\in I}\abs{a^{(N)}(\om)}.
\]
This simple fact is repeatedly used, often without comment, whenever the branch-cut integral is split into endpoint and central pieces.

\appheading{A bookkeeping lemma for endpoint remainders}

Consider an endpoint integral of the form
\[
   \int_0^\eps \ee^{\ii t\vp^2/(2\mu)}\vp^\alpha
   \bigl(
      b_0+b_1\kap+b_2\vp^2
   \bigr)\,\dd\vp,
   \qquad
   \alpha>-1.
\]
The leading term contributes $t^{-(\alpha+1)/2}$.  The $\kap$-term equals $M\mu^2\vp^{-1}$ times the leading integrand and therefore contributes
\[
   M\mu^2\,t^{-\alpha/2}
   =
   \kap_*(t)\,t^{-(\alpha+1)/2},
\]
once $\alpha=2\nu_{\ell,P}+1$ is substituted.  The $\vp^2$-term contributes one further factor of $t^{-1}$.  This is the algebraic origin of the two remainder terms in Theorem~\ref{thm:intermediate}.  The lemma is elementary, but it is exactly the kind of bookkeeping identity one needs to keep the remainder structure logically consistent throughout the paper.

\appheading{Contour segments away from the cut}

Let $\Gamma_R$ denote the semicircular arc $\om=Re^{\ii\theta}$, $\theta\in[0,\pi]$, in the upper half-plane.  For compactly supported data and fixed $\ell$, the cutoff resolvent satisfies the high-frequency bound
\[
   \norm{\chi\cR_\ell(\om)\chi}_{L^2\to L^2}\le C_\chi\abs{\om}^{-2}
\]
uniformly on $\Gamma_R$ for $R$ sufficiently large.  Therefore
\[
   \Bigl\|\int_{\Gamma_R}\ee^{-\ii\om t}\chi\cR_\ell(\om)\chi\,\dd\om\Bigr\|
   \le
   C_\chi R^{-1}\sup_{\theta\in[0,\pi]}\ee^{-Rt\sin\theta},
\]
which tends to $0$ as $R\to\infty$.  This justifies discarding the large semicircle in the contour deformation leading to the branch-cut formula.

Similarly, the horizontal contour segments in the slit strip contribute exponentially decaying terms whenever they are placed at fixed imaginary height $\pm\eta$.  These are the contour pieces referred to as exponentially small remainders in the body of the paper.

\appheading{A bookkeeping lemma for the saddle remainders}

In the very-late regime one writes
\[
   a(\vp)=\beta\vp+\vp\,\rho(\vp),
\]
where
\[
   \abs{\rho(\vp)}\le C(\kap^{-1}+\vp)
\]
on the support of the threshold localization.  Substituting into the model saddle integral shows that the error term is
\[
   O\!\bigl((\kap_0(t)^{-1}+\vp_0(t))t^{-5/6}\bigr),
\]
because every factor of $\kap^{-1}$ or $\vp$ may be evaluated at the stationary scale $\vp_0(t)$ after the stationary phase localization.  This explains why the remainder in Theorem~\ref{thm:late} has exactly the form recorded there.

\appheading{Final bookkeeping principle}

A recurring theme in the paper is that every time-decay exponent is obtained by first identifying the correct scale in the spectral variable and then evaluating the size of the amplitude at that scale.  The intermediate exponent is produced by the boundary scale $\vp\sim t^{-1/2}$ and the power $\vp^{2\nu_{\ell,P}+1}$.  The very-late exponent is produced by the stationary scale $\vp\sim t^{-1/3}$ and the linear amplitude factor $\vp$.  Once this principle is made explicit, the seemingly different estimates in the paper become instances of the same general oscillatory bookkeeping.

\section{Auxiliary regimes, monopole analysis, and cutoff independence}\label{app:small_mass}
\appheading{Exact Schwarzschild limit}

When $Q=0$, the RN coefficient simplifies to
\[
   f(r)=1-\frac{2M}{r},
\]
and the transformed even matrix loses its $r^{-4}$ charge contribution.  In this limit the leading $r^{-2}$ diagonalization in the polarization basis is exact to the order needed for the threshold analysis, and the threshold indices reduce to
\[
   \nu_{\ell,-1}=\ell-\frac12,\qquad
   \nu_{\ell,0}=\ell+\frac12,\qquad
   \nu_{\ell,+1}=\ell+\frac32.
\]
Consequently the intermediate branch-cut exponents are exactly
\[
   \ell+\frac12,\qquad \ell+\frac32,\qquad \ell+\frac52.
\]
This reproduces, in rigorous modewise form, the polarization pattern long expected from the Schwarzschild Proca literature.

\appheading{Perturbation of the threshold indices}

For general subextremal RN, the threshold indices are defined by the exact $r^{-2}$ coefficients of the diagonalized channel equations.  The dependence of these coefficients on the parameters $(M,Q,\mu)$ is analytic in $Q^2$ and smooth in $\mu^2$ near the massless and uncharged limits.  Writing
\[
   \nu_{\ell,P}=L_P+\frac12+\delta_{\ell,P}(M,Q,\mu),
\]
one may derive the perturbative equation
\begin{equation}\label{eq:appL_delta_equation}
   \bigl(2L_P+1\bigr)\delta_{\ell,P}
   =
   Q^2\mu^2+\rho_{\ell,P}(M,Q,\mu)
   +O(\delta_{\ell,P}^2),
\end{equation}
where $\rho_{\ell,P}$ collects the shorter-range diagonalization corrections.  Since $\rho_{\ell,P}=O((M\mu)^2+(Q\mu)^2)$, one obtains
\[
   \delta_{\ell,P}=O((M\mu)^2+(Q\mu)^2)
\]
for every fixed $\ell$ and polarization.

This perturbative statement is conceptually important.  It shows that the charge affects the intermediate exponents only through the threshold index and only perturbatively in the small-mass regime.  By contrast, the very-late exponent is entirely insensitive to the perturbation because it is governed by the universal Coulomb saddle.

\appheading{Continuity of amplitudes and phases}

The amplitudes $A_{\ell,P}(r,r';Q)$ and $B_{\ell,P}(r,r';Q)$, as well as the constant phases $\delta_{\ell,P}(Q)$ and $\delta_{\ell,P,0}(Q)$, are continuous in the charge parameter $Q$ on every compact radial set.  This follows from the continuity of the horizon and infinity transfer matrices and of the threshold coefficients in the channel equations.  In particular, as $Q\to0$ one recovers the Schwarzschild amplitudes and phases.  Therefore the entire RN fixed-mode theorem may be read as a deformation of the exact Schwarzschild polarization-resolved picture.

\appheading{Crossover between the intermediate and very-late regimes}

The parameter separating the two asymptotic regimes is
\[
   \kap=M\mu^2/\vp.
\]
At the level of time scales, the crossover is described by the condition $\kap\sim1$, equivalently
\[
   t\sim (M\mu^3)^{-1}.
\]
Before this time scale, the inverse-square threshold index determines the decay rate and the leading oscillation is essentially $\sin(\mu t+\delta)$.  After this time scale, the Coulomb phase becomes nonlinear in time and the universal $t^{-5/6}$ saddle dominates.  The fixed-mode branch-cut signal therefore has two asymptotic faces: a polarization-dependent intermediate face and a polarization-independent very-late face.

\appheading{A regime diagram}

It is useful to summarize the hierarchy as follows.
\begin{enumerate}[label=\textnormal{(\roman*)}]
   \item \emph{Pre-threshold times:} the branch point is not yet dominant and other parts of the contour may contribute comparably.
   \item \emph{Intermediate times:} $\kap_*(t)\ll1$ and the endpoint/Bessel analysis yields the rate $t^{-(\nu_{\ell,P}+1)}$.
   \item \emph{Very-late times:} $\kap_0(t)\gg1$ and the saddle/Whittaker analysis yields the universal rate $t^{-5/6}$.
\end{enumerate}
In the small-mass Schwarzschild-like regime, the first nontrivial exponent among the three polarizations is always $\ell+\frac12$, so the slowest intermediate branch-cut decay comes from the $P=-1$ channel.  Once the very-late regime is reached, all three polarizations synchronize to the same exponent.

\appheading{Small-mass full-field consequence}

The full-field theorem simplifies dramatically when $(M\mu)^2+(Q\mu)^2$ is sufficiently small.  In that case
\[
   \nu_*=\frac12+O((M\mu)^2+(Q\mu)^2),
\]
so the intermediate full-field rate is approximately $t^{-3/2}$ and the very-late full-field rate is exactly $t^{-5/6}$.  Since $3/2>5/6$, the overall polynomial decay rate is then governed by the universal very-late tail:
\[
   \sup_{r\in K,\omega\in S^2}\abs{A^{\bc}(t,r,\omega)}
   \lesssim
   t^{-5/6}.
\]
This is the direct spin-$1$ analogue of the late-time scalar picture in the perturbative mass regime.

\appheading{What the perturbative statement does and does not imply}

The perturbative formulas above concern the exact threshold indices and the branch-cut amplitudes.  They do not eliminate the discrete quasibound family, whose widths are exponentially sensitive to the trapping geometry.  In the fully self-contained core of the paper this exponential sensitivity is exactly what leads to the logarithmic full-field estimate proved in Section~\ref{sec:unsplit}.  Thus the small-mass regime makes the continuous asymptotics fully explicit, but it does not by itself produce a polynomial treatment of the discrete quasibound sum.  Any sharper polynomial upgrade would require additional arithmetic input beyond the self-contained packet argument developed here.

\applabelheading{The monopole, static thresholds, and no-hair input}{app:monopole}
\appheading{The reduced monopole equation}

When $\ell=0$, the odd channel and the magnetic even channel vanish identically.  The remaining electric mode may be represented by a single radial function $u_0$ satisfying
\[
   u_0''+\Bigl(\om^2-f\mu^2-\frac{2f}{r^2}\Bigl(f-\frac{rf'}2\Bigr)\Bigr)u_0=0.
\]
At infinity this has the same Coulomb coefficient $2M\mu^2/r$ and an inverse-square coefficient corresponding to the effective angular momentum $L=1$ in the small-mass regime.  Consequently the monopole supports the same universal very-late $t^{-5/6}$ law and the leading small-mass intermediate exponent $5/2$.

\appheading{Static solutions and no-hair}

At $\om=0$, the Proca equation becomes an elliptic system on the static RN exterior.  The static no-hair theorem implies that there is no nontrivial smooth solution which is regular at the horizon and decays at infinity.  This statement is used in the main body to exclude static threshold pathologies.  The proof is classical: one integrates the static Proca energy identity over the exterior region and uses positivity of the mass term to force vanishing of the field.

\begin{proposition}[No static Proca hair on subextremal RN]\label{prop:no_static_hair_appendix}
Let $A$ be a static Proca solution on a subextremal RN exterior which is regular at the event horizon and decays sufficiently fast at infinity.  Then $A\equiv0$.
\end{proposition}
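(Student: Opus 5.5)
I would prove Proposition~\ref{prop:no_static_hair_appendix} by a static energy identity for the Proca system, split into the electric (time) component and the spatial (magnetic/longitudinal) components. For a static field $A=A_t\,\dd t+\underline A$, with $\underline A$ tangent to the slices $\{t=\mathrm{const}\}$, staticity and the reflection symmetry $t\mapsto -t$ of the RN metric decouple the system. The $t$-component gives
\[
   \nabla_i\bigl(f^{-1/2}\,\overline{\nabla}{}^i A_t\bigr)\cdot(\text{volume factors})=\mu^2 f^{-1/2}A_t,
\]
which in covariant form is $\nabla^\mu F_{\mu t}=\mu^2A_t$ with $F_{it}=\partial_iA_t$. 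The spatial components satisfy
\[
   \overline\nabla{}^j\bigl(\sqrt f\,\overline F_{ij}\bigr)=\mu^2\sqrt f\,A_i ,
\]
where $\overline\nabla$ and $\overline F$ refer to the induced Riemannian metric on the slice $\Sigma$.

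First I multiply the electric equation by $A_t$ and integrate over $\Sigma=\{r>r_+\}$ with the natural volume form. After integration by parts this gives
\[
   \int_\Sigma f^{-1/2}\bigl(\abs{\overline\nabla A_t}^2+\mu^2A_t^2\bigr)\,\dd\mathrm{vol}_\Sigma=\text{boundary terms}.
\]
Second, I do the same with the spatial equation contracted with $A^i$. This gives
\[
   \int_\Sigma\sqrt f\bigl(\tfrac12\abs{\overline F}^2+\mu^2\abs{\underline A}^2\bigr)=\text{boundary terms}.
\]
Both integrands are nonnegative. Once the boundary terms vanish, the mass term forces $A_t\equiv0$ and $\underline A\equiv0$. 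This is Bekenstein's argument. The point to watch is the weight $f^{-1/2}$ in the electric identity, which blows up at the horizon.

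Third, I dispose of the boundary term at infinity. There, decay of $A$ and of its first derivatives (or exponential decay, which follows for $\om=0<\mu$ from the Jost construction of Lemma~\ref{lem:infinity_volterra}) kills the flux through large spheres.

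The main obstacle is the horizon boundary term. The electric flux is $\int_{S_r}A_t\,f^{1/2}\partial_rA_t\cdot f^{-1/2}\,r^2\,\dd\omega$, and regularity must be phrased invariantly. I plan to use the fact that $A_\mu A^\mu=-f^{-1}A_t^2+\dots$ and $F_{\mu\nu}F^{\mu\nu}$ are bounded at the future horizon, since $A$ is smooth in ingoing Eddington--Finkelstein coordinates. Staticity then forces $A_t=O(f^{1/2})$ at the bifurcation sphere, or at least $A_t\to0$ along the horizon with $\partial_rA_t$ bounded. With that, the horizon flux term $r^2A_t\partial_rA_t\to0$. For the spatial identity the flux is $\sqrt f\,\overline F_{ij}A^i n^j$, which carries a factor $\sqrt f\to0$ times bounded invariants, so it also vanishes.

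If the invariant bound on $A_t$ proves delicate, I would instead argue modewise. Decompose in vector harmonics, so that at $\om=0$ the reduced static equations in $r_*$ are those of Section~\ref{sec:reduction}. The odd channel is already handled in Proposition~\ref{prop:no_real_poles}. For the even channel, the horizon Frobenius basis of Lemma~\ref{lem:horizon_basis} at $\om=0$ has indicial roots $0,0$, and regularity selects the non-logarithmic solution, whose $A_t$-component vanishes to first order in $r-r_+$. That supplies the needed boundary vanishing, and the energy identity then finishes the proof.
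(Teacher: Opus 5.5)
Your argument is the paper's (Bekenstein's static energy identity). Splitting it into the decoupled electric and spatial identities, with weights $f^{-1/2}$ and $f^{1/2}$, is in fact what the paper's phrase ``$\norm{F}^2+\mu^2\norm{A}^2$, both nonnegative'' needs: $F_{\mu\nu}F^{\mu\nu}$ and $A_\mu A^\mu$ are indefinite in Lorentzian signature. There is a minor slip in your displayed spatial equation: it should read $\overline\nabla{}^j(\sqrt f\,\overline F_{ji})=\mu^2\sqrt f\,A_i$, since as written it is $-\Delta A_i=\mu^2A_i$ in flat space. The identity you derive from it is nevertheless the correct one.

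The gap is in the step you yourself single out, the horizon flux $r^2A_t\partial_rA_t$. Neither justification you offer actually gives $A_t|_{r=r_+}=0$.
\begin{itemize}
   \item Boundedness of $A_\mu A^\mu$ does not force it. Write $A=A_v\,\dd v+\tilde A_r\,\dd r+A_B\,\dd x^B$, smooth in ingoing Eddington--Finkelstein coordinates. In static coordinates $A_t=A_v$ and $A_r=\tilde A_r+f^{-1}A_v$, so $A_\mu A^\mu=2A_v\tilde A_r+f\tilde A_r^2+r^{-2}\gamma^{AB}A_AA_B$ ($\gamma$ the round metric). This stays bounded even if $A_v|_{r_+}\neq0$, because the singular pieces $-f^{-1}A_t^2$ and $fA_r^2$ cancel. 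The analogous cancellation in $F_{\mu\nu}F^{\mu\nu}$ leaves you with only a bound on $\partial_rA_t$.
   \item Appealing to the bifurcation sphere uses more regularity than the hypothesis provides. Moreover, $A_t=O(f^{1/2})$ alone would still leave $A_t\partial_rA_t=O(1)$.
   \item The modewise fallback fails too. At $\om=0$ the term $-f^{-1}\partial_ta_0$ drops out of \eqref{eq:app_lorenz_constraint}, so the reduced $(u_2,u_3)$ system of Section~\ref{sec:reduction} governs only the spatial components. Its Frobenius basis therefore says nothing about $A_t$.
\end{itemize}

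The missing idea is to use the electric equation at the horizon. Since $\partial_t=\partial_v$ is smooth up to the future horizon, $A_t=A(\partial_t)$ is smooth up to $r=r_+$. The static $t$-equation reads
\[
   \frac{f}{r^2}\,\partial_r\bigl(r^2\partial_rA_t\bigr)=\Bigl(\mu^2-\frac{1}{r^2}\Delta_{S^2}\Bigr)A_t .
\]
Letting $r\downarrow r_+$ gives $(\mu^2-r_+^{-2}\Delta_{S^2})A_t|_{r_+}=0$. Since this operator is positive, $A_t|_{r_+}=0$, and so $A_t=O(r-r_+)$. Modewise, the decoupled $a_0$-equation has Frobenius exponents $0$ and $1$ at $r_+$, and the exponent-$0$ solution contains an $(r-r_+)\log(r-r_+)$ term, so $C^1$ regularity selects the vanishing solution.

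This kills the electric flux, and it is also what your spatial flux needs. The static component $F_{rA}=F^{\mathrm{EF}}_{rA}-f^{-1}\partial_AA_t$ is bounded only because $A_t$ vanishes on the horizon. Otherwise the flux $f\gamma^{AB}F_{rA}A_B$ would tend to $-\gamma^{AB}\partial_AA_t\,A_B$, which need not vanish. With this step inserted your proof is complete, and more careful than the paper's one-line version, which simply asserts that the boundary terms vanish.
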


\begin{proof}
Multiply the static equation by $\overline A$, integrate by parts over the exterior region truncated at $r=R$, and let $R\to\infty$.  The boundary terms vanish by regularity at the horizon and decay at infinity.  The bulk identity is the sum of $\norm{F}^2$ and $\mu^2\norm{A}^2$, both nonnegative.  Hence both vanish and the field is identically zero.
\end{proof}

\appheading{Consequences for threshold analysis}

The no-hair statement rules out one possible loophole in the real-frequency exclusion argument, namely the existence of a static zero-mode hidden inside the threshold normal form.  Together with the radial current identity, it closes the threshold-resonance exclusion needed in Theorem~\ref{thm:mode_stability}.

\applabelheading{Cutoff independence and canonical meaning of the branch-cut field}{app:cutoff_independence}
\appheading{Why cutoff independence matters}

Throughout the paper the cut-off resolvent is written with a compactly supported radial cutoff $\chi$.  This is natural analytically because the horizon and infinity ends have different asymptotics, but it raises an obvious conceptual question: does the branch-cut field depend on the particular cutoff used to define it?  On a fixed compact radial set the answer is no, provided the cutoff is chosen identically $1$ on that set.  The present appendix records the simple argument.

\appheading{A local resolvent identity}

Let $\chi_1,\chi_2\in C_0^\infty((r_+,\infty))$ both equal $1$ on a neighborhood of the same compact set $K\Subset(r_+,\infty)$.  Then, on $K\times K$,
\[
   \chi_1(H_\ell-\om^2)^{-1}\chi_1
   -
   \chi_2(H_\ell-\om^2)^{-1}\chi_2
   =
   0
\]
as kernels, because both sides solve the same inhomogeneous equation with the same delta singularity on the diagonal and vanish after multiplication by $(1-\chi_j)$ near $K$.  More concretely, the difference factors through cutoff commutators supported away from $K$, and those commutators are annihilated when the kernel is restricted back to $K$.

\appheading{Consequence for the branch-cut integral}

Applying the previous identity to the continued resolvent and taking the discontinuity across the cut shows that the branch-cut kernel
\[
   \chi(r)\chi(r')\,\disc\cG_{\ell,P}(\om;r,r')
\]
is independent of the choice of cutoff as long as the cutoff equals $1$ on the compact radial set under consideration.  The same is therefore true of the time-domain integral defining $u_{\ell m,P}^{\bc}(t,r,r')$ on that set.  In particular, the branch-cut field used in the main theorems is a canonical local object, not an artifact of a particular cutoff construction.

\appheading{Global interpretation}

The cutoff-independence statement may be summarized as follows: the branch-cut field is canonically defined as an element of the local solution space on every compact radial set, and different cutoffs merely provide different global representatives of the same local object.  This is exactly the right level of invariance for the compact-region pointwise theorems proved in the paper.

\section*{Acknowledgements}
The author wishes to express his deepest gratitude to his family for their unwavering support and encouragement. The initial stage of this work, namely, Fixed-mode spectral and threshold theorem, is is  supported by Riset Unggulan ITB 2024 No. 959/IT1.B07.1/TA.00/2024. The final stage of this work, namely, Two-regime asymptotic expansion of the full Proca field, is  supported by Riset Unggulan ITB 2025 No. 841/IT1.B07.1/TA.00/2025.

\end{document}